\documentclass[12pt,reqno]{amsart}

\usepackage{mathrsfs,amsfonts,amsthm,amsmath,amssymb,thmtools}
\usepackage{arydshln}        

\usepackage{enumitem}                
\usepackage{lineno}                  

\usepackage[numbers,sort&compress]{natbib}  

\usepackage{graphics}
\usepackage{graphicx}   
\graphicspath{{images/}} 
\usepackage{subfig}     
\usepackage{rotating}

\usepackage[hyperindex, pdfencoding = auto, psdextra, bookmarksdepth = 4]{hyperref}  
\hypersetup{
    bookmarksopen      = true,
    bookmarksopenlevel = 1,
    bookmarksnumbered  = true,
    pdftitle    = {},
    pdfauthor   = {},
    linktoc     = page,
    anchorcolor = green,
    colorlinks  = true,        
    linkcolor   = black,
    citecolor   = blue,
    filecolor   = blue,
    urlcolor    = blue,
}

\usepackage[nameinlink]{cleveref}    

\theoremstyle{plain}

\newtheorem{theorem}{Theorem}[section]
\newtheorem{lemma}[theorem]{Lemma}
\newtheorem{proposition}[theorem]{Proposition}
\newtheorem{corollary}[theorem]{Corollary}
\newtheorem{question}[theorem]{Question}

\newtheorem*{theorem*}{Theorem}

\theoremstyle{definition}   
\newtheorem{definition}[theorem]{Definition}
\newtheorem{remark}[theorem]{Remark}
\newtheorem{example}[theorem]{Example}

\newtheorem{innercustomthm}{Theorem}
\newenvironment{customthm}[1]
    {\renewcommand\theinnercustomthm{#1}\innercustomthm}
    {\endinnercustomthm}

\makeatletter
    \def\subsection{\@startsection{subsection}{2}%
    \z@{.5\linespacing\@plus.7\linespacing}{.3\linespacing}%
    {\normalfont\bfseries}}
\makeatother 

\makeatletter                                         
    \newcommand{\LeftEqNo}{\let\veqno\@@leqno}        
\makeatother

\numberwithin{equation}{section}                      

\begin{document}

\

\vspace{-2cm}

\title[Kaplansky's second test problem in operator algebras]{Kaplansky's second test problem in operator algebras}

\author{Chunlan Jiang}
\address{Chunlan Jiang, School of Mathematics, Hebei Normal University, Shijiazhuang, 050016, China}
\email{cljiang@hebtu.edu.cn}

\author{Minghui Ma}
\address{Minghui Ma, School of Mathematical Sciences, Dalian University of Technology, Dalian, 116024, China}
\email{minghuima@dlut.edu.cn}

\author{Rui Shi}
\address{Rui Shi, School of Mathematical Sciences, Dalian University of Technology, Dalian, 116024, China}
\email{ruishi@dlut.edu.cn}

\author[Yuanhang Zhang]{Yuanhang Zhang}
\address{Yuanhang Zhang, School of Mathematics, Jilin University, Changchun, 130012, China}
\email{zhangyuanhang@jlu.edu.cn}

\thanks{Chunlan Jiang was supported in part by NSFC (No.12471120); Minghui Ma was supported by the China Postdoctoral Science Foundation (No.2025M783077) and the Postdoctoral Fellowship Program of CPSF (No.GZC20252022); Rui Shi was supported by NSFC (No.12271074); Yuanhang Zhang was supported in part by NSFC (No.12471123).}

\keywords{Kaplansky's second test problem, similarity, direct sum, operator algebra, essentially finite-dimensional commutant}
\subjclass[2000]{47A45, 47A65.}

\begin{abstract}
    Kaplansky's second test problem on similarity asks: if $T$ and $S$ are elements in a unital Banach algebra $\mathcal{B}$ and $T\oplus T$ is similar to $S\oplus S$ in $\mathbb{M}_2(\mathcal{B})$, is $T$ similar to $S$ in $\mathcal{B}$?
    We answer this problem affirmatively if $T$ is an operator with property $(J)$ in a type $\mathrm{I}_n$ von Neumann algebra $\mathcal{M}$, i.e., $\{T\}'\cap\mathcal{M}$ contains a bounded maximal abelian family of idempotents.
    Moreover, the condition of property $(J)$ can be removed for $1\leqslant n\leqslant 3$. 
    A similar result is proved if $T$ is an element in a unital Banach algebra $\mathcal{B}$ with essentially finite-dimensional commutant, i.e., the relative commutant of $T$ in $\mathcal{B}$ is finite-dimensional modulo its Jacobson radical. Finally, we point out that one of our main results can be applied to the implementation of local unitary (LU) equivalence of quantum states.
\end{abstract}

\maketitle

\section{Introduction}

In his little red book ``Infinite Abelian Groups'' \cite{Kap54}, I. Kaplansky raised ``\emph{three test problems}'' to test the usefulness of a structure theorem for abelian groups.
The second of these three problems asks: If ($G$ and $H$ are abelian groups and) $G\oplus G$ and $H\oplus H$ are isomorphic, are $G$ and $H$ isomorphic?
As Kaplansky himself noted, all three test problems can be formulated for general mathematical systems.

In the context of Hilbert space operators, Kaplansky's second test problem becomes: if $T, S$ are operators in $\mathcal{B}(\mathcal{H})$ and $T\oplus T$ is equivalent to $S\oplus S$ in $\mathcal{B}(\mathcal{H}\oplus\mathcal{H})$, is $T$ equivalent to $S$?
In essence, one is seeking an \emph{appropriate} multiplicity theory for the notion of equivalence involved.
To consider Kaplansky's second test problem, one must first clarify what the equivalence relation is.
As suggested by Kaplansky, R. Kadison and I. Singer \cite{KS57} applied this problem to unitary equivalence of operators and proved the interesting result that $T\oplus T\cong S\oplus S$ implies $T\cong S$.
With the help of Voiculescu's noncommutative Weyl-von Neumann theorem \cite[Theorem II.5.8]{Dav96}, Kaplansky's second test problem on approximate unitary equivalence also admits a positive solution \cite[Proposition 4.4]{MRTZ24}.

In 1990, K. Davidson and D. Herrero remarked that Kaplansky's second test problem on similarity of operators remains open and is quite interesting \cite[p.\,53]{DH90}.
There exist some special instances where a positive answer to the aforementioned problem is known.
In 2004, it was shown in \cite[Theorem 4.6]{Jiang04} that if $T$ and $S$ are both strongly irreducible Cowen-Douglas operators, then $T$ and $S$ are similar if and only if $T\oplus T$ and $S\oplus S$ are similar. 
Recently, a positive solution to Kaplansky's second test problem on similarity has been obtained for the case where $T$ is an isometry \cite[Theorem 4.17]{MRTZ24}.
The purpose of this paper is to systematically investigate Kaplansky's second test problem on similarity in type $\mathrm{I}_n$ von Neumann algebras and in general Banach algebras.

We briefly recall the origin of the main tool ``property $(J)$'' used in this paper. In 1972, F. Gilfeather studied the strong reducibility of Hilbert space operators in \cite{Gil72}, where strongly irreducible operators are also introduced. 
In the 1990s, Z. Jiang posed seven questions concerning the structure of Hilbert space operators and suggested studying strongly irreducible operators as infinite-dimensional Jordan blocks up to similarity (see \cite{JC23} for recent progress on these questions). To answer the third question in \cite{JC23},
the authors in \cite{JS11} proved that a Hilbert space operator $T$ is similar to a direct integral of strongly irreducible operators if and only if its commutant $\{T\}'$ contains a bounded maximal abelian family of idempotents. Inspired by this necessary-and-sufficient characterization, we introduce property $(J)$ in the setting of von Neumann algebras. An operator $T$ in a von Neumann algebra $\mathcal{M}$ has property $(J)$ if there exists a bounded maximal abelian family of idempotents in $\{T\}'\cap\mathcal{M}$ (see \Cref{def J}). Meanwhile,
the concept of \emph{(strong) irreducibility} of operators in $\mathcal{B}(\mathcal{H})$ can be naturally generalized in von Neumann algebras (see \Cref{def SI}). Property $(J)$ was first applied to study the strongly irreducible decomposition of operators up to similarity. In this paper, for operators with property $(J)$ in type $\mathrm{I}_n$ von Neumann algebras, we affirmatively answer Kaplansky's second test problem on similarity as stated in \Cref{thm J-Kaplansky}. Recall that, for operators $T$ and $S$ in a von Neumann algebra $\mathcal{M}$, we say that $T$ is similar to $S$ in $\mathcal{M}$, denoted by $T \sim S$, if there exists an invertible operator $X$ in $\mathcal{M}$ such that $XTX^{-1} = S$.  

\begin{customthm}{\ref{thm J-Kaplansky}}
    Let $\mathcal{M}$ be a type $\mathrm{I}_n$ von Neumann algebra and $T_1,T_2\in\mathcal{M}$ satisfying $T_1\oplus T_1\sim T_2\oplus T_2$ in $\mathbb{M}_2(\mathcal{M})$.
    If $T_1$ has property $(J)$, then $T_1\sim T_2$ in $\mathcal{M}$.
\end{customthm}

In a type $\mathrm{I}_n$ von Neumann algebra $\mathcal{M}$, the class of operators with  property $(J)$ is quite large, uniformly dense in $\mathcal{M}$. Furthermore, the assumption that $T$ having property $(J)$ in \Cref{thm J-Kaplansky} is removable for $1\leqslant n\leqslant 3$ by \Cref{prop type-I-3}.  In view of the two results, the following question naturally arises.

\begin{question}
    Let $\mathcal{M}$ be a type $\mathrm{I}_n$ von Neumann algebra and $T_1,T_2\in\mathcal{M}$.
    If $T_1\oplus T_1$ is similar to $T_2\oplus T_2$ in $\mathbb{M}_2(\mathcal{M})$, is $T_1$ similar to $T_2$ in $\mathcal{M}$?
\end{question}

It is worth pointing out that there exist operators in a type $\mathrm{I}_2$ von Neumann algebra without property $(J)$.
A typical example of type $\mathrm{I}_n$ von Neumann algebras is $\mathbb{M}_n(L^\infty(X,\mu))$, where $(X,\mu)$ is a probability space.
It is worth noting that operators in $\mathbb{M}_n(L^\infty(X,\mu))$ can be viewed as essentially bounded random matrices.
In contrast with this, we prove the following reduction theorem to study Kaplansky's second test problem in type $\mathrm{I}_n$ von Neumann algebras, by which it suffices to consider the type $\mathrm{I}_n$ von Neumann algebra $\mathbb{M}_n(\ell^\infty)$.
See \Cref{def similarity-number} for the definition of the similarity function $\theta_n$.

\begin{customthm}{\ref{thm reduction}}
    The following statements are equivalent:
    \begin{enumerate}  [label= $(\arabic*)$, ref= \arabic*, leftmargin=*] 
        \item[$(1)$] $\theta_n(t)<\infty$ for every $t\geqslant 1$;
        \item[$(2)$] Kaplansky's second test problem is true for $\mathbb{M}_n \otimes\ell^\infty$;
        \item[$(3)$] Kaplansky's second test problem is true for every type $\mathrm{I}_n$ von Neumann algebra.
    \end{enumerate}
\end{customthm}

We can also consider Kaplansky's second test problem for unital Banach algebras up to similarity.
To study the finite strongly irreducible decomposition of elements in Banach algebras, we say that an element $T$ in a Banach algebra $\mathcal{B}$ has \emph{essentially finite-dimensional commutant} if the relative commutant of $T$ in $\mathcal{B}$ is finite-dimensional modulo its radical.
For elements with essentially finite-dimensional commutants in Banach algebras, we prove the following result concerning Kaplansky's second test problem up to similarity in \Cref{thm Q-Kaplansky}.

\begin{customthm}{\ref{thm Q-Kaplansky}}
    Let $\mathcal{B}$ be a unital Banach algebra, $T_1,T_2\in\mathcal{B}$, and $n\in\mathbb{N}$ such that $T_1^{(n)}\sim T_2^{(n)}$ in $\mathbb{M}_n(\mathcal{B})$.
    If $T_1$ has essentially finite-dimensional commutant in $\mathcal{B}$, then $T_1\sim T_2$ in $\mathcal{B}$.
\end{customthm}

This paper is structured as follows.
We present basic definitions and results in the next section.
In \Cref{s3}, we provide a complete characterization of strongly irreducible operators in type $\mathrm{I}_n$ von Neumann algebras in \Cref{thm SI}.
By introducing the concept of property $(J)$ in \Cref{def J}, we study the finite strongly irreducible decomposition of operators in \Cref{thm J-FSID} and affirmatively answer Kaplansky's second test problem for operators with property $(J)$ in type $\mathrm{I}_n$ von Neumann algebras in \Cref{thm J-Kaplansky}.
In \Cref{s4}, we obtain \Cref{thm reduction}, a reduction theorem, and remove the assumption of property $(J)$ in \Cref{prop type-I-3} for $1\leqslant n\leqslant 3$.
In the last section, we compute essentially relative commutants in Banach algebras in \Cref{prop Q-direct-sum}.
As an application, we solve affirmatively Kaplansky's second test problem for elements with essentially finite-dimensional commutant in \Cref{thm Q-Kaplansky}. In \Cref{rem bitriangular-opt}, we discuss progress on Kaplansky's second test problem, up to similarity, for bitriangular operators and Cowen-Douglas operators. In \Cref{rem quantum-physics}, we present an application of Kaplansky's second test problem in quantum physics.

\section{Preliminaries} \label{s2}

Throughout the paper, let $\mathcal{M}$ be a von Neumann algebra, $\mathcal{A}$ an abelian von Neumann algebra, and $\mathcal{B}$ a unital Banach algebra. Denote by $\mathbb{M}_n$ the set of all $n\times n$ matrices over $\mathbb{C}$.
Note that $\mathbb{M}_n (\mathcal{A})= \mathbb{M}_n  \otimes\mathcal{A}$ is a type $\mathrm{I}_n$ von Neumann algebra with center $I_n\otimes\mathcal{A}$ for every integer $n\geqslant 1$, where $I_n$ is the identity operator in $\mathbb{M}_n$.
Conversely, if $\mathcal{M}$ is a type $\mathrm{I}_n$ von Neumann algebra, then $\mathcal{M}\cong \mathbb{M}_n\otimes\mathcal{Z}(\mathcal{M})$, where the center $\mathcal{Z}(\mathcal{M})$ of $\mathcal{M}$ is an abelian von Neumann algebra.
The reader is referred to \cite[Section 6.6]{KR2} for more details.
Since operators in $\mathcal{A}$ and $\mathbb{M}_n (\mathcal{A})$ will be discussed simultaneously, unless noted otherwise, we adopt the following convention: lowercase letters denote operators in $\mathcal{A}$, while uppercase letters denote operators in $\mathbb{M}_n(\mathcal{A})$.
Thus, an operator $T$ in $\mathbb{M}_n(\mathcal{A})$ can be written as $T = (t_{ij})_{1\leqslant i,j\leqslant n}$.

\begin{definition} \label{def Tr}
    Let $n$ be a positive integer and $\mathcal{A}$ an abelian von Neumann algebra.
    For every operator $T=(t_{ij})_{1\leqslant i,j\leqslant n}\in \mathbb{M}_n (\mathcal{A})$, we define
    \begin{equation*}
        \mathrm{Tr}(T)=\sum_{j=1}^{n}t_{jj}\in\mathcal{A}\quad\text{and}\quad
        \mathrm{tr}(T)=\frac{1}{n}\sum_{j=1}^{n}t_{jj}\in\mathcal{A},
    \end{equation*}
    where $\mathrm{Tr}(T)$ and $\mathrm{tr}(T)$ are called the \emph{usual trace} and the \emph{normalized trace} of $T$.
\end{definition}

\begin{remark} \label{rem Tr}
    Note that $I_n\otimes\mathrm{tr}(T)$ is the center-valued trace of $T$ in $\mathbb{M}_n (\mathcal{A})$.
    If $T=(I_k\oplus 0)\otimes I_{\mathcal{A}}$ for some integer $0\leqslant k\leqslant n$, then $\mathrm{Tr}(T)=kI_{\mathcal{A}}$.
    If $T=(t_{ij})_{1\leqslant i,j\leqslant n}$ has the same diagonal $a\in\mathcal{A}$, i.e., $t_{jj}=a$ for each $1\leqslant j\leqslant n$, then $a=\mathrm{tr}(T)$.
    These facts will be used tacitly in this paper.
\end{remark}

Let $\mathcal{M}$ be a von Neumann algebra acting on a Hilbert space $\mathcal{H}$ and $T$ an operator in $\mathcal{M}$.
The \emph{commutant} $\{T\}'$ of $T$ is the set of all operators in $\mathcal{B}(\mathcal{H})$ commuting with $T$ and $\{T\}'\cap\mathcal{M}$ is the \emph{relative commutant} of $T$ in $\mathcal{M}$.
Recall that $T$ is called \emph{irreducible} in $\mathcal{M}$ if $P\in\mathcal{Z}(\mathcal{M})$ for every projection $P$ in $\mathcal{M}$ with $PT=TP$.

\begin{definition} \label{def SI}
    Let $\mathcal{M}$ be a von Neumann algebra.
    An operator $T$ is $\mathcal{M}$ is said to be \emph{strongly irreducible} in $\mathcal{M}$ if each idempotent in $\{T\}'\cap\mathcal{M}$ lies in $\mathcal{Z}(\mathcal{M})$.
\end{definition}

We denote by $J_n(\lambda)$ the Jordan block in $\mathbb{M}_n  $ with eigenvalue $\lambda$, i.e.,
\begin{equation} \label{equ Jordan-block}
    J_n(\lambda)=
    \begin{pmatrix}
    \lambda & 1 & & \\
    & \lambda & 1 & \\
    & & \ddots & \ddots & \\
    & & & \lambda & 1 \\
    & & & & \lambda
    \end{pmatrix}_{n\times n}.
\end{equation}
For simplicity, we write $J_n=J_n(0)$.
Let $\mathcal{A}$ be an abelian von Neumann algebra.
It is clear that $J_n\otimes I_{\mathcal{A}}$ is a strongly irreducible operator in $\mathbb{M}_n (\mathcal{A})=\mathbb{M}_n  \otimes\mathcal{A}$.

The following lemma shows that every operator in $\mathbb{M}_n (\mathcal{A})$ is unitarily equivalent to an upper triangular operator (see \cite[Corollary 6]{Azo74} and \cite[Theorem 2]{DP64}).

\begin{lemma} \label{lem upper triangular}
    Let $n$ be a positive integer and $\mathcal{A}$ an abelian von Neumann algebra.
    Then for every operator $T$ in $\mathbb{M}_n (\mathcal{A})$, there exists a unitary operator $U$ in $\mathbb{M}_n (\mathcal{A})$ such that $U^*TU=(t_{ij})_{1\leqslant i,j\leqslant n}$ with $t_{ij}=0$ for $i>j$, i.e., $U^*TU$ is upper triangular.
\end{lemma}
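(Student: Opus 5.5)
We argue by induction on $n$, building $U$ column by column. The key step is a measurable selection of a unit eigenvector. Write $\mathcal{A}\cong L^\infty(X,\mu)$. For a general abelian von Neumann algebra, $\mu$ can be taken to be a direct sum of finite measures: decompose $\mathcal{A}$ into countably decomposable summands, each of which is $L^\infty$ of a probability space. Then $T$ becomes an essentially bounded measurable field $x\mapsto T(x)\in\mathbb{M}_n$.

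\emph{Step 1: measurable eigenvalue.} By the fundamental theorem of algebra, each $T(x)$ has an eigenvalue. The characteristic polynomial $\det(\lambda-T(x))$ has coefficients in $\mathcal{A}$. Let $\lambda(x)$ be the root that is smallest in lexicographic order on $(\mathrm{Re},\mathrm{Im})$. Then $\lambda$ is a measurable function: the multiset of roots depends continuously on the coefficients, so the lexicographically minimal root is a Borel function of them. Also $|\lambda(x)|\leqslant\|T\|$, so $\lambda\in\mathcal{A}$.

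\emph{Step 2: measurable unit eigenvector.} The set
\[
K(x)=\{v\in\mathbb{C}^n:\|v\|=1,\ (T(x)-\lambda(x))v=0\}
\]
is compact and nonempty, and its graph $\{(x,v): v\in K(x)\}$ is measurable. A measurable selection theorem (Kuratowski--Ryll-Nardzewski, or von Neumann's selection theorem modulo null sets) then gives a measurable $v(x)\in K(x)$. Alternatively, one can avoid selection theorems. Partition $X$ into the measurable pieces on which the first nonzero column of the adjugate-type matrices (the minors of $T(x)-\lambda(x)$) has a given index, and normalize that column on each piece. This is the step I expect to be the main obstacle, because the eigenspace dimension jumps, so a naive continuous choice fails. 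The piecewise construction by measurable partition is the concrete route I would try first.

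\emph{Step 3: complete to a unitary and induct.} Given the unit vector field $v$, extend it to a measurable orthonormal basis $v=u_1,u_2,\dots,u_n$. To do this, apply Gram--Schmidt to $v,e_1,\dots,e_n$, discarding at each $x$ the vectors that are linearly dependent on the earlier ones. The sets where a given pattern of discards occurs are measurable, so the construction is measurable on each piece. Let $U_1\in\mathbb{M}_n(\mathcal{A})$ be the matrix with columns $u_j$. It is a unitary in $\mathbb{M}_n(\mathcal{A})$, and
\[
U_1^*TU_1=\begin{pmatrix}\lambda & *\\ 0 & T'\end{pmatrix}
\]
with $T'\in\mathbb{M}_{n-1}(\mathcal{A})$. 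By the induction hypothesis there is a unitary $V'$ with $V'^*T'V'$ upper triangular. Setting $U=U_1(1\oplus V')$ gives $U^*TU$ upper triangular, which completes the induction; the case $n=1$ is trivial.
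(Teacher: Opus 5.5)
Your proof is correct. The paper does not prove this lemma itself. It cites Azoff, who shows that Schur triangularization can be carried out Borel measurably on $\mathbb{M}_n$, and Deckard--Pearcy, who work in $\mathbb{M}_n(C(X))$ with $X$ Stonean.

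Your argument is essentially a hands-on version of Azoff's route, with one structural difference. Azoff produces a single Borel map on $\mathbb{M}_n$ that can be composed with any measurable representative of $T$. You instead make your selections relative to the particular measure space, and that is what forces the reduction to countably decomposable summands and a selection theorem on the base.

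The Stonean route fits the model $\mathcal{A}=C(X)$ used elsewhere in the paper (e.g.\ Lemma~\ref{lem sigma-ap-bp}). It yields continuous rather than measurable choices and can skip your induction entirely. An extremally disconnected compact space is projective (Gleason). Consider the closed set $\{(x,U)\in X\times\mathcal{U}_n\colon U^*T(x)U\ \text{is upper triangular}\}$, where $\mathcal{U}_n$ is the unitary group of $\mathbb{M}_n$. By Schur's theorem it maps onto $X$, so it admits a continuous section, and that section is the required unitary.

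Three of your steps need one more line each.

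\textbf{Selection theorem.} Kuratowski--Ryll-Nardzewski requires weak measurability of $x\mapsto K(x)$, not a measurable graph. The measurable-graph version (Aumann, or von Neumann's theorem) needs a complete $\sigma$-finite measure; that is also fine after completing each $\mu_i$, since $L^\infty$ does not change. Weak measurability does hold here: for compact $C$ in the unit sphere with a countable dense subset $D$,
\[
\{x\colon K(x)\cap C\neq\varnothing\}=\{x\colon\inf_{v\in D}\|(T(x)-\lambda(x))v\|=0\},
\]
and open sets are countable unions of such $C$.

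\textbf{Borelness of $\lambda$.} Borelness of the lexicographically minimal root is not automatic from continuity of the roots. One way: its imaginary part equals $\lim_{k\to\infty}k\bigl(\min_j(\mathrm{Re}\,z_j+k^{-1}\mathrm{Im}\,z_j)-\min_j\mathrm{Re}\,z_j\bigr)$, a pointwise limit of continuous symmetric functions of the roots $z_j$. Alternatively, drop Step~1 and select the pair $(\lambda(x),v(x))$ jointly from $\{(\mu,v)\colon|\mu|\leqslant\|T\|,\ \|v\|=1,\ T(x)v=\mu v\}$.

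\textbf{Minors alternative.} This works once made precise. Stratify by $\mathrm{rank}(T(x)-\lambda(x))=r$ and fix the first nonvanishing $r\times r$ minor, with rows $I$ and columns $J$. Set one coordinate $k\notin J$ equal to $1$, solve for the $J$-coordinates by Cramer's rule, and normalize. The plain adjugate does not suffice, because it vanishes once the eigenspace has dimension at least $2$.
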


In the study of strongly irreducible operators in type $\mathrm{I}_n$ von Neumann algebras, we only need to focus on upper triangular operators by \Cref{lem upper triangular}.
The following elementary lemma will be used in the proof of \Cref{prop SI=Z+N}.

\begin{lemma}\label{lem sigma-ap-bp}
    Let $\mathcal{A}$ be an abelian von Neumann algebra.
    If $a,b$ are distinct operators in $\mathcal{A}$, then there is a nonzero projection $p$ in $\mathcal{A}$ such that $\sigma_{\mathcal{A}p}(ap)\cap\sigma_{\mathcal{A}p}(bp)=\varnothing$.
\end{lemma}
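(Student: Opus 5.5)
Since $a\neq b$, the element $c=a-b$ is a nonzero element of $\mathcal{A}$. I will work through the Gelfand representation: $\mathcal{A}\cong C(\Omega)$ for a compact Hausdorff (in fact extremally disconnected, hyperstonean) space $\Omega$. Under this identification, projections in $\mathcal{A}$ correspond exactly to characteristic functions $\chi_E$ of clopen sets $E\subseteq\Omega$. For a nonzero projection $p=\chi_E$, the algebra $\mathcal{A}p$ is identified with $C(E)$. The spectrum of $ap$ in $\mathcal{A}p$ is then the range $\hat a(E)$, and similarly for $bp$. So it suffices to find a nonempty clopen $E$ with $\hat a(E)\cap\hat b(E)=\varnothing$.

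\textbf{Step 1: pick a point where $a$ and $b$ differ.} Because $\hat c\neq0$, there is $\omega_0\in\Omega$ with $\hat a(\omega_0)\neq\hat b(\omega_0)$. Put $\delta=|\hat a(\omega_0)-\hat b(\omega_0)|/3>0$.

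\textbf{Step 2: localize by continuity.} By continuity of $\hat a$ and $\hat b$, the set
\[
U=\{\omega\in\Omega:\ |\hat a(\omega)-\hat a(\omega_0)|<\delta,\ |\hat b(\omega)-\hat b(\omega_0)|<\delta\}
\]
is an open neighbourhood of $\omega_0$.

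\textbf{Step 3: find a clopen subset.} Since $\Omega$ is extremally disconnected (it is totally disconnected, and the clopen sets form a base), there is a nonempty clopen set $E\subseteq U$. Let $p=\chi_E$, which is a nonzero projection in $\mathcal{A}$.

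\textbf{Step 4: check disjointness.} The closure $\overline U$ may be larger than $U$, but $E$ is compact and contained in $U$, so the bounds transfer to the closure. Hence
\[
\sigma_{\mathcal{A}p}(ap)=\hat a(E)\subseteq \overline{B}(\hat a(\omega_0),\delta),\qquad \sigma_{\mathcal{A}p}(bp)=\hat b(E)\subseteq \overline{B}(\hat b(\omega_0),\delta).
\]
These two closed balls have centres at distance $3\delta$ and radii $\delta$, so they are disjoint. This proves the lemma.

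\textbf{Avoiding Gelfand theory.} One can also argue directly with spectral projections. Since $c\neq0$, some spectral projection of $\mathrm{Re}\,c$ or $\mathrm{Im}\,c$ for a set bounded away from $0$ is nonzero. Refining it by spectral projections of $a$ onto small boxes (finitely many boxes cover $\sigma(a)$) gives a nonzero $p$ on which $a$ lies within $\delta$ of a scalar $\alpha$, while $c$ stays at distance at least $3\delta$ from $0$. A further refinement by spectral projections of $b$ then forces $bp$ to be near a scalar $\beta$ with $|\alpha-\beta|\geqslant 2\delta$ or so.

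\textbf{Main obstacle.} The only delicate point is choosing the refinement parameters consistently in this spectral-projection version. The Gelfand-space argument above avoids that bookkeeping, so I would present that one.
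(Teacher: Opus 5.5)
Your proof is correct and follows the paper's argument: realize $\mathcal{A}\cong C(X)$ with $X$ extremally disconnected, pick a point where $a$ and $b$ differ, take a neighbourhood on which both functions vary by less than a third of the gap, and cut down by the characteristic function of a clopen set. The only difference is cosmetic. The paper uses the closure $\overline{G}$ of the neighbourhood itself, which is clopen by extremal disconnectedness, with the strict bounds becoming non-strict. You instead take a nonempty clopen subset of $U$, so your remark about transferring the bounds to the closure is unnecessary, since the strict bounds already hold on $E$.
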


\begin{proof}
    We assume that $\mathcal{A}=C(X)$, where $X$ is an extremely disconnected compact Hausdorff space $X$ (see Theorem 3.4.16 of \cite{KR1} and the definition preceding it).
    Since $a\ne b$, there exists a point $x_0\in X$ such that $a(x_0)\ne b(x_0)$.
    Let $\varepsilon=|a(x_0)-b(x_0)|>0$.
    Then there exists an open neighborhood $G$ of $x_0$ such that $|a(x)-a(x_0)|<\frac{\varepsilon}{3}$ and $|b(x)-b(x_0)|<\frac{\varepsilon}{3}$ for every $x\in G$.
    Let $F$ be the closure of $G$.
    Then $F$ is a clopen subset of $X$ such that $|a(x)-a(x_0)|\leqslant\frac{\varepsilon}{3}$ and $|b(x)-b(x_0)|\leqslant\frac{\varepsilon}{3}$ for every $x\in F$.
    Let $p$ be the characteristic function of $F$.
    Then
    \begin{equation*}
        \sigma_{\mathcal{A}p}(ap)\subseteq
        \left\{z\in\mathbb{C}\colon|z-a(x_0)|\leqslant\frac{\varepsilon}{3}\right\},\quad
        \sigma_{\mathcal{A}p}(bp)\subseteq
        \left\{z\in\mathbb{C}\colon|z-b(x_0)|\leqslant\frac{\varepsilon}{3}\right\}.
    \end{equation*}
    It follows that $\sigma_{\mathcal{A}p}(ap)\cap\sigma_{\mathcal{A}p}(bp)=\varnothing$.
\end{proof}

\begin{proposition}\label{prop SI=Z+N}
    Let $\mathcal{M}$ be a type $\mathrm{I}_n$ von Neumann algebra with center-valued trace $\tau$.
    If $T$ is a strongly irreducible operator in $\mathcal{M}$, then $T-\tau(T)$ is nilpotent.
\end{proposition}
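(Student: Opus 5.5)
We identify $\mathcal{M}$ with $\mathbb{M}_n(\mathcal{A})$, where $\mathcal{A}=\mathcal{Z}(\mathcal{M})$, so that $\tau(T)=I_n\otimes\mathrm{tr}(T)$. By \Cref{lem upper triangular} we may assume $T$ is upper triangular with diagonal entries $t_{11},\dots,t_{nn}\in\mathcal{A}$; unitary conjugation preserves strong irreducibility, $\tau$, and nilpotency. The plan has two steps. First show $t_{11}=t_{22}=\cdots=t_{nn}$. Then by \Cref{rem Tr} this common value is $a=\mathrm{tr}(T)$, and $T-\tau(T)=T-I_n\otimes a$ is strictly upper triangular, hence nilpotent ($(T-\tau(T))^n=0$).

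For the first step we argue by contradiction: suppose $t_{ii}\neq t_{jj}$ for some $i\neq j$. By \Cref{lem sigma-ap-bp} there is a nonzero projection $p\in\mathcal{A}$ with $\sigma_{\mathcal{A}p}(t_{ii}p)\cap\sigma_{\mathcal{A}p}(t_{jj}p)=\varnothing$. Compressing by the central projection $I_n\otimes p$ keeps strong irreducibility in $\mathbb{M}_n(\mathcal{A}p)$: an idempotent in $\{Tp\}'\cap\mathbb{M}_n(\mathcal{A}p)$ is an idempotent in $\{T\}'\cap\mathcal{M}$, hence central. So we may assume $p=I$. Let $K=\bigcup_k\sigma_{\mathcal{A}}(t_{kk})$. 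Since $T$ is upper triangular over the commutative algebra $\mathcal{A}$, we have $\sigma_{\mathcal{M}}(T)\subseteq K$. Indeed, for $\lambda\notin K$ each $t_{kk}-\lambda$ is invertible in $\mathcal{A}$, and an upper triangular matrix with invertible diagonal entries is invertible by back substitution.

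Next, choose a closed-open partition of $K$ into $K_1\ni$ (a set containing) $\sigma(t_{ii})$ and $K_2\supseteq\sigma(t_{jj})$. Directly separating two disjoint compact sets may not split all of $K$ into clopen pieces, so we should instead take a contour $\Gamma$ around $\sigma(t_{ii})$ avoiding $\sigma(t_{jj})$ and handle the other diagonal entries as well. A cleaner route is to work pointwise via the Gelfand representation $\mathcal{A}=C(X)$. Shrinking $p$ as in the proof of \Cref{lem sigma-ap-bp}, we may assume each diagonal function $t_{kk}$ has range within $\varepsilon/3$ of its value at $x_0$. Then the diagonal values at $x_0$ fall into finitely many clusters, which are separated after shrinking. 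This gives a clopen decomposition of $\sigma(T)$ with $\sigma(t_{ii})$ and $\sigma(t_{jj})$ in different pieces. The Riesz idempotent $E=\frac{1}{2\pi i}\int_\Gamma(\lambda-T)^{-1}\,d\lambda$ is then an idempotent in $\{T\}'\cap\mathcal{M}$. It is upper triangular with diagonal entries equal to the Riesz idempotents $\chi_{K_1}(t_{kk})$ of the $t_{kk}$; each of these is $1$ or $0$ according to the cluster of $t_{kk}$. So $E_{ii}=1$ and $E_{jj}=0$, whereas a central element $I_n\otimes c$ has equal diagonal entries. Thus $E$ is not central, contradicting strong irreducibility.

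The main obstacle is this spectral-separation step: ensuring that the diagonal spectra can be grouped so that a single contour separates $t_{ii}$ from $t_{jj}$ while enclosing entire diagonal spectra. The uniform localization on a small clopen set, refining \Cref{lem sigma-ap-bp} to all $n$ diagonal entries simultaneously, is what makes this work.
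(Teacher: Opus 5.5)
Your proposal is correct and follows the paper's proof: upper-triangularize, cut down by a central projection $I_n\otimes p$, use a Riesz (holomorphic functional calculus) idempotent whose diagonal entries are the corresponding idempotents of the $t_{kk}p$ to contradict strong irreducibility, and then observe that $T-\tau(T)$ is strictly upper triangular. The only difference is how the spectral splitting is arranged. The paper iterates \Cref{lem sigma-ap-bp} so that each $t_{kk}p$ either equals $t_{nn}p$ or has spectrum disjoint from $\sigma(t_{nn}p)$; your pointwise clustering at $x_0$ also works, provided the radius is below half the minimal gap among the distinct values $t_{kk}(x_0)$, rather than $\varepsilon/3$ with $\varepsilon=|t_{ii}(x_0)-t_{jj}(x_0)|$, since intermediate diagonal values could otherwise bridge the two disks.
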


\begin{proof}
    Write $\mathcal{M}=\mathbb{M}_n (\mathcal{A})$, where $\mathcal{A}$ is an abelian von Neumann algebra.
    According to \Cref{lem upper triangular}, we may assume that $T=(t_{ij})_{1\leqslant i,j\leqslant n}$ with $t_{ij}=0$ for $i>j$, i.e., $T$ is upper triangular.
    We claim that $t_{jj}=t_{nn}$ for $1\leqslant j\leqslant n-1$.
    Suppose, on the contrary, that $t_{jj}\ne t_{nn}$ for some index $1\leqslant j\leqslant n-1$.
    By repeatedly applying \Cref{lem sigma-ap-bp}, there exists a nonzero projection $p$ in $\mathcal{A}$ such that either $t_{kk}p=t_{nn}p$ or $\sigma_{\mathcal{A}p}(t_{kk}p)\cap\sigma_{\mathcal{A}p}(t_{nn}p)=\varnothing$ for every $1\leqslant k\leqslant n-1$, and $\sigma_{\mathcal{A}p}(t_{jj}p)\cap\sigma_{\mathcal{A}p}(t_{nn}p)=\varnothing$.
    Let $P=I_n\otimes p$ be a nonzero central projection in $\mathcal{M}$.
    It is clear that $\sigma_{P\mathcal{M}P}(TP)$ is the disjoint union of the nonempty compact sets $\sigma_{\mathcal{A}p}(t_{nn}p)$ and $\sigma_{P\mathcal{M}P}(TP)\setminus\sigma_{\mathcal{A}p}(t_{nn}p)$.
    Let $f$ be an analytic function in an open neighborhood of $\sigma_{P\mathcal{M}P}(TP)$ such that
    \begin{equation*}
        f=1~\text{on}~\sigma_{\mathcal{A}p}(t_{nn}p)\quad\text{and}\quad
        f=0~\text{on}~\sigma_{P\mathcal{M}P}(TP)\setminus\sigma_{\mathcal{A}p}(t_{nn}p).
    \end{equation*}
    Then $f(TP)$ is an idempotent in $\mathcal{M}P$ commuting with $TP$.
    Since $TP$ is upper triangular, the $(k,k)$-entry of $f(TP)$ is $f(t_{kk}p)$ for every $1\leqslant k\leqslant n$.
    In particular, the $(j,j)$-entry and the $(n,n)$-entry of $f(TP)$ are $0$ and $p$, respectively.
    Therefore, $f(TP)$ is an idempotent in $\{T\}'\cap\mathcal{M}$ but is not in $\mathcal{Z}(\mathcal{M})$.
    That is a contradiction.
    Thus, we have $t_{jj}=t_{nn}$ for $1\leqslant j\leqslant n-1$.
    It follows that $\tau(T)=I_n\otimes t_{nn}$ and $T-\tau(T)$ is strictly upper triangular.
    Hence $T-\tau(T)$ is nilpotent.
\end{proof}

The relative commutant of the direct sum of two strongly irreducible operators is computed in the following corollary, which is used in the proof of \Cref{prop diagonal}.


\begin{corollary}\label{cor SI-Rosenblum}
    Let $n_1,n_2$ be positive integers and $\mathcal{A}$ an abelian von Neumann algebra.
    Suppose that $T_1$ and $T_2$ are strongly irreducible operators in $\mathbb{M}_{n_1}(\mathcal{A})$ and $\mathbb{M}_{n_2}(\mathcal{A})$, respectively.
    If $\mathrm{tr}(T_1)-\mathrm{tr}(T_2)$ has range projection $I_{\mathcal{A}}$, then $T_1A= AT_2$ implies that $A=0$ for every $A\in \mathbb{M}_{n_1,n_2}(\mathcal{A})$.
    In particular, we have
    \begin{equation*}
        \{T_1\oplus T_2\}'\cap \mathbb{M}_{n_1+n_2}(\mathcal{A})
        =\big(\{T_1\}'\cap \mathbb{M}_{n_1}(\mathcal{A})\big)\oplus\big(\{T_2\}'\cap \mathbb{M}_{n_2}(\mathcal{A})\big).
    \end{equation*}
\end{corollary}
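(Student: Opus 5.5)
By \Cref{prop SI=Z+N} we can write $T_1=I_{n_1}\otimes a_1+N_1$ and $T_2=I_{n_2}\otimes a_2+N_2$. Here $a_k=\mathrm{tr}(T_k)\in\mathcal{A}$ and $N_k$ is nilpotent, with $N_1^{n_1}=0$ and $N_2^{n_2}=0$. The center-valued trace of $T_k$ is $I_{n_k}\otimes\mathrm{tr}(T_k)$, as recorded in \Cref{rem Tr}. Put $d=a_1-a_2\in\mathcal{A}$. Since $I_{n_1}\otimes a_1$ and $I_{n_2}\otimes a_2$ are central scalars from $\mathcal{A}$ acting on the rectangular matrices $\mathbb{M}_{n_1,n_2}(\mathcal{A})$, the identity $T_1A=AT_2$ becomes
\begin{equation*}
    dA=AN_2-N_1A=:\Delta(A),
\end{equation*}
where $\Delta(X)=XN_2-N_1X$ is an $\mathcal{A}$-linear map on $\mathbb{M}_{n_1,n_2}(\mathcal{A})$. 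Here $dA$ means entrywise multiplication by $d$.

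The key step is to show that $\Delta$ is nilpotent. Write $\Delta=R-L$ with $R(X)=XN_2$ and $L(X)=N_1X$. The maps $L$ and $R$ commute, and $L^{n_1}=0$, $R^{n_2}=0$. By the binomial theorem, $\Delta^{m}=0$ for $m=n_1+n_2-1$. Moreover $\Delta$ commutes with multiplication by $d$, because $d$ is central. Iterating $dA=\Delta(A)$ therefore gives $d^kA=\Delta^k(A)$ for every $k$, and hence $d^mA=0$.

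It remains to conclude $A=0$. Every entry $x$ of $A$ satisfies $d^mx=0$ in $\mathcal{A}$. Represent $\mathcal{A}=C(X)$ as in the proof of \Cref{lem sigma-ap-bp}. Then $x$ vanishes on the open set $\{d\ne0\}$. The range projection of $d$ is the characteristic function of the closure of $\{d\neq 0\}$, and by hypothesis this is $I_{\mathcal{A}}$, so the set is dense. By continuity $x=0$. Equivalently, one can argue algebraically: $d^mx=0$ forces $dx=0$ (on $\{d\ne 0\}$ the function $x$ vanishes, so $dx$ vanishes everywhere), hence $x$ annihilates the range of $d$, whose projection is $I$, so $x=0$. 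This part is routine. The only mild care needed is the passage from $d^m x=0$ to $x=0$, which the density argument handles.

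For the ``in particular'' statement, take $S\in\{T_1\oplus T_2\}'\cap\mathbb{M}_{n_1+n_2}(\mathcal{A})$ and write it as a $2\times2$ block matrix $(S_{ij})$. Commutation gives $T_1S_{12}=S_{12}T_2$ and $T_2S_{21}=S_{21}T_1$. The hypothesis is symmetric, since $a_2-a_1=-d$ has the same range projection, so the first part applied twice yields $S_{12}=0$ and $S_{21}=0$. The diagonal blocks then commute with $T_1$ and $T_2$ respectively. The reverse inclusion is immediate.
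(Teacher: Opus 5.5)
Your proof is correct, and it takes a different route from the paper's. The paper argues by contradiction. Assuming $A\neq 0$, it chooses a nonzero projection $q\in\mathcal{A}$ with $Ap\neq 0$ for all $0<p\leqslant q$. It then uses \Cref{lem sigma-ap-bp} to find $0<p\leqslant q$ with $\sigma_{\mathcal{A}p}(t_1p)\cap\sigma_{\mathcal{A}p}(t_2p)=\varnothing$, and uses \Cref{prop SI=Z+N} only to get $\sigma(T_jp)=\sigma(t_jp)$. Rosenblum's theorem then forces $Ap=0$.

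You use \Cref{prop SI=Z+N} structurally instead. Because the scalar parts are central, the intertwining relation becomes $dA=\Delta(A)$, where $\Delta$ is nilpotent and commutes with multiplication by $d$. This gives $d^{\,n_1+n_2-1}A=0$. The hypothesis $R(d)=I_{\mathcal{A}}$ says exactly that $d$ is not a zero divisor in $\mathcal{A}$, since its nonvanishing set is dense in the Stonean space.

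In effect you reprove, over the commutative ring $\mathcal{A}$, the special case of Rosenblum's theorem for scalar-plus-nilpotent operators. This removes the external citation, the spectral localization and the contradiction argument, and gives an explicit exponent. It also gives slightly more: with no hypothesis on $d$, your argument shows every intertwiner satisfies $A=(I_{\mathcal{A}}-R(d))A$. That is, intertwiners are supported where $\mathrm{tr}(T_1)=\mathrm{tr}(T_2)$, which is precisely the dichotomy used in the General Case of \Cref{prop diagonal}.

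The paper's route, by contrast, is the same localize-then-separate-spectra technique it uses in \Cref{thm local-structure}. It keeps the exposition uniform, at the cost of relying on Rosenblum's theorem.
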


\begin{proof}
    Let $t_1=\mathrm{tr}(T_1)$ and $t_2=\mathrm{tr}(T_2)$.
    For $A=(a_{ij})\in \mathbb{M}_{n_1,n_2}(\mathcal{A})$ and $b\in\mathcal{A}$,  write
    \begin{equation*}
        bA=Ab=(ba_{ij})=(I_{n_1}\otimes b)A=A(I_{n_2}\otimes b).
    \end{equation*}
    Suppose on the contrary that $A\in \mathbb{M}_{n_1,n_2}(\mathcal{A})$, $T_1A=AT_2$, and $A\ne 0$.
    Then there exists a nonzero projection $q$ in $\mathcal{A}$ such that $Ap\ne 0$ for every projection $p$ in $\mathcal{A}$ with $0<p\leqslant q$.
    Since $t_1q\ne t_2q$ by assumption, there exists a nonzero projection $p$ in $\mathcal{A}q$ such that $\sigma_{\mathcal{A}p}(t_1p)\cap\sigma_{\mathcal{A}p}(t_2p)=\varnothing$ by \Cref{lem sigma-ap-bp}.
    By \Cref{prop SI=Z+N}, we have
    \begin{equation*}
        \sigma_{\mathbb{M}_{n_j}(\mathcal{A}p)}(T_jp)
        =\sigma_{\mathcal{A}p}(t_jp)\quad\text{for}~j=1,2.
    \end{equation*}
    It follows that $\sigma_{\mathbb{M}_{n_1}(\mathcal{A}p)}(T_1p)\cap\sigma_{\mathbb{M}_{n_2}(\mathcal{A}p)}(T_2p)
        =\varnothing$.
    Thus, the Rosenblum equation $(T_1p)X=X(T_2p)$ for $X\in \mathbb{M}_{n_1,n_2}(\mathcal{A}p)$ has only zero solution by \cite{Ros56}.
    Therefore, $Ap=0$.
    That is a contradiction.
    We complete the proof.
\end{proof}

Let $\mathcal{M}$ be a von Neumann algebra.
For every operator $T$ in $\mathcal{M}$, we denote by $C_T$ its \emph{central support}, i.e., the minimal central projection in $\mathcal{M}$ with the property $C_TT=T$.
For two idempotents $P$ and $Q$ in $\mathcal{M}$, we say that $Q$ is \emph{weaker} than $P$, denoted by $Q\preccurlyeq P$, if $Q=PQ=QP$.

\begin{definition} \label{def centrally-minimal}
    Let $\mathcal{M}$ be a von Neumann algebra and $T$ an operator in $\mathcal{M}$.
    An idempotent $P$ in $\{T\}'\cap\mathcal{M}$ is said to be \emph{centrally minimal} if every idempotent $Q$ in $\{T\}'\cap\mathcal{M}$ with $Q\preccurlyeq P$ is of the form $Q=PC_Q$.
    A finite family $\{P_j\}_{j=1}^m$ of nonzero idempotents in $\{T\}'\cap\mathcal{M}$ is said be a \emph{finite strongly irreducible decomposition} of $T$ if
    \begin{enumerate} [label= $(\arabic*)$, ref= \arabic*, leftmargin=*] 
        \item[$(1)$] $I_{\mathcal{M}}=\sum_{j=1}^{m}P_j$;
        \item[$(2)$] $P_jP_k =P_kP_j =0$ for $j \ne k$;
        \item[$(3)$] $P_j$ is centrally minimal in $\{T\}'\cap\mathcal{M}$ for $1\leqslant j\leqslant m$.
    \end{enumerate}
\end{definition}

The following lemma explains the terminology ``finite strongly irreducible decomposition'' in \Cref{def centrally-minimal}.

\begin{lemma}\label{lem SI-decomposition}
    Let $\mathcal{M}$ be a von Neumann algebra and $T$ an operator in $\mathcal{M}$.
    Then $T$ admits a finite strongly irreducible decomposition if and only if $T$ is similar to a direct sum of finitely many strongly irreducible operators, i.e., there exists an invertible operator $X$ in $\mathcal{M}$ such that $X^{-1}TX=\sum_{j=1}^{m}T_j$, where each $T_j$ is a strongly irreducible operator in $P_j\mathcal{M}P_j$ and $\{P_j\}_{j=1}^m$ are nonzero projections in $\mathcal{M}$ with $I_{\mathcal{M}}=\sum_{j=1}^{m}P_j$.
\end{lemma}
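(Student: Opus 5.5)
Both directions reduce to a single dictionary: for an idempotent $P$ in $\{T\}'\cap\mathcal{M}$, the idempotents $Q\preccurlyeq P$ commuting with $T$ correspond to idempotents in the commutant of the compression of $T$ to the range of $P$, computed in the appropriate corner algebra. Centrally minimal idempotents should then correspond to strongly irreducible compressions.

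For the direction ($\Leftarrow$), suppose $X^{-1}TX=\sum_{j=1}^m T_j$ with $T_j$ strongly irreducible in $P_j\mathcal{M}P_j$. Set $E_j=XP_jX^{-1}$. These are idempotents in $\{T\}'\cap\mathcal{M}$, pairwise orthogonal in the sense $E_jE_k=0$ for $j\neq k$, and summing to $I_{\mathcal{M}}$. It remains to check that $E_j$ is centrally minimal.

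Conjugation by $X$ commutes with central elements, so it preserves $C_Q$ and the relation $\preccurlyeq$. We may therefore replace $T$ by $X^{-1}TX$ and $E_j$ by the projection $P_j$. If $Q\preccurlyeq P_j$ commutes with $T$, then $Q=P_jQP_j$ is an idempotent in $P_j\mathcal{M}P_j$ commuting with $T_j$. Strong irreducibility gives $Q\in\mathcal{Z}(P_j\mathcal{M}P_j)=\mathcal{Z}(\mathcal{M})P_j$, so $Q=ZP_j$ for some central projection $Z$. (A central idempotent in a von Neumann algebra is a projection, since it is normal.) It follows that $C_Q=ZC_{P_j}$, and hence $P_jC_Q=ZP_j=Q$.

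For the direction ($\Rightarrow$), let $\{E_j\}$ be a finite strongly irreducible decomposition. The main obstacle is to simultaneously similarity-transform the idempotents $E_j$ into orthogonal projections inside $\mathcal{M}$. I would use the standard trick: put
\[
Y=\sum_j E_j^*E_j.
\]
This operator is positive and invertible, since $x=\sum_j E_jx$ gives $\|x\|^2\le m\sum_j\|E_jx\|^2$. Set $S=Y^{1/2}\in\mathcal{M}$. Then $SE_jS^{-1}$ is self-adjoint, because $E_j^*Y=E_j^*E_j=YE_j$, which uses $E_kE_j=\delta_{kj}E_j$. Hence $P_j=SE_jS^{-1}$ are orthogonal projections summing to $I_{\mathcal{M}}$.

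Set $X=S^{-1}$. Then $STS^{-1}$ commutes with each $P_j$, so $X^{-1}TX=\sum_j T_j$ with $T_j=P_jSTS^{-1}P_j$. Each $P_j$ is nonzero because each $E_j$ is.

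Finally, strong irreducibility of $T_j$ in $P_j\mathcal{M}P_j$ is the converse of the dictionary step. Let $Q$ be an idempotent in $P_j\mathcal{M}P_j$ commuting with $T_j$. Then $Q\preccurlyeq P_j$ and $Q$ commutes with $X^{-1}TX$. Central minimality, transported by the similarity as above, gives $Q=P_jC_Q$. This lies in $\mathcal{Z}(\mathcal{M})P_j=\mathcal{Z}(P_j\mathcal{M}P_j)$, as required.
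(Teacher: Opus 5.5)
Your proof is correct and follows the same route as the paper's. In both directions you pass to the corner algebra $P_j\mathcal{M}P_j$ and use $\mathcal{Z}(P_j\mathcal{M}P_j)=\mathcal{Z}(\mathcal{M})P_j$ to move between central minimality of $P_j$ and strong irreducibility of the compression. The only real difference is that the paper simply says ``without loss of generality, we may assume that each $P_j$ is a projection,'' whereas you justify this by the explicit simultaneous similarity $S=(\sum_j E_j^*E_j)^{1/2}$, which is exactly the standard argument needed there.
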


\begin{proof}
    Suppose that $\{P_j\}_{j=1}^m$ is a finite strongly irreducible decomposition of $T$.
    Then $\{X^{-1}P_jX\}_{j=1}^m$ is a finite strongly irreducible decomposition of $X^{-1}TX$ for every invertible operator $X$ in $\mathcal{M}$.
    Without loss of generality, we may assume that each $P_j$ is a projection.
    Let $Q$ be an idempotent in $\{TP_j\}'\cap P_j\mathcal{M}P_j$.
    Then $Q\in\{T\}'\cap\mathcal{M}$ and $Q\preccurlyeq P_j$.
    Since $P_j$ is centrally minimal in $\{T\}'\cap\mathcal{M}$, we have
    \begin{equation*}
        Q=P_jC_Q\in\mathcal{Z}(\mathcal{M})P_j= \mathcal{Z}(P_j\mathcal{M}P_j).
    \end{equation*}
    Thus, each $TP_j$ is strongly irreducible in $P_j\mathcal{M}P_j$.
    
    Suppose that $T$ is similar to a direct sum of finitely many strongly irreducible operators.
    Without loss of generality, we assume that $T=\sum_{j=1}^{m}T_j$, where each $T_j$ is a strongly irreducible operator in $P_j\mathcal{M}P_j$ and $\{P_j\}_{j=1}^m$ are nonzero projections in $\mathcal{M}$ with $I_{\mathcal{M}}=\sum_{j=1}^{m}P_j$.
    If $Q$ is an idempotent in $\{T\}'\cap\mathcal{M}$ with $Q\preccurlyeq P_j$, then $Q\in\{T_j\}'\cap P_j\mathcal{M}P_j$.
    Since $\mathcal{Z}(P\mathcal{M}P)=\mathcal{Z}(\mathcal{M})P$ by \cite[Proposition 5.5.6]{KR1} and $T_j$ is strongly irreducible in $P_j\mathcal{M}P_j$, we have $Q\in\mathcal{Z}(P_j\mathcal{M}P_j)=\mathcal{Z}(\mathcal{M})P_j$.
    It follows that $Q=P_jC_Q$.
    Therefore, $\{P_j\}_{j=1}^m$ is a finite strongly irreducible decomposition of $T$.
\end{proof}

\begin{remark} \label{rem SI-decomposition}
    Let $\mathcal{M}$ be a von Neumann algebra and $P$ a projection in $\mathcal{M}$.
    In contrast to the relation $\mathcal{Z}(P\mathcal{M}P)=\mathcal{Z}(\mathcal{M})P$ used in the proof of \Cref{lem SI-decomposition},
    it is worth noting that the same conclusion does not hold for $C^*$-algebras.
    For example, let $\mathcal{K}(\mathcal{H})$ be the ideal of compact operators on an infinite-dimensional Hilbert space $\mathcal{H}$ and
    \begin{equation*}
        \mathcal{B}=\{(\lambda I_{\mathcal{H}}+K_1)\oplus(\lambda I_{\mathcal{H}}+K_2)
        \colon\lambda\in\mathbb{C},K_1,K_2\in\mathcal{K}(\mathcal{H})\}.
    \end{equation*}
    Then $\mathcal{Z}(\mathcal{B})=\mathbb{C}(I_{\mathcal{H}}\oplus I_{\mathcal{H}})$.
    Let $P=P_1\oplus P_2$, where $P_1$ and $P_2$ are nonzero finite-rank projections in $\mathcal{K}(\mathcal{H})$.
    Then $\mathcal{Z}(P\mathcal{B}P)=\mathbb{C}P_1\oplus\mathbb{C}P_2\ne\mathbb{C}P
    =\mathcal{Z}(\mathcal{B})P$.
\end{remark}

Let $\mathcal{M}$ be a von Neumann algebra, $T$ an operator in $\mathcal{M}$, and $Z$ a nontrivial central projection in $\mathcal{M}$.
Suppose that $T$ admits a finite strongly irreducible decomposition $\{P_j\}_{j=1}^m$.
Then $\{P_jZ\}_{j=1}^m\cup\{P_j(I_{\mathcal{M}}-Z)\}_{j=1}^m$ is also a finite strongly irreducible decomposition of $T$.
Intuitively, these two decompositions are not essentially different
The following lemma addresses this redundancy.

\begin{lemma} \label{lem FSID-J}
    Let $\mathcal{M}$ be a von Neumann algebra and $T$ an operator in $\mathcal{M}$.
    Then every finite strongly irreducible decomposition of $T$ is contained in a unique bounded maximal abelian family of idempotents in $\{T\}'\cap\mathcal{M}$.
\end{lemma}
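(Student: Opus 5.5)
The natural candidate is the family
\begin{equation*}
    \mathcal{F}=\Big\{\sum_{j=1}^m P_jZ_j\colon Z_1,\dots,Z_m\ \text{central projections in}\ \mathcal{M}\Big\}.
\end{equation*}
We show that it is a bounded maximal abelian family of idempotents in $\{T\}'\cap\mathcal{M}$ containing $\{P_j\}_{j=1}^m$, and that it is the only such family.

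First, $\mathcal{F}$ is an abelian family of idempotents. The $P_j$ commute with each other and with $\mathcal{Z}(\mathcal{M})$, and they are mutually orthogonal. Hence each $\sum_j P_jZ_j$ is an idempotent in $\{T\}'\cap\mathcal{M}$, and any two elements of $\mathcal{F}$ commute. Each $P_j$ lies in $\mathcal{F}$ (take $Z_j=I_{\mathcal{M}}$ and $Z_k=0$ for $k\neq j$).

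Next, $\mathcal{F}$ is bounded. We have $\|\sum_j P_jZ_j\|\leqslant\sum_j\|P_j\|$, since each $Z_j$ is a projection.

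Now maximality, which is the key step. Let $Q$ be an idempotent in $\{T\}'\cap\mathcal{M}$ commuting with every element of $\mathcal{F}$; in particular $Q$ commutes with each $P_j$. Then $QP_j=P_jQP_j$ is an idempotent in $\{T\}'\cap\mathcal{M}$ with $QP_j\preccurlyeq P_j$. Central minimality of $P_j$ gives $QP_j=P_jC_{QP_j}$. Summing over $j$ and using $\sum_jP_j=I_{\mathcal{M}}$, we get
\begin{equation*}
    Q=\sum_j P_jC_{QP_j}\in\mathcal{F}.
\end{equation*}
So every idempotent commuting with $\mathcal{F}$ already lies in $\mathcal{F}$. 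Consequently, any abelian family of idempotents containing $\mathcal{F}$ equals $\mathcal{F}$, which is maximality.

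Finally, uniqueness. Let $\mathcal{G}$ be any maximal abelian family of idempotents in $\{T\}'\cap\mathcal{M}$ containing $\{P_j\}$. Take $Q\in\mathcal{G}$. Then $Q$ commutes with all $P_j$, so the computation above gives $Q\in\mathcal{F}$; thus $\mathcal{G}\subseteq\mathcal{F}$. Since $\mathcal{F}$ is abelian and contains $\mathcal{G}$, maximality of $\mathcal{G}$ forces $\mathcal{G}=\mathcal{F}$. Boundedness of $\mathcal{G}$ is then automatic.

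The only delicate point is maximality. It relies on central minimality yielding exactly the form $QP_j=P_jC_{QP_j}$, and on central projections commuting with everything, so that the elements of $\mathcal{F}$ are genuinely idempotents and mutually commuting. We should also check that the "abelian" hypothesis on a competing family is what forces its members to commute with each $P_j$; this holds because the $P_j$ belong to that family.
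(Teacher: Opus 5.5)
Your proof is correct and takes the same route as the paper. The paper also uses the family $\{\sum_j P_jZ_j\}$ with $Z_j$ central projections, and also uses central minimality to show that any idempotent $Q\in\{T\}'\cap\mathcal{M}$ commuting with the $P_j$ satisfies $Q=\sum_j P_jC_{QP_j}$; this gives maximality and uniqueness at once. You simply spell out the verifications (idempotence, commutativity, boundedness, and that a competing maximal family must lie inside this one) that the paper leaves implicit.
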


\begin{proof}
    Let $\{P_j\}_{j=1}^m$ be a finite strongly irreducible decomposition of $T$.
    We define a bounded abelian family of idempotents in $\{T\}'\cap\mathcal{M}$ by
    \begin{equation*}
        \mathscr{P}=\{P_1Z_1+P_2Z_2+\cdots+P_mZ_m\colon\text{each}~Z_j~\text{is a central projection in}~\mathcal{M}\}.
    \end{equation*}
    If $Q$ is an idempotent in $\{T\}'\cap\mathcal{M}$ that commutes with $\{P_j\}_{j=1}^m$, then $QP_j\preccurlyeq P_j$ and hence $QP_j=P_jC_{QP_j}$ for $1\leqslant j\leqslant m$.
    It follows that $Q\in\mathscr{P}$.
    Thus, $\mathscr{P}$ is the unique bounded maximal abelian family of idempotents in $\{T\}'\cap\mathcal{M}$ containing $\{P_j\}_{j=1}^m$.
\end{proof}

By the following lemma, any bounded abelian family of idempotents in a type $\mathrm{I}_n$ von Neumann algebra is contained in a bounded abelian family of idempotents which is closed under multiplication.

\begin{lemma} \label{lem product-bdd}
    Let $\mathcal{M}$ be a type $\mathrm{I}_n$ von Neumann algebra, $\mathscr{P}$ a bounded abelian family of idempotents in $\mathcal{M}$, and
    \begin{equation*}
        \mathscr{P}_0=\{P_1P_2\cdots P_k\colon k\geqslant 1, P_j\in\mathscr{P}\}.
    \end{equation*}
    Then $\mathscr{P}_0$ is bounded.
\end{lemma}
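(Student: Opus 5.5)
The plan is to reduce to a pointwise statement about commuting idempotent matrices and use a rank-counting argument. Write $\mathcal{M}=\mathbb{M}_n(\mathcal{A})$ and identify $\mathcal{A}$ with $C(X)$ for an extremely disconnected compact Hausdorff space $X$, as in the proof of \Cref{lem sigma-ap-bp}. Then $\mathcal{M}\cong C(X,\mathbb{M}_n)$ as $C^*$-algebras, and the norm is $\|T\|=\sup_{x\in X}\|T(x)\|$. For each $x\in X$, evaluation $T\mapsto T(x)$ is a unital homomorphism into $\mathbb{M}_n$. It sends idempotents to idempotents and commuting elements to commuting elements. Let $M=\sup_{P\in\mathscr{P}}\|P\|<\infty$. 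It therefore suffices to show that
\begin{equation*}
\|P_1(x)P_2(x)\cdots P_k(x)\|\leqslant \max(1,M)^{n}
\end{equation*}
for every $x\in X$, every $k\geqslant1$ and every choice of $P_1,\dots,P_k\in\mathscr{P}$.

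Fix $x$ and write $E_j=P_j(x)$. These are pairwise commuting idempotents in $\mathbb{M}_n$, with $\|E_j\|\leqslant M$. Put $Q_0=I_n$ and $Q_j=E_1\cdots E_j$. Since the $E_j$ commute, each $Q_j$ is an idempotent, and $Q_j=Q_{j-1}E_j$ satisfies $Q_j\preccurlyeq Q_{j-1}$. The key elementary fact is that if $Q\preccurlyeq Q'$ are idempotents in $\mathbb{M}_n$ with $\operatorname{rank}Q=\operatorname{rank}Q'$, then $Q=Q'$. To see this, note that $Q=QQ'$ gives $\operatorname{ran}Q\subseteq\operatorname{ran}Q'$, so the two ranges coincide by dimension. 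Hence $Q'=QQ'$ on that common range. The relation $Q=Q'Q$ gives $\ker Q'\subseteq\ker Q$, so both $Q$ and $Q'$ vanish on $\ker Q'$. Since $\mathbb{C}^n=\operatorname{ran}Q'\oplus\ker Q'$, we conclude $Q=Q'$.

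Consequently the ranks $\operatorname{rank}Q_j$ form a nonincreasing sequence in $\{0,\dots,n\}$. At every index $j$ where the rank does not drop, we have $Q_j=Q_{j-1}$, so the factor $E_j$ can be deleted. The rank can drop at most $n$ times. Hence $Q_k$ equals the product of at most $n$ of the $E_j$, which gives $\|Q_k\|\leqslant\max(1,M)^n$. Taking the supremum over $x\in X$ yields $\|P_1\cdots P_k\|\leqslant\max(1,M)^n$, so $\mathscr{P}_0$ is bounded.

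The only step needing care is the rank-stability fact for comparable idempotents. Everything else is formal once $\mathcal{M}$ is realized as $C(X,\mathbb{M}_n)$. It is important to pass to a fixed point $x$ rather than argue globally with the trace $\mathrm{Tr}$: the set of indices that can be deleted depends on $x$, and only the uniform bound $\max(1,M)^n$ survives the supremum.
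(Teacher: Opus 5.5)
Your proof is correct. It takes a different route from the paper's, although the core combinatorial idea is the same.

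\textbf{The paper's route.} The paper argues globally in $\mathcal{M}$ by contradiction. It takes a product $P_1\cdots P_k$ of norm $>C^n$ with $k$ minimal. It picks a nonzero central projection $Z_0$ such that every nonzero central cut-down below $Z_0$ still has norm $>C^n$. Then for $j=1,\dots,n$ it uses minimality of $k$ to rule out $P_1\cdots P_jZ_{j-1}=P_1\cdots P_{j-1}Z_{j-1}$. It passes to the central support $Z_j$ of the difference, on which the center-valued trace of $P_1\cdots P_j$ is at most $\frac{n-j}{n}$. At $j=n$ the trace vanishes on $Z_n$, contradicting the choice of $Z_0$.

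\textbf{Your route.} You evaluate at points of $X$ and run the same rank count in $\mathbb{M}_n$. This gives, for each $x$, an explicit rewriting of $P_1(x)\cdots P_k(x)$ as a product of at most $n$ of the $P_j(x)$, and you then take the supremum.

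\textbf{What each buys.} Your version avoids the minimal counterexample and the central-support bookkeeping. That bookkeeping includes the choice of $Z_0$ and the estimate $C_E\leqslant n\tau(E)$ for the idempotent differences, which is what makes the trace drop on all of $Z_j$. You need only the isometric identification $\mathbb{M}_n(C(X))\cong C(X,\mathbb{M}_n)$. The set of deletable indices depends on $x$; you absorb this into the final supremum, while the paper handles it by successively shrinking central projections. The paper's version stays in the intrinsic language of $\tau$ and central supports, which it reuses in \Cref{lem decreaing-idempotent} and \Cref{lem J-FSID}.

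\textbf{A minor slip.} In your rank-stability fact, the two inclusions are attributed to the wrong identities. In fact $Q=Q'Q$ gives $\operatorname{ran}Q\subseteq\operatorname{ran}Q'$, and $Q=QQ'$ gives $\ker Q'\subseteq\ker Q$. This is harmless, since $Q\preccurlyeq Q'$ supplies both identities. A shorter argument: $Q'-Q$ is an idempotent with trace $\operatorname{rank}Q'-\operatorname{rank}Q=0$, hence zero.
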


\begin{proof}
    Let $C \geqslant 1$ be a constant such that $\|P\| \leqslant C$ for every $P \in \mathscr{P}$. We claim that $\|P_1 P_2 \cdots P_k\| \leqslant C^n$ for all $k \geqslant 1$ and $\{P_1,\ldots,P_k\} \subseteq \mathscr{P}$. 

    Suppose, on the contrary, that the claim is false. Let $k$ be the minimal positive integer such that there exist $P_1, P_2, \dots, P_k \in \mathscr{P}$ satisfying $\|P_1 P_2 \cdots P_k\| > C^n$. Since $\|P_j\| \leqslant C $ for each $j = 1,\ldots, k$, it must hold that $k \geqslant n + 1$. 
    By $\|P_1 P_2 \cdots P_k\| > C^n$, there exists a nonzero central projection $Z_0 \in \mathcal{M}$ such that 
    \begin{equation*}
        \|P_1 P_2 \cdots P_k Z\| > C^n
    \end{equation*}
    for every nonzero central projection $Z \leqslant Z_0$. 
    
    We derive a contradiction by induction. If $P_1 Z_0 = Z_0$, then 
    \begin{equation*}
        \|P_2 P_3 \cdots P_k \| \geqslant \| P_2 P_3 \cdots P_k Z_0\| = \|P_1 P_2 \cdots P_k Z_0\| > C^n.
    \end{equation*}
    This contradicts the minimality of $k$. Hence, $P_1 Z_0 \neq Z_0$.
    Let $\tau: \mathcal{M} \to \mathcal{Z}(\mathcal{M})$ be the center-valued trace of $\mathcal{M}$. Let $Z_1 $ be the central support of $P_1 Z_0 - Z_0$. Then $0 < \tau(P_1 Z_1) \leqslant \frac{n-1}{n} Z_1$.
    
    Next, we consider the product $P_1 P_2 Z_1$. If $P_1 P_2 Z_1 = P_1 Z_1$, then we have
    \begin{equation*}
        \|P_1 P_3 \cdots P_k \| \geqslant \| (P_1 P_3 \cdots P_k) Z_1\| = \|P_1 P_2 \cdots P_k Z_1\| > C^n, 
    \end{equation*}
    which again contradicts the minimality of $k$. Thus, $P_1 P_2 Z_1 \neq P_1 Z_1$. Let $Z_2$ be the central support of $P_1 P_2 Z_1 - P_1 Z_1$. Evaluating the center-valued trace yields
    \begin{equation*}
        0 < \tau(P_1 P_2 Z_2) \leqslant \frac{n-2}{n} Z_2.
    \end{equation*}
    Inductively, for each $j \leqslant n < k$, we can find a decreasing sequence of nonzero central projections $Z_0 \geqslant Z_1 \geqslant Z_2 \geqslant \dots \geqslant Z_j$ such that $P_1 \cdots P_j Z_{j-1} \neq P_1 \cdots P_{j-1} Z_{j-1}$, and the central support $Z_j $ of $ P_1 \cdots P_j Z_{j-1} - P_1 \cdots P_{j-1} Z_{j-1}$ satisfies
    \begin{equation*}
        0 < \tau(P_1 P_2 \cdots P_j Z_j) \leqslant \frac{n-j}{n} Z_j. 
    \end{equation*}
    In particular, we have
    \begin{equation*}
        0 < \tau(P_1 P_2 \cdots P_n Z_n) \leqslant \frac{n-n}{n} Z_n = 0.
    \end{equation*}
    That is a contradiction.
\end{proof}

\begin{remark} \label{rem product-bdd}
    A result similar to \Cref{lem product-bdd} is not true in $\mathcal{B}(\mathcal{H})$.
    For example, let $\mathcal{R}$ be the hyperfinite type $\mathrm{II}_1$ factor acting on $\mathcal{H}$.
    Then $\mathcal{R}$ can be viewed as an infinite tensor product of $\mathbb{M}_2 $.
    Let $P$ be an idempotent in $\mathbb{M}_2 $ with $\|P\|=2$.
    Let $P_k=I_2^{\otimes(k-1)}\otimes P\otimes I_2^{\otimes(\infty)}$ for $k\geqslant 1$.
    Then $\mathscr{P}=\{P_k\}_{k=1}^\infty$ is a bounded abelian family of idempotents in $\mathscr{R}$.
    But $\mathscr{P}_0$ is unbounded as $\|P_1P_2\cdots P_k\|=2^k$.
\end{remark}

Next we show that every bounded decreasing net of idempotents in a type $\mathrm{I}_n$ von Neumann algebra is weak-operator convergent to an idempotent. Note that the condition ``decreasing'' is not removable. Otherwise, the weak-operator limit may fail to be an idempotent.

\begin{lemma} \label{lem decreaing-idempotent}
    Let $\mathcal{M}$ be a type $\mathrm{I}_n$ von Neumann algebra, $\{P_\lambda\}_{\lambda\in\Lambda}$ a bounded net of idempotents in $\mathcal{M}$ such that $P_{\lambda_1}\succcurlyeq P_{\lambda_2}$ for $\lambda_1\leqslant\lambda_2$, i.e., $\{P_\lambda\}_{\lambda\in\Lambda}$ is decreasing.
    Then $\{P_\lambda\}_{\lambda\in\Lambda}$ is convergent to an idempotent in the weak-operator topology.
\end{lemma}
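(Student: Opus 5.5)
The plan is to use weak-operator compactness to extract a limit point, and then use the decreasing order structure to show that this limit point is the full limit of the net and that it is an idempotent.

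\textbf{Existence of a limit point.} Let $C=\sup_\lambda\|P_\lambda\|<\infty$. The ball of radius $C$ in $\mathcal{M}$ is weak-operator compact, so the net $\{P_\lambda\}$ has a cluster point $P\in\mathcal{M}$ with $\|P\|\leqslant C$.

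\textbf{Multiplicative relations with $P$.} Fix $\mu$. For every $\lambda\geqslant\mu$ the decreasing hypothesis gives
\[
P_\mu P_\lambda=P_\lambda P_\mu=P_\lambda .
\]
Left and right multiplication by the fixed operator $P_\mu$ are weak-operator continuous. Passing to a subnet converging to $P$ (which is eventually $\geqslant\mu$), we obtain
\[
P_\mu P=PP_\mu=P\qquad\text{for every }\mu .
\]
Now fix $\mu$ in the equation $P_\mu P=P$ and let $\mu$ run along the same subnet. The map $X\mapsto XP$ is weak-operator continuous, so $P_\mu P\to P^2$ along the subnet. Since each $P_\mu P$ equals $P$, this gives $P^2=P$. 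Hence $P$ is an idempotent with $P\preccurlyeq P_\mu$ for all $\mu$.

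\textbf{Uniqueness of the cluster point.} It remains to show that the whole net converges to $P$, which is the main obstacle. Let $Q$ be any other cluster point. The argument above shows that $Q$ is also an idempotent with $Q\preccurlyeq P_\mu$ for every $\mu$. Moreover, $Q$ is a limit of a subnet of $\{P_\mu\}$, and $P_\mu P=PP_\mu=P$ for every $\mu$. Taking limits gives $QP=PQ=P$, that is, $P\preccurlyeq Q$. By symmetry $Q\preccurlyeq P$. Therefore $P=PQ=Q$.

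So the net has a unique cluster point. Since it lies in a weak-operator compact set, the whole net converges weak-operator to $P$, and $P$ is an idempotent.

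The step I expected to require care is exactly this identification of cluster points. Its key point is that the relation $\preccurlyeq$ is preserved under weak-operator limits when one factor is held fixed. If one wanted to avoid arguing about cluster points directly, the trace of $\mathcal{M}$ could serve as a backup: the center-valued trace $\tau(P_\lambda)$ is a decreasing net in the center, since $\tau(P_\lambda)-\tau(P_{\lambda'})=\tau(P_\lambda-P_{\lambda'})$ and $P_\lambda-P_{\lambda'}$ is an idempotent when $P_\lambda\succcurlyeq P_{\lambda'}$. This uses that $\mathcal{M}$ is of type $\mathrm{I}_n$. However, the compactness argument above already suffices and needs no finiteness beyond boundedness.
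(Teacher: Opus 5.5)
Your proof is correct, and it takes a genuinely different and more general route than the paper.

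\textbf{The paper's route.} The paper works with the center-valued trace $\tau$. It notes that $\tau(P_\lambda)$ is a decreasing net whose values, on central summands, lie in $\{0,\frac{1}{n},\ldots,1\}$. Two ingredients then do the work:
\begin{itemize}
\item this discreteness of trace values;
\item the fact that comparable idempotents with equal trace coincide.
\end{itemize}
Together they show that for every nonzero central $Z_0$ there is a central $0<Z_1\leqslant Z_0$ and an index $\lambda_1$ with $P_\lambda Z_1=AZ_1$ for all $\lambda\geqslant\lambda_1$, i.e.\ the net is locally eventually constant. Idempotency of a limit point $A$, and uniqueness of limit points, are then proved central piece by central piece.

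\textbf{Your route.} You use only weak-operator compactness of bounded balls and one-sided weak-operator continuity of multiplication. Passing to the limit in $P_\mu P_\lambda=P_\lambda P_\mu=P_\lambda$ gives $P\preccurlyeq P_\mu$ for every cluster point $P$. Letting $\mu$ run along the same subnet in $P_\mu P=P$ gives $P^2=P$. Two cluster points $P,Q$ then satisfy $P\preccurlyeq Q\preccurlyeq P$, so they coincide. Each step is valid, since in every limit only one factor moves. Your phrase ``fix $\mu$ \dots\ let $\mu$ run'' should simply say that $P_\mu P=P$ holds for all $\mu$, in particular along the subnet.

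\textbf{What each buys.} Your argument is shorter and never uses the type $\mathrm{I}_n$ hypothesis. It therefore proves the lemma for bounded decreasing nets of idempotents in any von Neumann algebra, including $\mathcal{B}(\mathcal{H})$. In the paper, type $\mathrm{I}_n$ is genuinely needed elsewhere, for instance in \Cref{lem product-bdd} to get boundedness of products, but not for this statement. The paper's argument yields in addition the local eventual-constancy of the net on central summands. That fits the trace-counting style of Section 3, but the lemma as stated does not need it. Your ``backup'' observation that $\tau(P_\lambda)$ is decreasing is exactly where the paper's proof begins.
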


\begin{proof}
    Since $\{\tau(P_\lambda)\}_{\lambda\in\Lambda}$ is a decreasing net of positive operators, it is weak-operator convergent to an operator of the form $\sum_{j=0}^{n}\frac{j}{n}Z_j$, where $\{Z_j\}_{j=0}^n$ are central projections in $\mathcal{M}$ with $I_{\mathcal{M}}=\sum_{j=0}^{n}Z_j$.
    
    Let $A$ be a weak-operator limit point of the bounded net $\{P_\lambda\}_{\lambda\in\Lambda}$.
    It is clear that $\tau(A)=\sum_{j=0}^{n}\frac{j}{n}Z_j$ by the weak-operator continuity of $\tau$.
    We claim that for every nonzero central projection $Z_0$ in $\mathcal{M}$, there exists a central projection $Z_1$ in $\mathcal{M}$ with $0<Z_1\leqslant Z_0$ and an index $\lambda_1\in\Lambda$ such that $P_{\lambda}Z_1=AZ_1$ for all $\lambda\geqslant\lambda_1$.
    Indeed, if $\tau(P_\lambda)Z\ne\tau(A)Z$ for every central projection $Z$ in $\mathcal{M}$ with $0<Z\leqslant Z_0$, then we have $\tau(P_\lambda)Z_0\geqslant\tau(A)Z_0+\frac{1}{n}Z_0$.
    Since $\tau(P_\lambda)Z_0\to\tau(A)Z_0$ in the weak-operator topology, there exists a nonzero central projection $Z_1\leqslant Z_0$ and an index $\lambda_1\in\Lambda$ such that $\tau(P_{\lambda_1})Z_1=\tau(A)Z_1$.
    It follows that $\tau(P_{\lambda_1})Z_1=\tau(P_{\lambda})Z_1$, i.e., $P_{\lambda_1}Z_1=P_{\lambda}Z_1$ for all $\lambda\geqslant\lambda_1$.
    Thus, $P_{\lambda}Z_1=AZ_1$ for all $\lambda\geqslant\lambda_1$.
    
    If $A$ is not an idempotent, then there is a nonzero central projection $Z_0$ in $\mathcal{M}$ such that $AZ$ is not an idempotent for every central projection $Z$ in $\mathcal{M}$ with $0<Z\leqslant Z_0$.
    By the above argument, there exists a central projection $Z_1$ in $\mathcal{M}$ with $0<Z_1\leqslant Z_0$ and an index $\lambda_1\in\Lambda$ such that $AZ_1=P_{\lambda_1}Z_1$.
    That is a contradiction.
    
    Let $B$ be another weak-operator limit point of the bounded net $\{P_\lambda\}_{\lambda\in\Lambda}$.
    Then we must have $A=B$.
    Suppose on the contrary that the central support $Z_0$ of $A-B$ is nonzero.
    Then there exists a central projection $Z_1$ in $\mathcal{M}$ with $0<Z_1\leqslant Z_0$ and an index $\lambda_1\in\Lambda$ such that $P_{\lambda}Z_1=AZ_1$ for all $\lambda\geqslant\lambda_1$.
    Similarly, there exists a central projection $Z_2$ in $\mathcal{M}$ with $0<Z_2\leqslant Z_1$ and an index $\lambda_2\geqslant\lambda_1$ such that $P_{\lambda}Z_2=BZ_2$ for all $\lambda\geqslant\lambda_2$.
    It follows that $AZ_2=P_{\lambda}Z_2=BZ_2$.
    That is a contradiction.
\end{proof}

The following lemma is a routine matrix technique in von Neumann algebras, which is applied repeatedly in the proof of \Cref{thm J-Kaplansky}.

\begin{lemma} \label{lem P-upper triangular}
    Let $\mathcal{M}$ be a von Neumann algebra, $E$ a projection in $\mathcal{M}$, and $P$ an idempotent in $\mathcal{M}$ of the form
    \begin{equation*}
      P=
      \begin{pmatrix}
        P_{11} & P_{12} \\
        P_{21} & P_{22}
      \end{pmatrix}
      \begin{array}{l}
        \operatorname{ran} E \\
        \operatorname{ran}(I-E)
      \end{array}.
    \end{equation*}
    Suppose that $P_{11}$ is invertible in $E\mathcal{M}E$ and $X$ is an operator in $\mathcal{M}$ of the form
    \begin{equation*}
      X=
      \begin{pmatrix}
        E & 0 \\
        P_{21}P_{11}^{-1} & I-E
      \end{pmatrix}.
    \end{equation*}
    Then $X^{-1}PX$ is upper triangular as follows
    \begin{equation*}
      X^{-1}PX=
      \begin{pmatrix}
        E & P_{12} \\
        0 & -P_{21}P_{11}^{-1}P_{12}+P_{22}
      \end{pmatrix}.
    \end{equation*}
\end{lemma}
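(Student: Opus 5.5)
The plan is a direct block-matrix computation. First we record what the idempotent relation $P^2=P$ says entrywise. Then we check that $X$ is invertible with an explicit inverse. Finally we multiply out $X^{-1}PX$ and simplify each block using the relations from the first step.

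\emph{Entrywise relations.} Comparing the blocks of $P^2=P$ with respect to $I=E+(I-E)$ gives
\begin{align*}
P_{11}^2+P_{12}P_{21}&=P_{11}, & P_{11}P_{12}+P_{12}P_{22}&=P_{12},\\
P_{21}P_{11}+P_{22}P_{21}&=P_{21}, & P_{21}P_{12}+P_{22}^2&=P_{22}.
\end{align*}

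\emph{Invertibility of $X$.} Write $L=P_{21}P_{11}^{-1}$, which maps $\operatorname{ran}E$ into $\operatorname{ran}(I-E)$. The matrix
\begin{equation*}
\begin{pmatrix} E & 0 \\ -L & I-E\end{pmatrix}
\end{equation*}
is a two-sided inverse of $X$, and this is checked by direct multiplication. The entries of $X$ lie in $\mathcal{M}$, because $P_{11}^{-1}\in E\mathcal{M}E$, so $X$ is invertible in $\mathcal{M}$.

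\emph{Computing $X^{-1}PX$.} First,
\begin{equation*}
PX=\begin{pmatrix} P_{11}+P_{12}L & P_{12}\\ P_{21}+P_{22}L & P_{22}\end{pmatrix}.
\end{equation*}
Left multiplication by $X^{-1}$ keeps the first row and subtracts $L$ times the first row from the second. This gives the following four blocks.
\begin{itemize}
\item The $(1,2)$ entry is $P_{12}$.
\item The $(2,2)$ entry is $P_{22}-LP_{12}=-P_{21}P_{11}^{-1}P_{12}+P_{22}$.
\item The $(1,1)$ entry is $P_{11}+P_{12}P_{21}P_{11}^{-1}$. The first relation gives $P_{12}P_{21}=P_{11}-P_{11}^2$, so this entry equals $P_{11}+(P_{11}-P_{11}^2)P_{11}^{-1}=E$.
\item The $(2,1)$ entry is $P_{21}+P_{22}L-L(P_{11}+P_{12}L)=P_{21}+P_{22}L-LE=P_{22}L$, using the simplified $(1,1)$ entry and $LE=L$.
\end{itemize}

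\emph{The main obstacle.} The only step that is not routine is showing that the $(2,1)$ entry $P_{22}L$ vanishes. The relations above do not give this in general. The third relation gives $P_{22}P_{21}=P_{21}(E-P_{11})$, hence
\begin{equation*}
P_{22}L=P_{21}(E-P_{11})P_{11}^{-1}=P_{21}P_{11}^{-1}-P_{21},
\end{equation*}
which is zero only when $P_{21}=0$ or $P_{11}=E$. Combined with the $(1,1)$ computation, the identity $E^2=E$ shows that $X^{-1}PX$ has $(1,1)$ block $E$. Without extra structure, though, this forces only the relation $P_{12}\cdot(P_{22}L)=0$, not $P_{22}L=0$.

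So the plan at this step is to simplify $P_{22}L$ using the hypotheses as stated and see whether it closes. If it does not, the natural repair is to conjugate by the alternative operator
\begin{equation*}
\begin{pmatrix} E & -P_{11}^{-1}P_{12}\\ 0& I-E\end{pmatrix}.
\end{equation*}
Another option is to read the lemma with the intended meaning that $X^{-1}PX$ is upper triangular modulo this correction, namely with lower-left block $P_{21}P_{11}^{-1}-P_{21}$. One would then verify in the later application in \Cref{thm J-Kaplansky} which version is actually needed.
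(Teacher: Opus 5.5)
The block computation is the right approach, and your treatment of $X^{-1}$, of $PX$, and of the $(1,1)$, $(1,2)$, $(2,2)$ blocks is correct. The $(2,1)$ block, however, contains an algebra slip. The conclusion you draw from it is therefore wrong: the lemma does not fail as stated, and it needs no different conjugating operator and no reinterpretation.

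You correctly write the $(2,1)$ block as $P_{21}+P_{22}L-L(P_{11}+P_{12}L)$ and correctly use $P_{11}+P_{12}L=E$. You then cancel $P_{21}$ against $LE$, which is invalid: $LE=L=P_{21}P_{11}^{-1}$, and this differs from $P_{21}$ in general. The correct value is
\begin{equation*}
  P_{21}+P_{22}L-L .
\end{equation*}
In your next paragraph you derive, from the $(2,1)$ block of $P^2=P$, the identity $P_{22}L=P_{21}(E-P_{11})P_{11}^{-1}=L-P_{21}$. That identity makes the expression above exactly $0$, so the lemma holds as stated. Your alternative operator and the re-examination of later applications are unnecessary. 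Your ``corrected'' lower-left block $P_{21}P_{11}^{-1}-P_{21}$ is false.

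A quick sanity check would have caught this. In $\mathbb{M}_2$ take $E=e_{11}$ and $P=\frac12\begin{pmatrix}1&1\\1&1\end{pmatrix}$. Then $L=1$ and $P_{22}L=\frac12\neq 0$, yet $X^{-1}PX=\begin{pmatrix}1&\frac12\\0&0\end{pmatrix}$.

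With the slip repaired, your argument is the same direct block computation as the paper's. The paper writes the $(2,1)$ block in the factored form $(P_{22}-P_{21}P_{11}^{-1}P_{12})P_{21}P_{11}^{-1}$. It then kills this block via
\begin{equation*}
  P_{21}P_{11}^{-1}P_{12}P_{21}=P_{21}P_{11}^{-1}(P_{11}-P_{11}^2)=P_{21}-P_{21}P_{11}=P_{22}P_{21}.
\end{equation*}
This uses the $(1,1)$ and $(2,1)$ relations of $P^2=P$, the same two relations your corrected computation needs, just combined in a different order.
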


\begin{proof}
    A direct computation shows that
    \begin{equation*}
    \begin{split}
       X^{-1}PX & =
       \begin{pmatrix}
         E & 0 \\
         -P_{21}P_{11}^{-1} & I-E
       \end{pmatrix}
      \begin{pmatrix}
        P_{11} & P_{12} \\
        P_{21} & P_{22}
      \end{pmatrix}
        \begin{pmatrix}
        E & 0 \\
        P_{21}P_{11}^{-1} & I-E
      \end{pmatrix}
       \\
       &=
       \begin{pmatrix}
         (P_{11}^2+P_{12}P_{21})P_{11}^{-1} & P_{12} \\
         (-P_{21}P_{11}^{-1}P_{12}+P_{22})P_{21}P_{11}^{-1} & -P_{21}P_{11}^{-1}P_{12}+P_{22}
       \end{pmatrix}.
    \end{split}
    \end{equation*}
    Since $P$ is an idempotent, we have
    \begin{equation*}
        \begin{pmatrix}
        P_{11} & P_{12} \\
        P_{21} & P_{22}
      \end{pmatrix}
      =
      \begin{pmatrix}
        P_{11}^2+P_{12}P_{21} & P_{11}P_{12}+P_{12}P_{22} \\
        P_{21}P_{11}+P_{22}P_{21} & P_{21}P_{12}+P_{22}^2
      \end{pmatrix}.
    \end{equation*}
    It follows that $P_{11}=P_{11}^2+P_{12}P_{21}$ and $P_{21}=P_{21}P_{11}+P_{22}P_{21}$.
    Thus, the $(1,1)$-entry of $X^{-1}PX$ is $E$.
    Moreover, we have
    \begin{equation*}
      P_{21}P_{11}^{-1}P_{12}P_{21}
      =P_{21}P_{11}^{-1}(P_{11}-P_{11}^2)
      =P_{21}-P_{21}P_{11}=P_{22}P_{21}.
    \end{equation*}
    Thus, the $(2,1)$-entry of $X^{-1}PX$ is $0$.
    This completes the proof.
\end{proof}

In the remaining part of this section, we review relevant knowledge about the Jacobson radical.
For a unital Banach algebra $\mathcal{B}$, its \emph{(Jacobson) radical} $\mathrm{Rad}(\mathcal{B})$ is the intersection of all maximal left (right)
ideals of $\mathcal{B}$ by \cite[Theorem 5.9]{All11}.
It follows from \cite[Theorem 5.13]{All11} that $\mathrm{Rad}(\mathcal{B})$ is a closed two-sided ideal of $\mathcal{B}$, and $\mathcal{B}/\mathrm{Rad}(\mathcal{B})$ is a semi-simple Banach algebra, i.e., $\mathrm{Rad}(\mathcal{B}/\mathrm{Rad}(\mathcal{B}))=\{0\}$.
Moreover, we have
\begin{equation*}
    \begin{split}
        \mathrm{Rad}(\mathcal{B})
        &=\{A\in\mathcal{B}\colon\sigma(AB)=\sigma(BA)=\{0\}~\text{for all}~B\in\mathcal{B}\}\\
        &=\{A\in\mathcal{B}\colon\sigma(B_1AB_2)=\{0\}~\text{for all}~B_1,B_2\in\mathcal{B}\}.
    \end{split}
\end{equation*}
Let $\pi\colon\mathcal{B}\to\mathcal{B}/\mathrm{Rad}(\mathcal{B})$ be the quotient map.
Then
\begin{equation}\label{equ sigma-pi-A}
  \sigma_{\mathcal{B}/\mathrm{Rad}(\mathcal{B})}(\pi(A))=\sigma_{\mathcal{B}}(A)
  \quad\text{for every}~A\in\mathcal{B}.
\end{equation}
The reader is referred to \cite[Section 5.3]{All11} for more details.
Recall that two elements $A$ and $B$ in $\mathcal{B}$ are said to be \emph{similar}, denoted by $A\sim B$, if there exists an invertible element $X$ in $\mathcal{B}$ such that $B=X^{-1}AX$.
The following lemma will be used frequently in this paper.

\begin{lemma}\label{lem P-Q-radical}
    Let $\mathcal{B}$ be a unital Banach algebra.
    If $P$ and $Q$ are idempotents in $\mathcal{B}$ such that $P-Q\in\mathrm{Rad}(\mathcal{B})$, then $P\sim Q$ in $\mathcal{B}$.
\end{lemma}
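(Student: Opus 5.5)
We use the classical fact that two idempotents which are close, or which differ by an element of small spectral radius, are similar via an explicit intertwiner. Set
\begin{equation*}
    X = QP + (I-Q)(I-P) \in \mathcal{B}.
\end{equation*}
A direct computation using $P^2=P$ and $Q^2=Q$ gives
\begin{equation*}
    XP = QP = QX .
\end{equation*}
So it suffices to show that $X$ is invertible in $\mathcal{B}$; then $XPX^{-1}=Q$, i.e. $P\sim Q$.

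To prove invertibility, write $R=P-Q\in\mathrm{Rad}(\mathcal{B})$ and compute $X$ modulo the radical. Let $\pi\colon\mathcal{B}\to\mathcal{B}/\mathrm{Rad}(\mathcal{B})$ be the quotient map. Since $\pi(P)=\pi(Q)$, we get
\begin{equation*}
    \pi(X)=\pi(P)^2+(1-\pi(P))^2=\pi(P)+1-\pi(P)=1 .
\end{equation*}
Hence $X=I+N$ for some $N\in\mathrm{Rad}(\mathcal{B})$. Explicitly, $N=X-I=QP+(I-Q)(I-P)-I$, which can be expanded as a combination of terms each containing a factor $R$.

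By the characterization of the radical recalled above, $\sigma(N)=\{0\}$. Therefore $\sigma(X)=1+\sigma(N)=\{1\}$, so $0\notin\sigma(X)$ and $X$ is invertible. Alternatively, one can invoke \eqref{equ sigma-pi-A}: $\sigma_{\mathcal{B}}(X)=\sigma(\pi(X))=\sigma(1)=\{1\}$.

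The only point needing care is the algebraic identity $XP=QX$. Expanding,
\begin{equation*}
    XP=QP\cdot P+(I-Q)(I-P)P=QP,
\end{equation*}
and
\begin{equation*}
    QX=Q\cdot QP+Q(I-Q)(I-P)=QP .
\end{equation*}
So the identity holds, and there is no real obstacle: the argument is the standard intertwiner combined with the spectral description of the radical.
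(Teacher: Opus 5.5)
Your proof is correct and follows essentially the same route as the paper: you write down an explicit intertwiner and read off its invertibility modulo $\mathrm{Rad}(\mathcal{B})$ via \eqref{equ sigma-pi-A}. The only difference is the choice of intertwiner. The paper uses $X=P+Q-I$, with $PX=PQ=XQ$ and $\pi(X)=\pi(2P-I)$, so $\sigma(X)\subseteq\{-1,1\}$. Your element satisfies $QP+(I-Q)(I-P)=(2Q-I)(P+Q-I)$, so it is the paper's $X$ multiplied by the symmetry $2Q-I$, and this makes it congruent to $I$ modulo the radical.
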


\begin{proof}
    Let $X=P+Q-I_{\mathcal{B}}$.
    Then $PX=PQ=XQ$.
    It follows from \eqref{equ sigma-pi-A} that
    \begin{equation*}
        \sigma(X)=\sigma(P+Q-I_{\mathcal{B}})
        =\sigma(2P-I_{\mathcal{B}})\subseteq\{-1,1\}.
    \end{equation*}
    Thus, $X$ is invertible in $\mathcal{B}$.
    Hence $P\sim Q$ in $\mathcal{B}$.
\end{proof}

We include a proof of the following well-known lemma for completeness.

\begin{lemma}\label{lem rad-Mn-A}
    Let $n$ be a positive integer and $\mathcal{B}$ a unital Banach algebra.
    Then
    \begin{equation*}
        \mathrm{Rad}(\mathbb{M}_n (\mathcal{B}))= \mathbb{M}_n (\mathrm{Rad}(\mathcal{B})).
    \end{equation*}
\end{lemma}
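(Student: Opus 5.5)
We prove the two inclusions separately, using the characterization of the radical recalled above: $A\in\mathrm{Rad}(\mathcal{B})$ if and only if $\sigma(B_1AB_2)=\{0\}$ for all $B_1,B_2\in\mathcal{B}$, or equivalently, $I-BA$ is invertible for every $B\in\mathcal{B}$. Write $E_{ij}$ for the matrix units in $\mathbb{M}_n(\mathcal{B})$; they have entries $I_{\mathcal{B}}$ or $0$. Recall also that $\mathrm{Rad}(\mathbb{M}_n(\mathcal{B}))$ is a closed two-sided ideal.

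\textbf{Step 1: $\mathrm{Rad}(\mathbb{M}_n(\mathcal{B}))\subseteq\mathbb{M}_n(\mathrm{Rad}(\mathcal{B}))$.} Let $A=(a_{ij})\in\mathrm{Rad}(\mathbb{M}_n(\mathcal{B}))$. Since the radical is an ideal, $E_{1i}AE_{j1}=a_{ij}\otimes E_{11}$ lies in it. For $b_1,b_2\in\mathcal{B}$, the element $(b_1\otimes E_{11})(a_{ij}\otimes E_{11})(b_2\otimes E_{11})=b_1a_{ij}b_2\otimes E_{11}$ therefore has spectrum $\{0\}$ in $\mathbb{M}_n(\mathcal{B})$.
\begin{itemize}
\item Fix $\lambda\ne0$. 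The matrix $\lambda I-(b_1a_{ij}b_2\otimes E_{11})$ is block diagonal, with $(1,1)$-entry $\lambda-b_1a_{ij}b_2$ and the remaining diagonal entries equal to $\lambda$.
\item Its inverse is also block diagonal: its $(1,1)$ block inverts $\lambda-b_1a_{ij}b_2$, which one sees by compressing with $E_{11}$.
\item Hence $\sigma_{\mathcal{B}}(b_1a_{ij}b_2)=\{0\}$ for all $b_1,b_2$, so $a_{ij}\in\mathrm{Rad}(\mathcal{B})$.
\end{itemize}

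\textbf{Step 2: $\mathbb{M}_n(\mathrm{Rad}(\mathcal{B}))\subseteq\mathrm{Rad}(\mathbb{M}_n(\mathcal{B}))$.} This is the main step.
\begin{itemize}
\item Since $\mathrm{Rad}(\mathbb{M}_n(\mathcal{B}))$ is closed under addition, it suffices to treat a single entry $A=a\otimes E_{ij}$ with $a\in\mathrm{Rad}(\mathcal{B})$.
\item We use the criterion that $A$ is in the radical if and only if $I-BA$ is left invertible for every $B$ (the intersection of maximal left ideals).
\item Compute $BA=\sum_k b_{ki}a\otimes E_{kj}$, which is supported in column $j$.
\item So $I-BA$ is the identity except in column $j$. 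Its $(j,j)$ entry is $1-b_{ji}a$, which is invertible in $\mathcal{B}$ because $a\in\mathrm{Rad}(\mathcal{B})$.
\item Permuting so that $j$ comes first, $I-BA$ is lower block triangular with invertible diagonal entries $1-b_{ji}a$ and $1,\dots,1$. It is therefore invertible, with the explicit inverse given by the usual triangular formula.
\end{itemize}
Hence $a\otimes E_{ij}\in\mathrm{Rad}(\mathbb{M}_n(\mathcal{B}))$, and summing over $i,j$ gives the inclusion.

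The only care needed is the left/right consistency in the radical characterization; Step 2 avoids any spectral estimate because the column structure makes $I-BA$ triangular.
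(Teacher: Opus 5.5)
Your proof is correct and follows essentially the same route as the paper. In Step 1 you compress by the matrix units $E_{1i}$, $E_{j1}$ exactly as the paper does, and you also justify why $\sigma(b_1a_{ij}b_2\otimes E_{11})=\{0\}$ forces $\sigma_{\mathcal{B}}(b_1a_{ij}b_2)=\{0\}$. In Step 2 you reduce to single-entry matrices and use that multiplying by an arbitrary $B$ gives a matrix supported in one column (the paper uses one row), so it is triangular up to permutation; phrasing this via invertibility of $I-BA$ rather than via $\sigma\big((E_{ij}\otimes A_{ij})B\big)\setminus\{0\}=\sigma(A_{ij}B_{ji})\setminus\{0\}$ is only a cosmetic difference, equivalent after rescaling $B$.
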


\begin{proof}
    Let $\{E_{ij}\}_{1\leqslant i,j\leqslant n}$ be a system of matrix units in $\mathbb{M}_n  $.
    Then every element $A$ in $\mathbb{M}_n (\mathcal{B})$ can be written as
    \begin{equation*}
        A=(A_{ij})_{1\leqslant i,j\leqslant n}=\sum_{i,j=1}^{n}E_{ij}\otimes A_{ij},
    \end{equation*}
    where $A_{ij}\in\mathcal{B}$ for all $1\leqslant i,j\leqslant n$.
    
    Suppose that $A=\sum_{i,j=1}^{n}E_{ij}\otimes A_{ij}\in\mathrm{Rad}(\mathbb{M}_n (\mathcal{B}))$.
    Then for every $1\leqslant i,j\leqslant n$ and $B_1,B_2\in\mathcal{B}$, we have
    \begin{equation*}
        \sigma(E_{11}\otimes B_1A_{ij}B_2)=\sigma((E_{1i}\otimes B_1)A(E_{j1}\otimes B_2))=\{0\},
    \end{equation*}
    and hence $\sigma(B_1A_{ij}B_2)=\{0\}$.
    It follows that $A_{ij}\in\mathrm{Rad}(\mathcal{B})$ for all $1\leqslant i,j\leqslant n$.
    Therefore, $A\in \mathbb{M}_n (\mathrm{Rad}(\mathcal{B}))$.
    
    Suppose that $A=\sum_{i,j=1}^{n}E_{ij}\otimes A_{ij}\in \mathbb{M}_n (\mathrm{Rad}(\mathcal{B}))$, where $A_{ij}\in\mathrm{Rad}(\mathcal{B})$.
    Let $B=\sum_{i,j=1}^{n}E_{ij}\otimes B_{ij}\in \mathbb{M}_n (\mathcal{B})$.
    Since $(E_{ij}\otimes A_{ij})B=\sum_{k=1}^{n}E_{ik}\otimes A_{ij}B_{jk}$, we have
    \begin{equation*}
        \sigma((E_{ij}\otimes A_{ij})B)\backslash\{0\}=\sigma(E_{ii}\otimes A_{ij}B_{ji})\backslash\{0\}=\sigma(A_{ij}B_{ji})\setminus\{0\}=\varnothing.
    \end{equation*}
    It follows that $\sigma((E_{ij}\otimes A_{ij})B)=\{0\}$.
    Similarly, we have $\sigma(B(E_{ij}\otimes A_{ij}))=\{0\}$.
    Thus, $E_{ij}\otimes A_{ij}\in\mathrm{Rad}(\mathbb{M}_n (\mathcal{B}))$.
    It follows that $A\in\mathrm{Rad}(\mathbb{M}_n (\mathcal{B}))$ because  $\mathrm{Rad}(\mathbb{M}_n (\mathcal{B}))$ is a linear space.
    This completes the proof.
\end{proof}

By the following lemma, if $\mathcal{B}/\mathrm{Rad}(\mathcal{B})$ is finite dimensional, then $P\mathcal{B}P/\mathrm{Rad}(P\mathcal{B}P)$ is also finite dimensional for every nonzero idempotent $P$ in $\mathcal{B}$.

\begin{lemma}\label{lem iota}
    Let $\mathcal{B}$ be a unital Banach algebra and $P$ a nonzero idempotent in $\mathcal{B}$.
    Then the map
    \begin{equation*}
        \varphi\colon P\mathcal{B}P/\mathrm{Rad}(P\mathcal{B}P)\to \mathcal{B}/\mathrm{Rad}(\mathcal{B}),\quad
        A+\mathrm{Rad}(P\mathcal{B}P)\mapsto A+\mathrm{Rad}(\mathcal{B})
    \end{equation*}
    is a well-defined injective homomorphism.
\end{lemma}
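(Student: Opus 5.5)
The key claim is $P\mathcal{B}P\cap\mathrm{Rad}(\mathcal{B})=\mathrm{Rad}(P\mathcal{B}P)$. Granting it, the map $\varphi$ is well defined because $\mathrm{Rad}(P\mathcal{B}P)\subseteq\mathrm{Rad}(\mathcal{B})$, and it is injective because $A\in P\mathcal{B}P$ with $A\in\mathrm{Rad}(\mathcal{B})$ forces $A\in\mathrm{Rad}(P\mathcal{B}P)$. It is a homomorphism because the inclusion $P\mathcal{B}P\hookrightarrow\mathcal{B}$ is multiplicative and linear, and $\varphi$ is that inclusion followed by the quotient map. Note that $\varphi$ is not unital: it sends the unit $P$ to $P+\mathrm{Rad}(\mathcal{B})$, which is harmless.

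Both inclusions use the spectral characterization of the radical recalled before \Cref{lem P-Q-radical}. In $P\mathcal{B}P$, with unit $P$, this reads
\[
\mathrm{Rad}(P\mathcal{B}P)=\{A\in P\mathcal{B}P\colon \sigma_{P\mathcal{B}P}(B_1AB_2)=\{0\}\ \text{for all}\ B_1,B_2\in P\mathcal{B}P\}.
\]
The one tool needed is the corner spectrum identity: for $C\in P\mathcal{B}P$,
\[
\sigma_{\mathcal{B}}(C)\cup\{0\}=\sigma_{P\mathcal{B}P}(C)\cup\{0\}.
\]
To prove it, take $\lambda\neq0$. If $C-\lambda P$ has an inverse $D$ in $P\mathcal{B}P$, then $D-\lambda^{-1}(I-P)$ inverts $C-\lambda I$ in $\mathcal{B}$. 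Conversely, if $C-\lambda I$ is invertible in $\mathcal{B}$, its inverse commutes with $P$, because $C$ commutes with $P$. Compressing that inverse by $P$ then gives an inverse of $C-\lambda P$ in $P\mathcal{B}P$.

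For the inclusion $P\mathcal{B}P\cap\mathrm{Rad}(\mathcal{B})\subseteq\mathrm{Rad}(P\mathcal{B}P)$, let $A\in P\mathcal{B}P\cap\mathrm{Rad}(\mathcal{B})$ and $B_1,B_2\in P\mathcal{B}P$. Then $\sigma_{\mathcal{B}}(B_1AB_2)=\{0\}$, so by the identity $\sigma_{P\mathcal{B}P}(B_1AB_2)\subseteq\{0\}$. Since spectra in a unital Banach algebra are nonempty, $\sigma_{P\mathcal{B}P}(B_1AB_2)=\{0\}$, hence $A\in\mathrm{Rad}(P\mathcal{B}P)$.

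For the inclusion $\mathrm{Rad}(P\mathcal{B}P)\subseteq\mathrm{Rad}(\mathcal{B})$, let $A\in\mathrm{Rad}(P\mathcal{B}P)$ and $B_1,B_2\in\mathcal{B}$. Set $C=B_1AB_2=B_1PAPB_2$. By Jacobson's lemma, $\sigma(XY)\cup\{0\}=\sigma(YX)\cup\{0\}$, applied with $X=B_1P$ and $Y=APB_2$, we get
\[
\sigma_{\mathcal{B}}(C)\cup\{0\}=\sigma_{\mathcal{B}}(APB_2B_1P)\cup\{0\}=\sigma_{\mathcal{B}}\big(A\,(PB_2B_1P)\big)\cup\{0\}.
\]
Here $A(PB_2B_1P)$ lies in $P\mathcal{B}P$ and is the product of $A$ with an element of $P\mathcal{B}P$, so its $P\mathcal{B}P$-spectrum is $\{0\}$. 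By the corner identity its $\mathcal{B}$-spectrum is contained in $\{0\}$, so $\sigma_{\mathcal{B}}(C)=\{0\}$ and $A\in\mathrm{Rad}(\mathcal{B})$.

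The main difficulty is this second inclusion: the outside elements $B_1,B_2$ must be moved into the corner, which is exactly what Jacobson's lemma does. Everything else is routine bookkeeping.
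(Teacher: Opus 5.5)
Your proof is correct and follows the paper's argument: you prove $\mathrm{Rad}(P\mathcal{B}P)\subseteq\mathrm{Rad}(\mathcal{B})$ for well-definedness and $P\mathcal{B}P\cap\mathrm{Rad}(\mathcal{B})\subseteq\mathrm{Rad}(P\mathcal{B}P)$ for injectivity, using the spectral characterization of the radical together with the relation $\sigma_{\mathcal{B}}(C)\cup\{0\}=\sigma_{P\mathcal{B}P}(C)\cup\{0\}$ for corner elements. The only differences are that you work with the two-sided form $\sigma(B_1AB_2)=\{0\}$ where the paper uses $\sigma(AB)=\sigma(BA)=\{0\}$, and that you make explicit the Jacobson-lemma step the paper leaves implicit when it passes from $\sigma_{P\mathcal{B}P}(RPBP)=\{0\}$ to $\sigma_{\mathcal{B}}(RB)=\{0\}$.
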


\begin{proof}
    Let $R\in\mathrm{Rad}(P\mathcal{B}P)$.
    Then for every element $B\in\mathcal{B}$, we have
    \begin{equation*}
        \sigma_{P\mathcal{B}P}(RPBP)=\{0\}=\sigma_{P\mathcal{B}P}(PBPR).
    \end{equation*}
    It follows that $\sigma_{\mathcal{B}}(RB)=\{0\}=\sigma_{\mathcal{B}}(BR)$, and hence $R\in\mathrm{Rad}(\mathcal{B})$.
    Thus, the map $\varphi$ is a well-defined homomorphism.
    
    Let $A+\mathrm{Rad}(P\mathcal{B}P)\in\ker\varphi$.
    Then we have $A\in P\mathcal{B}P\cap\mathrm{Rad}(\mathcal{B})$.
    For every element $B\in P\mathcal{B}P\subseteq\mathcal{B}$, it is clear that
    \begin{equation*}
        \sigma_{\mathcal{B}}(AB)=\{0\}=\sigma_{\mathcal{B}}(BA).
    \end{equation*}
    Thus, $\sigma_{P\mathcal{B}P}(AB)=\{0\}=\sigma_{P\mathcal{B}P}(BA)$.
    It follows that $A\in\mathrm{Rad}(P\mathcal{B}P)$.
    Therefore, the map $\varphi$ is injective.
\end{proof}

The following tool is  useful  for studying Kaplansky's second test problem.

\begin{lemma}\label{lem matrix-units}
    Let $\mathcal{B}$ be a Banach algebra, $\{e_{ij}\}_{1\leqslant i,j\leqslant 2}$ the canonical system of matrix units in $\mathbb{M}_2 $, and $E_{ij}=e_{ij}\otimes I_{\mathcal{B}}$ for $1\leqslant i,j\leqslant 2$.
    Suppose that $A$ and $B$ are elements in $\mathcal{B}$ and $X$ is an invertible operator in $\mathbb{M}_2(\mathcal{B})$ such that
    \begin{equation*}
        \begin{pmatrix}
            A & 0 \\
            0 & A
        \end{pmatrix}
        =X^{-1}
        \begin{pmatrix}
            B & 0 \\
            0 & B 
        \end{pmatrix}X.
    \end{equation*}
    Let  $\{F_{ij}\}_{i,j=1}^2$ be a system of matrix units in $\{A\oplus A\}'\cap \mathbb{M}_2(\mathcal{M})$ of the form
    \begin{equation*}
        F_{ij}=X^{-1}E_{ij}X\quad\text{for all}~1\leqslant i,j\leqslant 2.
    \end{equation*}
    Then $A\sim B$ in $\mathcal{B}$ if and only if $E_{11}\sim F_{11}$ in $\{A\oplus A\}'\cap \mathbb{M}_2(\mathcal{B})$.
\end{lemma}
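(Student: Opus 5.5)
We prove both implications directly; the whole argument consists of building explicit similarities out of the matrix units $E_{ij}$ and $F_{ij}$. First note that each $F_{ij}=X^{-1}E_{ij}X$ commutes with $A\oplus A$. Indeed, $E_{ij}$ commutes with $B\oplus B$, so
\begin{equation*}
F_{ij}(A\oplus A)=X^{-1}E_{ij}(B\oplus B)X=X^{-1}(B\oplus B)E_{ij}X=(A\oplus A)F_{ij}.
\end{equation*}
Likewise each $E_{ij}$ commutes with $A\oplus A$. So both systems of matrix units lie in $\mathcal{C}:=\{A\oplus A\}'\cap\mathbb{M}_2(\mathcal{B})$.

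\emph{($\Leftarrow$)} Suppose $Y\in\mathcal{C}$ is invertible with $Y^{-1}E_{11}Y=F_{11}$.
\begin{itemize}
\item Set $W=YX^{-1}$. Then $W^{-1}E_{11}W=XF_{11}X^{-1}=E_{11}$, so $W$ commutes with $E_{11}$ and with $E_{22}=I-E_{11}$. Hence $W=W_1\oplus W_2$ is diagonal, with $W_1,W_2$ invertible in $\mathcal{B}$.
\item Since $Y$ commutes with $A\oplus A$, we get
\begin{equation*}
W^{-1}(A\oplus A)W=XY^{-1}(A\oplus A)YX^{-1}=X(A\oplus A)X^{-1}=B\oplus B.
\end{equation*}
\item Comparing $(1,1)$-entries gives $W_1^{-1}AW_1=B$, so $A\sim B$ in $\mathcal{B}$.
\end{itemize}

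\emph{($\Rightarrow$)} Suppose $A=V^{-1}BV$ with $V$ invertible in $\mathcal{B}$.
\begin{itemize}
\item Put $D=V\oplus V$. Then $D$ is invertible, commutes with every $E_{ij}$, and $D^{-1}(B\oplus B)D=A\oplus A$.
\item Set $Y=X^{-1}D$. Then $Y^{-1}(A\oplus A)Y=D^{-1}X(A\oplus A)X^{-1}D=D^{-1}(B\oplus B)D=A\oplus A$, so $Y\in\mathcal{C}$.
\item Since $D$ commutes with $E_{11}$, we get $YE_{11}Y^{-1}=X^{-1}DE_{11}D^{-1}X=X^{-1}E_{11}X=F_{11}$. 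Thus $E_{11}\sim F_{11}$ in $\mathcal{C}$.
\end{itemize}

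The only point needing care is in ($\Leftarrow$): one must check that $W$ is block diagonal before reading off the corner. This follows at once from the fact that $W$ commutes with $E_{11}$. The rest is bookkeeping with the identities $F_{ij}=X^{-1}E_{ij}X$. No earlier lemma is needed beyond these algebraic manipulations.
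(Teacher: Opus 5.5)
Your proof is correct and follows the paper's own argument. The forward direction uses the same intertwiner $X^{-1}(V\oplus V)$. The converse reads the similarity off the $(1,1)$-corner of the block-diagonal element $YX^{-1}$, which is the paper's $XY$ with $Y$ replaced by its inverse.
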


\begin{proof}
    Suppose that $A\sim B$ in $\mathcal{B}$, i.e., $A=X_0^{-1}BX_0$ for some invertible operator $X_0$ in $\mathcal{B}$.
    We define an invertible operator $Y$ in $\{A\oplus A\}'\cap \mathbb{M}_2(\mathcal{B})$ as
    \begin{equation*}
        Y=X^{-1}
        \begin{pmatrix}
            X_0 & 0 \\
            0 & X_0
        \end{pmatrix}.
    \end{equation*}
    A direct computation shows that $E_{11}=Y^{-1}F_{11}Y$.
    
    Suppose that there exists an invertible operator $Y$ in $\{A\oplus A\}'\cap \mathbb{M}_2(\mathcal{B})$ such that $E_{11}=Y^{-1}F_{11}Y$.
    Since $E_{11}=(XY)^{-1}E_{11}(XY)$, we can write
    \begin{equation*}
        XY=
        \begin{pmatrix}
            X_1 & 0 \\
            0 & X_2
        \end{pmatrix},
    \end{equation*}
    where both $X_1$ and $X_2$ are invertible operators in $\mathcal{B}$.
    It follows that
    \begin{equation*}
        \begin{pmatrix}
            A & 0 \\
            0 & A
        \end{pmatrix}
        =Y^{-1}
        \begin{pmatrix}
            A & 0 \\
            0 & A
        \end{pmatrix}Y
        =(XY)^{-1}
        \begin{pmatrix}
            B & 0 \\
            0 & B
        \end{pmatrix}XY
        =
        \begin{pmatrix}
            X_1^{-1}BX_1 & 0 \\
            0 & X_2^{-1}BX_2
        \end{pmatrix}.
    \end{equation*}
    Hence, $A\sim B$ in $\mathcal{B}$.
    This completes the proof.
\end{proof}

For our use, we recall a classical theorem of Wedderburn as follows.

\begin{theorem}[Theorem 5.22 of \cite{All11}]
    Let $\mathcal{B}$ be a finite-dimensional semisimple unital algebra over $\mathbb{C}$.
    Then there exist positive integers $k \in \mathbb{N}$ and $n_l, \ldots ,n_k \in \mathbb{N}$ such that $\mathcal{B}\cong \mathbb{M}_{n_1}\oplus \mathbb{M}_{n_2}\oplus\cdots\oplus \mathbb{M}_{n_k}$.
\end{theorem}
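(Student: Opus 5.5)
The statement to prove is Wedderburn's classical theorem, quoted from \cite{All11}. My plan is the standard structure-theoretic argument: first split $\mathcal{B}$ into simple ideals, then identify each simple algebra with a full matrix algebra.

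\emph{Step 1: $\mathcal{B}$ is a finite direct sum of simple algebras.}
\begin{enumerate}[label=$(\roman*)$, leftmargin=*]
\item Since $\mathcal{B}$ is finite dimensional, every nonzero two-sided ideal contains a minimal nonzero two-sided ideal $J$.
\item Semisimplicity gives $J^2\neq 0$. Indeed, a nilpotent ideal lies in the radical: for $A\in J$ and any $B\in\mathcal{B}$, $(AB)^2\in J^2=0$, so $\sigma(AB)=\{0\}$, and likewise $\sigma(BA)=\{0\}$. This puts $J$ inside the radical characterization recalled above, a contradiction.
\item Next show $J$ has an identity that is a central idempotent. Use the left regular module: $\mathcal{B}$ is semisimple as a left module over itself, because every left ideal $L$ has a complement. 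To build it, take a minimal left ideal $L_0\subseteq L$. Then $L_0^2\neq0$, by the same radical argument applied to left ideals. Hence $L_0=\mathcal{B}e$ for an idempotent $e$, obtained from $xL_0=L_0$ for some $x\in L_0$ by minimality. Peeling off $\mathcal{B}e$ with complement $\mathcal{B}(1-e)$ and inducting on dimension yields $\mathcal{B}=\bigoplus_i L_i$ with each $L_i$ a minimal left ideal.
\item Group the $L_i$ by isomorphism type to obtain two-sided ideals $B_1,\dots,B_k$ with $\mathcal{B}=B_1\oplus\cdots\oplus B_k$. Then $B_rB_s=0$ for $r\ne s$, since $\mathrm{Hom}$ between non-isomorphic simple modules vanishes. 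Decomposing $1=\sum_r 1_r$ shows each $1_r$ is a central idempotent and an identity for $B_r$.
\end{enumerate}

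\emph{Step 2: each $B_r$ is a full matrix algebra.}
\begin{enumerate}[label=$(\roman*)$, leftmargin=*]
\item Fix a minimal left ideal $L$ of $B_r$. By Schur's lemma, $D=\mathrm{End}_{B_r}(L)$ is a finite-dimensional division algebra over $\mathbb{C}$.
\item Each $d\in D$ is algebraic. Since $\mathbb{C}$ is algebraically closed, $d-\lambda$ fails to be invertible for some $\lambda\in\mathbb{C}$, so $d-\lambda=0$. Hence $D=\mathbb{C}$.
\item Write $B_r\cong L^{\oplus n_r}$ as left $B_r$-modules. Then
\begin{equation*}
B_r^{\mathrm{op}}\cong\mathrm{End}_{B_r}(B_r)\cong\mathbb{M}_{n_r}(D)=\mathbb{M}_{n_r}.
\end{equation*}
\item Compose with the transpose to get $B_r\cong\mathbb{M}_{n_r}$.
\end{enumerate}
Combining the two steps gives $\mathcal{B}\cong\mathbb{M}_{n_1}\oplus\cdots\oplus\mathbb{M}_{n_k}$.

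\emph{Main obstacle.} The delicate point is the complementation argument in Step 1$(iii)$, which proves that $\mathcal{B}$ is semisimple as a module. It needs two things: that a minimal left ideal with nonzero square is generated by an idempotent, and that "Jacobson semisimple" excludes nilpotent one-sided ideals. For the latter I would use the spectral description of $\mathrm{Rad}(\mathcal{B})$ already recalled in the paper.
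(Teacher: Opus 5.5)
The paper gives no proof of this statement: it is quoted from Allan's book and used as a black box in Lemma~\ref{lem Q-FD}. Your argument is the standard Wedderburn--Artin proof specialized to $\mathbb{C}$, and its overall structure is correct. A Banach-algebra treatment would typically identify the division algebra with $\mathbb{C}$ by applying Gelfand--Mazur to $e\mathcal{B}e$ for a minimal idempotent $e$. Your eigenvalue argument in Step 2$(ii)$ is the purely algebraic substitute and works equally well.

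One step needs correcting. In Step 1$(iii)$ you cannot obtain the idempotent from $xL_0=L_0$. The set $xL_0$ is not a left ideal, so minimality says nothing about it. In fact the equality can fail for every $x$: in $\mathbb{M}_2$ with $L_0=\mathbb{M}_2e_{11}$ (the first-column matrices), $xL_0=\mathbb{C}x$ has dimension at most $1$. Use the other side instead:
\begin{enumerate}[label=$(\alph*)$, leftmargin=*]
\item Since $L_0^2\neq 0$, there is $x\in L_0$ with $L_0x\neq 0$.
\item $L_0x$ is a left ideal contained in $L_0$, so $L_0x=L_0$ by minimality. Pick $e\in L_0$ with $ex=x$.
\item The left ideal $\{a\in L_0\colon ax=0\}$ is proper, hence zero. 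It contains $e^2-e$, so $e^2=e\neq 0$, and therefore $\mathcal{B}e=L_0$.
\end{enumerate}

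There are three smaller points. First, the spectral description of $\mathrm{Rad}$ recalled in the paper is stated for unital Banach algebras. Either note that $\mathcal{B}$ becomes one through its faithful left regular representation on the finite-dimensional space $\mathcal{B}$, or just use the algebraic fact that a nilpotent one-sided ideal lies in the Jacobson radical. Second, in Step 1$(iv)$ the claim that each isotypic block $B_r$ is also a \emph{right} ideal needs a one-line justification. For $b\in\mathcal{B}$, the map $a\mapsto ab$ is a module map $L_i\to\mathcal{B}$, so $L_ib$ is $0$ or isomorphic to $L_i$. Projecting onto the summands of other isomorphism types and applying Schur's lemma then shows $L_ib\subseteq B_r$. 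Third, Steps 1$(i)$--$(ii)$ about the minimal two-sided ideal $J$ are never used afterwards, since $(iii)$--$(iv)$ produce the simple blocks on their own.
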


\section{Property \texorpdfstring{$(J)$}{(J)} in type \texorpdfstring{$\mathrm{I}_n$}{In} von Neumann algebras}\label{s3}

In this section, we investigate strong irreducibility and finite strongly irreducible decomposition of operators in type $\mathrm{I}_n$ von Neumann algebras.

\subsection{Local structure of operators}

In this subsection, we study the local structure of operators in type $\mathrm{I}_n$ von Neumann algebras (see \Cref{thm local-structure}).
By the Jordan canonical form theorem, every nilpotent operator in $\mathbb{M}_n  $ is similar to a direct sum of $J_m$'s defined in \eqref{equ Jordan-block}.
In the following lemma, we prove a similar result for nilpotent operators in type $\mathrm{I}_n$ von Neumann algebras.

\begin{lemma}\label{lem Jordan-nilpotent}
    Let $n$ be a positive integer, $\mathcal{A}$ an abelian von Neumann algebra, and $T$ a nilpotent operator in $\mathbb{M}_n (\mathcal{A})$.
    Then there is a nonzero projection $p$ in $\mathcal{A}$ such that in $\mathbb{M}_n (\mathcal{A}p)$, $T(I_n\otimes p)$ is similar to a direct sum of operators of the form $J_m\otimes p$.
\end{lemma}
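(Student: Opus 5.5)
The plan is to localize to a nonzero central projection on which the ranks of all powers of $T$ are constant. On such a piece the kernel flag of $T$ can be split off by complementary projections, one power at a time, which is exactly the Jordan construction. Write $T^k$ for the powers, with $T^n=0$ since $T$ is nilpotent in $\mathbb{M}_n(\mathcal{A})$. For each $k$, let $R_k$ be the range projection of $T^k$ in $\mathbb{M}_n(\mathcal{A})$ and let $r_k=\mathrm{Tr}(R_k)\in\mathcal{A}$. Each $r_k$ takes values in $\{0,1,\dots,n\}$, so $r_k=\sum_{j=0}^n j\,z^{(k)}_j$ for central projections $z^{(k)}_j$ summing to $I_{\mathcal{A}}$. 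Intersecting over $k=1,\dots,n$, I pick a nonzero projection $p\in\mathcal{A}$ on which every $r_k$ is a constant integer, so $r_kp=m_kp$.

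The key technical step is to refine $p$ further so that the ranges of the $T^kp$ are "uniformly closed". Concretely, I want a $c>0$ such that each $T^kp$ has bounded relative inverse: $T^kp$ should be bounded below by $c$ on $\operatorname{ran}$ of the support projection. In $C(X)$ language (with $X$ extremely disconnected), $T^k(x)$ has rank exactly $m_k$ at every point of the clopen set, except possibly on a nowhere dense set. The smallest nonzero singular value $s_k(x)$ is positive and lower semicontinuous, so it can vanish in the limit. To handle this, I shrink $p$ to a clopen set where $s_k(x)\ge c$, which exists by a Baire-category type argument. Alternatively I work directly in $\mathcal{A}$: consider the spectral projections of $(T^k)^*T^k$ and use that $\sigma(|T^k|)$ restricted to a small enough piece avoids $(0,c)$. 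With this, $\operatorname{ran}(T^kp)$ is closed, and partial inverses and range or kernel projections all lie in $\mathbb{M}_n(\mathcal{A}p)$. The kernel projection of $T^kp$ then has trace $(n-m_k)p$.

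Next I carry out the standard Jordan chain construction over $\mathcal{A}p$, treating $\mathcal{A}p$-modules as projective modules of constant rank. For each level $k$, I choose a projection $E_k\le\ker T^k$ complementary to $\ker T^{k-1}+T(\ker T^{k+1})$. This is possible because these subspaces come from projections with constant central trace and closed ranges. Within $\mathbb{M}_n(\mathcal{A}p)$ these projections can be "diagonalized": each projection with constant trace $d\,p$ is equivalent to $(I_d\oplus0)\otimes p$, since type $\mathrm{I}_n$ projections with equal center-valued trace are equivalent. I then pick partial isometries, meaning frames $v_1,\dots,v_d$ in $\mathcal{A}p^n$, for $E_k$. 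The vectors $T^iv_j$ for $0\le i<k$ are linearly independent pointwise, with a uniform bound coming from the chosen $c$. Assembling all these vectors as columns gives an invertible $X\in\mathbb{M}_n(\mathcal{A}p)$ such that $X^{-1}TpX$ is the direct sum of $J_k\otimes p$ blocks, with multiplicities determined by the $m_k$.

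I expect the main obstacle to be the uniformity step: ensuring invertibility of $X$, not just pointwise invertibility. Pointwise invertibility holds everywhere on the clopen piece. Inverse bounds, however, require the lower bound $c$ on the singular values of the relevant frames. I would obtain these by one more shrinking of $p$, using Lemma~\ref{lem sigma-ap-bp}-style neighborhood arguments. The determinant of $X$ is a continuous nonvanishing function on the clopen piece except on a nowhere dense set. I therefore restrict to a clopen neighborhood where $|\det X|\ge\delta>0$, which gives invertibility of $X$ in $\mathbb{M}_n(\mathcal{A}p)$.
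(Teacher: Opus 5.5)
Your argument is correct, but it takes a genuinely different route from the paper's proof.

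The paper argues by induction on $n$:
\begin{itemize}
\item it upper-triangularizes $T$ via \Cref{lem upper triangular} and writes $T=\begin{pmatrix}T_1&\xi\\0&0\end{pmatrix}$, where $T_1$ already equals $\bigoplus_j J_{n_j}\otimes I_{\mathcal{A}}$ by the inductive hypothesis;
\item it clears all but the last entry $a_j$ of each $\xi_j$ by an explicit unipotent similarity;
\item if $a_1\neq 0$, it cuts down by a spectral projection of $a_1$ to make $a_1$ invertible (if $a_1=0$, it splits off $J_{n_1}$ and applies induction);
\item it kills the remaining $a_j$ using $n_1\geqslant n_j$, and rescales to merge the first block into $J_{n_1+1}$.
\end{itemize}

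You instead localize once, to a clopen piece on which the pointwise ranks $m_k$ of all powers $T^k(x)$ are constant. There the pointwise Jordan type is constant: there are $m_{k-1}-2m_k+m_{k+1}$ blocks of size $k$, with $m_0=n$. Hence every space in the textbook chain construction is a continuous constant-rank family. This includes $\ker T^{k-1}+T\ker T^{k+1}$, of dimension $\dim\ker T^{k-1}+\dim\ker T^{k+1}-\dim\ker T^{k}$. Global frames then come from equivalence of projections with equal center-valued trace in the finite algebra $\mathbb{M}_n(\mathcal{A}p)$.

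What each route buys:
\begin{itemize}
\item Yours avoids the triangularization theorem. It also identifies the Jordan type on the piece explicitly through the central traces $\mathrm{Tr}(R_k)$, which would make the countable decomposition in \Cref{cor Jordan-nilpotent} more canonical.
\item The paper's stays purely algebraic, with no singular-value or continuity analysis, in the matrix-manipulation style used throughout the paper.
\end{itemize}

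Two small corrections to tidy up:
\begin{itemize}
\item The smallest nonzero singular value is upper, not lower, semicontinuous. The clean function to use is the $m_k$-th singular value of $T^k(x)$, which is continuous on $X$ and positive exactly where the rank equals $m_k$. A nonempty common point for all $k$ exists because finitely many dense open sets intersect.
\item Once the ranks are exactly $m_k$ at every point of the compact clopen piece, the uniform lower bounds and $|\det X|\geqslant\delta$ follow from compactness. So your final extra shrinking is unnecessary. Your remark that $\det X$ is nonvanishing ``except on a nowhere dense set'' is also inconsistent with the pointwise invertibility you already established.
\end{itemize}
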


\begin{proof}
    We prove the lemma by induction.
    If $n=1$, then $T=0$ and the conclusion is clear.
    For $n\geqslant 2$, we may assume that
    $T=
    \begin{pmatrix}
        T_1 & \xi \\
        0 & 0
    \end{pmatrix}$
    by \Cref{lem upper triangular}, where $T_1$ is a nilpotent operator in $\mathbb{M}_{n-1}(\mathcal{A})$ and $\xi\in \mathbb{M}_{n-1,1}(\mathcal{A})$.
    By induction, we may assume that $T_1=\bigoplus_{j=1}^kJ_{n_j}\otimes I_{\mathcal{A}}$, i.e.,
    \begin{equation*}
        T=
        \begin{pmatrix}
            J_{n_1}\otimes I_{\mathcal{A}} & 0 & \cdots & 0 & \xi_1 \\
            & J_{n_2}\otimes I_{\mathcal{A}} & \cdots & 0 & \xi_2 \\
            & & \ddots & \vdots & \vdots \\
            & & & J_{n_k}\otimes I_{\mathcal{A}} & \xi_k \\
            & & & & 0
        \end{pmatrix},
    \end{equation*}
    where $n_1\geqslant n_2\geqslant\cdots\geqslant n_k$.
    We write $\xi_j=(\xi_{j1},\xi_{j2},\ldots,\xi_{jn_j})^t$ with $\xi_{j\ell}\in\mathcal{A}$.
    Let
    \begin{equation*}
        X=
        \begin{pmatrix}
            I_{n_1}\otimes I_{\mathcal{A}} & 0 & \cdots & 0 & -\eta_1 \\
            & I_{n_2}\otimes I_{\mathcal{A}} & \cdots & 0 & -\eta_2 \\
            & & \ddots & \vdots & \vdots \\
            & & & I_{n_k}\otimes I_{\mathcal{A}} & -\eta_k \\
            & & & & I_{\mathcal{A}}
        \end{pmatrix},
    \end{equation*}
    where $\eta_j=(0,\xi_{j1},\ldots,\xi_{j,n_j-1})^t$.
    It is clear that $X$ is invertible in $\mathbb{M}_n (\mathcal{A})$.
    By considering $X^{-1}TX$, we may assume that $\xi_j=(0,\ldots,0,a_j)^t$ with $a_j\in\mathcal{A}$ for every $1\leqslant j\leqslant k$.
    If $a_1=0$, then $T$ is of the form
    $T=
    \begin{pmatrix}
    J_{n_1}\otimes I_{\mathcal{A}} & \\
    & T_2
    \end{pmatrix}$ and we finish the proof by induction.
    Assume that $a_1\ne 0$.
    Then there exists a scalar $\varepsilon>0$ such that the spectral projection $p$ of $a_1$ with respect to $\{z\in\mathbb{C}\colon|z|\geqslant\varepsilon\}$ is nonzero.
    By considering $T(I_n\otimes p)$, we may assume that $a_1$ is invertible in $\mathcal{A}$.
    Let
    \begin{equation*}
        Y=
        \begin{pmatrix}
            I_{n_1}\otimes I_{\mathcal{A}} & & & & \\
            Y_2 & I_{n_2}\otimes I_{\mathcal{A}} & & &\\
            \vdots & & \ddots & &\\
            Y_k & & & I_{n_k}\otimes I_{\mathcal{A}} & \\
            0 & & & & I_{\mathcal{A}}
        \end{pmatrix},
    \end{equation*}
    where
    $Y_j=
    \begin{pmatrix}
    0_{n_j,n_1-n_j} & I_{n_j}\otimes a_1^{-1}a_j \\
    \end{pmatrix}\in \mathbb{M}_{n_j,n_1}(\mathcal{A})$
    for every $2\leqslant j\leqslant k$.
    By considering $Y^{-1}TY$, we may assume that $\xi_j=0$ for every $2\leqslant j\leqslant k$.
    In this case, $T$ is unitarily equivalent to the operator
    \begin{equation*}
        \begin{pmatrix}
            T_3 &&& \\
            & J_{n_2}\otimes I_{\mathcal{A}} & & \\
            && \ddots & \\
            &&& J_{n_k}\otimes I_{\mathcal{A}}
            \end{pmatrix},\quad\text{where}~
            T_3=
            \begin{pmatrix}
            J_{n_1}\otimes I_{\mathcal{A}} & \xi_1 \\
            0 & 0
        \end{pmatrix}.
    \end{equation*}
    Recall that $\xi_1=(0,\ldots,0,a_1)^t$ and $a_1$ is an invertible operator in $\mathcal{A}$.
    It is routine to verify that $D^{-1}T_3D=J_{n_1+1}\otimes I_{\mathcal{A}}$, where $D=\mathrm{diag}(I_{\mathcal{A}},\ldots,I_{\mathcal{A}},a_1^{-1})$.
    This completes the proof.
\end{proof}

It is worth noting that there is no assumption on the separability of $\mathcal{A}$ in the following corollary.

\begin{corollary}\label{cor Jordan-nilpotent}
    Let $n$ be a positive integer, $\mathcal{A}$ an abelian von Neumann algebra, and $T$ a nilpotent operator in $\mathbb{M}_n (\mathcal{A})$.
    Then there exists a countable family $\{p_\lambda\}_{\lambda\in\Lambda}$ of nonzero projections in $\mathcal{A}$ with $I_{\mathcal{A}}=\sum_{\lambda\in\Lambda}p_\lambda$ such that in $\mathbb{M}_n (\mathcal{A}p_\lambda)$, $T(I_n\otimes p_\lambda)$ is similar to a direct sum of operators of the form $J_m\otimes p_\lambda$ for every $\lambda\in\Lambda$.
\end{corollary}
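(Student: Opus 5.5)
The plan is a Zorn's lemma exhaustion built on \Cref{lem Jordan-nilpotent}, with a countability argument at the end. Consider the collection of all families $\{p_\lambda\}_{\lambda\in\Lambda}$ of nonzero, mutually orthogonal projections in $\mathcal{A}$ such that, for every $\lambda$, $T(I_n\otimes p_\lambda)$ is similar in $\mathbb{M}_n(\mathcal{A}p_\lambda)$ to a direct sum of operators $J_m\otimes p_\lambda$. Order these families by inclusion. The union of a chain is again such a family, so Zorn's lemma gives a maximal family $\{p_\lambda\}_{\lambda\in\Lambda}$.

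The next step is to show that $\sum_\lambda p_\lambda=I_{\mathcal{A}}$. Suppose instead that $q=I_{\mathcal{A}}-\sum_\lambda p_\lambda\neq 0$. Then $T(I_n\otimes q)$ is a nilpotent operator in $\mathbb{M}_n(\mathcal{A}q)$, and $\mathcal{A}q$ is an abelian von Neumann algebra. Applying \Cref{lem Jordan-nilpotent} to it yields a nonzero projection $p\leqslant q$ with the required property. Adding $p$ to the family contradicts maximality, so the projections sum to $I_{\mathcal{A}}$.

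The main obstacle is countability of the family. There is no separability assumption, so $\Lambda$ could be uncountable, and an uncountable family has to be regrouped. There are only finitely many ways to write $n$ as an ordered sum $m_1+\cdots+m_k$ of positive integers, so $\Lambda$ splits into finitely many classes $\Lambda_\alpha$ according to the Jordan type $\alpha$. For each $\lambda$ fix an invertible $X_\lambda\in\mathbb{M}_n(\mathcal{A}p_\lambda)$ with $X_\lambda^{-1}T(I_n\otimes p_\lambda)X_\lambda=\bigoplus_i J_{m_i}\otimes p_\lambda$.

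Simply summing the $X_\lambda$ over a class fails, because $\|X_\lambda\|$ and $\|X_\lambda^{-1}\|$ need not be uniformly bounded. To fix this, refine each class further by the integer $N\geqslant 1$ with $\max(\|X_\lambda\|,\|X_\lambda^{-1}\|)\in[N,N+1)$. This produces countably many classes $\Lambda_{\alpha,N}$. Set $p_{\alpha,N}=\sum_{\lambda\in\Lambda_{\alpha,N}}p_\lambda$ and $X_{\alpha,N}=\sum_{\lambda\in\Lambda_{\alpha,N}}X_\lambda$, which converges strongly. Since the summands are uniformly bounded and centrally orthogonal, $X_{\alpha,N}$ is invertible in $\mathbb{M}_n(\mathcal{A}p_{\alpha,N})$ with inverse $\sum_\lambda X_\lambda^{-1}$. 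It conjugates $T(I_n\otimes p_{\alpha,N})$ onto $\bigoplus_i J_{m_i}\otimes p_{\alpha,N}$.

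Finally, discard the empty classes. The remaining nonzero projections $p_{\alpha,N}$ form a countable family summing to $I_{\mathcal{A}}$ with the required property, which proves the corollary.
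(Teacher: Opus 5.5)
Your proposal is correct and follows essentially the same route as the paper: a maximal orthogonal family of good projections that sums to $I_{\mathcal{A}}$ by \Cref{lem Jordan-nilpotent}, then regrouping by the finitely many Jordan types and by integer bounds on $\|X_\lambda\|$ and $\|X_\lambda^{-1}\|$. The regrouping keeps the strongly convergent sums of the similarities invertible and makes the resulting family countable.
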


\begin{proof}
    Let $\{q_\mu\}_{\mu\in\mathbb{A}}$ be a maximal orthogonal family of nonzero projections in $\mathcal{A}$ such that $T(I_n\otimes q_\mu)$ is similar to a direct sum of operators of the form $J_m\otimes q_\mu$ in $\mathbb{M}_n (\mathcal{A}q_\mu)$ for every $\mu\in\mathbb{A}$.
    Then $I=\sum_{\mu\in\mathbb{A}}q_\mu$ by \Cref{lem Jordan-nilpotent}.
    Suppose that $X_\mu$ is an invertible operator in $\mathbb{M}_n (\mathcal{A}q_\mu)$ such that
    \begin{equation*}
        X_\mu^{-1}T(I_n\otimes q_\mu)X_\mu
        =\bigoplus_{j=1}^{k_\mu}J_{n_{\mu,j}}\otimes q_\mu,
    \end{equation*}
    where $n_{\mu,1}\geqslant n_{\mu,2}\geqslant\cdots\geqslant n_{\mu,k_\mu}$.
    Let $\{n_j\}_{j=1}^k$ be positive integers satisfying that $n_1\geqslant n_2\geqslant\cdots\geqslant n_k$ and $n=\sum_{j=1}^{k}n_j$.
    It suffices to consider the set of all indices $\mu\in\mathbb{A}$ such that $\{n_j\}_{j=1}^k=\{n_{\mu,j}\}_{j=1}^{k_\mu}$.
    Without loss of generality, we assume that
    \begin{equation*}
        X_\mu^{-1}T(I_n\otimes q_\mu)X_\mu
        =\bigoplus_{j=1}^{k}J_{n_j}\otimes q_\mu\quad\text{for every}~\mu\in\mathbb{A}.
    \end{equation*}
    We set $\mathbb{A}_0=\varnothing$
    For every integer $m\geqslant 1$, let $\mathbb{A}_m$ be the set of all indices $\mu\in\mathbb{A}$ such that $\|X_\mu\|\leqslant m$ and $\|X_\mu^{-1}\|\leqslant m$.
    We put
    \begin{equation*}
        p_m=\sum_{\mu\in\mathbb{A}_m\setminus\mathbb{A}_{m-1}}q_\mu\quad\text{and}\quad
        Y_m=\sum_{\mu\in\mathbb{A}_m\setminus\mathbb{A}_{m-1}}X_\mu.
    \end{equation*}
    It is clear that $I_{\mathcal{A}}=\sum_{m=1}^{\infty}p_m$ and $Y_m$ is invertible in $\mathbb{M}_n (\mathcal{A}p_m)$ such that $\|Y_m\|\leqslant m$, $\|Y_m^{-1}\|\leqslant m$, and
    \begin{equation*}
        Y_m^{-1}T(I_n\otimes p_m)Y_m=\bigoplus_{j=1}^{k}J_{n_j}\otimes p_m.
    \end{equation*}
    We complete the proof by deleting zero projections in $\{p_m\}_{m=1}^\infty$.
\end{proof}

The following result illustrates the local structure of operators in type $\mathrm{I}_n$ von Neumann algebras.

\begin{theorem} \label{thm local-structure}
    Let $n$ be a positive integer, $\mathcal{A}$ an abelian von Neumann algebra, and $T$ an operator in $\mathbb{M}_n (\mathcal{A})$.
    Then there are positive integers $\{n_j\}_{j=1} ^k$ with $n=\sum_{j=1}^{k}n_j$, a nonzero projection $p$ in $\mathcal{A}$, and operators $\{a_j\}_{j=1}^k$ in $\mathcal{A}p$ such that $T(I_n\otimes p)$ is similar to the direct sum $\bigoplus_{j=1}^k(I_{n_j}\otimes a_j+J_{n_j}\otimes p)$ in $\mathbb{M}_n (\mathcal{A}p)$.
\end{theorem}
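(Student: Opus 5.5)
We first make $T$ upper triangular, then localize to a central piece on which the diagonal entries split into groups that either coincide or have disjoint spectra, separate the groups, and finally apply \Cref{lem Jordan-nilpotent} to the nilpotent part of each group.

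By \Cref{lem upper triangular} we may assume $T=(t_{ij})$ is upper triangular. Applying \Cref{lem sigma-ap-bp} finitely many times (once for each pair of diagonal indices), we shrink to a nonzero projection $p_0\in\mathcal{A}$ with the following property: for every pair $i,j$, either $t_{ii}p_0=t_{jj}p_0$ or $\sigma_{\mathcal{A}p_0}(t_{ii}p_0)\cap\sigma_{\mathcal{A}p_0}(t_{jj}p_0)=\varnothing$. Each application takes place inside the current corner $\mathcal{A}q$. The dichotomy persists under further shrinking, since equality and disjointness of spectra are both inherited by subprojections. This exactly mimics the first step of the proof of \Cref{prop SI=Z+N}. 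The relation ``equal on $p_0$'' partitions the indices into classes $C_1,\dots,C_k$ with distinct values $a_1,\dots,a_k\in\mathcal{A}p_0$ having pairwise disjoint spectra.

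Next we separate the classes by holomorphic functional calculus. The spectrum of $Tp_0$ in $\mathbb{M}_n(\mathcal{A}p_0)$ is $\bigcup_j\sigma(a_j)$; this holds because for an upper triangular element, invertibility of all diagonal entries implies invertibility of the element. Take $f_j$ equal to $1$ near $\sigma(a_j)$ and $0$ near the other spectra. Then $E_j=f_j(Tp_0)$ are commuting idempotents in $\{T\}'$ summing to $I_n\otimes p_0$. Since $Tp_0$ is upper triangular, $E_j$ is upper triangular with diagonal entries $p_0$ on $C_j$ and $0$ elsewhere, so $\mathrm{Tr}(E_j)=|C_j|\,p_0$. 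Set $n_j'=|C_j|$.

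The main obstacle is turning these idempotents into a block-diagonal similarity in $\mathbb{M}_n(\mathcal{A}p_0)$. One route is to show that the $E_j$ are simultaneously similar to diagonal projections $\mathrm{diag}(0,\dots,I_{n_j'},\dots,0)\otimes p_0$. Idempotents with equal center-valued trace are similar in a finite von Neumann algebra: their range projections are equivalent, and $E\sim$ its range projection. So $\sum_j E_j=I$ gives the simultaneous similarity. Alternatively, one can avoid this by reordering the diagonal through unitary triangularization and solving Sylvester/Rosenblum equations, using \cite{Ros56} and the disjointness of the spectra to kill the off-diagonal blocks. Either way, $Tp_0\sim\bigoplus_j T_j$ with $T_j\in\mathbb{M}_{n_j'}(\mathcal{A}p_0)$ and $T_j-I\otimes a_j$ nilpotent. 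The latter holds because $T_j$ is similar to the compression of a triangular matrix whose diagonal is constantly $a_j$, so its spectrum is $\sigma(a_j)$ locally; more concretely, one checks that $(T-I\otimes a_j)^{n}E_j=0$ from the triangular form.

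Finally, apply \Cref{lem Jordan-nilpotent} successively to each nilpotent $N_j=T_j-I_{n_j'}\otimes a_j$, shrinking $p_0$ to a nonzero $p$ at each step, so that $N_jp\sim\bigoplus_\ell J_{m_{j\ell}}\otimes p$. Since $I\otimes a_j$ is central in the block, the same similarity carries $T_jp$ to $\bigoplus_\ell(I_{m_{j\ell}}\otimes a_jp+J_{m_{j\ell}}\otimes p)$. Relabeling all the Jordan sizes as $n_1,\dots,n_k$ with $\sum n_j=n$, and the corresponding scalars $a_jp$, gives the statement.
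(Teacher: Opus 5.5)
Your proof is correct, but it decouples the groups of equal diagonal entries differently from the paper. Both arguments share the triangularization, the localization via \Cref{lem sigma-ap-bp}, and the final appeal to \Cref{lem Jordan-nilpotent}. In between, the paper puts $T$ in block upper triangular form with the equal diagonal entries already grouped into contiguous blocks $T_{jj}$ (a reordering it does not argue separately). It then kills the off-diagonal blocks by solving Sylvester equations $T_{ii}X-XT_{jj}=Y$ via Rosenblum's theorem, and reads off nilpotency of $T_{jj}-I_{n_j}\otimes a_j$ from the constant diagonal.

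You instead take the Riesz idempotents $E_j=f_j(Tp_0)$, which need no reordering. You block-diagonalize $T$ by conjugating the $E_j$ to the diagonal projections $D_j=\mathrm{diag}(0,\dots,I_{n_j'},\dots,0)\otimes p_0$, using comparison theory in the finite algebra $\mathbb{M}_n(\mathcal{A}p_0)$. Your route avoids the ordering issue and Rosenblum's theorem, at the cost of heavier von Neumann algebra input and a non-explicit similarity. The paper's route is more computational and relies less on the structure of the ambient algebra.

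Two steps deserve one more line each.
\begin{itemize}
\item Going from ``each $E_j$ is similar to $D_j$'' to a simultaneous similarity needs an argument. One option is induction: conjugate $E_1$ to $D_1$, and the remaining idempotents then lie in the corner $(I_n\otimes p_0-D_1)\mathbb{M}_n(\mathcal{A}p_0)(I_n\otimes p_0-D_1)\cong\mathbb{M}_{n-n_1'}(\mathcal{A}p_0)$. Alternatively, $H=\sum_jE_j^*E_j$ is invertible with $E_j^*H=HE_j$, so the $H^{1/2}E_jH^{-1/2}$ are mutually orthogonal projections, which a single unitary then carries to the $D_j$.
\item Your remark that $T_j$ has spectrum $\sigma(a_j)$ would not by itself give nilpotency, since equal spectra do not force $T_j-I\otimes a_j$ to be nilpotent. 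What does the job is your concrete observation that $(Tp_0-I_n\otimes a_j)E_j$ is strictly upper triangular, so its $n$-th power vanishes.
\end{itemize}
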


\begin{proof}
    By \Cref{lem upper triangular}, we may assume that $T=(t_{ij})_{1\leqslant i,j\leqslant n}$ with $t_{ij}=0$ for $i>j$, i.e., $T$ is upper triangular.
    By \Cref{lem sigma-ap-bp}, we may assume that
    \begin{equation*}
        T=(T_{ij})_{1\leqslant i,j\leqslant k},
    \end{equation*}
    where $T_{ij}\in \mathbb{M}_{n_i,n_j}(\mathcal{A})$ with $T_{ij}=0$ for $i>j$, $T_{jj}$ is upper triangular whose diagonal elements are all $\mathrm{tr}(T_{jj})$, and
    \begin{equation*}
        \sigma(\mathrm{tr}(T_{ii}))\cap\sigma(\mathrm{tr}(T_{jj}))=\varnothing
        \quad\text{for}~i\ne j.
    \end{equation*}
    For every $1\leqslant j\leqslant k$, let $a_j=\mathrm{tr}(T_{jj})\in\mathcal{A}$ and it is clear that $\sigma(T_{jj})=\sigma(a_j)$.
    By considering the Rosenblum operator $A\mapsto T_{ii}A-AT_{jj}$ on $\mathbb{M}_{n_i,n_j}(\mathcal{A})$ for $i<j$, there exists an invertible operator $X$ in $\mathbb{M}_n (\mathcal{A})$ such that
    \begin{equation*}
        X^{-1}TX= \bigoplus_{j=1}^kT_{jj}.
    \end{equation*}
    Note that $T_{jj}-I_{n_j}\otimes a_j$ is nilpotent.
    We complete the proof by \Cref{lem Jordan-nilpotent}.
\end{proof}

\begin{remark}\label{rem local-structure}
    Combining the proofs of \Cref{cor Jordan-nilpotent} and \Cref{thm local-structure}, for every operator $T$ in $\mathbb{M}_n (\mathcal{A})$, there exists a countable family $\{p_\lambda\}_{\lambda\in\Lambda}$ of nonzero projections in $\mathcal{A}$ such that $T(I_n\otimes p_\lambda)$ is similar to an operator of the form $\bigoplus_{j=1}^{k(\lambda)} (I_{n_j}\otimes a_j+J_{n_j}\otimes p_\lambda)$ in $\mathbb{M}_n (\mathcal{A}p_\lambda)$ for every $\lambda\in\Lambda$, where each $k(\lambda)$ is a positive integer dependent of $\lambda$.
\end{remark}

\subsection{Strongly irreducible operators}

Next we study the general form of strongly irreducible operators in type $\mathrm{I}_n$ von Neumann algebras.
Since $T$ is strongly irreducible if and only if $T-\tau(T)$ is strongly irreducible, by \Cref{lem upper triangular} and \Cref{prop SI=Z+N}, we only need to focus on strictly upper triangular operators.

\begin{lemma}\label{lem SI-1-diagonal}
    Let $n$ be a positive integer, $\mathcal{A}$ an abelian von Neumann algebra, and $T=(t_{ij})_{1\leqslant i,j\leqslant n}$ an operator in $\mathbb{M}_n (\mathcal{A})$ such that $t_{ij}=0$ for $i\geqslant j$, i.e., $T$ is strictly upper triangular.
    If $T$ is strongly irreducible in $\mathbb{M}_n (\mathcal{A})$, then $R(t_{j,j+1})=I_{\mathcal{A}}$ for $1\leqslant j\leqslant n-1$.
\end{lemma}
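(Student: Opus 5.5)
We argue by contradiction: supposing some superdiagonal entry $t_{j,j+1}$ fails to have full range projection, we localize to a central piece where that entry vanishes and then build a non-central idempotent in $\{T\}'\cap\mathbb{M}_n(\mathcal{A})$. Let $q=I_{\mathcal{A}}-R(t_{j,j+1})$, a nonzero projection with $t_{j,j+1}q=0$. Since $P(I_n\otimes q)$ is a non-central idempotent in the commutant whenever $P$ is one for $T(I_n\otimes q)$ in $\mathbb{M}_n(\mathcal{A}q)$, and every idempotent of $\mathbb{M}_n(\mathcal{A}q)$ commuting with $Tq$ is also an idempotent of $\mathbb{M}_n(\mathcal{A})$ commuting with $T$, it suffices to treat $\mathcal{A}q$ in place of $\mathcal{A}$. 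So we may assume $t_{j,j+1}=0$. We are also free to pass to any smaller nonzero projection $p\leqslant q$ in $\mathcal{A}$.

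The plan is to exhibit a non-central idempotent by showing the nilpotent $T$ cannot be (locally) similar to the single Jordan block $J_n\otimes p$. First apply \Cref{lem Jordan-nilpotent} (or \Cref{cor Jordan-nilpotent}) to obtain a nonzero $p$ with $T(I_n\otimes p)$ similar in $\mathbb{M}_n(\mathcal{A}p)$ to $\bigoplus_{i=1}^k J_{m_i}\otimes p$. If $k\geqslant 2$, the projection onto the first summand, conjugated back by the similarity, is an idempotent in $\{T\}'\cap\mathbb{M}_n(\mathcal{A})$ (extended by $0$ off $p$). Its normalized trace is $(m_1/n)p$, which is neither $0$ nor $I_{\mathcal{A}}$, so it is not central, contradicting strong irreducibility. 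Hence $k=1$, i.e.\ $T(I_n\otimes p)\sim J_n\otimes p$, and in particular $T^{n-1}(I_n\otimes p)\neq 0$. For each such $p$ in fact $(T^{n-1})_{1n}\,p$ must be nonzero on every nonzero subprojection of $p$, since the similarity is central-module and $J_n^{n-1}\otimes p$ has central support $p$.

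The key computation, and the main obstacle, is to see that $t_{j,j+1}=0$ forces $T^{n-1}=0$, which contradicts the previous paragraph. For a strictly upper triangular $T$, the entries of $T^{n-1}$ vanish except $(1,n)$, and
\begin{equation*}
    (T^{n-1})_{1n}=t_{12}t_{23}\cdots t_{n-1,n},
\end{equation*}
because the only strictly increasing index chain from $1$ to $n$ of length $n-1$ is $1<2<\cdots<n$. Since $\mathcal{A}$ is abelian, this is a product of commuting entries, and it contains the factor $t_{j,j+1}=0$. Therefore $T^{n-1}=0$ on $\mathcal{A}q$, and so $T(I_n\otimes p)$ cannot be similar to $J_n\otimes p$ for any nonzero $p\leqslant q$ (as $J_n^{n-1}\neq 0$). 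Combined with the previous paragraph, this yields the required non-central idempotent.

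Thus the argument reduces to \Cref{lem Jordan-nilpotent} together with the trace computation certifying non-centrality. The delicate point is ensuring the idempotent built on $\mathcal{A}p$ is genuinely non-central in $\mathbb{M}_n(\mathcal{A})$. This holds because a central idempotent in $\mathbb{M}_n(\mathcal{A})$ is of the form $I_n\otimes e$ with $e$ a projection, so its normalized trace is a projection. An idempotent with trace $(m_1/n)p$, where $0<m_1<n$, is therefore not central.
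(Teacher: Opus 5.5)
Your proof is correct and follows the same route as the paper. You cut down by the central projection $I_{\mathcal{A}}-R(t_{j,j+1})$ to reach $t_{j,j+1}=0$, deduce $T^{n-1}=0$, and then apply \Cref{lem Jordan-nilpotent} to get a local Jordan decomposition with at least two blocks, whose block idempotent is a non-central element of the relative commutant. The paper leaves tacit two points that you make explicit: the formula $(T^{n-1})_{1n}=t_{12}t_{23}\cdots t_{n-1,n}$, and the non-centrality of that idempotent via its normalized trace $(m_1/n)p$.
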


\begin{proof}
    Suppose on the contrary that $R(t_{j,j+1})\ne I_{\mathcal{A}}$ for some index $1\leqslant j\leqslant n-1$.
    By considering $T(I_n\otimes p_0)$ with $p_0=I_{\mathcal{A}}-R(t_{j,j+1})$, we may assume that $t_{j,j+1}=0$.
    It follows that $T^{n-1}=0$.
    By \Cref{lem Jordan-nilpotent}, there is a nonzero projection $p$ in $\mathcal{A}$ such that $T(I_n\otimes p)$ is similar to
    \begin{equation*}
        (J_{n_1}\oplus J_{n_2}\oplus\cdots\oplus J_{n_k})\otimes p
    \end{equation*}
    in $\mathbb{M}_n (\mathcal{A}p)$.
    It follows that $k\geqslant 2$ by the fact $(T(I_n\otimes p))^{n-1}=0$.
    That contradicts the strong irreducibility of $T$.
\end{proof}

Since the center $I_n\otimes\mathcal{A}$ of $\mathbb{M}_n (\mathcal{A})$ is isomorphic to  $\mathcal{A}$, $\mathbb{M}_n (\mathcal{A})$ can be naturally viewed as an $\mathcal{A}$-$\mathcal{A}$-bimodule.
Indeed, for every $T=(t_{ij})_{1\leqslant i,j\leqslant n}$ and $a\in\mathcal{A}$, the multiplication $aT=Ta$ is defined by
\begin{equation*}
    aT=(I_n\otimes a)T=(at_{ij})_{1\leqslant i,j\leqslant n}.
\end{equation*}
The following notation is similar to $\mathbb{C}[t]$, the polynomial ring in $t$ over $\mathbb{C}$.

\begin{definition}\label{def T-polynomial}
    Let $n$ be a positive integer, $\mathcal{A}$ an abelian von Neumann algebra, and $T$ an operator in $\mathbb{M}_n (\mathcal{A})$.
    The algebra $\mathcal{A}[T]$ of polynomials in $T$ over $\mathcal{A}$ is the set of all operators in $\mathbb{M}_n (\mathcal{A})$ of the form
    \begin{equation*}
        a_0I_{\mathbb{M}_n (\mathcal{A})}+a_1T+a_2T^2+\cdots+a_kT^k,
    \end{equation*}
    where $k\geqslant 0$, $a_j\in\mathcal{A}$ for each $0\leqslant j\leqslant k$, and $I_{\mathbb{M}_n (\mathcal{A})}=I_n\otimes I_{\mathcal{A}}$.
\end{definition}

\begin{remark}\label{rem T-polynomial}
    Note that $\mathcal{A}[T]$ is the unital subalgebra of $\mathbb{M}_n (\mathcal{A})$ generated by $T$ and $I_n\otimes\mathcal{A}$, the center of $\mathbb{M}_n (\mathcal{A})$.
\end{remark}

With the notation introduced in \Cref{def T-polynomial}, we can compute the relative commutant of certain operators in $\mathbb{M}_n (\mathcal{A})$.

\begin{lemma}\label{lem similar-J}
    Let $n$ be a positive integer and $\mathcal{A}$ an abelian von Neumann algebra.
    Suppose that $T=(t_{ij})_{1\leqslant i,j\leqslant n}$ is an operator in $\mathbb{M}_n (\mathcal{A})$ such that
    \begin{enumerate}  [label= $(\arabic*)$, ref= \arabic*, leftmargin=*] 
        \item[$(1)$] $t_{ij}=0$ for $i\geqslant j$, i.e., $T$ is strictly upper triangular;
        \item[$(2)$] $t_{j,j+1}$ is invertible in $\mathcal{A}$ for $1\leqslant j\leqslant n-1$.
    \end{enumerate}
    Then $T$ is similar to $J_n\otimes I_{\mathcal{A}}$ in $\mathbb{M}_n (\mathcal{A})$ through an upper triangular invertible operator.
    In particular, $T$ is strongly irreducible in $\mathbb{M}_n (\mathcal{A})$ and $\{T\}'\cap \mathbb{M}_n (\mathcal{A})=\mathcal{A}[T]$.
\end{lemma}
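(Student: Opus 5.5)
The idea is to run the classical cyclic-vector argument over $\mathcal{A}$, with invertibility of the superdiagonal entries replacing nonvanishing. Let $e_1,\dots,e_n$ denote the standard basis of the free $\mathcal{A}$-module $\mathcal{A}^n$, so that $\mathbb{M}_n(\mathcal{A})$ acts on column vectors with entries in $\mathcal{A}$.

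Since $T$ is strictly upper triangular, $T$ maps $\mathrm{span}_{\mathcal{A}}\{e_1,\dots,e_k\}$ into $\mathrm{span}_{\mathcal{A}}\{e_1,\dots,e_{k-1}\}$. Put $v_n=e_n$ and $v_k=T^{n-k}e_n$ for $1\le k\le n$. Induction on $n-k$ shows that $v_k$ lies in $\mathrm{span}_{\mathcal{A}}\{e_1,\dots,e_k\}$ and that its $k$-th coordinate is
\begin{equation*}
    t_{k,k+1}t_{k+1,k+2}\cdots t_{n-1,n}.
\end{equation*}
This product is invertible in $\mathcal{A}$ by hypothesis $(2)$.

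Let $S$ be the matrix whose $k$-th column is $v_k$. Then $S$ is upper triangular with invertible diagonal entries in $\mathcal{A}$, so $S$ is invertible in $\mathbb{M}_n(\mathcal{A})$ and $S^{-1}$ is also upper triangular; the inverse can be written down by back substitution, or obtained by writing $S=D(I+N)$ with $D$ diagonal and $N$ nilpotent. We have $Tv_1=T^ne_n=0$, since $T^n=0$ for a strictly upper triangular matrix, and $Tv_{k+1}=v_k$. These relations say exactly that $TS=S(J_n\otimes I_{\mathcal{A}})$, that is, $S^{-1}TS=J_n\otimes I_{\mathcal{A}}$. This gives the similarity through an upper triangular invertible operator.

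For the commutant, similarity carries relative commutants to relative commutants and carries $\mathcal{A}[J_n\otimes I_{\mathcal{A}}]$ onto $\mathcal{A}[T]$, because $S$ commutes with the center $I_n\otimes\mathcal{A}$. So it suffices to treat $J:=J_n\otimes I_{\mathcal{A}}$. If $A=(a_{ij})$ commutes with $J$, the entrywise identities $a_{i+1,j}=a_{i,j-1}$ (with out-of-range entries taken to be $0$) force $A$ to be an upper triangular Toeplitz matrix, exactly as in the scalar computation. Hence $A=\sum_{m=0}^{n-1}c_mJ^m$ with $c_m\in\mathcal{A}$, which lies in $\mathcal{A}[J]$. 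The reverse inclusion $\mathcal{A}[J]\subseteq\{J\}'\cap\mathbb{M}_n(\mathcal{A})$ is trivial.

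Strong irreducibility then follows. Let $P=\sum_m c_mJ^m$ be an idempotent in the commutant. Then $P$ is upper triangular with constant diagonal $c_0$, so $c_0^2=c_0$, and $c_0$ is a projection $p$ in $\mathcal{A}$. Write $P=p+N$, where $N=\sum_{m\ge1}c_mJ^m$ is nilpotent and commutes with $p$. Expanding $P^2=P$ gives $(2p-1)N+N^2=N$. Since $2p-1$ is an invertible element of the center, this yields $N=f(N)N$ for some element $f(N)$ of the nilpotent algebra generated by $N$. Iterating, $N=f(N)^rN$ for every $r$, so $N=0$. Therefore $P=I_n\otimes p$ is central. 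Alternatively, one may simply note that $\mathcal{A}[J]$ is commutative and that an idempotent in it with nilpotent part must equal its diagonal part, by comparing the coefficients $c_m$ in increasing order of $m$.

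The only real obstacle is bookkeeping: checking that upper triangular matrices with invertible diagonal entries over a commutative $\mathcal{A}$ are invertible with upper triangular inverse. This reduces to the Neumann-series (nilpotent) argument above.
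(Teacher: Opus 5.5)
Your proposal is correct and follows essentially the same route as the paper. The paper uses the same conjugating matrix $(T^{n-1}\xi,\ldots,T\xi,\xi)$ with $\xi=(0,\ldots,0,I_{\mathcal{A}})^t$ and reduces the commutant to upper triangular Toeplitz matrices over $\mathcal{A}$; it only asserts that idempotents in $\mathcal{A}[T]$ are central, which you additionally justify. One small slip: expanding $P^2=P$ with $P=p+N$ gives $2pN+N^2=N$, i.e.\ $(2p-1)N+N^2=0$ rather than $=N$, and it is this corrected identity (yielding $N=-(2p-1)N^2$) that your next step actually uses.
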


\begin{proof}
    Let $\xi=(0,\ldots,0,I_{\mathcal{A}})^t\in \mathbb{M}_{n,1}(\mathcal{A})$.
    We define an operator
    \begin{equation*}
        X=(T^{n-1}\xi,\ldots,T\xi,\xi)\in \mathbb{M}_n (\mathcal{A}).
    \end{equation*}
    A direct computation shows that $TX=X(J_n\otimes I_{\mathcal{A}})$.
    Note that $X$ is upper triangular with diagonal elements (in order)
    \begin{equation*}
        t_{12}t_{34}\cdots t_{n-1,n},\quad\ldots,\quad t_{n-2,n-1}t_{n-1,n},\quad t_{n-1,n},\quad I_{\mathcal{A}}.
    \end{equation*}
    Since each $t_{j,j+1}$ is invertible in $\mathcal{A}$ by assumption, $X$ is invertible in $\mathbb{M}_n (\mathcal{A})$.
    Let $A$ be an operator in $\mathbb{M}_n (\mathcal{A})$ that commutes with $T$.
    Then $X^{-1}AX$ commutes with $J_n\otimes I_{\mathcal{A}}$.
    By a routine calculation, there are operators $a_0,a_1,\ldots,a_{n-1}$ in $\mathcal{A}$ such that
    \begin{equation*}
        X^{-1}AX=\sum_{j=0}^{n-1}J_n^j\otimes a_j=\sum_{j=0}^{n-1}a_j(J_n\otimes I_{\mathcal{A}})^j.
    \end{equation*}
    It follows that $A=\sum_{j=0}^{n-1}a_jT^j\in\mathcal{A}[T]$.
    Therefore, $\{T\}'\cap \mathbb{M}_n (\mathcal{A})=\mathcal{A}[T]$.
    Since every idempotent in $\mathcal{A}[T]$ lies in $I_n\otimes\mathcal{A}$, $T$ is strongly irreducible in $\mathbb{M}_n (\mathcal{A})$.
    This completes the proof.
\end{proof}

The following corollary will be used in the proof of \Cref{thm SI}.

\begin{corollary} \label{cor similar-J}
    Let $n$ be a positive integer and $\mathcal{A}$ an abelian von Neumann algebra.
    Suppose that $T=(t_{ij})_{1\leqslant i,j\leqslant n}$ is an operator in $\mathbb{M}_n (\mathcal{A})$ such that
    \begin{enumerate}  [label= $(\arabic*)$, ref= \arabic*, leftmargin=*] 
        \item[$(1)$] $t_{ij}=0$ for $i\geqslant j$, i.e., $T$ is strictly upper triangular;
        \item[$(2)$] $R(t_{j,j+1})=I_{\mathcal{A}}$ for $1\leqslant j\leqslant n-1$.
    \end{enumerate}
    Then there exists an increasing sequence of projections $\{p_k\}_{k=1}^{\infty}$ in $\mathcal{A}$ such that
    \begin{enumerate}  [label= $(\arabic*)$, ref= \arabic*, leftmargin=*] 
        \item[$(i)$] $\{p_k\}_{k=1}^{\infty}$ is convergent to $I_{\mathcal{A}}$ in the strong-operator topology;
        \item[$(ii)$] $T(I_n\otimes p_k)$ is strongly irreducible in $\mathbb{M}_n (\mathcal{A}p_k)$ for every $k\geqslant 1$;
        \item[$(iii)$] $A(I_n\otimes p_k)\in\mathcal{A}[T]$ for every $A\in\{T\}'\cap \mathbb{M}_n (\mathcal{A})$ and $k\geqslant 1$.
    \end{enumerate}
\end{corollary}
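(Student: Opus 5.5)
The plan is to cut off the region where the superdiagonal entries are small, and then apply \Cref{lem similar-J} on each remaining piece. For every $k\geqslant 1$ and $1\leqslant j\leqslant n-1$, let $q_{j,k}$ be the spectral projection of $|t_{j,j+1}|$ for the set $[1/k,\infty)$, and put
\begin{equation*}
    p_k=q_{1,k}q_{2,k}\cdots q_{n-1,k}\in\mathcal{A}.
\end{equation*}
For $n=1$ we take $p_k=I_{\mathcal{A}}$. The projections $q_{j,k}$ increase in $k$, and their strong-operator limit is the range projection $R(t_{j,j+1})$, which equals $I_{\mathcal{A}}$ by hypothesis $(2)$. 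Since $\mathcal{A}$ is abelian, a product of commuting increasing nets of projections converges strongly to the product of the limits. Hence $\{p_k\}$ is increasing and converges strongly to $I_{\mathcal{A}}$, which gives $(i)$.

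For $(ii)$, we look at $T(I_n\otimes p_k)$ inside $\mathbb{M}_n(\mathcal{A}p_k)$, whose unit is $I_n\otimes p_k$. This operator is strictly upper triangular. Its superdiagonal entries are $t_{j,j+1}p_k$, and each satisfies $|t_{j,j+1}p_k|\geqslant \frac1k p_k$, so each is invertible in $\mathcal{A}p_k$. Now \Cref{lem similar-J}, applied to the abelian von Neumann algebra $\mathcal{A}p_k$, shows that $T(I_n\otimes p_k)$ is strongly irreducible. The same lemma also gives
\begin{equation*}
    \{T(I_n\otimes p_k)\}'\cap\mathbb{M}_n(\mathcal{A}p_k)=(\mathcal{A}p_k)[T(I_n\otimes p_k)].
\end{equation*}
If $p_k=0$ for small $k$, we discard those indices. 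Alternatively, we adopt the convention that the statement is vacuous on the zero algebra.

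For $(iii)$, let $A\in\{T\}'\cap\mathbb{M}_n(\mathcal{A})$. Because $I_n\otimes p_k$ is central, $A(I_n\otimes p_k)$ commutes with $T(I_n\otimes p_k)$ in $\mathbb{M}_n(\mathcal{A}p_k)$. By the display above, we can write
\begin{equation*}
    A(I_n\otimes p_k)=\sum_{j=0}^{n-1} b_j\,(T(I_n\otimes p_k))^j, \qquad b_j\in\mathcal{A}p_k.
\end{equation*}
Here the $j=0$ term is $b_0(I_n\otimes p_k)$, which is $b_0I$ since $b_0=b_0p_k$. For $j\geqslant1$, we have $b_j(T(I_n\otimes p_k))^j=b_jp_kT^j=b_jT^j$. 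Therefore $A(I_n\otimes p_k)=\sum_j b_jT^j\in\mathcal{A}[T]$.

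The main point needing care is the convergence claim in $(i)$. This is standard: $1-p_k\leqslant\sum_j(1-q_{j,k})$, and each term tends to $0$ strongly. The remaining steps are direct applications of \Cref{lem similar-J}.
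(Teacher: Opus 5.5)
Your proposal is correct and follows essentially the same route as the paper, which also takes $p_k$ to be the product over $j$ of the spectral projections of $t_{j,j+1}$ for $\{|z|\geqslant 2^{-k}\}$ and then applies \Cref{lem similar-J} on $\mathcal{A}p_k$ to get $(ii)$ and $(iii)$. Your additional bookkeeping fills in details the paper leaves implicit: the union bound $I_{\mathcal{A}}-p_k\leqslant\sum_j(I_{\mathcal{A}}-q_{j,k})$ for $(i)$, absorbing $p_k$ into the coefficients $b_j$ for $(iii)$, and the remark about possibly zero $p_k$.
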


\begin{proof}
    For every $1\leqslant j\leqslant n-1$ and $k\geqslant 1$, let $p_{j,k}$ be the spectral projection of $t_{j,j+1}$ with respect to $\{z\in\mathbb{C}\colon|z|\geqslant 2^{-k}\}$.
    By the assumption $R(t_{j,j+1})=I_{\mathcal{A}}$, it is clear that $\{p_{j,k}\}_{k=1}^{\infty}$ is an increasing sequence of projections in $\mathcal{A}$ convergent to $I_{\mathcal{A}}$ in the strong-operator topology.
    Let $p_k=p_{1,k}p_{2,k}\cdots p_{n-1,k}$ for every $k\geqslant 1$.
    Then the condition $(i)$ holds.
    Since each $t_{j,j+1}p_k$ is invertible in $\mathcal{A}p_k$, the conditions $(ii)$ and $(iii)$ hold by \Cref{lem similar-J}.
    This completes the proof.
\end{proof}

The following theorem provides a complete characterization of strongly irreducible operators in type $\mathrm{I}_n$ von Neumann algebras.

\begin{theorem}\label{thm SI}
    Let $n$ be a positive integer, $\mathcal{A}$ an abelian von Neumann algebra, and $T$ an operator in $\mathbb{M}_n (\mathcal{A})$.
    Then $T$ is strongly irreducible in $\mathbb{M}_n (\mathcal{A})$ if and only if there exists a unitary operator $U$ in $\mathbb{M}_n (\mathcal{A})$ such that $U^*TU=(t_{ij})_{1\leqslant i,j\leqslant n}$ satisfies that
    \begin{enumerate}  [label= $(\arabic*)$, ref= \arabic*, leftmargin=*] 
    \item[$(1)$] $t_{ij}=0$ for $i>j$, i.e., $T$ is upper triangular;
    \item[$(2)$] $t_{jj}=t_{nn}$ for $1\leqslant j\leqslant n-1$;
    \item[$(3)$] $R(t_{j,j+1})=I_{\mathcal{A}}$ for $1\leqslant j\leqslant n-1$.
    \end{enumerate}
    Under the above conditions, the relative commutant $\{T\}'\cap \mathbb{M}_n (\mathcal{A})$ is the strong-operator closure of $\mathcal{A}[T]$, i.e.
    \begin{equation*}
        \{T\}'\cap \mathbb{M}_n (\mathcal{A})=\overline{\mathcal{A}[T]}^{SOT}.
    \end{equation*}
    In particular, $A-\tau(A)$ is nilpotent for every $A\in\{T\}'\cap \mathbb{M}_n (\mathcal{A})$, where $\tau$ is the center-valued trace of $\mathbb{M}_n (\mathcal{A})$.
\end{theorem}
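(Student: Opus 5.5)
For the ``only if'' direction, I would first apply \Cref{lem upper triangular} to obtain a unitary $U$ with $U^*TU$ upper triangular; strong irreducibility is invariant under unitary (indeed any) similarity in $\mathbb{M}_n(\mathcal{A})$. The proof of \Cref{prop SI=Z+N} already shows that the diagonal entries of an upper triangular strongly irreducible operator all coincide, which gives condition $(2)$. Then $U^*TU-\tau(T)$ is strictly upper triangular. Subtracting a central element does not change the relative commutant, so it is still strongly irreducible. \Cref{lem SI-1-diagonal} then gives condition $(3)$.

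For the ``if'' direction and the description of the commutant, I would assume $U=I$ and replace $T$ by $N=T-I_n\otimes t_{nn}$, which is strictly upper triangular with $R(t_{j,j+1})=I_{\mathcal{A}}$. Note that $\mathcal{A}[T]=\mathcal{A}[N]$ and $\{T\}'=\{N\}'$. Applying \Cref{cor similar-J} gives increasing projections $p_k\to I_{\mathcal{A}}$ in the strong-operator topology such that $A(I_n\otimes p_k)\in\mathcal{A}[N]$ for every $A\in\{N\}'\cap\mathbb{M}_n(\mathcal{A})$. Since $A(I_n\otimes p_k)\to A$ strongly, this yields $\{T\}'\cap\mathbb{M}_n(\mathcal{A})\subseteq\overline{\mathcal{A}[T]}^{SOT}$. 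The reverse inclusion is immediate, because relative commutants in a von Neumann algebra are strong-operator closed and contain $\mathcal{A}[T]$.

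Next I would establish the nilpotency claim, since it also gives strong irreducibility. The key observation is that every element of $\mathcal{A}[N]$ is upper triangular with constant diagonal $a_0$. This property (upper triangular with all diagonal entries equal) is preserved under strong-operator limits, since matrix entries converge strongly. Hence every $A$ in the relative commutant is upper triangular with constant diagonal $a=\mathrm{tr}(A)$, so $\tau(A)=I_n\otimes a$ and $A-\tau(A)$ is strictly upper triangular, hence nilpotent.

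Strong irreducibility then follows. If $P$ is an idempotent in the commutant, then $P-\tau(P)=M$ is nilpotent and commutes with $\tau(P)$, which is central. From $P^2=P$ we get $M(2\tau(P)-I)=\tau(P)-\tau(P)^2-M^2$. Because $P(I-P)=0$ and $I-P$ has the same structure, a cleaner route is this: $\tau(P)$ is a central idempotent (its diagonal entries $a$ satisfy $a^2=a$, read off from the diagonal of $P^2=P$), and $2\tau(P)-I$ is an invertible symmetry. Working separately on $\tau(P)$ and $I-\tau(P)$ reduces the question to an idempotent that is nilpotent or unipotent, and such an idempotent is $0$ or $I$. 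Hence $P=\tau(P)$ is central. I expect the main subtlety to be only the strong-operator-limit step; everything else follows routinely from the earlier lemmas.
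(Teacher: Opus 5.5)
Your proof is correct. The necessity direction and the identification of the commutant with $\overline{\mathcal{A}[T]}^{SOT}$ follow the paper's argument. Both rest on \Cref{lem upper triangular}, \Cref{prop SI=Z+N}, \Cref{lem SI-1-diagonal} and the projections $p_k$ of \Cref{cor similar-J}.

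The genuine difference is how you get strong irreducibility in the sufficiency direction.

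\emph{The paper's route.} It proves strong irreducibility first, using part $(ii)$ of \Cref{cor similar-J}: each cut-down $T(I_n\otimes p_k)$ is strongly irreducible, so an idempotent $P$ in the commutant satisfies $P(I_n\otimes p_k)=I_n\otimes a_k$. These local central pieces are then glued into $a=a_1+\sum_{k\geqslant 2}a_k(p_k-p_{k-1})$. This step needs the uniform bound $\|a_k\|\leqslant\|P\|$ so that $a$ is a genuine element of $\mathcal{A}$. Nilpotency of $A-\tau(A)$ is proved separately, from $(A-\tau(A))^n(I_n\otimes p_k)=0$.

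\emph{Your route.} You use only part $(iii)$. From it you extract the structural fact that every element of the commutant is upper triangular with constant diagonal, since the entries $a_{ij}p_k$ vanish or agree for all $k$. Nilpotency follows immediately. Strong irreducibility then comes out algebraically: the diagonal of an idempotent $P$ is a projection $a\in\mathcal{A}$. On $I_n\otimes a$, $P$ is a unipotent idempotent, hence the identity there. On $I_n\otimes(I_{\mathcal{A}}-a)$, $P$ is a nilpotent idempotent, hence $0$ there.

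\emph{Comparison.} Your approach avoids the gluing and norm-bound step, and it lets the ``in particular'' statement do the work rather than appear as an afterthought. It also records explicitly that commutant elements are upper triangular with constant diagonal. The paper later invokes \Cref{thm SI} for exactly this fact without stating it, e.g.\ in Case II of \Cref{prop diagonal}. The paper's route, by contrast, reads strong irreducibility directly off the local model $J_n\otimes p_k$ without the triangular-structure observation.

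The identity you first write for $M(2\tau(P)-I)$ is correct but unused, and can be dropped.
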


\begin{proof}
    The necessity follows from \Cref{lem upper triangular}, \Cref{prop SI=Z+N}, and \Cref{lem SI-1-diagonal}.
    In the remaining part of the proof, we assume that the conditions $(1)$-$(3)$ hold.
    Without loss of generality, we assume that $U=I$ and $t_{jj}=0$ for $1\leqslant j\leqslant n$.
    Let $\{p_k\}_{k=1}^{\infty}$ be given by \Cref{cor similar-J}.
    If $P$ is an idempotent in $\{T\}'\cap \mathbb{M}_n (\mathcal{A})$, then by the strong irreducibility of $T(I_n\otimes p_k)$, there exists an operator $a_k$ in $\mathcal{A}p_k$ such that $P(I_n\otimes p_k)=I_n\otimes a_k$ for every $k\geqslant 1$.
    Note that $\|a_k\|\leqslant\|P\|$.
    Let $a=a_1+\sum_{k=2}^{\infty}a_k(p_k-p_{k-1})$.
    Then $a$ is an operator in $\mathcal{A}$ with $P(I_n\otimes p_k)=I_n\otimes ap_k$ for every $k\geqslant 1$.
    It follows that $P=I_n\otimes a$.
    Therefore, $T$ is strongly irreducible in $\mathbb{M}_n (\mathcal{A})$.
    Since $A(I_n\otimes p_k)\in\mathcal{A}[T]$ is convergent to $A$ in the strong-operator topology for every $A\in\{T\}'\cap \mathbb{M}_n (\mathcal{A})$, we have
    \begin{equation*}
        \{T\}'\cap \mathbb{M}_n (\mathcal{A})=\overline{\mathcal{A}[T]}^{SOT}.
    \end{equation*}
    Since $A(I_n\otimes p_k)\in\mathcal{A}[T]$, we have $(A-\tau(A))^n(I_n\otimes p_k)=0$ for every $k\geqslant 1$.
    It follows that $(A-\tau(A))^n=0$, i.e., $A-\tau(A)$ is nilpotent.
\end{proof}

The following corollary is a direct consequence of \Cref{thm SI}.

\begin{corollary} \label{cor SI}
    Let $\mathcal{M}$ be a type $\mathrm{I}_n$ von Neumann algebra and $T$ a strongly irreducible operator in $\mathcal{M}$.
    Then the relative commutant $\{T\}'\cap\mathcal{M}$ is the strong-operator closure of the algebra generated by $T$ and $\mathcal{Z}(\mathcal{M})$.
    In particular, $\{T\}'\cap\mathcal{M}$ is abelian and
    \begin{equation*}
        \{T\}'\cap\mathcal{M}/\mathrm{Rad}(\{T\}'\cap\mathcal{M})
        \cong\mathcal{Z}(\mathcal{M}).
    \end{equation*}
\end{corollary}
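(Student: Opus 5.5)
Write $\mathcal{M}=\mathbb{M}_n(\mathcal{A})$ with $\mathcal{A}\cong\mathcal{Z}(\mathcal{M})$. Similarity preserves every assertion (commutants are conjugated, centers are fixed), so by \Cref{thm SI} we may assume $T$ is upper triangular with constant diagonal $t_{nn}$ and $R(t_{j,j+1})=I_{\mathcal{A}}$. \Cref{thm SI} then gives $\{T\}'\cap\mathcal{M}=\overline{\mathcal{A}[T]}^{SOT}$. Since $\mathcal{A}[T]$ is generated by $T$ and $I_n\otimes\mathcal{A}=\mathcal{Z}(\mathcal{M})$ (\Cref{rem T-polynomial}), this is exactly the first assertion. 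For commutativity, $\mathcal{A}[T]$ is commutative because $T$ commutes with central elements. Multiplication is separately SOT-continuous, so commutativity passes to the SOT closure in two steps: first fix one factor in $\mathcal{A}[T]$ and let the other vary over the closure, then let both vary.

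For the quotient, consider $\tau$ restricted to $\mathcal{C}:=\{T\}'\cap\mathcal{M}$; it is a linear map onto $\mathcal{Z}(\mathcal{M})\subseteq\mathcal{C}$ that fixes $\mathcal{Z}(\mathcal{M})$. The key step is to show that $\tau$ is multiplicative on $\mathcal{C}$. By \Cref{thm SI}, each $A\in\mathcal{C}$ has $N_A:=A-\tau(A)$ nilpotent. Since $\mathcal{C}$ is commutative, for $A,B\in\mathcal{C}$ the elements $N_A$ and $N_B$ commute, and $N_AN_B$, $\tau(A)N_B$, $\tau(B)N_A$ are nilpotent. Hence $AB-\tau(A)\tau(B)$ is a sum of commuting nilpotents, so it is nilpotent. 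It is also in $\mathcal{C}$, so $\tau(AB)-\tau(A)\tau(B)=\tau\bigl(AB-\tau(A)\tau(B)\bigr)$.

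It remains to see that a nilpotent $N\in\mathcal{M}$ has $\tau(N)=0$. Triangularize $N$ by \Cref{lem upper triangular}. Its diagonal entries $d_j\in\mathcal{A}$ satisfy $d_j^k=0$, so $d_j=0$ because $\mathcal{A}$ is semisimple; hence $\mathrm{tr}(N)=0$. Therefore $\tau$ is a unital homomorphism $\mathcal{C}\to\mathcal{Z}(\mathcal{M})$, and it is surjective.

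Finally, identify $\ker\tau\cap\mathcal{C}$ with $\mathrm{Rad}(\mathcal{C})$.
\begin{itemize}
\item If $A\in\mathcal{C}$ and $\tau(A)=0$, then $A$ is nilpotent. For any $B\in\mathcal{C}$, $AB=BA$ is nilpotent by commutativity, so $\sigma(AB)=\sigma(BA)=\{0\}$. The spectral characterization of the radical recalled in \Cref{s2} then gives $A\in\mathrm{Rad}(\mathcal{C})$.
\item Conversely, $\mathcal{Z}(\mathcal{M})$ is a commutative $C^*$-algebra and hence semisimple, and $\tau$ is a surjective homomorphism. The image of the radical under a surjective homomorphism lies in the radical, since quasi-invertibility is preserved. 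So $\tau(\mathrm{Rad}(\mathcal{C}))\subseteq\mathrm{Rad}(\mathcal{Z}(\mathcal{M}))=\{0\}$.
\end{itemize}
Thus $\mathrm{Rad}(\mathcal{C})=\ker\tau|_{\mathcal{C}}$, and $\mathcal{C}/\mathrm{Rad}(\mathcal{C})\cong\mathcal{Z}(\mathcal{M})$.

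The main obstacle is the multiplicativity of $\tau$ on $\mathcal{C}$. It depends on commutativity, so that sums and products of the relevant nilpotents remain nilpotent, and on the trace of a nilpotent vanishing over the abelian algebra $\mathcal{A}$; the rest is routine.
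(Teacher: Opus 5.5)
Your proof is correct and is essentially the paper's route: the paper records this corollary as a direct consequence of \Cref{thm SI}, namely that the relative commutant equals $\overline{\mathcal{A}[T]}^{SOT}$ and that $A-\tau(A)$ is nilpotent for each $A$ in it, and gives no further detail. Your argument supplies exactly the omitted steps: commutativity via separate SOT-continuity of multiplication, multiplicativity of $\tau$ on the commutant via the vanishing center-valued trace of nilpotents, and the identification $\mathrm{Rad}(\{T\}'\cap\mathcal{M})=\ker\tau|_{\{T\}'\cap\mathcal{M}}$.
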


\subsection{Property \texorpdfstring{$(J)$}{(J)}}

To investigate strongly irreducible decomposition of operators in von Neumann algebras, we introduce the following property (see \Cref{lem FSID-J}).

\begin{definition}\label{def J}
    Let $\mathcal{M}$ be a von Neumann algebra and $T$ an operator in $\mathcal{M}$.
    We say that $T$ has \emph{property $(J)$} if there exists a bounded maximal abelian family of idempotents in $\{T\}'\cap\mathcal{M}$.
\end{definition}

The following lemma will be used in the proof of \Cref{prop local-J}.

\begin{lemma}\label{lem locally-bounded}
    Let $\mathcal{M}$ be a type $\mathrm{I}_n$ von Neumann algebra and $\mathscr{P}$ an abelian family of idempotents in $\mathcal{M}$.
    Then there exists a nonzero central projection $Z$ in $\mathcal{M}$ such that $\mathscr{P}Z$ is bounded.
\end{lemma}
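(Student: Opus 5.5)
The plan is to find, on some nonzero central piece, finitely many ``atoms'' built from $\mathscr{P}$ that every member of $\mathscr{P}$ is a central combination of. Since the atoms have fixed norms, every $PZ$ is then uniformly bounded.

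\textbf{Setting up atoms and a maximal count.} Write $\mathcal{M}=\mathbb{M}_n(\mathcal{A})$ and let $\tau$ be the center-valued trace. For a finite subset $F=\{P_1,\dots,P_k\}\subseteq\mathscr{P}$ and a sign pattern $\epsilon\in\{0,1\}^k$, put $Q_\epsilon=\prod_{i}P_i^{\epsilon_i}(I-P_i)^{1-\epsilon_i}$. Since $\mathscr{P}$ is abelian, each $Q_\epsilon$ is an idempotent in the algebra generated by $\mathscr{P}$. The $Q_\epsilon$ are mutually orthogonal ($Q_\epsilon Q_{\epsilon'}=0$ for $\epsilon\neq\epsilon'$) and sum to $I$.

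For a nonzero central projection $Z$, say that $(F,Z)$ has $m$ atoms if exactly $m$ of the $Q_\epsilon Z$ have central support equal to $Z$ and the remaining ones vanish. Such $Z$ always exist for a given $F$: cut $I$ into finitely many central pieces according to which central supports $C_{Q_\epsilon}$ contain a point. Each nonzero $Q_\epsilon Z$ with full central support satisfies $\tau(Q_\epsilon Z)\geqslant \frac1n Z$, and these traces sum to $Z$, so $m\leqslant n$. Let $m$ be the largest integer for which some pair $(F,Z)$ has $m$ atoms, and fix such a pair with atoms $Q_1,\dots,Q_m$. Any smaller nonzero central $Z'\leqslant Z$ also gives $m$ atoms for $F$, and by the choice of $m$ no finite $F'\supseteq F$ yields more than $m$ atoms on any nonzero central projection.

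\textbf{Every $P\in\mathscr{P}$ is central-diagonal on the atoms.} Fix $P\in\mathscr{P}$ and $j$. Then $PQ_jZ$ and $(I-P)Q_jZ$ are commuting idempotents summing to $Q_jZ$. Let $c$ and $c'$ be their central supports. If $cc'\neq0$, then $F\cup\{P\}$ on the central projection $cc'$ keeps all other atoms (their central supports dominate $Z\geqslant cc'$) and splits $Q_j$ into two atoms, each of central support at least $cc'$. Cutting down further to make the supports exact if necessary produces $m+1$ atoms, a contradiction. Hence $cc'=0$. Since $c+c'\geqslant C_{Q_jZ}=Z$, it follows that $PQ_jZ=Q_jc$ with $c$ a central projection. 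Summing over $j$ gives
\begin{equation*}
PZ=\sum_{j=1}^{m}Q_jc_j,\qquad \|PZ\|\leqslant\sum_{j=1}^{m}\|Q_jZ\|,
\end{equation*}
and the right-hand side does not depend on $P$, so $\mathscr{P}Z$ is bounded.

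\textbf{Expected main obstacle.} The delicate step is the bookkeeping of central supports in the maximality argument. One must define ``$m$ atoms'' so that it is stable under passing to smaller central projections, and check that adding $P$ genuinely raises the count on a nonzero central piece. The trace bound $\tau(\cdot)\geqslant \frac1n C_{(\cdot)}$ for nonzero idempotents in a type $\mathrm{I}_n$ algebra is what makes the maximum finite, so that is the point where type $\mathrm{I}_n$ is used. This contrasts with \Cref{rem product-bdd}, where no such bound is available.
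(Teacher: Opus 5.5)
Your argument is correct, and it follows a genuinely different route from the paper's.

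The paper argues by induction on $n$. If some $P\in\mathscr{P}$ is not central, it cuts to a central piece $Z_1$ on which $\tau(PZ_1)=\frac{k}{n}Z_1$ with $1\leqslant k\leqslant n-1$. It then conjugates $PZ_1$ to a projection $Q$. Finally it applies the induction hypothesis to the compressions of the conjugated family to the type $\mathrm{I}_k$ corner $Q\mathcal{M}Q$ and the type $\mathrm{I}_{n-k}$ corner $(I-Q)\mathcal{M}(I-Q)$, shrinking the central projection twice.

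You instead run a single extremal argument. The bound $\tau(E)\geqslant\frac1n C_E$ caps at $n$ the number of Boolean atoms generated by finitely many members of $\mathscr{P}$. Maximality of that count then forces every $P\in\mathscr{P}$ to satisfy $PQ_jZ=Q_jc_j$ with $c_j$ central. The bookkeeping holds up: on a central cut $W\leqslant cc'$ where all supports are exact, $Q_j$ contributes two pieces of support $W$. Each $Q_i$ with $i\ne j$ contributes at least one, since $Q_iW\ne 0$ (the old atoms are split too, but one piece always survives).

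What each route buys:
\begin{itemize}
\item The paper's route is shorter and needs only the similarity of idempotents to projections plus induction on corners.
\item Your route avoids both the induction and the conjugation, and gives an explicit bound $\sum_j\|Q_jZ\|$.
\item Your route also yields more structure. On $Z$, every $PZ$ has the form $\sum_j Q_jc_j$ for at most $n$ fixed orthogonal idempotents $Q_j$ in the algebra generated by $\mathscr{P}$. This is a local version of condition $(3)$ in \Cref{lem J-FSID}, obtained without assuming that $\mathscr{P}$ is bounded.
\end{itemize}
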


\begin{proof}
    If $\mathscr{P}$ is a subset of $\mathcal{Z}(\mathcal{M})$, then every idempotent in $\mathscr{P}$ is a projection, and we can take $Z=I$.
    We prove the lemma by induction.
    If $n=1$, then $\mathscr{P}$ is a subset of $\mathcal{Z}(\mathcal{M})=\mathcal{M}$ and the lemma is clearly true.
    Suppose that $n\geqslant 2$.
    Without loss of generality, we assume that there exists an idempotent $P$ in $\mathscr{P}$ but not in $\mathcal{Z}(\mathcal{M})$.
    Then there exists a nonzero central projection $Z_1$ in $\mathcal{M}$ such that
    \begin{equation*}
        \tau(PZ_1)=\frac{k}{n}Z_1\quad\text{for some}~1\leqslant k\leqslant n-1,
    \end{equation*}
    where $\tau$ is the center-valued trace of $\mathcal{M}$.
    Let $X_1$ be an invertible operator in $\mathcal{M}Z_1$ such that $X_1^{-1}PZ_1X_1$ is a projection.
    Let $X=X_1+(I_{\mathcal{M}}-Z_1)$ be an invertible operator in $\mathcal{M}$, $Q=X^{-1}PZ_1X$, and $\mathscr{Q}=X^{-1}\mathscr{P}Z_1X$.
    Then $\mathscr{Q}Q$ is an abelian family of idempotents in the type $\mathrm{I}_k$ von Neumann algebra $Q\mathcal{M}Q$.
    By induction, there exists a nonzero central projection $Z_2\leqslant Z_1$ in $\mathcal{M}$ such that $\mathscr{Q}QZ_2$ is bounded.
    Similarly, $\mathscr{Q}(I_{\mathcal{M}}-Q)Z_2$ is an abelian family of idempotents in the type $\mathrm{I}_{n-k}$ von Neumann algebra $(I_{\mathcal{M}}-Q)\mathcal{M}(I_{\mathcal{M}}-Q)Z_2$.
    By induction again, there exists a nonzero central projection $Z\leqslant Z_2$ in $\mathcal{M}$ such that $\mathscr{Q}(I_{\mathcal{M}}-Q)Z$ is bounded.
    Since both $\mathscr{Q}QZ$ and $\mathscr{Q}(I_{\mathcal{M}}-Q)Z$ are bounded, $\mathscr{Q}Z$ is bounded.
    Thus, $\mathscr{P}Z$ is bounded.
\end{proof}

The following proposition states that every operator in a type $\mathrm{I}_n$ von Neumann algebra has property $(J)$ locally.

\begin{proposition} \label{prop local-J}
    Let $\mathcal{M}$ be a type $\mathrm{I}_n$ von Neumann algebra, $T$ an operator in $\mathcal{M}$, and $\mathscr{P}$ a maximal abelian family of idempotents in $\{T\}'\cap\mathcal{M}$.
    Then there exists an increasing sequence of central projections $\{P_k\}_{k=1}^{\infty}$ in $\mathcal{M}$ such that
    \begin{enumerate}  [label= $(\arabic*)$, ref= \arabic*, leftmargin=*] 
        \item[$(i)$] $\{P_k\}_{k=1}^{\infty}$ is convergent to $I_{\mathcal{M}}$ in the strong-operator topology;
        \item[$(ii)$] $\mathscr{P}P_k$ is bounded for every $k\geqslant 1$.
    \end{enumerate}
    In particular, each $TP_k$ has property $(J)$ in $\mathcal{M}P_k$.
\end{proposition}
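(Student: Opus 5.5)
We derive the statement from \Cref{lem locally-bounded} by an exhaustion argument, much as in the proof of \Cref{cor Jordan-nilpotent}. Applying \Cref{lem locally-bounded} to $\mathscr{P}Z_0$ inside the type $\mathrm{I}_n$ von Neumann algebra $\mathcal{M}Z_0$, where $Z_0$ is any nonzero central projection, gives a nonzero central $Z\leqslant Z_0$ with $\mathscr{P}Z$ bounded. Note that $\mathscr{P}Z_0$ is still an abelian family of idempotents in $\mathcal{M}Z_0$.

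Next, let $\{Z_\mu\}_{\mu\in\mathbb{A}}$ be a maximal orthogonal family of nonzero central projections such that each $\mathscr{P}Z_\mu$ is bounded. By the local statement above, $\sum_\mu Z_\mu=I_{\mathcal{M}}$: otherwise we could apply it to $Z_0=I-\sum_\mu Z_\mu$ and contradict maximality. For each $\mu$, set $c_\mu=\sup\{\|PZ_\mu\|\colon P\in\mathscr{P}\}<\infty$. For $m\geqslant1$, put $Q_m=\sum\{Z_\mu\colon c_\mu\leqslant m\}$ and $P_k=Q_k$. Since the $Z_\mu$ are orthogonal central projections, $\|PQ_k\|=\sup_{c_\mu\leqslant k}\|PZ_\mu\|\leqslant k$, so $\mathscr{P}P_k$ is bounded by $k$. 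The sequence $P_k$ increases, and its supremum is $\sum_\mu Z_\mu=I$, so it converges strongly to $I_{\mathcal{M}}$. This gives (i) and (ii).

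For the final assertion, we check that $\mathscr{P}P_k$ is a maximal abelian family of idempotents in $\{TP_k\}'\cap\mathcal{M}P_k$. Suppose $Q$ is an idempotent in $\{TP_k\}'\cap\mathcal{M}P_k$ commuting with every element of $\mathscr{P}P_k$. Then for each $P\in\mathscr{P}$, the operator $Q+P(I-P_k)$ is an idempotent in $\{T\}'\cap\mathcal{M}$ that commutes with $\mathscr{P}$, because $P_k$ is central. By maximality of $\mathscr{P}$ it lies in $\mathscr{P}$, and multiplying by $P_k$ shows $Q\in\mathscr{P}P_k$. (The identity $I$ belongs to $\mathscr{P}$ by maximality, so taking $P=I$ is allowed.) Hence $\mathscr{P}P_k$ is a bounded maximal abelian family, and $TP_k$ has property $(J)$.

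The main delicate point is this maximality transfer, in particular making sure that idempotents of $\mathcal{M}P_k$ lift to idempotents in $\{T\}'\cap\mathcal{M}$ commuting with $\mathscr{P}$. Centrality of $P_k$ handles this. One should also check the routine facts that the relative commutant of $TP_k$ in $\mathcal{M}P_k$ equals $(\{T\}'\cap\mathcal{M})P_k$, and that norms over orthogonal central summands are computed as a supremum.
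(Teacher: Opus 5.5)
Your proof is correct and follows the paper's argument: take a maximal orthogonal family of central projections $Z_\lambda$ with $\mathscr{P}Z_\lambda$ bounded, note that it sums to $I_{\mathcal{M}}$ by \Cref{lem locally-bounded}, and let $P_k$ be the sum of those $Z_\lambda$ with $\sup_{P\in\mathscr{P}}\|PZ_\lambda\|\leqslant k$. Your check that $\mathscr{P}P_k$ stays maximal abelian in $\{TP_k\}'\cap\mathcal{M}P_k$ (via the idempotent $Q+P(I-P_k)$) is also correct, and it supplies the ``in particular'' clause that the paper leaves unproved.
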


\begin{proof}
    Let $\{Z_\lambda\}_{\lambda\in\Lambda}$ be a maximal orthogonal family of nonzero central projections in $\mathcal{M}$ such that $\mathscr{P}Z_\lambda$ is bounded for every $\lambda\in\Lambda$.
    Then $I=\sum_{\lambda\in\Lambda}Z_\lambda$ by \Cref{lem locally-bounded}.
    For every $k\geqslant 1$, let
    \begin{equation*}
        \Lambda_k=\{\lambda\in\Lambda\colon\|PZ_\lambda\|\leqslant k~\text{for every}~P\in\mathscr{P}\}.
    \end{equation*}
    Then $\Lambda_k$ is increasing and $\Lambda=\bigcup_{k=1}^{\infty}\Lambda_k$.
    Let $P_k=\sum_{\lambda\in\Lambda_k}Z_\lambda$.
    Then $\{P_k\}_{k=1}^{\infty}$ has the desired properties.
\end{proof}

Let $\mathcal{M}$ be a type $\mathrm{I}_n$ von Neumann algebra with  center-valued trace $\tau$.
Then $\tau(P)\leqslant C_P\leqslant n\tau(P)$ for every idempotent $P$ in $\mathcal{M}$, where $C_P$ is the central support of $P$.
It follows that $C_P=I_{\mathcal{M}}$ if and only if $\tau(P)\geqslant\frac{1}{n}I_{\mathcal{M}}$.
This fact will be used in the proof of the following lemma.

\begin{lemma}\label{lem J-FSID}
    Let $\mathcal{M}$ be a type $\mathrm{I}_n$ von Neumann algebra, $T$ an operator in $\mathcal{M}$, and $\mathscr{P}$ a bounded abelian family of idempotents in $\{T\}'\cap\mathcal{M}$.
    Then there are finitely many idempotents $\{P_j\}_{j=1}^m$ in $\{T\}'\cap\mathcal{M}$ such that
    \begin{enumerate}  [label= $(\arabic*)$, ref= \arabic*, leftmargin=*] 
        \item[$(1)$] $I_{\mathcal{M}}=\sum_{j=1}^{m}P_j$;
        \item[$(2)$] $P_jP_k=0$ for $j\ne k$;
        \item[$(3)$] $PP_j=P_jP=P_jC_{P_jP}$ for every idempotent $P$ in $\mathscr{P}$.
    \end{enumerate}
\end{lemma}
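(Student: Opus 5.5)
Following \Cref{lem product-bdd}, I would first enlarge $\mathscr{P}$ to the family $\mathscr{P}_0$ of finite products. Since $\mathscr{P}$ is abelian and lies in $\{T\}'\cap\mathcal{M}$, every product is again an idempotent in $\{T\}'\cap\mathcal{M}$. The family $\mathscr{P}_0$ is uniformly bounded, say by $C^n$. I would then also adjoin the complements $I-P$ and all products of these with elements of $\mathscr{P}$. This gives a bounded abelian multiplicative family $\widetilde{\mathscr{P}}$ that is closed under $P\mapsto I-P$. Boundedness holds because \Cref{lem product-bdd} applies to the bounded abelian family $\mathscr{P}\cup\{I-P\colon P\in\mathscr{P}\}$.

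The idea is to build $P_1,\dots,P_m$ as ``atoms'' of the Boolean algebra generated by $\widetilde{\mathscr{P}}$, localized over the center. For a nonzero idempotent $Q\in\widetilde{\mathscr{P}}$, define
\begin{equation*}
\nu(Q)=\min\{k\colon \tau(QZ)\geqslant \tfrac{k}{n}Z \text{ for some nonzero central projection } Z\}.
\end{equation*}
Since $\tau$ takes values in $\{0,\frac1n,\ldots,1\}$ locally, any strictly decreasing chain $Q\succ QP\succ\cdots$, restricted to a central piece where it is strict, has length at most $n$.

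Step one is a local statement. For any nonzero central projection $Z_0$, there is a nonzero central $Z\leqslant Z_0$ and finitely many pairwise orthogonal idempotents $E_1,\dots,E_r\in\widetilde{\mathscr{P}}Z$ with sum $Z$, such that for each $P\in\mathscr{P}$ and each $i$ we have $PE_i=E_iC_{PE_i}$. In words, on the corner $E_i$, every $P$ is either $0$ or $E_i$ over each central piece. I would prove this by a descent on trace values. Start with $E=Z_0$. If some $P\in\mathscr{P}$ splits $E$ nontrivially on every central subpiece, replace $E$ by $PE$ and $(I-P)E$ on a smaller central projection where both traces are constant and positive. Each split lowers the local trace by at least $\frac1n$, so after at most $n$ rounds nothing splits. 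At that point we have at most $n$ orthogonal idempotents, each minimal in the required centrally-localized sense. The minimality condition $PE_i=E_iC_{PE_i}$ means exactly that $PE_i$ is a central cut of $E_i$.

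Step two globalizes by a maximal-orthogonal-family argument, as in \Cref{cor Jordan-nilpotent} and \Cref{prop local-J}. Take a maximal orthogonal family $\{Z_\lambda\}$ of central projections carrying such decompositions $E^{\lambda}_1,\dots,E^{\lambda}_{r_\lambda}$ with $r_\lambda\leqslant n$. Group the $\lambda$'s by the value of $r_\lambda$ and pad with zeros so that there are $m=n$ slots. Then set $P_j=\sum_\lambda E^{\lambda}_j$, with the sum taken in the strong operator topology. These sums converge and define idempotents because all the $E^\lambda_j$ lie in the bounded family $\widetilde{\mathscr{P}}$ (norm at most $C^{2n}$) and live on orthogonal central summands. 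They commute with $T$, are pairwise orthogonal, and sum to $I$. Condition (3) is verified summand by summand, using $C_{P_jP}=\sum_\lambda C_{E^\lambda_jP}$.

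The main obstacle is step one: making the descent terminate rigorously when $\mathscr{P}$ is infinite. This requires choosing at each stage a $P$ that splits $E$ on some central piece and shrinking $Z$ suitably, while making sure that the final corners are not split by \emph{any} $P\in\mathscr{P}$ on any subpiece. I would handle this by picking $E$ of minimal local trace among nonzero elements of $\widetilde{\mathscr{P}}Z$, using \Cref{lem sigma-ap-bp}-style central localization to make the traces constant. If some $P$ split $E$ nontrivially on a subpiece, then $PE$ would have strictly smaller trace there, contradicting minimality. Once the minimal $E$ is found, apply the same procedure to $Z-E$, which lies in $\widetilde{\mathscr{P}}$ by closure under complements, and repeat at most $n$ times.
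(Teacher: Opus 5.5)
Your approach is valid and differs from the paper's, but one auxiliary claim is false and you rely on it twice. The set $\widetilde{\mathscr{P}}$ of finite products of elements of $\mathscr{P}\cup\{I-P\colon P\in\mathscr{P}\}$ is \emph{not} closed under $Q\mapsto I-Q$. Already $I-P_1P_2=(I-P_1)+P_1(I-P_2)$ is in general not such a product. So neither ``$Z-E\in\widetilde{\mathscr{P}}$'' nor ``all $E^\lambda_j$ lie in $\widetilde{\mathscr{P}}$'' is justified.

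The repair is routine: run your minimal-trace extraction relative to the current remainder $R$, starting from $R=Z_0$.
\begin{itemize}
\item Minimize $k\geqslant 1$ over all $Q\in\widetilde{\mathscr{P}}$ and all nonzero central $Z'$ below the current central projection with $\tau(QRZ')=\frac{k}{n}Z'$, and set $E=QRZ'$. Products from $\mathscr{P}$ alone, together with $I$, suffice for $Q$.
\item If $PE\neq EC_{PE}$ for some $P\in\mathscr{P}$, let $Z''$ be the central support of the nonzero idempotent $(I-P)EC_{PE}$. Then $\frac1n Z''\leqslant\tau(PQRZ'')\leqslant\frac{k-1}{n}Z''$, contradicting minimality.
\item Replace $R$ by $(R-E)Z'$ and repeat. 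Each step lowers $\tau(R)$ by at least $\frac1n$, so there are at most $n$ steps.
\item Since $\|E\|\leqslant C^n\|R\|$ and $\|R-E\|\leqslant(1+C^n)\|R\|$, all atoms are bounded by a constant $K$ depending only on $C$ and $n$.
\end{itemize}
In step two, take the maximal orthogonal family among central projections carrying such decompositions with atoms of norm at most $K$. Your gluing and your verification of condition $(3)$ then go through as written. Alternatively, expand $Z-R_1\cdots R_sZ=\sum_i R_1\cdots R_{i-1}(I-R_i)Z$ and extract inside each product separately. Then every atom really is a product and your original bound applies.

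The paper instead inducts on $n$ and extracts one global piece at a time. It closes $\mathscr{P}$ under products (\Cref{lem product-bdd}) and under central patching. It then takes a maximal multiplicative subfamily $\mathscr{C}$ of idempotents with full central support and gets a weak-operator limit $P_1$ from \Cref{lem decreaing-idempotent}. Maximality of $\mathscr{C}$ gives $P_1P=P_1C_{P_1P}$ for all $P\in\mathscr{P}$. Finally it conjugates $P_1$ to a projection and applies the induction hypothesis to $(I-P_1)\mathcal{M}(I-P_1)$, a finite direct sum of type $\mathrm{I}_k$ algebras with $k<n$.

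Your minimal-trace argument is the local counterpart of that maximality argument. Your exhaustion over the center replaces both the weak-operator limit lemma and the induction on the type, which makes the construction more hands-on. The price is that you glue over possibly infinitely many central summands. So you must control the norms of the locally built atoms uniformly in $\lambda$, which is exactly where the false closure claim bit. In the paper the only boundedness needed is that of the patched product family, which \Cref{lem product-bdd} supplies directly, and only finitely many central summands are ever combined.
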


\begin{proof}
    We prove the lemma by induction.
    If $n=1$, then we take $\{P_j\}_{j=1}^m=\{I_{\mathcal{M}}\}$.
    Without loss of generality, we assume that $n\geqslant 2$ and $I_{\mathcal{M}}\in\mathscr{P}$.
    Moreover, we may assume that $\mathscr{P}$ is closed under multiplication by \Cref{lem product-bdd}.
    Let $\mathscr{Q}$ be the family of all idempotents in $\{T\}'\cap\mathcal{M}$ of the form
    \begin{equation*}
        P_1Z_1+P_2Z_2+\cdots+P_kZ_k,
    \end{equation*}
    where $\{Z_j\}_{j=1}^k$ are central projections in $\mathcal{M}$ with $I_{\mathcal{M}}=\sum_{j=1}^{k}Z_j$ and $\{P_j\}_{j=1}^k$ are idempotents in $\mathscr{P}$.
    Then $\mathscr{Q}$ is a bounded abelian family of idempotents in $\{T\}'\cap\mathcal{M}$ and closed under multiplication.
    Let $\mathscr{C}$ be a maximal family of idempotents in $\mathscr{Q}$ such that $P_1P_2\in\mathscr{C}$ for all $P_1,P_2\in\mathscr{C}$ and $\tau(P)\geqslant\frac{1}{n}I_{\mathcal{M}}$ for every $P\in\mathscr{C}$.
    By \Cref{lem decreaing-idempotent}, $(\mathscr{C},\succcurlyeq)$ is convergent to an idempotent $P_1$ in the weak-operator topology.
    By the weak-operator continuity of $\tau$, we have $\tau(P_1)\geqslant\frac{1}{n}I_{\mathcal{M}}$, i.e., $C_{P_1}=I_{\mathcal{M}}$.
    Let $P\in\mathscr{P}$.
    It is clear that $PP_1=P_1P$.
    Let $Z$ be the central support of $P_1P-P_1C_{P_1P}$.
    Then $Z\leqslant C_{P_1P}$ and $\tau(P_1P)\geqslant\frac{1}{n}C_{P_1P}\geqslant\frac{1}{n}Z$.
    Thus, for every $Q\in\mathscr{C}$, we have
    \begin{equation*}
        \begin{split}
        \tau(Q(PZ+I_{\mathcal{M}}-Z))
        & =\tau(QP)Z+\tau(Q)(I_{\mathcal{M}}-Z)\\
        & \geqslant\tau(P_1P)Z+\frac{1}{n}(I_{\mathcal{M}}-Z)
        \geqslant\frac{1}{n}I_{\mathcal{M}}.
        \end{split}
    \end{equation*}
    By the maximality of $\mathscr{C}$, we have $Q(PZ+I_{\mathcal{M}}-Z)\in\mathscr{C}$ for every $Q\in\mathscr{C}$.
    It follows that $P_1(PZ+I_{\mathcal{M}}-Z)$ is a weak-operator limit of $\mathscr{C}$, and hence $P_1(PZ+I_{\mathcal{M}}-Z)=P_1$.
    Then $(P_1P-P_1C_{P_1P})Z=0$, and hence $P_1P=P_1C_{P_1P}$ for every $P\in\mathscr{P}$.
    
    Let $X$ be an invertible operator in $\mathcal{M}$ such that $X^{-1}P_1X$ is a projection.
    By considering $X^{-1}TX$, we may assume that $P_1$ is a projection.
    Let $E=I_{\mathcal{M}}-P_1$.
    Then $\mathscr{P}E$ is a bounded abelian family of idempotents in $\{TE\}'\cap E\mathcal{M}E$.
    Since $E\mathcal{M}E$ is a direct sum of von Neumann algebras of type $I_k$ for $1\leqslant k\leqslant n-1$, by induction, there are finitely many idempotents $\{P_j\}_{j=2}^m$ in $\{TE\}'\cap E\mathcal{M}E$ such that
    \begin{enumerate}  [label= $(\arabic*)$, ref= \arabic*, leftmargin=*] 
        \item[$(1')$] $E=\sum_{j=2}^{m}P_j$;
        \item[$(2')$] $P_jP_k=0$ for $2\leqslant j\ne k\leqslant m$;
        \item[$(3')$] $PP_j=P_jP=P_jC_{P_jP}$ for every idempotent $P$ in $\mathscr{P}E$ and $2\leqslant j\leqslant m$.
    \end{enumerate}
    For every $P\in\mathscr{P}$ and $2\leqslant j\leqslant m$, we have $PP_j=PEP_j$, and it follows that $PP_j=P_jP=P_jC_{P_jP}$.
    This completes the proof.
\end{proof}

The following corollary is a certain converse of \Cref{lem FSID-J}.

\begin{corollary}\label{cor J-FSID}
    Let $\mathcal{M}$ be a type $\mathrm{I}_n$ von Neumann algebra and $T$ an operator in $\mathcal{M}$.
    Then every bounded maximal abelian family of idempotents in $\{T\}'\cap\mathcal{M}$ contains a finite strongly irreducible decomposition of $T$.
\end{corollary}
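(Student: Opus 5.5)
Let $\mathscr{P}$ be a bounded maximal abelian family of idempotents in $\{T\}'\cap\mathcal{M}$. The plan is to apply \Cref{lem J-FSID} to $\mathscr{P}$. This yields finitely many idempotents $\{P_j\}_{j=1}^m$ in $\{T\}'\cap\mathcal{M}$ with $I_{\mathcal{M}}=\sum_j P_j$, $P_jP_k=0$ for $j\ne k$, and $PP_j=P_jP=P_jC_{P_jP}$ for every $P\in\mathscr{P}$. Discarding the zero $P_j$'s, it remains to show two things: that each $P_j$ lies in $\mathscr{P}$, and that each $P_j$ is centrally minimal in $\{T\}'\cap\mathcal{M}$.

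\emph{Membership in $\mathscr{P}$.} By property $(3)$ of \Cref{lem J-FSID}, each $P_j$ commutes with every element of $\mathscr{P}$. Hence $\mathscr{P}\cup\{P_j\}$ is an abelian family of idempotents in $\{T\}'\cap\mathcal{M}$, and it is still bounded, since only one element is added. Maximality of $\mathscr{P}$ then forces $P_j\in\mathscr{P}$. Consequently $\{P_j\}_{j=1}^m\subseteq\mathscr{P}$, and conditions $(1)$ and $(2)$ of \Cref{def centrally-minimal} hold.

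\emph{Central minimality.} Let $Q$ be an idempotent in $\{T\}'\cap\mathcal{M}$ with $Q\preccurlyeq P_j$, that is, $Q=QP_j=P_jQ$. The key step is to show $Q\in\mathscr{P}$, and the natural route is to show that $Q$ commutes with every $P\in\mathscr{P}$. Write $P_jP=P_jC_{P_jP}$ with $C_{P_jP}$ central. Then
\[
QP=QP_jP=QP_jC_{P_jP}=QC_{P_jP},
\]
and similarly $PQ=PP_jQ=C_{P_jP}P_jQ=C_{P_jP}Q$. Since $C_{P_jP}$ is central, $QP=PQ$.

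The remaining point is boundedness. For this, replace $\mathscr{P}$ by the family of all products $P\,Q^\epsilon$ and the like. It suffices to observe that $\mathscr{P}\cup\{Q\}$ is bounded, as $\|Q\|$ is a single finite number. Hence $\mathscr{P}\cup\{Q\}$ is a bounded abelian family of idempotents, and maximality gives $Q\in\mathscr{P}$. Applying property $(3)$ to $P=Q$ then yields
\[
Q=QP_j=P_jC_{P_jQ}=P_jC_Q,
\]
where the last equality uses $P_jQ=Q$. This is exactly central minimality.

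So every $P_j$ is centrally minimal, and $\{P_j\}$ is a finite strongly irreducible decomposition of $T$ contained in $\mathscr{P}$. The main work has already been done in \Cref{lem J-FSID}. In the present argument, the only subtle point is checking that the commutation derived from $(3)$ really places $Q$ inside the maximal family, which is the computation above.
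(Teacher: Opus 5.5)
Your proposal is correct and follows the paper's proof: apply \Cref{lem J-FSID}, use the maximality of $\mathscr{P}$ to get $\{P_j\}_{j=1}^m\subseteq\mathscr{P}$, and conclude. The paper simply calls the central-minimality step ``clear,'' whereas you verify it explicitly. You show that any idempotent $Q\preccurlyeq P_j$ commutes with $\mathscr{P}$, hence lies in $\mathscr{P}$, so property $(3)$ gives $Q=P_jC_Q$; the aside about ``products $P\,Q^\epsilon$'' is unnecessary and can be deleted.
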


\begin{proof}
    Let $\mathscr{P}$ be a bounded maximal abelian family of idempotents in $\{T\}'\cap\mathcal{M}$ and $\{P_j\}_{j=1}^m$ finitely many idempotents given by \Cref{lem J-FSID}.
    Then $\{P_j\}_{j=1}^m\subseteq\mathscr{P}$ by the maximality of $\mathscr{P}$.
    Clearly, $\{P_j\}_{j=1}^m$ is a finite strongly irreducible decomposition of $T$ defined in \Cref{def centrally-minimal}.
\end{proof}

By \Cref{lem FSID-J} and \Cref{cor J-FSID}, we obtain the following relation between property $(J)$ and finite strongly irreducible decomposition.

\begin{theorem}\label{thm J-FSID}
    Let $\mathcal{M}$ be a type $\mathrm{I}_n$ von Neumann algebra and $T$ an operator in $\mathcal{M}$.
    The following statements are equivalent:
    \begin{enumerate}  [label= $(\arabic*)$, ref= \arabic*, leftmargin=*] 
    \item[$(1)$] $T$ has property $(J)$;
    \item[$(2)$] $T$ has a finite strongly irreducible decomposition.
    \end{enumerate}
\end{theorem}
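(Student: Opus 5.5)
The proof splits into the two implications, and both follow by combining \Cref{lem FSID-J} and \Cref{cor J-FSID}.

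For $(2)\Rightarrow(1)$, let $\{P_j\}_{j=1}^m$ be a finite strongly irreducible decomposition of $T$. By \Cref{lem FSID-J} it is contained in the unique bounded maximal abelian family
\begin{equation*}
    \mathscr{P}=\Big\{\sum_{j=1}^{m}P_jZ_j\colon Z_j~\text{central projections in}~\mathcal{M}\Big\}
\end{equation*}
of idempotents in $\{T\}'\cap\mathcal{M}$. The only thing to confirm is that $\mathscr{P}$ really is bounded. This holds because
\begin{equation*}
    \Big\|\sum_{j}P_jZ_j\Big\|\leqslant\sum_{j}\|P_j\|,
\end{equation*}
which is a uniform bound independent of the central projections $Z_j$. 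Hence $\{T\}'\cap\mathcal{M}$ contains a bounded maximal abelian family of idempotents, so $T$ has property $(J)$ by \Cref{def J}.

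For $(1)\Rightarrow(2)$, let $\mathscr{P}$ be a bounded maximal abelian family of idempotents in $\{T\}'\cap\mathcal{M}$, as guaranteed by property $(J)$. By \Cref{cor J-FSID}, $\mathscr{P}$ contains a finite strongly irreducible decomposition of $T$, and this is exactly statement $(2)$.

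The substantive work has already been done in \Cref{lem J-FSID}, which uses the type $\mathrm{I}_n$ hypothesis through \Cref{lem product-bdd} and \Cref{lem decreaing-idempotent}. The only point needing care here is checking that the decomposition $\{P_j\}_{j=1}^m$ produced there satisfies the central-minimality condition of \Cref{def centrally-minimal}. Let $Q\preccurlyeq P_j$ be an idempotent in $\{T\}'\cap\mathcal{M}$. Since $Q=P_jQ=QP_j$, one checks that $Q$ commutes with all of $\mathscr{P}$. By maximality, $Q\in\mathscr{P}$. Condition $(3)$ of \Cref{lem J-FSID} then gives $Q=QP_j=P_jC_{P_jQ}=P_jC_Q$, which is the required form.
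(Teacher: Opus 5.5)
Your proof is correct and follows the paper's own argument: the theorem is obtained by combining \Cref{lem FSID-J} for $(2)\Rightarrow(1)$ with \Cref{cor J-FSID} for $(1)\Rightarrow(2)$. Your two extra checks are valid and fill in steps the paper treats as clear: the uniform bound $\sum_j\|P_j\|$ shows that $\mathscr{P}$ is bounded, and central minimality follows from condition $(3)$ of \Cref{lem J-FSID} together with the maximality of $\mathscr{P}$.
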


\subsection{Similarity of finite strongly irreducible decomposition}

Let $\mathcal{M}$ be a von Neumann algebra and $T$ an operator in $\mathcal{M}$ with property $(J)$.
By \Cref{lem FSID-J} and \Cref{cor J-FSID}, it is natural to ask that: if $\mathscr{P}$ and $\mathscr{Q}$ are bounded maximal abelian families of idempotents in $\{T\}'\cap\mathcal{M}$, is there an invertible operator $X$ in $\{T\}'\cap\mathcal{M}$ such that $\mathscr{Q}=X^{-1}\mathscr{P}X$?
This question can be viewed as the uniqueness of a finite strongly irreducible decomposition up to similarity and will be answered in \Cref{thm SI-unique} for type $\mathrm{I}_n$ von Neumann algebras.
The following lemma is prepared for \Cref{prop diagonal}.

\begin{lemma} \label{lem AX=XB}
    Let $m\geqslant n$ be positive integers and $\mathcal{A}$ an abelian von Neumann algebra.
    Suppose that $T$ and $S$ are strictly upper triangular strongly irreducible operators in $\mathbb{M}_m(\mathcal{A})$ and $\mathbb{M}_n (\mathcal{A})$, respectively.
    If $A= (a_{ij})\in \mathbb{M}_{m,n}(\mathcal{A})$ and $B= (b_{ij})\in \mathbb{M}_{n,m}(\mathcal{A})$ satisfying that $TA=AS$ and $SB=BT$, then $a_{ij}=0$ for $i>j$ and $b_{ij}=0$ for $i>j-(m-n)$.
\end{lemma}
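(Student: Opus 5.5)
The plan is to reduce, locally on the center, to the case where $T$ and $S$ are honest Jordan blocks $J_m\otimes I_{\mathcal{A}}$ and $J_n\otimes I_{\mathcal{A}}$. In that case the intertwining equations can be solved entrywise, exactly as for scalar Jordan blocks.

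\emph{Step 1 (localization).} By \Cref{thm SI} (or \Cref{lem SI-1-diagonal}), the superdiagonal entries $t_{j,j+1}$ of $T$ and $s_{j,j+1}$ of $S$ all have range projection $I_{\mathcal{A}}$. Applying \Cref{cor similar-J} to $T$ and to $S$ and multiplying the two resulting sequences, we get an increasing sequence of projections $p_k\in\mathcal{A}$ with $p_k\to I_{\mathcal{A}}$ strongly such that every $t_{j,j+1}p_k$ and $s_{j,j+1}p_k$ is invertible in $\mathcal{A}p_k$. The conclusion is a statement about individual entries, so it suffices to show $a_{ij}p_k=0$ and $b_{ij}p_k=0$ in the stated ranges for each $k$. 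The relations $TA=AS$ and $SB=BT$ survive multiplication by the central element $p_k$. Hence we may assume all superdiagonal entries are invertible.

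\emph{Step 2 (conjugation to Jordan blocks).} By \Cref{lem similar-J}, there are upper triangular invertible $X\in\mathbb{M}_m(\mathcal{A})$ and $Y\in\mathbb{M}_n(\mathcal{A})$ with
\[
T=X(J_m\otimes I_{\mathcal{A}})X^{-1},\qquad S=Y(J_n\otimes I_{\mathcal{A}})Y^{-1}.
\]
Inverses of upper triangular invertibles are upper triangular, since their diagonals are invertible. Set $A'=X^{-1}AY$ and $B'=Y^{-1}BX$. Then $(J_m\otimes I)A'=A'(J_n\otimes I)$ and $(J_n\otimes I)B'=B'(J_m\otimes I)$.

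The key observation is about two zero patterns. The first is the set of $m\times n$ matrices whose $(i,j)$-entry vanishes for $i>j$. The second is the set of $n\times m$ matrices whose $(i,j)$-entry vanishes for $i>j-(m-n)$. Each pattern is stable under left multiplication by upper triangular matrices and right multiplication by upper triangular matrices of the appropriate sizes. Indeed, such multiplications only move nonzero entries up and to the right, because $(UMV)_{ij}=\sum_{k\ge i,\ l\le j}u_{ik}m_{kl}v_{lj}$. Hence $A$ has the first pattern if and only if $A'$ does, and $B$ has the second pattern if and only if $B'$ does. So it suffices to treat $T=J_m\otimes I$ and $S=J_n\otimes I$.

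\emph{Step 3 (Jordan block computation).} For $J_mA=AJ_n$, comparing entries gives $a_{i+1,j}=a_{i,j-1}$, with the conventions $a_{m+1,j}=0$ and $a_{i,0}=0$. Thus each diagonal $\{i-j=d\}$ of $A$ is constant. For $d\ge 1$, the diagonal meets column $1$ at row $d+1\ge 2$, where the relation with $j=1$ forces the entry to be $0$. Hence $a_{ij}=0$ for $i>j$.

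For $J_nB=BJ_m$, we likewise get $b_{i+1,j}=b_{i,j-1}$, so the diagonals $\{j-i=d\}$ of $B$ are constant. Column $1$ forces $b_{i,1}=0$ for $i\ge 2$, which kills all diagonals with $d<0$. Row $n$ forces $b_{n,j-1}=0$ for $j\le m$, that is, $b_{n,l}=0$ for $l\le m-1$. A diagonal with $d\ge0$ meets row $n$ at column $n+d$, so it vanishes whenever $n+d\le m-1$, i.e.\ $d<m-n$. Since $m\ge n$, these two facts together give $b_{ij}=0$ whenever $j-i<m-n$, i.e.\ $i>j-(m-n)$.

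The main point needing care is Step 2: verifying that the rectangular staircase patterns are preserved under two-sided multiplication by upper triangular matrices, and checking that the localization in Step 1 loses nothing.
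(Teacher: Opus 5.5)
Your proof is correct and follows the same route as the paper. You cut down by central projections so that the superdiagonal entries become invertible, conjugate $T$ and $S$ to $J_m\otimes I_{\mathcal{A}}$ and $J_n\otimes I_{\mathcal{A}}$ through the upper triangular invertibles of \Cref{lem similar-J}, solve the Jordan-block intertwining equations, and transport the zero pattern back through the upper triangular factors; along the way you supply the details the paper leaves as ``clear'' or ``by a similar argument'' (the localization via \Cref{cor similar-J}, the invariance of the staircase patterns, and the explicit computation for $B$).
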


\begin{proof}
    Write $T=(t_{ij})\in \mathbb{M}_m(\mathcal{A})$ and $S=(s_{ij})\in \mathbb{M}_n (\mathcal{A})$.
    Cutting down by central projections, we assume that $t_{i,i+1}$ and $s_{j,j+1}$ are invertible in $\mathcal{A}$ for $1\leqslant i\leqslant m$ and $1\leqslant j\leqslant n$.
    By \Cref{lem similar-J}, there are upper triangular invertible operators $X\in \mathbb{M}_m(\mathcal{A})$ and $Y\in \mathbb{M}_n (\mathcal{A})$ such that $X^{-1}TX=J_m\otimes I_{\mathcal{A}}$ and $Y^{-1}SY=J_n\otimes I_{\mathcal{A}}$, respectively.
    Since $TA=AS$, we have
    \begin{equation*}
        (J_m\otimes I_{\mathcal{A}})(X^{-1}AY)=(X^{-1}AY)(J_n\otimes I_{\mathcal{A}}).
    \end{equation*}
    Let $X^{-1}AY=(a_{ij}')\in \mathbb{M}_{m,n}(\mathcal{A})$.
    It is clear that $a_{ij}'=0$ for $i>j$.
    Since both $X$ and $Y$ are upper triangular and $A=X(a_{ij}')Y^{-1}$, we have $a_{ij}=0$ for $i>j$.
    By a similar argument, we have $b_{ij}=0$ for $i>j-(m-n)$.
\end{proof}

To facilitate the discussion that follows, we introduce the following definition.

\begin{definition}\label{def permutation}
    Let $N$ and $n_1\geqslant n_2\geqslant\cdots\geqslant n_m$ be positive integers such that
    \begin{equation*}
        N=n_1+n_2+\cdots+n_m.
    \end{equation*}
    Let $\{e_{11},\ldots,e_{1n_1};e_{21},\ldots,e_{2n_2};\ldots;e_{m1},\ldots,e_{mn_m}\}$ be an orthonormal basis of $\mathbb{C}^N$, relabeled as $\{e_j\}_{j=1}^N$ in order.
    We rearrange $\{e_j\}_{j=1}^N$ as follows:
    \begin{enumerate}  [label= $(\arabic*)$, ref= \arabic*, leftmargin=*] 
        \item [$(1)$] the first $m$ vectors are $e_{11},e_{21},\ldots,e_{m1}$;
        \item [$(2)$] if $n_k=1$ for each $1\leqslant k\leqslant m$, then we terminate this process; otherwise, let $k\leqslant m$ be the maximal integer such that $n_k\geqslant 2$, and the remaining vectors
        $\{e_{12},\ldots,e_{1n_1};e_{22},\ldots,e_{2n_2};\ldots;e_{k2},\ldots,e_{kn_k}\}$ are rearranged inductively.
    \end{enumerate}
    Let $\{f_j\}_{j=1}^N$ be the resulting orthonormal basis in order.
    Then there exists a unique permutation $\pi=\pi(n_1,\ldots,n_k)$
    such that $f_j=e_{\pi(j)}$ for every $1\leqslant j\leqslant N$.
\end{definition}

\begin{example}
    Suppose that $N=8$ and $n_1=4,n_2=2,n_3=2$.
    Then $\{e_j\}_{j=1}^8$ and $\{f_j\}_{j=1}^8$ in \Cref{def permutation} are
    \begin{equation*}
        \begin{split}
        &\{e_{11},e_{12},e_{13},e_{14};e_{21},e_{22};e_{31},e_{32}\},\\
        &\{e_{11},e_{21},e_{31};e_{12},e_{22},e_{32};e_{13};e_{14}\},
        \end{split}
    \end{equation*}
    respectively.
    Moreover, we have $\{\pi(j)\}_{j=1}^8=\{1,5,7,2,6,8,3,4\}$ in order.
\end{example}

In the following proposition, we prove that every idempotent in the relative commutant $\{T\}'\cap \mathbb{M}_N (\mathcal{A})$ is diagonalizable up to similarity, where $T$ is a finite direct sum of upper triangular strongly irreducible operators in $\mathbb{M}_N (\mathcal{A})$.

In the proof of \Cref{prop diagonal}, we adopt the matrix representation method described below. For finitely many positive integers $n_1 \geqslant n_2 \geqslant \cdots \geqslant n_m$ satisfying $N=n_1+n_2+\cdots+n_m$, we repeatedly switch between two block-matrix representations of operators in $\mathbb{M}_{N}(\mathcal{A})$, up to unitary equivalence. 
Specifically, for an operator $A$ in $\mathbb{M}_{N}(\mathcal{A})$ and $n_1 = \cdots = n_m = n$, we employ the block-matrix representations $A= (A^{kl})_{1\leqslant k,l\leqslant m}$ and $A= (A_{ij})_{1\leqslant i,j \leqslant n}$, where each $A^{kl}=(a^{kl}_{ij})_{1\leqslant i,j\leqslant n} \in \mathbb{M}_{n}(\mathcal{A})$ and  $A_{ij}=(a^{kl}_{ij})_{1\leqslant k,l\leqslant m} \in \mathbb{M}_{m}(\mathcal{A})$. See \eqref{eq switch-mn} for an example of general forms.

\begin{proposition}\label{prop diagonal}
    Let $N$ and $n_1\geqslant n_2\geqslant\cdots\geqslant n_m$ be positive integers such that
    \begin{equation*}
        N=n_1+n_2+\cdots+n_m.
    \end{equation*}
    Let $\mathcal{A}$ be an abelian von Neumann algebra and $T=\bigoplus_{k=1}^mT_k\in \mathbb{M}_N (\mathcal{A})$, where each $T_k$ is an upper triangular strongly irreducible operator in $\mathbb{M}_{n_k}(\mathcal{A})$.
    Then for every idempotent $P$ in $\{T\}'\cap \mathbb{M}_N(\mathcal{A})$, there exists an invertible operator $X$ in $\{T\}'\cap \mathbb{M}_N(\mathcal{A})$ such that $X^{-1}PX$ is diagonal.
\end{proposition}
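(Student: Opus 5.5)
First reduce to the case where the traces coincide. Group the summands $T_k$ according to $\mathrm{tr}(T_k)$. After cutting by suitable central projections (an exhaustion argument like \Cref{cor Jordan-nilpotent}, with uniform bounds so the local similarities can be summed), we may assume that for each pair $k\neq l$ either $\mathrm{tr}(T_k)=\mathrm{tr}(T_l)$ or $\mathrm{tr}(T_k)-\mathrm{tr}(T_l)$ has range projection $I_{\mathcal A}$. In the second situation \Cref{cor SI-Rosenblum} shows that the relative commutant splits as a direct sum along groups with equal trace, and $P$ splits accordingly. So it suffices to treat a single group. Subtracting the common trace, each $T_k$ is strictly upper triangular and strongly irreducible. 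Cutting down as in \Cref{cor similar-J} and using \Cref{lem similar-J}, we may further take $T_k=J_{n_k}\otimes I_{\mathcal A}$.

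Now let $A=(A^{kl})\in\{T\}'\cap\mathbb{M}_N(\mathcal A)$. By \Cref{lem AX=XB}, each block $A^{kl}$ is an upper-triangular Toeplitz-type matrix: it is a polynomial in the shift, padded with zeros when $n_k\neq n_l$. Conjugate by the permutation $\pi=\pi(n_1,\ldots,n_m)$ of \Cref{def permutation}, which reorders the basis by ``level''. Under this reordering $A$ becomes block upper triangular, and its diagonal blocks are determined by the square $\mathbb{M}_{r}(\mathcal A)$-matrix $D(A)$ of constant terms of the $A^{kl}$, restricted to indices with equal sizes. More precisely, the $j$-th diagonal block is the compression of the ``constant term'' matrix $D=(d^{kl})$, where $d^{kl}$ is the constant coefficient of $A^{kl}$ when $n_k=n_l$ and $0$ when $n_k\ne n_l$ (those blocks are strictly above). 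Thus $A\mapsto D(A)$ is a homomorphism onto $\bigoplus_s\mathbb{M}_{m_s}(\mathcal A)$, where $m_s$ is the multiplicity of the $s$-th distinct size. Its kernel consists of nilpotent elements; they are nilpotent of order at most $N$ uniformly, since they are strictly block upper triangular after $\pi$.

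Given the idempotent $P$, its image $D(P)$ is an idempotent in $\bigoplus_s\mathbb{M}_{m_s}(\mathcal A)$. In a type $\mathrm{I}$ algebra, after a further central cutting to make ranks constant, $D(P)$ is similar to a diagonal projection via some invertible $V$, and we can control $\|V\|$ and $\|V^{-1}\|$ by $\|P\|$. Lift $V$ to $X_0=V\otimes(\text{identity on each Jordan block})\in\{T\}'$, i.e. $X_0$ acts with constant Toeplitz blocks. Then $P_1=X_0^{-1}PX_0$ is an idempotent in the commutant with $D(P_1)=Q_0$ diagonal. Let $Q=Q_0\otimes I$ be the corresponding diagonal idempotent in $\{T\}'$. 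Then $P_1-Q$ lies in the kernel of $D$, which is a nil ideal of bounded index. The standard trick of \Cref{lem P-Q-radical} applies: $Y=P_1Q+(I-P_1)(I-Q)$ lies in $\{T\}'$ and satisfies $P_1Y=YQ$. Moreover $Y-I$ lies in the nil ideal, so $Y$ is invertible with explicit inverse $\sum_{j<N}(I-Y)^j$. Then $X=X_0Y$ works.

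The main obstacle is the gluing step. All of the above is done on pieces $p_\lambda$ cut out by central projections, and to assemble a global $X$ the norms $\|X_\lambda\|$ and $\|X_\lambda^{-1}\|$ must be uniformly bounded. I expect this to follow because every ingredient is controlled polynomially by $\|P\|$ and $\|T\|$ together with the fixed size $N$. The one exception is the conjugation to Jordan form in \Cref{lem similar-J}, which is not uniformly bounded. To avoid it, I would instead work directly with the structure $\{T_k\}'=\overline{\mathcal A[T_k]}^{SOT}$ from \Cref{thm SI}, carrying out the constant-term map $D$ intrinsically (via $\tau$ on diagonal blocks) so that no unbounded similarity is needed.
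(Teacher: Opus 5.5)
Your local argument is sound once every $T_k$ is literally $J_{n_k}\otimes I_{\mathcal{A}}$. In that case the level permutation $\pi$ makes the commutant block upper triangular for the (level, size class) ordering, $D$ is a homomorphism onto $\bigoplus_s\mathbb{M}_{m_s}(\mathcal{A})$ with nil kernel, $V\otimes I$ is a lift, and $Y=P_1Q+(I-P_1)(I-Q)$ finishes.

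The gap is the passage back to the given $T$. The similarities of \Cref{lem similar-J} on the pieces $p_k$ of \Cref{cor similar-J} are not uniformly bounded as $p_k\uparrow I_{\mathcal{A}}$. Moreover, $\|D(P)\|$, and hence $\|V\|$, is governed by the norm of the conjugated idempotent, not by $\|P\|$. So the gluing is the whole difficulty, and ``I expect this to follow'' leaves it open.

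The intrinsic repair you sketch fails as stated. \Cref{thm SI} describes only the diagonal blocks $A^{kk}$. The trouble is in the intertwiners $A^{kl}$ with $k\neq l$ and $n_k=n_l$, for which \Cref{lem AX=XB} gives only a zero pattern, not a Toeplitz form. Comparing $(i,i+1)$ entries in $T_kA^{kl}=A^{kl}T_l$ yields $t^k_{i,i+1}a^{kl}_{i+1,i+1}=a^{kl}_{ii}t^l_{i,i+1}$, so no ``constant term'' exists.

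The natural substitute is the level-one compression. It is a homomorphism with nil kernel, but it is not onto $\bigoplus_s\mathbb{M}_{m_s}(\mathcal{A})$. Take $T_1=J_2\otimes I_{\mathcal{A}}$ and $T_2=\begin{pmatrix}0&t\\0&0\end{pmatrix}$ with $R(t)=I_{\mathcal{A}}$ but $t$ not invertible. Then every $B$ with $T_2B=BT_1$ has $b_{11}=tb_{22}\in t\mathcal{A}$. Hence a $V$ chosen abstractly (via type $\mathrm{I}$ theory) to diagonalize $D(P)$ need not be the image of any element of $\{T\}'\cap\mathbb{M}_N(\mathcal{A})$, let alone an invertible one. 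Nothing replaces $X_0=V\otimes I$.

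The missing idea is to build the diagonalizing similarity out of the blocks of $P$ itself, which are automatically intertwiners ($T_kP^{kl}=P^{kl}T_l$). This is the paper's route:
\begin{enumerate}
\item After finitely many central cuttings and a relabeling, one has $\mathrm{Tr}(P_{11})=rI_{\mathcal{A}}$ and $|p^{11}_{11}|\geqslant\frac{r}{m}I_{\mathcal{A}}$.
\item Conjugate by the $Y$ with $Y^{k1}=-(p^{11}_{11})^{-1}P^{k1}$ (\Cref{lem P-upper triangular}), then by the $Y$ with $Y^{1l}=-P^{1l}$. Iterating makes the level-one block $P_{11}$ of $\Phi(P)$ diagonal.
\item The relation $P_{jj}T_{j,j+1}=T_{j,j+1}P_{j+1,j+1}$, with $T_{j,j+1}$ diagonal of full range, forces $P_{jj}=P_{11}$ at every level.
\item So $P$ differs from its diagonal part by a radical element, and \Cref{lem P-Q-radical} applies.
\end{enumerate}
Only finitely many central projections are used, and every similarity is written down directly from entries of $P$, so there is nothing to glue. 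Likewise, your first reduction needs only the finite partition given by the projections $R(\mathrm{tr}(T_k)-\mathrm{tr}(T_l))$, not an exhaustion.
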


\begin{proof}
    We first consider two special cases. In combination of \textbf{Case I} and \textbf{Case II}, we prove \textbf{Case III.} Then, each general case can be proved by a combination of \Cref{prop SI=Z+N}, \Cref{cor SI-Rosenblum}, and \textbf{Case III}.
    
    
    \noindent\textbf{Case I.} $n_1>n_2>\cdots>n_m$ and each $T_j$ is strictly upper triangular.
    
    Let $\pi=\pi(n_1,n_2,\ldots,n_m)$ be the permutation defined in \Cref{def permutation} and $U_{\pi}$ the unitary operator in $\mathbb{M}_N $ satisfying that $U_{\pi}e_j=e_{\pi(j)}$ for $1\leqslant j\leqslant N$.
    Let $U=U_{\pi}\otimes I_{\mathcal{A}}$ be a unitary operator in $\mathbb{M}_N(\mathcal{A})$ and $\Phi(A)=U^*AU$.
    It is clear that for every operator $A=(a_{ij})_{1\leqslant i,j\leqslant N}\in \mathbb{M}_N(\mathcal{A})$, we have $\Phi(A)=(a_{\pi(i)\pi(j)})_{1\leqslant i,j\leqslant N}$.
    
    Let $A$ be an operator in $\{T\}'\cap \mathbb{M}_N(\mathcal{A})$ of the form
    \begin{equation*}
      A=(A^{kl})_{1\leqslant k,l\leqslant m}\quad\text{and}\quad
      A^{kl}=(a^{kl}_{ij})_{1\leqslant i\leqslant n_k,1\leqslant j\leqslant n_l}
      \in \mathbb{M}_{n_k,n_l}(\mathcal{A}).
    \end{equation*}
    Then $T_kA^{kl}=A^{kl}T_l$ for $1\leqslant k,l\leqslant m$.
    It follows that $\Phi(A)$ is upper triangular by \Cref{lem AX=XB}.
    For example, if $N=6$, $n_1=4$, and $n_2=2$, then
    \begin{equation} \label{eq switch-mn}
      \Phi\colon\left(\begin{array}{cccc|cc}
       a_{11}^{11} & a_{12}^{11} & a_{13}^{11} & a_{14}^{11} & a_{11}^{12} & a_{12}^{12} \\
       0 & a_{22}^{11} & a_{23}^{11} & a_{24}^{11} & 0 & a_{22}^{12} \\
       0 & 0 & a_{33}^{11} & a_{34}^{11} & 0 & 0 \\
       0 & 0 & 0 & a_{44}^{11} & 0 & 0 \\
       \hline
       0 & 0 & a_{13}^{21} & a_{14}^{21} & a_{11}^{22} & a_{12}^{22} \\
       0 & 0 & 0 & a_{24}^{21} & 0 & a_{22}^{22}
      \end{array}\right)\mapsto
      \left(\begin{array}{cc|cc|c|c}
       a_{11}^{11} & a_{11}^{12} & a_{12}^{11} & a_{12}^{12} & a_{13}^{11} & a_{14}^{11} \\
       0 & a_{11}^{22} & 0 & a_{12}^{22} & a_{13}^{21} & a_{14}^{21} \\
       \hline
       0 & 0 & a_{22}^{11} & a_{22}^{12} & a_{23}^{11} & a_{24}^{11} \\
       0 & 0 & 0 & a_{22}^{22} & 0 & a_{24}^{21} \\
       \hline
       0 & 0 & 0 & 0 & a_{33}^{11} & a_{34}^{11} \\
       \hline
       0 & 0 & 0 & 0 & 0 & a_{44}^{11}
      \end{array}\right).
    \end{equation}
    Let $P$ be an idempotent in $\{T\}'\cap \mathbb{M}_N (\mathcal{A})$ and $Q$ a diagonal operator in $\mathbb{M}_N (\mathcal{A})$ whose diagonal elements are given by the diagonal elements of $P$.
    Then $Q$ is also an idempotent in $\{T\}'\cap \mathbb{M}_N (\mathcal{A})$.
    Since $\Phi(P)-\Phi(Q)$ is strictly upper triangular, it lies in the radical of $\{\Phi(T)\}'\cap \mathbb{M}_N (\mathcal{A})$, i.e., $P-Q$ lies in the radical of $\{T\}'\cap \mathbb{M}_N (\mathcal{A})$.
    Therefore, $P$ is similar to $Q$ in $\{T\}'\cap \mathbb{M}_N (\mathcal{A})$ by \Cref{lem P-Q-radical}.

    \vspace{1.5ex}
    \noindent\textbf{Case II.} $n_1=n_2=\cdots=n_m=n$ and each $T_j$ is strictly upper triangular.

    We define $\Phi$ as in \textbf{Case I} and give another equivalent description of $\Phi$ as follows.
    For every operator $A\in \mathbb{M}_N (\mathcal{A})$, we can write
    \begin{equation*}
        A=(A^{kl})_{1\leqslant k,l\leqslant m}\quad\text{and}\quad
        A^{kl}=(a^{kl}_{ij})_{1\leqslant i,j\leqslant n}\in \mathbb{M}_n  (\mathcal{A}).
    \end{equation*}
    Let $A_{ij}=(a^{kl}_{ij})_{1\leqslant k,l\leqslant m}\in \mathbb{M}_m(\mathcal{A})$.
    Then $\Phi(A)=(A_{ij})_{1\leqslant i,j\leqslant n}$.
    
    Let $P$ be an idempotent in $\{T\}'\cap \mathbb{M}_N(\mathcal{A})$ of the form $P=(P^{kl})_{1\leqslant k,l\leqslant m}$ and $P^{kl}=(p^{kl}_{ij})_{1\leqslant i,j\leqslant n}$.
    Then $T_kP^{kl}=P^{kl}T_l$ for $1\leqslant k,l\leqslant m$.
    By \Cref{lem AX=XB}, each $P^{kl}$ is upper triangular, i.e., $p^{kl}_{ij}=0$ for $i>j$.
    Moreover, by \Cref{thm SI}, there exists an operator $p_k$ in $\mathcal{A}$ such that
    \begin{equation*}
      p^{kk}_{jj}=p_k\quad\text{for every}~1\leqslant j\leqslant n.
    \end{equation*}
    Let $P_{ij}=(p^{kl}_{ij})_{1\leqslant k,l\leqslant m}\in \mathbb{M}_m(\mathcal{A})$.
    Then $P_{ij}=0$ for $i>j$, i.e., $\Phi(P)$ is block upper triangular.
    Since $\Phi(P)$ is an idempotent, each $P_{jj}$ is an idempotent in $\mathbb{M}_m(\mathcal{A})$.
    We claim that $P_{11}$ is diagonalizable up to similarity.
    Cutting down by central projections, we may assume that $\mathrm{Tr}(P_{11})=rI_{\mathcal{A}}$, where $r$ is an integer with $0\leqslant r\leqslant m$.
    Clearly, $P_{11}=0$ is diagonal if $r=0$.
    Without loss of generality, we assume that $1\leqslant r\leqslant m$ and $|p_1|=|p^{11}_{11}|\geqslant\frac{r}{m}I_{\mathcal{A}}$.
    Let $Y=(Y^{kl})_{1\leqslant k,l\leqslant m}$ be an invertible operator in $\{T\}'\cap \mathbb{M}_N(\mathcal{A})$, where
    \begin{equation*}
      Y^{kl}=(y^{kl}_{ij})_{1\leqslant i,j\leqslant n}=
      \begin{cases}
        I_m\otimes I_{\mathcal{A}}, & \mbox{if }~k=l, \\
        -p_1^{-1}P^{k1}, & \mbox{if }~2\leqslant k\leqslant m \text{ and } l=1, \\
        0, & \mbox{otherwise}.
      \end{cases}
    \end{equation*}
    Let $Y_{ij}=(y^{kl}_{ij})_{1\leqslant k,l\leqslant m}\in \mathbb{M}_m(\mathcal{A})$.
    Then $Y_{ij}=0$ for $i>j$.
    Let $Q=Y^{-1}PY$.
    Then the first column of $Q_{11}=Y_{11}^{-1}P_{11}Y_{11}\in \mathbb{M}_m(\mathcal{A})$ is $(I_{\mathcal{A}},0,\ldots,0)^t$ by \Cref{lem P-upper triangular}.
    Without loss of generality, we may assume that the first column of $P_{11}$ is $(I_{\mathcal{A}},0,\ldots,0)^t$.
    In this case, let $Y=(Y^{kl})_{1\leqslant k,l\leqslant m}$ be an invertible operator in $\{T\}'\cap \mathbb{M}_N(\mathcal{A})$, where
    \begin{equation*}
      Y^{kl}=(y^{kl}_{ij})_{1\leqslant i,j\leqslant n}=
      \begin{cases}
        I_m\otimes I_{\mathcal{A}}, & \mbox{if }~k=l, \\
        -P^{1l}, & \mbox{if }~k=1 \text{ and } 2\leqslant l\leqslant m, \\
        0, & \mbox{otherwise}.
      \end{cases}
    \end{equation*}
    By considering $Y^{-1}PY$, we may assume that $p_{11}^{11}=I_{\mathcal{A}}$, and $p_{11}^{k1}=p_{11}^{1l}=0$ for $2\leqslant k,l\leqslant m$, i.e., $P_{11}=I_{\mathcal{A}}\oplus(p_{11}^{kl})_{2\leqslant k,l\leqslant m}$.
    If $r=1$, then $P_{11}=I_{\mathcal{A}}\oplus 0$ is diagonal.
    If $r\geqslant 2$, then we may assume that $|p_2|=|p^{22}_{11}|\geqslant\frac{r-1}{m-1}I_{\mathcal{A}}$.
    By a similar argument, we may assume that $P_{11}=I_{\mathcal{A}}\oplus I_{\mathcal{A}}\oplus(p_{11}^{kl})_{3\leqslant k,l\leqslant m}$.
    Inductively, we may assume that $P_{11}$ is diagonal of the form $P_{11}=(I_r\oplus 0_{m-r})\otimes I_{\mathcal{A}}$.
    
    Write $T_k=(t_{ij}^k)_{1\leqslant i,j\leqslant n}$, where $t_{ij}=0$ for $i\geqslant j$ and $R(t_{j,j+1})=I_{\mathcal{A}}$ for $1\leqslant j\leqslant n$ by \Cref{thm SI}.
    Let $T_{ij}=\bigoplus_{k=1}^mt_{ij}^k\in \mathbb{M}_m(\mathcal{A})$.
    Then $T_{ij}=0$ for $i\geqslant j$.
    Recall that $P\in\{T\}'\cap \mathbb{M}_N(\mathcal{A})$.
    Thus, $\Phi(P)$ commutes with $\Phi(T)$, and hence
    \begin{equation*}
      P_{jj}T_{j,j+1}=T_{j,j+1}P_{j+1,j+1}\quad\text{for}~1\leqslant j\leqslant n-1.
    \end{equation*}
    Since $P_{11}$ and $T_{j,j+1}$ are diagonal, we have $P_{jj}=P_{11}$ for $2\leqslant j\leqslant n$.
    
    Let $Q$ be a diagonal operator in $\{T\}'\cap \mathbb{M}_N(\mathcal{A})$ whose diagonal elements are given by the diagonal elements of $P$.
    Then $\Phi(P)-\Phi(Q)$ is strictly block upper triangular.
    Recall that for every operator $A\in\{T\}'\cap \mathbb{M}_N(\mathcal{A})$, the operator $\Phi(A)=(A_{ij})_{1\leqslant i,j\leqslant n}$ is block upper triangular, i.e., $A_{ij}=0$ for $i>j$.
    It follows that $\Phi(P)-\Phi(Q)$ lies in the radical of $\{\Phi(T)\}'\cap \mathbb{M}_N(\mathcal{A})$, i.e., $P-Q$ lies in the radical of $\{T\}'\cap \mathbb{M}_N(\mathcal{A})$.
    As in \textbf{Case I}, $P$ is similar to $Q$ by \Cref{lem P-Q-radical}.
    
    \vspace{1.5ex}
    \noindent\textbf{Case III.} $n_1\geqslant n_2\geqslant\cdots\geqslant n_m$ and each $T_j$ is strictly upper triangular.
    
    The proof is a combination of the methods applied in \textbf{Case I} and \textbf{Case II}.
    For simplicity, we only prove the case that $N=7$, $n_1=3$, and $n_2=n_3=2$.
    We define $\Phi$ as in \textbf{Case I}.
    Let $A$ be an operator in $\{T\}'\cap \mathbb{M}_7(\mathcal{A})$.
    Then
    \begin{equation*}
        \Phi\colon
        \left(\begin{array}{ccc|cc|cc}
            a_{11}^{11} & a_{12}^{11} & a_{13}^{11} & a_{11}^{12} & a_{12}^{12} & a_{11}^{13} & a_{12}^{13} \\
            0 & a_{22}^{11} & a_{23}^{11} & 0 & a_{22}^{12} & 0 & a_{22}^{13} \\
            0 & 0 & a_{33}^{11} & 0 & 0 & 0 & 0\\
            \hline
            0 & a_{12}^{21} & a_{13}^{21} & a_{11}^{22} & a_{12}^{22} & a_{11}^{23} & a_{12}^{23} \\
            0 & 0 & a_{23}^{21} & 0 & a_{22}^{22} & 0 & a_{22}^{23} \\
            \hline
            0 & a_{12}^{31} & a_{13}^{31} & a_{11}^{32} & a_{12}^{32} & a_{11}^{33} & a_{12}^{33} \\
            0 & 0 & a_{23}^{31} & 0 & a_{22}^{32} & 0 & a_{22}^{32} \\
            \end{array}\right)\mapsto
            \left(\begin{array}{c:cc|c:cc|c}
            a_{11}^{11} & a_{11}^{12} & a_{11}^{13} & a_{12}^{11} & a_{12}^{12} & a_{12}^{13} & a_{13}^{11}\\
            \hdashline
            0 & a_{11}^{22} & a_{11}^{23} & a_{12}^{21} & a_{12}^{22} & a_{12}^{23} & a_{13}^{21} \\
            0 & a_{11}^{32} & a_{11}^{33} & a_{12}^{31} & a_{12}^{32} & a_{12}^{33} & a_{13}^{31} \\
            \hline
            0 & 0 & 0 & a_{22}^{11} & a_{22}^{12} & a_{22}^{13} & a_{23}^{11} \\
            \hdashline
            0 & 0 & 0 & 0 & a_{22}^{22} & a_{22}^{23} & a_{23}^{21} \\
            0 & 0 & 0 & 0 & a_{22}^{32} & a_{22}^{33} & a_{23}^{31} \\
            \hline
            0 & 0 & 0 & 0 & 0 & 0 & a_{33}^{11}
        \end{array}\right).
    \end{equation*}
    Let $P$ be an idempotent in $\{T\}'\cap \mathbb{M}_7(\mathcal{A})$.
    Since the idempotent $\Phi(P)$ is block upper triangular, the following block entry $P_0$ of $\Phi(P)$ in the form
    \begin{equation*}
        P_0=
        \begin{pmatrix}
            p_{11}^{22} & p_{11}^{23} \\
            p_{11}^{32} & p_{11}^{33}
        \end{pmatrix}
    \end{equation*}
    is an idempotent.
    Similar to \textbf{Case II}, we may assume that $\mathrm{Tr}(P_0)=rI_{\mathcal{A}}$, where $0\leqslant r\leqslant 2$.
    If $r=0$, then $P_0=0$.
    If $r=2$, then $P_0=I_{\mathcal{A}}\oplus I_{\mathcal{A}}$.
    Without loss of generality, we assume that $r=1$ and $|p_{11}^{22}|\geqslant\frac{1}{2}I_{\mathcal{A}}$.
    Let $Y=(Y^{kl})_{k,l=1}^3$ be an invertible operator in $\{T\}'\cap \mathbb{M}_7(\mathcal{A})$ given by
    \begin{equation*}
        Y=
        \begin{pmatrix}
            I_3\otimes I_{\mathcal{A}} & 0 & 0 \\
            0 & I_2\otimes I_{\mathcal{A}} & 0 \\
            0 & -(p_{11}^{22})^{-1}P_{32} & I_2\otimes I_{\mathcal{A}}
        \end{pmatrix}.
    \end{equation*}
    Let $Q=Y^{-1}PY$.
    Then we obtain the following equality
    \begin{equation*}
        \begin{pmatrix}
            q_{11}^{22} & q_{11}^{23} \\
            q_{11}^{32} & q_{11}^{33}
        \end{pmatrix}
        =
        \begin{pmatrix}
        I_{\mathcal{A}} & q_{11}^{23} \\
        0 & 0
    \end{pmatrix}
    \end{equation*}
    by \Cref{lem P-upper triangular}.
    Without loss of generality, we may assume that
    \begin{equation*}
        P_0=
        \begin{pmatrix}
            I_{\mathcal{A}} & p_{11}^{23} \\
            0 & 0
        \end{pmatrix}.
    \end{equation*}
    In this case, let $Y=(Y^{kl})_{k,l=1}^3$ be an invertible operator in $\{T\}'\cap \mathbb{M}_7(\mathcal{A})$ given by
    \begin{equation*}
        Y=
        \begin{pmatrix}
            I_3\otimes I_{\mathcal{A}} & 0 & 0 \\
            0 & I_2\otimes I_{\mathcal{A}} & -P_{23} \\
            0 & 0 & I_2\otimes I_{\mathcal{A}}
        \end{pmatrix}.
    \end{equation*}
    By considering $Y^{-1}PY$, we may assume that $P_0=I_{\mathcal{A}}\oplus 0$.
    Let $T_k=(t_{ij}^k)_{1\leqslant i,j\leqslant n_k}$ for $1\leqslant k\leqslant 3$.
    Since $P$ commutes with $T$, we have
    \begin{equation*}
      \begin{pmatrix}
        p_{11}^{22} & p_{11}^{23} \\
        p_{11}^{32} & p_{11}^{33}
      \end{pmatrix}
      \begin{pmatrix}
        t_{12}^2 & \\
        & t_{12}^3
      \end{pmatrix}=
      \begin{pmatrix}
        t_{12}^2 & \\
        & t_{12}^3
      \end{pmatrix}\begin{pmatrix}
        p_{22}^{22} & p_{22}^{23} \\
        p_{22}^{32} & p_{22}^{33}
      \end{pmatrix}.
    \end{equation*}
    It follows that
    \begin{equation*}
        \begin{pmatrix}
          p_{22}^{22} & p_{22}^{23} \\
          p_{22}^{32} & p_{22}^{33}
        \end{pmatrix}=
        \begin{pmatrix}
          p_{11}^{22} & p_{11}^{23} \\
          p_{11}^{32} & p_{11}^{33}
        \end{pmatrix}=I_{\mathcal{A}}\oplus 0.
    \end{equation*}
    
    Let $Q$ be a diagonal operator in $\{T\}'\cap \mathbb{M}_7(\mathcal{A})$ whose diagonal entries coincide with those of $P$.
    As in \textbf{Case II}, $P$ is similar to $Q$ in $\{T\}'\cap \mathbb{M}_7(\mathcal{A})$ by \Cref{lem P-Q-radical}.
    
    \vspace{1.5ex}
    \noindent\textbf{General Case.}
    Let $t_j = \operatorname{tr}(T_j)\in\mathcal{A}$ for each $T_j \in \mathbb{M}_{n_j}(\mathcal{A})$, where $1\leqslant j\leqslant m$.
    Cutting down by central projections, we assume that either $t_i=t_j$ or $R(t_i-t_j)=I_{\mathcal{A}}$ for all $1\leqslant i<j\leqslant m$.
    Up to a rearrangement, we may assume that there are integers $0=m_1<m_2<\cdots<m_s=m$ such that
    \begin{enumerate}  [label= $(\arabic*)$, ref= \arabic*, leftmargin=*] 
        \item[$(1)$] $t_i=t_j$ for $m_k<i,j\leqslant m_{k+1}$, $1\leqslant k\leqslant s-1$;
        \item[$(2)$] $R(t_i-t_j)=I_{\mathcal{A}}$ for other cases.
    \end{enumerate}
    We complete the proof by \Cref{prop SI=Z+N}, \Cref{cor SI-Rosenblum}, and \textbf{Case III}.
\end{proof}

The following consequence is a reformulation of \Cref{prop diagonal}.

\begin{corollary} \label{cor SI-unique-pre}
    Let $\mathcal{M}$ be a type $\mathrm{I}_n$ von Neumann algebra and $T$ an operator in $\mathcal{M}$ with property $(J)$.
    Suppose that $\mathscr{Q}$ is a bounded maximal abelian family of idempotents in $\{T\}'\cap\mathcal{M}$ and $P$ is an idempotent in $\{T\}'\cap\mathcal{M}$.
    Then there exists an invertible operator $X$ in $\{T\}'\cap\mathcal{M}$ such that $X^{-1}PX\in\mathscr{Q}$.
\end{corollary}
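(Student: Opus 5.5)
The plan is to reduce the statement to \Cref{prop diagonal} by conjugating $T$ into a block form in which the members of $\mathscr{Q}$ become diagonal.

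First, by \Cref{cor J-FSID}, the family $\mathscr{Q}$ contains a finite strongly irreducible decomposition $\{Q_j\}_{j=1}^m$ of $T$. By \Cref{lem FSID-J}, $\mathscr{Q}$ is the unique bounded maximal abelian family containing it, namely
\begin{equation*}
    \mathscr{Q}=\Big\{\sum_{j=1}^m Q_jZ_j\colon \text{each } Z_j \text{ is a central projection}\Big\}.
\end{equation*}
As in the proof of \Cref{lem SI-decomposition}, choose an invertible $W\in\mathcal{M}$ such that each $W^{-1}Q_jW$ is a projection. Replacing $T$, $P$, $\mathscr{Q}$ by their conjugates under $W$ changes neither the hypotheses nor the conclusion, since $W^{-1}(\{T\}'\cap\mathcal{M})W=\{W^{-1}TW\}'\cap\mathcal{M}$. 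So we may assume $T=\sum_j T_j$, where each $T_j=TQ_j$ is strongly irreducible in $Q_j\mathcal{M}Q_j$.

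The obstacle is that \Cref{prop diagonal} is stated for $\mathbb{M}_N(\mathcal{A})$ with summands living in $\mathbb{M}_{n_k}(\mathcal{A})$ over a common $\mathcal{A}$. The ranks $\tau(Q_j)$, however, need not be constant. Cut by a countable orthogonal family of central projections $\{Z_\lambda\}$ summing to $I$ on which each $\tau(Q_j)$ is constant. On each such piece, $\mathcal{M}Z_\lambda\cong\mathbb{M}_n(\mathcal{A}Z_\lambda)$ and $Q_jZ_\lambda$ is equivalent to a diagonal projection $(I_{n_j}\oplus 0)\otimes I$. Conjugating by a unitary, $TZ_\lambda$ becomes $\bigoplus_j T_jZ_\lambda$ with $T_jZ_\lambda\in\mathbb{M}_{n_j}(\mathcal{A}Z_\lambda)$ strongly irreducible. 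Then \Cref{thm SI} and a further unitary conjugation within each block make each summand upper triangular. This unitary can be taken block-diagonal, so it preserves the form of the $Q_j$; summands with $Q_jZ_\lambda=0$ are simply dropped.

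Apply \Cref{prop diagonal} to $PZ_\lambda$ in $\{TZ_\lambda\}'\cap\mathcal{M}Z_\lambda$. This gives an invertible $X_\lambda$ in that commutant with $X_\lambda^{-1}PZ_\lambda X_\lambda$ diagonal. A diagonal idempotent over $\mathcal{A}$ has projections of $\mathcal{A}$ on its diagonal, but such an element need not lie in $\mathscr{Q}$. It does, however, lie in $\{T\}'$ and commute with every $Q_j$ (both are diagonal). Hence, by the maximality description of $\mathscr{Q}$ in \Cref{lem FSID-J}, it lies in $\mathscr{Q}Z_\lambda$, because the proof of \Cref{lem FSID-J} shows that any idempotent of the commutant commuting with all $Q_j$ lies in $\mathscr{Q}$.

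Finally, glue the pieces: $X=\sum_\lambda X_\lambda$ is bounded and invertible only if $\sup_\lambda\|X_\lambda\|$ and $\sup_\lambda\|X_\lambda^{-1}\|$ are finite. I expect this uniform-bound step to be the main obstacle. The plan is to use the explicit constructions in \Cref{prop diagonal}, where the similarities are built from entries of $P$, inverses of elements like $p_1$ with $|p_1|\geqslant \frac{r}{m}$, and the radical argument of \Cref{lem P-Q-radical} with $X=P+Q-I$. These give bounds depending only on $\|P\|$, $n$ and $\|W\|$, $\|W^{-1}\|$. If that bookkeeping proves troublesome, the fallback is to group the indices $\lambda$ by bound levels as in \Cref{cor Jordan-nilpotent}, which at least produces a countable decomposition with controlled norms on each piece.
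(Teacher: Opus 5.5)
Your route is the paper's. After conjugating, $T$ becomes a direct sum of upper triangular strongly irreducible blocks as in \Cref{prop diagonal}, $\mathscr{Q}$ becomes the family of block-scalar idempotents $\bigoplus_k I_{n_k}\otimes p_k$, and \Cref{prop diagonal} supplies the similarity. The paper states this reduction in one line. Your extra details are sound: conjugating the $Q_j$ to diagonal block projections, cutting so that the ranks are constant, and placing the diagonal idempotent in $\mathscr{Q}$ via \Cref{lem FSID-J} (the paper instead uses strong irreducibility of each block).

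The step you leave open is not actually an obstacle, but the fallback you propose for it would not work, so it needs to be settled differently. The partition $\{Z_\lambda\}$ is finite, not merely countable:
\begin{itemize}
\item At each point of the Stonean spectrum of $\mathcal{A}$, $Q_j(\omega)$ is an idempotent matrix, so $\mathrm{Tr}(Q_j)$ is a continuous function with values in $\{0,1,\ldots,n\}$.
\item Hence $\mathrm{Tr}(Q_j)=\sum_{k=0}^{n}k\,z_{j,k}$ for projections $z_{j,k}\in\mathcal{A}$ summing to $I_{\mathcal{A}}$.
\item The common refinement over $1\leqslant j\leqslant m$ therefore has at most $(n+1)^m$ pieces.
\end{itemize}
This is the same finiteness the paper uses tacitly whenever it writes ``cutting down by central projections, we may assume $\mathrm{Tr}(\cdot)=rI_{\mathcal{A}}$''. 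Consequently $X=\sum_\lambda X_\lambda$ is a finite sum of invertibles on orthogonal central summands. It is automatically invertible in $\{T\}'\cap\mathcal{M}$, and no uniform estimate is needed.

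By contrast, grouping the pieces by norm level as in \Cref{cor Jordan-nilpotent} only gives similarities on central pieces $p_m$ with norms of size $m$. That never yields a single bounded invertible $X$ with bounded inverse. Had the partition really been infinite, only your explicit bookkeeping of bounds through \Cref{prop diagonal} could have closed the argument.
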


\begin{proof}
    We write $\mathcal{M}= \mathbb{M}_n(\mathcal{A})$, where $\mathcal{A}$ is an abelian von Neumann algebra.
    By applying \Cref{lem upper triangular} and \Cref{cor J-FSID}, we may assume that $T$ is of the form in \Cref{prop diagonal} and $\mathscr{Q}$ is the family of all idempotents of the form $\bigoplus_{k=1}^m I_{n_k}\otimes p_k$, where each $p_k$ is a projection in $\mathcal{A}$.
    By \Cref{prop diagonal}, there exists an invertible operator $X$ in $\{T\}'\cap \mathbb{M}_n(\mathcal{A})$ such that $X^{-1}PX$ is diagonal, i.e., $X^{-1}PX\in\mathscr{Q}$.
    This completes the proof.
\end{proof}

The following corollary will be used in the proof of \Cref{thm J-Kaplansky}.

\begin{corollary} \label{cor J-direct-sum}
    Let $n_1, n_2$ be positive integers and $\mathcal{A}$ an abelian von Neumann algebra.
    Suppose that $T$ and $S$ are operators in $\mathbb{M}_m(\mathcal{A})$ and $\mathbb{M}_n(\mathcal{A})$, respectively.
    Then $T\oplus S$ has property $(J)$ if and only if both $T$ and $S$ have property $(J)$.
\end{corollary}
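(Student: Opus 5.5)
Both implications will go through \Cref{thm J-FSID}, which identifies property $(J)$ in a type $\mathrm{I}$ von Neumann algebra with the existence of a finite strongly irreducible decomposition. Write $\mathcal{M}=\mathbb{M}_{m+n}(\mathcal{A})$ and let $E=(I_m\oplus 0)\otimes I_{\mathcal{A}}$. Then $E\mathcal{M}E\cong\mathbb{M}_m(\mathcal{A})$ and $(I-E)\mathcal{M}(I-E)\cong\mathbb{M}_n(\mathcal{A})$.

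\emph{Sufficiency.} Suppose $T$ and $S$ both have property $(J)$. By \Cref{thm J-FSID} and \Cref{lem SI-decomposition}, $T$ is similar in $\mathbb{M}_m(\mathcal{A})$ to a finite direct sum of strongly irreducible operators, and likewise for $S$ in $\mathbb{M}_n(\mathcal{A})$. Forming the direct sum of the two similarities shows that $T\oplus S$ is similar to a finite direct sum of strongly irreducible operators, each living in a corner $P_j\mathcal{M}P_j$ with $\sum_j P_j=I$. By \Cref{lem SI-decomposition} in the converse direction, $T\oplus S$ then has a finite strongly irreducible decomposition. By \Cref{thm J-FSID}, $T\oplus S$ has property $(J)$.

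\emph{Necessity.} Suppose $T\oplus S$ has property $(J)$. By \Cref{lem FSID-J} and \Cref{cor J-FSID}, there is a bounded maximal abelian family $\mathscr{Q}$ of idempotents in $\{T\oplus S\}'\cap\mathcal{M}$. The projection $E$ lies in $\{T\oplus S\}'\cap\mathcal{M}$, so \Cref{cor SI-unique-pre} gives an invertible $X$ in this commutant with $E':=X^{-1}EX\in\mathscr{Q}$. The family $X\mathscr{Q}X^{-1}$ is again a bounded maximal abelian family of idempotents in the commutant, and it contains $E$. We may therefore assume $E\in\mathscr{Q}$.

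Every $Q\in\mathscr{Q}$ commutes with $E$, so $QE$ is an idempotent in $\{T\}'\cap E\mathcal{M}E$. The family $\mathscr{Q}E$ is bounded and abelian. To show it is maximal abelian in $\{T\}'\cap E\mathcal{M}E$, take an idempotent $R$ there commuting with $\mathscr{Q}E$. Then $R+Q(I-E)$ is an idempotent in $\{T\oplus S\}'\cap\mathcal{M}$ that commutes with all of $\mathscr{Q}$, because each element of $\mathscr{Q}$ splits as $QE+Q(I-E)$. By maximality of $\mathscr{Q}$, we get $R+Q(I-E)\in\mathscr{Q}$, and hence $R\in\mathscr{Q}E$. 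Thus $T$ has property $(J)$, and the same argument with $I-E$ in place of $E$ shows that $S$ does too.

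The main point needing care is the reduction to $E\in\mathscr{Q}$, which is exactly where \Cref{cor SI-unique-pre} is used. An alternative route would take a finite strongly irreducible decomposition of $T\oplus S$, conjugate it to diagonal form via \Cref{prop diagonal}, and read off decompositions of the two corners directly.
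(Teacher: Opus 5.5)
Your proof is correct and follows essentially the paper's route. For necessity, the paper also uses \Cref{prop diagonal}, in the normalized form from the proof of \Cref{cor SI-unique-pre}, to conjugate the corner projection $I_m\oplus 0$ into the bounded maximal abelian family and then compress; the sufficiency the paper calls clear is exactly your argument via \Cref{lem SI-decomposition} and \Cref{thm J-FSID}. Your version is slightly cleaner in two ways: the conjugating operator lies in the commutant, so $T\oplus S$ is never changed, and you actually verify the maximality of $\mathscr{Q}E$, which the paper only asserts.
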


\begin{proof}
    The sufficiency is clear.
    Let $\mathcal{M}=\mathbb{M}_{m+n}(\mathcal{A})$ for simplicity.
    Suppose that $T\oplus S$ has property $(J)$, i.e., there exists a bounded maximal abelian family of idempotents in $\{T\oplus S\}'\cap\mathcal{M}$.
    As in the proof of \Cref{cor SI-unique-pre}, there exists an invertible operator $X$ in $\mathcal{M}$ such that $X^{-1}(T\oplus S)X=\sum_{j=1}^{\ell}T_j$ is of the form in \Cref{prop diagonal} and $\mathscr{Q}$ is the family of all idempotents of the form $\bigoplus_{k=1}^{\ell} I_{n_k}\otimes p_k$, where each $p_k$ is a projection in $\mathcal{A}$.
    Let $P=X^{-1}(I_m\oplus 0)X$.
    By \Cref{prop diagonal}, we may assume that $P\in\mathscr{Q}$.
    Then $\mathscr{Q}P$ is a bounded maximal abelian family of idempotents in $\{X^{-1}(T\oplus S)XP\}'\cap P\mathcal{M}P$.
    Consequently, $X\mathscr{Q}PX^{-1}$ is a bounded maximal abelian family of idempotents in $\{T\oplus 0\}'\cap(\mathbb{M}_m(\mathcal{A})\oplus 0)$.
    Therefore, $T$ has property $(J)$.
    Similarly, $S$ has property $(J)$.
    This completes the proof.
\end{proof}

The next lemma is prepared for \Cref{thm SI-unique}.

\begin{lemma} \label{lem P-in-Q}
    Let $\mathcal{M}$ be a type $\mathrm{I}_n$ von Neumann algebra and $T$ an operator in $\mathcal{M}$.
    Suppose that $\mathscr{P}$ is a bounded abelian family of idempotents in $\{T\}'\cap\mathcal{M}$ and $\mathscr{Q}$ is a bounded maximal abelian family of idempotents in $\{T\}'\cap\mathcal{M}$.
    Then there exists an invertible operator $X$ in $\{T\}'\cap\mathcal{M}$ such that $X^{-1}\mathscr{P}X\subseteq\mathscr{Q}$.
\end{lemma}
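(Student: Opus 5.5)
The idea is to replace $\mathscr{P}$ by a finite family of idempotents that captures all of it, and then conjugate those idempotents into $\mathscr{Q}$ one at a time.

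\textbf{Step 1: reduce to a finite family.} Apply \Cref{lem J-FSID} to the bounded abelian family $\mathscr{P}$. This gives finitely many idempotents $\{P_j\}_{j=1}^m$ in $\{T\}'\cap\mathcal{M}$ that are mutually orthogonal, sum to $I_{\mathcal{M}}$, and satisfy
\[
PP_j=P_jP=P_jC_{P_jP}\quad\text{for every } P\in\mathscr{P}.
\]
Summing over $j$ gives $P=\sum_{j}P_jC_{P_jP}$. So every $P\in\mathscr{P}$ lies in the family
\[
\mathscr{P}'=\Big\{\sum_j P_jZ_j\colon Z_j~\text{central projections}\Big\}.
\]
It therefore suffices to find an invertible $X\in\{T\}'\cap\mathcal{M}$ with $X^{-1}P_jX\in\mathscr{Q}$ for all $j$. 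Once that holds, $X^{-1}\big(\sum_j P_jZ_j\big)X=\sum_j (X^{-1}P_jX)Z_j$. This is a sum of mutually orthogonal commuting idempotents, each a central cut of an element of $\mathscr{Q}$. By maximality it lies in $\mathscr{Q}$: it is an idempotent in $\{T\}'\cap\mathcal{M}$ commuting with all of $\mathscr{Q}$, since $\mathscr{Q}$ contains the central projections and is abelian, and boundedness is preserved. Note also that $T$ has property $(J)$, because $\mathscr{Q}$ exists.

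\textbf{Step 2: conjugate the $P_j$ inductively.} By \Cref{cor SI-unique-pre} there is an invertible $X_1\in\{T\}'\cap\mathcal{M}$ with $X_1^{-1}P_1X_1=:Q_1\in\mathscr{Q}$. Replacing $P_j$ by $X_1^{-1}P_jX_1$, assume $P_1=Q_1\in\mathscr{Q}$. Set $E=I-Q_1$, which is an idempotent in $\mathscr{Q}$. Up to a similarity, $E$ can be taken to be a projection, and then $E\mathcal{M}E$ is a finite direct sum of type $\mathrm{I}_k$ algebras. The remaining $P_2,\dots,P_m$ lie in $\{TE\}'\cap E\mathcal{M}E$, and $\mathscr{Q}E$ is a bounded maximal abelian family of idempotents there. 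Maximality holds because an idempotent in $E\mathcal{M}E$ commuting with $TE$ and with $\mathscr{Q}E$ can be added to $Q_1$ to give an idempotent commuting with $T$ and with $\mathscr{Q}$; that sum lies in $\mathscr{Q}$, and cutting it by $E$ recovers the original idempotent. Applying \Cref{cor SI-unique-pre} (or induction on $m$) in the corner $E\mathcal{M}E$ gives an invertible $Y_E\in\{TE\}'\cap E\mathcal{M}E$ with $Y_E^{-1}P_2Y_E\in\mathscr{Q}E$. Then $Y=Q_1+Y_E$ is invertible in $\{T\}'\cap\mathcal{M}$ and fixes $Q_1$. Iterating over $j=2,\dots,m$ and composing the conjugations produces $X$.

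\textbf{Main obstacle.} The delicate point is the corner step: checking that $\mathscr{Q}E$ is \emph{maximal} in the relative commutant of $TE$ inside $E\mathcal{M}E$, and that \Cref{cor SI-unique-pre} applies when $E\mathcal{M}E$ is a direct sum of type $\mathrm{I}_k$ algebras rather than a single type $\mathrm{I}_n$ algebra. I would handle the second issue by cutting with the central projections on which $E$ has constant trace and treating each summand separately; each piece is then type $\mathrm{I}_k$. One must also confirm that the corner operators still have property $(J)$; this follows from \Cref{cor J-direct-sum}, or directly from the existence of $\mathscr{Q}E$. Finally, when pulling back, I would verify that the composed similarities keep the already-placed $Q_1,\dots,Q_{j-1}$ fixed.
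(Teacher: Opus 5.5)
Your proposal is correct and takes essentially the same route as the paper. Both arguments reduce $\mathscr{P}$ to a finite orthogonal family via \Cref{lem J-FSID}, conjugate $P_1$ into $\mathscr{Q}$ using \Cref{prop diagonal} (equivalently \Cref{cor SI-unique-pre}), pass to the complementary corner, and conclude from $P_jP=P_jC_{P_jP}$ together with the maximality of $\mathscr{Q}$. The only difference is bookkeeping: the paper recurses by induction on $n$, applying the lemma itself to $\mathscr{P}(I_{\mathcal{M}}-P_1)$ in a corner isomorphic to $\mathbb{M}_{n-r_1}(\mathcal{A})$, whereas you iterate over $j$; you also explicitly verify that $\mathscr{Q}E$ is maximal in the corner, which the paper uses tacitly.
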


\begin{proof}
    Suppose that $T$ and $\mathscr{Q}$ are of the form as in the proof of \Cref{cor SI-unique-pre}.
    Let $\{P_j\}_{j=1}^m$ be finitely many idempotents given by \Cref{cor J-FSID}.
    By \Cref{prop diagonal}, there exists an invertible operator $X_1$ in $\{T\}'\cap\mathcal{M}$ such that $X_1^{-1}P_1X_1$ is diagonal.
    By considering $X_1^{-1}\mathscr{P}X_1$, we may assume that $P_1$ is diagonal, i.e., $P_1\in\mathscr{Q}$.
    Cutting down by central projections, we may assume that $\mathrm{Tr}(P_1)=r_1I_{\mathcal{A}}$.
    Let $Q_1=I_{\mathcal{M}}-P_1$.
    Then $\mathscr{P}Q_1$ is a bounded abelian family of idempotents in $\{TQ_1\}'\cap Q_1 \mathbb{M}_n(\mathcal{A})Q_1$.
    Since $Q_1 \mathbb{M}_n(\mathcal{A})Q_1\cong \mathbb{M}_{n-r_1}(\mathcal{A})$, by induction, there exists an invertible operator $Y$ in $\{TQ_1\}'\cap Q_1 \mathbb{M}_n(\mathcal{A})Q_1$ such that $Y^{-1}(\mathscr{P}Q_1)Y=\mathscr{Q}Q_1$.
    Let $X=P_1+Y$ be an invertible operator in $\{T\}'\cap \mathbb{M}_n(\mathcal{A})$.
    Then $X^{-1}P_jX\in\mathscr{Q}$ for $1\leqslant j\leqslant m$.
    For every $P\in\mathscr{P}$, we have $P_jP=P_jC_{P_jP}$ and hence $X^{-1}P_jPX\in\mathscr{Q}$ for each $1\leqslant j\leqslant m$.
    It follows that $X^{-1}PX\in\mathscr{Q}$.
    This completes the proof.
\end{proof}

\begin{remark} \label{rem P-in-Q}
    By \Cref{lem P-in-Q}, if $T$ is an operator in a type $\mathrm{I}_n$ von Neumann algebra $\mathcal{M}$ with property $(J)$, then every bounded abelian family of idempotents in $\{T\}'\cap\mathcal{M}$ is contained in a bounded maximal abelian family of idempotents in $\{T\}'\cap\mathcal{M}$.
    By \Cref{rem product-bdd}, a result similar to \Cref{lem P-in-Q} is not true in $\mathcal{B}(\mathcal{H})$.
\end{remark}

By \Cref{lem FSID-J}, \Cref{cor J-FSID}, and \Cref{thm J-FSID}, the following theorem can be viewed as the uniqueness of finite strongly irreducible decomposition up to similarity.

\begin{theorem} \label{thm SI-unique}
    Let $\mathcal{M}$ be a type $\mathrm{I}_n$ von Neumann algebra and $T$ an operator in $\mathcal{M}$.
    Suppose that $\mathscr{P}$ and $\mathscr{Q}$ are two bounded maximal abelian families of idempotents in $\{T\}'\cap\mathcal{M}$, then there exists an invertible operator $X$ in $\{T\}'\cap\mathcal{M}$ such that $\mathscr{Q}=X^{-1}\mathscr{P}X$.
\end{theorem}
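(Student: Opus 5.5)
The plan is to obtain $X$ from \Cref{lem P-in-Q} applied to $\mathscr{P}$ and $\mathscr{Q}$, and then to use maximality to upgrade the inclusion to equality. Since $\mathscr{P}$ is a bounded abelian family of idempotents in $\{T\}'\cap\mathcal{M}$ and $\mathscr{Q}$ is a bounded maximal abelian family there, \Cref{lem P-in-Q} gives an invertible $X\in\{T\}'\cap\mathcal{M}$ with $X^{-1}\mathscr{P}X\subseteq\mathscr{Q}$. The remaining task is to show that this inclusion is an equality.

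For this, note that conjugation by $X$ is an automorphism of $\{T\}'\cap\mathcal{M}$, because $X$ commutes with $T$. It therefore maps bounded maximal abelian families of idempotents in $\{T\}'\cap\mathcal{M}$ to families of the same kind: boundedness is preserved since $\|X^{-1}PX\|\leqslant\|X^{-1}\|\|X\|\|P\|$, and commutativity and maximality transfer through the automorphism. Hence $X^{-1}\mathscr{P}X$ is a maximal abelian family of idempotents in $\{T\}'\cap\mathcal{M}$.

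Now take any $Q\in\mathscr{Q}$. Since $\mathscr{Q}$ is abelian and contains $X^{-1}\mathscr{P}X$, the idempotent $Q$ commutes with every element of $X^{-1}\mathscr{P}X$. Thus $X^{-1}\mathscr{P}X\cup\{Q\}$ is an abelian family of idempotents in $\{T\}'\cap\mathcal{M}$. Maximality of $X^{-1}\mathscr{P}X$ then forces $Q\in X^{-1}\mathscr{P}X$, so $\mathscr{Q}=X^{-1}\mathscr{P}X$.

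The only point needing care is which notion of maximality is in force. It should mean maximal among all abelian families of idempotents, not only among bounded ones. If it is read as ``maximal among bounded abelian families'', the argument still goes through: $X^{-1}\mathscr{P}X\cup\{Q\}$ is contained in the bounded family $\mathscr{Q}$, so it is itself bounded. The substantive work is therefore entirely in \Cref{lem P-in-Q}, which in turn rests on \Cref{prop diagonal}. This theorem reduces to that lemma together with the maximality argument above.
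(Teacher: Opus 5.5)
Your proposal is correct and follows the paper's proof: apply \Cref{lem P-in-Q} to get an invertible $X\in\{T\}'\cap\mathcal{M}$ with $X^{-1}\mathscr{P}X\subseteq\mathscr{Q}$, then conclude equality from the maximality of $\mathscr{P}$, transported to $X^{-1}\mathscr{P}X$. Your remarks on why conjugation by $X$ preserves maximality, and on the two possible readings of ``maximal'', make explicit what the paper leaves implicit.
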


\begin{proof}
    By \Cref{lem P-in-Q}, there exists an invertible operator $X$ in $\{T\}'\cap\mathcal{M}$ such that $X^{-1}\mathscr{P}X\subseteq\mathscr{Q}$.
    It follows that $X^{-1}\mathscr{P}X=\mathscr{Q}$ by the maximality of $\mathscr{P}$.
\end{proof}

\subsection{Kaplansky's second test problem and property \texorpdfstring{$(J)$}{(J)}}

\begin{theorem} \label{thm J-Kaplansky}
    Let $\mathcal{M}$ be a type $\mathrm{I}_n$ von Neumann algebra and $T_1,T_2\in\mathcal{M}$ such that $T_1\oplus T_1\sim T_2\oplus T_2$ in $\mathbb{M}_2(\mathcal{M})$.
    If $T_1$ has property $(J)$, then $T_1\sim T_2$.
\end{theorem}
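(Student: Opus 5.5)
The plan is to combine \Cref{lem matrix-units} with the uniqueness of bounded maximal abelian families of idempotents up to similarity (\Cref{thm SI-unique}, and its more flexible form \Cref{lem P-in-Q}).

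Write $\mathcal{M}=\mathbb{M}_n(\mathcal{A})$, so that $\mathbb{M}_2(\mathcal{M})\cong\mathbb{M}_{2n}(\mathcal{A})$ is of type $\mathrm{I}_{2n}$. Let $A=T_1$, $B=T_2$, and let $X$ be invertible with $A\oplus A=X^{-1}(B\oplus B)X$. As in \Cref{lem matrix-units}, put $F_{ij}=X^{-1}E_{ij}X$. By that lemma, it suffices to show that $E_{11}\sim F_{11}$ inside $\mathcal{C}:=\{A\oplus A\}'\cap\mathbb{M}_2(\mathcal{M})$.

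Since $T_1$ has property $(J)$, \Cref{cor J-direct-sum} shows that $A\oplus A$ has property $(J)$ in $\mathbb{M}_{2n}(\mathcal{A})$. So we may fix a bounded maximal abelian family $\mathscr{Q}$ of idempotents in $\mathcal{C}$. Concretely, by \Cref{lem upper triangular} and \Cref{cor J-FSID}, we first conjugate $T_1$ by an invertible in $\mathcal{M}$ (harmless, since $\sim$ is transitive) into a finite direct sum $\bigoplus_k S_k$ of upper triangular strongly irreducible blocks. Then $A\oplus A=\bigoplus_k(S_k\oplus S_k)$, and $\mathscr{Q}$ consists of the diagonal idempotents $\bigoplus_k(I\otimes p_k\oplus I\otimes p_k')$, taken after the permutation making $A\oplus A$ a direct sum of the blocks $S_k$. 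In this picture $E_{11}=\bigoplus_k(I\otimes I_{\mathcal{A}}\oplus 0)$ already lies in $\mathscr{Q}$.

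Next apply \Cref{cor SI-unique-pre} to the idempotent $F_{11}\in\mathcal{C}$. This gives an invertible $Y\in\mathcal{C}$ with $G:=Y^{-1}F_{11}Y\in\mathscr{Q}$. Replacing $X$ by $XY$ keeps $A\oplus A=(XY)^{-1}(B\oplus B)(XY)$, so we may assume that $F_{11}$ itself is a diagonal idempotent $\bigoplus_k(I\otimes p_k\oplus I\otimes p_k')$. The remaining task is to conjugate this diagonal idempotent to $E_{11}$ inside $\mathcal{C}$.

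For this, use that $F_{11}\sim F_{22}=I-F_{11}$ in $\mathcal{C}$ via the matrix units $F_{12},F_{21}$, which lie in $\mathcal{C}$. Applying \Cref{cor SI-unique-pre} and \Cref{lem P-in-Q} to the commuting pair $\{F_{11},I-F_{11}\}$, both can be taken in $\mathscr{Q}$. Hence, on each strongly irreducible summand type, the equivalence $F_{11}\sim I-F_{11}$ in $\mathcal{C}$ forces a counting identity, computed after cutting by central projections. For a fixed similarity class of blocks $S_k$ (same size, same trace, and mutually similar), let the class have multiplicity $r$ in $T_1$. It then has multiplicity $2r$ in $A\oplus A$, and the central-valued count of the blocks selected by $F_{11}$ must equal the count selected by $I-F_{11}$. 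So $F_{11}$ selects exactly $r$ copies from each class, locally over $\mathcal{A}$.

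To justify this count, note that similarity of idempotents in $\mathcal{C}$ passes to the quotient $\mathcal{C}/\mathrm{Rad}(\mathcal{C})$. By \Cref{cor SI} and \Cref{cor SI-Rosenblum}, after cutting centrally this quotient is $\bigoplus_{\text{classes}}\mathbb{M}_{2r}(\mathcal{A})$, where similarity of diagonal projections is detected by the trace. Two diagonal idempotents in $\mathscr{Q}$ with the same local counts in each class are then conjugate by a permutation of identical blocks, and such a permutation lies in $\mathcal{C}$. This gives $F_{11}\sim E_{11}$ in $\mathcal{C}$. The countably many central pieces are glued as in \Cref{cor Jordan-nilpotent}, keeping the norms of the conjugating operators bounded because they are permutations composed with the bounded similarities identifying blocks within a class.

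The main obstacle I expect is this last bookkeeping step. It requires identifying $\mathcal{C}$ modulo its radical precisely enough to read off the multiplicities. It also requires handling blocks that are similar but not identical: these must be made identical by a preliminary uniform similarity, which needs the blocks within a class to be similar via bounded operators, locally in $\mathcal{A}$.
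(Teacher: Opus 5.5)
\textbf{There is a genuine gap, in the final step.} Everything up to putting $F_{11}$ into $\mathscr{Q}$ is fine: \Cref{lem matrix-units}, property $(J)$ of $A\oplus A$, and \Cref{cor SI-unique-pre} are used correctly.

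The gap is the passage from $F_{11}\sim I-F_{11}$ to $F_{11}\sim E_{11}$ by counting blocks in $\mathcal{C}/\mathrm{Rad}(\mathcal{C})$. This is the counting argument of \Cref{thm Q-Kaplansky}, moved to a setting where it fails. There each block has $\mathcal{Q}\cong\mathbb{C}$, so \Cref{lem off-diagonal} gives the dichotomy ``similar, or intertwiner products lie in the radical''. Here each block has quotient $\mathcal{A}$ (\Cref{cor SI}), and similarity of blocks is only a local, non-uniform notion.

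Here is a concrete example. Let $\mathcal{A}=\ell^\infty$, $t=(1/k)_{k\geqslant 1}$, $S_1=J_2\otimes I_{\mathcal{A}}$ and $S_2=J_2\otimes t$. Both are strongly irreducible by \Cref{thm SI}, and they are similar on every atom. However $S_1\oplus S_1\nsim S_1\oplus S_2$. To see this, put $M=J_2\oplus J_2$, a partial isometry, and $N=J_2\oplus\frac1kJ_2$. Any $Y$ with $YN=MY$ satisfies $1=\mathrm{dist}(e_4,\ker N)\leqslant\|Y^{-1}\|\,\|MYe_4\|\leqslant\|Y\|\|Y^{-1}\|/k$.

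Suppose your classes are global. Then $S_1$ and $S_2$ lie in different classes, but $\mathcal{C}/\mathrm{Rad}(\mathcal{C})$ is not a direct sum over classes. Indeed, $\mathrm{diag}(1,t)$ and $\mathrm{diag}(t,1)$ intertwine $S_1$ and $S_2$ in both directions. Their product is $I_2\otimes t$, which is not in $\mathrm{Rad}(\{S_1\}'\cap\mathbb{M}_2(\mathcal{A}))$. Moreover, for $T_1=S_1\oplus S_2$, showing that $F_{11}\sim I-F_{11}$ forces one copy per class amounts to $S_1\oplus S_1\nsim S_2\oplus S_2$. For general blocks that is an instance of the very problem being solved.

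Suppose instead your classes are local, i.e.\ defined after a countable central partition. Then equal local counts do not give similarity in $\mathcal{C}$. For $T_1=S_1\oplus S_2$, consider the diagonal idempotents $E_{11}=I\oplus I\oplus 0\oplus 0$ and $F=I\oplus 0\oplus I\oplus 0$. On every atom each picks two of the four locally identical blocks. Yet an invertible $W\in\mathcal{C}$ with $W^{-1}FW=E_{11}$ would compress to a similarity $S_1\oplus S_2\sim S_1\oplus S_1$. The local conjugators have condition numbers at least $k$ and cannot be glued. This is exactly the obstacle you flagged, and it is not bookkeeping: counting throws away the uniformity carried by the single bounded similarity $F_{11}\sim I-F_{11}$.

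The paper never counts; it cancels one strongly irreducible summand at a time using the bounded $X$.
\begin{enumerate}
\item It notes that $T_2$ also has property $(J)$ (\Cref{cor J-direct-sum}). It takes finite strongly irreducible decompositions $\{P_j\}$ of $T_1$ and $\{Q_j\}$ of $T_2$, with families $\mathscr{P}$ and $\mathscr{Q}$.
\item It uses \Cref{thm SI-unique} to arrange $X^{-1}(\mathscr{P}\oplus\mathscr{P})X=\mathscr{Q}\oplus\mathscr{Q}$.
\item Then $X^{-1}(P_1\oplus 0)X$ is centrally minimal in $\mathscr{Q}\oplus\mathscr{Q}$. After finitely many central cuts it is a single summand, say $Q_1\oplus 0$, and compressing $X$ gives a bounded similarity $T_1P_1\sim T_2Q_1$.
\item A flip, or a swap built from $T_2Q_1\sim T_1P_1\sim T_2Q_2$, makes $X^{-1}(0\oplus P_1)X=0\oplus Q_1$.
\item Compressing $X$ to the complements gives $T_1(I-P_1)\oplus T_1(I-P_1)\sim T_2(I-Q_1)\oplus T_2(I-Q_1)$. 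Induction on the number of summands finishes.
\end{enumerate}
Every similarity comes from $X$ (plus an explicit flip) with only finitely many central cuts, so uniform bounds are automatic.

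To rescue your route you would have to do the same: use $F_{12}$ and $F_{21}$ to match strongly irreducible summands of $F_{11}$ and $I-F_{11}$ one at a time, instead of reading multiplicities off a quotient.
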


\begin{proof}
    Note that $T_2$ has property $(J)$ by \Cref{cor J-direct-sum}.
    By \Cref{lem SI-decomposition}, \Cref{thm J-FSID}, and \Cref{thm SI-unique}, we may assume that
    \begin{enumerate}  [label= $(\arabic*)$, ref= \arabic*, leftmargin=*] 
    \item[$(1)$] $\{P_j\}_{j=1}^{k_1}$ are nonzero projections in $\{T_1\}'\cap\mathcal{M}$ such that each $T_1P_j$ is strongly irreducible in $P_j\mathcal{M}P_j$, and
    \begin{equation*}
        \mathscr{P}=\{P_1Z_1+\cdots+P_{k_1}Z_{k_1}\colon\text{each}~Z_j~\text{is a central projection in}~\mathcal{M}\};
    \end{equation*}
    
    \item[$(2)$] $\{Q_j\}_{j=1}^{k_2}$ are nonzero projections in $\{T_2\}'\cap\mathcal{M}$ such that each $T_2Q_j$ is strongly irreducible in $Q_j\mathcal{M}Q_j$, and
    \begin{equation*}
        \mathscr{Q}=\{Q_1Z_1+\cdots+Q_{k_2}Z_{k_2}\colon\text{each}~Z_j~\text{is a central projection in}~\mathcal{M}\};
    \end{equation*}
    
    \item[$(3)$] $X$ is an invertible operator in $\mathbb{M}_2 \otimes\mathcal{M}$ such that $T_2\oplus T_2=X^{-1}(T_1\oplus T_1)X$ and $\mathscr{Q}\oplus\mathscr{Q}=X^{-1}(\mathscr{P}\oplus\mathscr{P})X$.
    \end{enumerate}
    Let $Q=X^{-1}(P_1\oplus 0)X\in\mathscr{Q}\oplus\mathscr{Q}$.
    Then $Q$ is centrally minimal in the relative commutant $\{T_2\oplus T_2\}'\cap(\mathbb{M}_2 \otimes\mathcal{M})$.
    Hence the central supports of projections in
    \begin{equation*}
        \{Q(Q_j\oplus 0),Q(0\oplus Q_j)\colon 1\leqslant j\leqslant k_2\}
    \end{equation*}
    are mutually orthogonal.
    Cutting down by central projections, we assume that
    \begin{equation*}
        X^{-1}(P_1\oplus 0)X=Q_1\oplus 0.
    \end{equation*}
    It follows that $T_1P_1\sim T_2Q_1$.
    Up to a permutation, we may assume that
    \begin{equation*}
        X^{-1}(0\oplus P_1)X\in\{0\oplus Q_1,0\oplus Q_2,Q_2\oplus 0\}.
    \end{equation*}
    \noindent\textbf{Case I.}
    $X^{-1}(0\oplus P_1)X=Q_2\oplus 0$.
    Clearly, there exists an invertible operator $Y$ in $\{T_2Q_2\oplus T_2Q_2\}'\cap \mathbb{M}_2 \otimes Q_2\mathcal{M}Q_2$ such that $Y^{-1}(Q_2\oplus 0)Y=0\oplus Q_2$.
    Define an operator $Y_0=Y+(I_{\mathcal{M}}-Q_2)\oplus(I_{\mathcal{M}}-Q_2)$  invertible in $\{T_2\oplus T_2\}'\cap \mathbb{M}_2 \otimes\mathcal{M}$.
    By considering $XY_0$, we may assume that $X^{-1}(0\oplus P_1)X=0\oplus Q_2$.
    
    \noindent\textbf{Case II.}
    If $X^{-1}(0\oplus P_1)X=0\oplus Q_2$, then $T_1P_1\sim T_2Q_2$.
    It follows that $T_2Q_1\sim T_2Q_2$.
    There exists an invertible operator $Y$ in $\{T_2\}'\cap\mathcal{M}$ such that $Y^{-1}Q_2Y=Q_1$.
    By considering $X(I_{\mathcal{M}}\oplus Y)$, we may assume that $X^{-1}(0\oplus P_1)X=0\oplus Q_1$.
    
    \noindent\textbf{Case III.}
    $X^{-1}(0\oplus P_1)X=0\oplus Q_1$.
    It is clear that
    \begin{equation*}
        T_1(I_{\mathcal{M}}-P_1)\oplus T_1(I_{\mathcal{M}}-P_1)
        \sim T_2(I_{\mathcal{M}}-Q_1)\oplus T_2(I_{\mathcal{M}}-Q_1).
    \end{equation*}
    By induction, we have $T_1(I_{\mathcal{M}}-P_1)\sim T_2(I_{\mathcal{M}}-Q_1)$.
    This completes the proof.
\end{proof}

\section{Kaplansky's second test problem in type \texorpdfstring{$\mathrm{I}_n$}{In} von Neumann algebras}\label{s4}

In this section, we prove a reduction theorem for Kaplansky's second test problem in type $\mathrm{I}_n$ von Neumann algebras in \Cref{thm reduction}.
Moreover, we prove that Kaplansky's second test problem holds for every type $\mathrm{I}_n$ von Neumann algebra with $1\leqslant n\leqslant 3$ in \Cref{prop type-I-3}.

\subsection{A reduction theorem}

Suppose that $A,B\in \mathbb{M}_n $ and $A\sim B$, i.e., there exists an invertible operator $X_0$ in $\mathbb{M}_n $ such that $A=X_0^{-1}BX_0$.
Let $\mathcal{X}$ be the set of all pairs $(X,Y)$ in $\mathbb{M}_n ^2$ such that $\|X\|\leqslant\|X_0\|$, $\|Y\|\leqslant\|X_0^{-1}\|$, $A=YBX$, and $I_n=YX$.
Then $\mathcal{X}$ is a compact subset of $\mathbb{M}_n $ and the function $(X,Y)\mapsto\|X\|\|Y\|$ takes its minimal value in $\mathcal{X}$.
Note that the minimal value is independent of the choice of $X_0$.

\begin{definition} \label{def similarity-number}
    For $A,B\in \mathbb{M}_n $ with $A\sim B$, their \emph{similarity number} is defined as
    \begin{equation*}
        s(A,B)=\min\{\|X\|\|X^{-1}\|\colon A=XBX^{-1}\}.
    \end{equation*}
    We put $s(A,B)=\infty$ if $A\nsim B$.
    For every integer $n\geqslant 1$, we define the \emph{similarity function} $\theta_n\colon[1,\infty)\to[1,\infty]$ by
    \begin{equation*}
        \theta_n(t)=\sup\{s(A,B)\colon (A,B)\in \mathbb{M}_n ^2, s(A\oplus A,B\oplus B)\leqslant t\}\quad\text{for}~t\geqslant 1.
    \end{equation*}
\end{definition}

\begin{remark}\label{rem similarity-number}
    Clearly, $s(A\oplus A,B\oplus B)\leqslant s(A,B)$ for all $A,B\in \mathbb{M}_n $.
    (It seems that this inequality is in fact an equality.)
    Note that $\theta_1(t)=1$ for every $t\geqslant 1$.
    Moreover, $\theta_n$ is an increasing function on $[1,\infty)$.
\end{remark}

The following theorem reduces Kaplansky's second test problem in type $\mathrm{I}_n$ von Neumann algebras to $\mathbb{M}_n(\ell^\infty) = \mathbb{M}_n \otimes \ell^\infty$.

\begin{theorem} \label{thm reduction}
    The following statements are equivalent:
    \begin{enumerate} [label= $(\arabic*)$, ref= \arabic*, leftmargin=*] 
    \item[$(1)$] $\theta_n(t)<\infty$ for every $t\geqslant 1$;
    \item[$(2)$] Kaplansky's second test problem is true for $\mathbb{M}_n \otimes\ell^\infty$;
    \item[$(3)$] Kaplansky's second test problem is true for every type $\mathrm{I}_n$ von Neumann algebra.
    \end{enumerate}
\end{theorem}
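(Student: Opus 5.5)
I will prove the cycle $(3)\Rightarrow(2)\Rightarrow(1)\Rightarrow(3)$. The implication $(3)\Rightarrow(2)$ is trivial, since $\mathbb{M}_n\otimes\ell^\infty$ is a type $\mathrm{I}_n$ von Neumann algebra.

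For $(2)\Rightarrow(1)$ I argue by contraposition. Suppose $\theta_n(t_0)=\infty$ for some $t_0\geqslant 1$. Then there are pairs $(A_k,B_k)\in\mathbb{M}_n^2$ such that $s(A_k\oplus A_k,B_k\oplus B_k)\leqslant t_0$ while $s(A_k,B_k)\geqslant k$ (possibly $=\infty$). Rescaling by a positive scalar changes no similarity number, so I may assume $\|A_k\|,\|B_k\|\leqslant 1$. Put $T_1=(A_k)_k$ and $T_2=(B_k)_k$ in $\mathbb{M}_n\otimes\ell^\infty=\ell^\infty(\mathbb{N},\mathbb{M}_n)$. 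Choose implementing similarities $X_k$ in $\mathbb{M}_2(\mathbb{M}_n)$, normalised so that $\|X_k\|\leqslant\sqrt{t_0}$ and $\|X_k^{-1}\|\leqslant\sqrt{t_0}$. These assemble into a bounded invertible element of $\mathbb{M}_2(\mathbb{M}_n\otimes\ell^\infty)$, so $T_1\oplus T_1\sim T_2\oplus T_2$. However, $T_1\sim T_2$ would give a uniform bound $\|X\|\|X^{-1}\|$ on the coordinatewise similarities, contradicting $s(A_k,B_k)\to\infty$. This contradicts (2).

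For $(1)\Rightarrow(3)$, let $\mathcal{M}=\mathbb{M}_n(\mathcal{A})$ with $\mathcal{A}=C(X)$, $X$ extremely disconnected, so that elements are continuous $\mathbb{M}_n$-valued functions on $X$. Suppose $T_1\oplus T_1=Y^{-1}(T_2\oplus T_2)Y$ with $t=\|Y\|\|Y^{-1}\|$. At each point $x\in X$, evaluation gives $s(T_1(x)\oplus T_1(x),T_2(x)\oplus T_2(x))\leqslant t$, hence $s(T_1(x),T_2(x))\leqslant\theta_n(t)=:C<\infty$. The task is to glue these pointwise similarities into a global one with uniformly bounded norms.

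My plan is to use a maximality and compactness argument in the style of \Cref{cor Jordan-nilpotent}. Fix $x_0$ and a similarity $Z_0$ with $\|Z_0\|,\|Z_0^{-1}\|\leqslant\sqrt{C}+1$ and $T_1(x_0)=Z_0^{-1}T_2(x_0)Z_0$. I want a clopen neighbourhood on which some bounded continuous $Z$ satisfies $T_1Z=ZT_2$ exactly, which a small perturbation of $Z_0$ alone does not give. Instead I will work with the set
\[
\mathcal{S}(x)=\{(Z,W)\colon ZT_1(x)=T_2(x)Z,\ WZ=I,\ \|Z\|,\|W\|\leqslant\sqrt{C}\}.
\]
This set is nonempty and compact for each $x$, and $x\mapsto\mathcal{S}(x)$ is upper semicontinuous. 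I then need a continuous selection over a clopen piece. The key fact I intend to use is that the solution spaces $\{Z\colon ZT_1(x)=T_2(x)Z\}$ are kernels of the linear Rosenblum maps $\tau_x$, which depend continuously on $x$. On the set where $\dim\ker\tau_x$ is locally constant, which is open and dense after refining by clopen sets via a lower-semicontinuity-of-rank argument, these kernels vary continuously as subspaces. I can therefore transport $Z_0$ continuously within the kernel and retain invertibility, with norm bounds close to $\sqrt{C}$, on a clopen neighbourhood.

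A maximal orthogonal family of such clopen pieces, each carrying an invertible $Z_\lambda$ with $\|Z_\lambda\|,\|Z_\lambda^{-1}\|\leqslant\sqrt{C}+1$, then sums to the identity in $\mathcal{A}$, using that $\mathcal{A}$ is a von Neumann algebra and the dense-open argument above. The sum $\sum_\lambda Z_\lambda$ converges strongly to a bounded invertible operator implementing $T_1\sim T_2$.

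The main obstacle is this local selection step: producing exact continuous solutions of $ZT_1=T_2Z$ with uniform invertibility bounds near a point. I expect to handle it through rank-constancy of $\tau_x$ on a dense open set together with projection onto the kernel, with the rank stratification resolved by clopen refinement in the extremely disconnected space.
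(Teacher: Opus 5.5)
Your argument is correct. The implications $(3)\Rightarrow(2)$ and $(2)\Rightarrow(1)$ coincide with the paper's, and the only real difference is the gluing step in $(1)\Rightarrow(3)$.

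\textbf{How the paper glues.} After the same pointwise estimate $s(T_1(x),T_2(x))\leqslant\theta_n(t)$, the paper invokes the Deckard--Pearcy selection theorem \cite[Theorem 1]{DP64} on Stonian spaces. This gives at once a continuous, hence global, similarity $Y$ with $\|Y\|,\|Y^{-1}\|\leqslant\sqrt{\theta_n(t)}$. In your language, this is a continuous section of the compact graph of your map $x\mapsto\mathcal{S}(x)$ over the extremally disconnected space (equivalently, Gleason's projectivity). So once you had written down $\mathcal{S}$ and its upper semicontinuity, you could have concluded directly.

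\textbf{How you glue.} You build the section by hand.
\begin{itemize}
\item On the dense open set $G$ where $\operatorname{rank}\tau_x$ is locally constant, the orthogonal projection $Q_x$ onto $\ker\tau_x$ depends continuously on $x$.
\item So $x\mapsto Q_xZ_0$ is an exact continuous solution through $Z_0$. It stays invertible with controlled norms on a clopen neighbourhood.
\item A maximal orthogonal family of such central pieces exhausts $I_{\mathcal{A}}$: a nonzero leftover projection has nonempty clopen support, which must meet $G$.
\item The uniformly bounded pieces and their inverses then sum strongly, just as in \Cref{cor Jordan-nilpotent}.
\end{itemize}

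\textbf{What each route buys.} Your route is self-contained. It uses only finite-dimensional linear algebra plus a maximality device already present in the paper. It also shows why naive perturbation of $Z_0$ fails: $\ker\tau_x$ can drop in dimension near a point outside $G$. The price is a constant $\sqrt{C}+\varepsilon$ in place of $\sqrt{C}$, which is irrelevant for the equivalence. The paper's route is a one-line citation with the sharp constant.

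\textbf{Two minor fixes.} First, the local step must be performed at a point $x_0\in G$, not at an arbitrary point. Second, take $\|Z_0\|,\|Z_0^{-1}\|\leqslant\sqrt{C}$, which is possible because the minimum defining $s$ is attained. Then the perturbed norms stay below $\sqrt{C}+1$.
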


\begin{proof}
    $(3)\Rightarrow(2)$: This is clear.
    
    $(2)\Rightarrow(1)$: Suppose on the contrary that $\theta_n(t)=\infty$ for some $t\geqslant 1$.
    Then for each $j\geqslant 1$, there exists a pair $(A_j,B_j)\in \mathbb{M}_n ^2$ such that
    \begin{equation*}
      s(A_j\oplus A_j,B_j\oplus B_j)\leqslant t\quad\text{and}\quad s(A_j,B_j)\geqslant j.
    \end{equation*}
    Without loss of generality, we assume that $\max\{\|A_j\|,\|B_j\|\}\leqslant 1$.
    We define two operators $A$ and $B$ in $\mathbb{M}_n \otimes\ell^\infty$ by
    \begin{equation*}
      A=\bigoplus_{j=1}^{\infty}A_j\quad\text{and}\quad B=\bigoplus_{j=1}^{\infty}B_j.
    \end{equation*}
    Since $s(A_j\oplus A_j,B_j\oplus B_j)\leqslant t$, there exists an invertible operator $X_j$ in $\mathbb{M}_{2n} $ such that
    \begin{equation*}
      \|X_j\|=\|X_j^{-1}\|\leqslant\sqrt{t}\quad\text{and}\quad (A_j\oplus A_j)=X_j^{-1}(B_j\oplus B_j)X_j.
    \end{equation*}
    Let $X=\bigoplus_{j=1}^{\infty}X_j$.
    Then $X$ is an invertible operator in $\mathbb{M}_{2n} \otimes\ell^\infty$ such that
    \begin{equation*}
      \begin{pmatrix}
        A & 0 \\
        0 & A
      \end{pmatrix}
      =X^{-1}
      \begin{pmatrix}
        B & 0 \\
        0 & B
      \end{pmatrix}X.
    \end{equation*}
    We claim that $A\nsim B$ in $\mathbb{M}_n \otimes\ell^\infty$.
    Otherwise, there exists an invertible operator $Y=\bigoplus_{j=1}^{\infty}Y_j\in \mathbb{M}_n \otimes\ell^\infty$ such that $A=Y^{-1}BY$.
    It follows that $A_j=Y_j^{-1}B_jY_j$ for every $j\geqslant 1$.
    Hence
    \begin{equation*}
      \|Y\|\|Y^{-1}\|\geqslant\|Y_j\|\|Y_j^{-1}\|\geqslant s(A_j,B_j)\geqslant j.
    \end{equation*}
    That is a contradiction.
    
    $(1)\Rightarrow(3)$:
    Suppose that $\mathcal{M}$ is a type $\mathrm{I}_n$ von Neumann algebra.
    Then we may assume that $\mathcal{M}=\mathbb{M}_n(\mathcal{A})$, where $\mathcal{A}=C(\Omega)$ is an abelian von Neumann algebra.
    Suppose that $A$ and $B$ are operators in $\mathcal{M}$ and $X$ is an invertible operator in $\mathbb{M}_2(\mathcal{M})$ such that
    \begin{equation*}
      \begin{pmatrix}
        A & 0 \\
        0 & A
      \end{pmatrix}
      =X^{-1}
      \begin{pmatrix}
        B & 0 \\
        0 & B
      \end{pmatrix}X.
    \end{equation*}
    Then for every $\omega\in\Omega$, we have
    \begin{equation*}
      \begin{pmatrix}
        A(\omega) & 0 \\
        0 & A(\omega)
      \end{pmatrix}
      =X(\omega)^{-1}
      \begin{pmatrix}
        B(\omega) & 0 \\
        0 & B(\omega)
      \end{pmatrix}X(\omega).
    \end{equation*}
    Let $t=\|X\|\|X^{-1}\|$.
    Then $s(A(\omega)\oplus A(\omega),B(\omega)\oplus B(\omega))\leqslant t$ for every $\omega\in\Omega$.
    It follows that $s(A(\omega),B(\omega))\leqslant\theta_n(t)$.
    Thus, there exists an invertible operator $S(\omega)$ in $\mathbb{M}_n $ satisfying that
    \begin{equation*}
      A(\omega)=S(\omega)^{-1}B(\omega)S(\omega),\quad \|S(\omega)\|\leqslant\sqrt{\theta_n(t)},\quad \|S(\omega)^{-1}\|\leqslant\sqrt{\theta_n(t)}.
    \end{equation*}
    By \cite[Theorem 1]{DP64}, there exists an invertible operator $Y\in\mathcal{M}$ such that $A=Y^{-1}BY$, $\|Y\|\leqslant\sqrt{\theta_n(t)}$, and $\|Y^{-1}\|\leqslant\sqrt{\theta_n(t)}$.
    This completes the proof.
\end{proof}

In the following lemma, we show that the sequence $\{\theta_n(t)\}_{n=1}^\infty$ is increasing for every $t\geqslant 1$.

\begin{lemma}\label{lem theta-increasing}
    For every $t\geqslant 1$, we have $\theta_n(t)\leqslant\theta_{n+1}(t)$.
\end{lemma}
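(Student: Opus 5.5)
The plan is to embed $n\times n$ pairs into $(n+1)\times(n+1)$ pairs by adjoining a one-dimensional block that cannot interact with the rest, and to check that this embedding changes neither side of the similarity numbers. Fix $t\geqslant 1$ and a pair $(A,B)\in\mathbb{M}_n^2$ with $s(A\oplus A,B\oplus B)\leqslant t$; it suffices to show $s(A,B)\leqslant\theta_{n+1}(t)$. Pick a scalar $\lambda\in\mathbb{C}$ with $|\lambda|>\max\{\|A\|,\|B\|\}$, so that $\lambda\notin\sigma(A)\cup\sigma(B)$. Put
\begin{equation*}
    \tilde A=A\oplus\lambda,\qquad \tilde B=B\oplus\lambda\in\mathbb{M}_{n+1}.
\end{equation*}
We will prove two things:
\begin{equation*}
    s(\tilde A\oplus\tilde A,\tilde B\oplus\tilde B)\leqslant t
    \quad\text{and}\quad
    s(\tilde A,\tilde B)\geqslant s(A,B).
\end{equation*}
Together they give $s(A,B)\leqslant s(\tilde A,\tilde B)\leqslant\theta_{n+1}(t)$. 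Taking the supremum over all such pairs then yields $\theta_n(t)\leqslant\theta_{n+1}(t)$.

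For the first claim, take $X$ with $A\oplus A=X^{-1}(B\oplus B)X$ and $\|X\|\|X^{-1}\|\leqslant t$. After permuting coordinates (a unitary, so norms are preserved), $\tilde A\oplus\tilde A$ is unitarily equivalent to $(A\oplus A)\oplus\lambda I_2$, and similarly for $\tilde B\oplus\tilde B$. The intertwiner $X\oplus I_2$ has condition number $\max\{\|X\|,1\}\max\{\|X^{-1}\|,1\}$. To keep this at most $t$, we first rescale $X$ so that $\|X\|=\|X^{-1}\|=\sqrt{\|X\|\|X^{-1}\|}\geqslant 1$. Then $\|X\oplus I_2\|\,\|(X\oplus I_2)^{-1}\|=\|X\|\|X^{-1}\|\leqslant t$.

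For the second claim, let $Y\in\mathbb{M}_{n+1}$ be invertible with $\tilde A=Y^{-1}\tilde BY$, i.e.\ $\tilde BY=Y\tilde A$. Write $Y$ in block form with respect to $\mathbb{C}^n\oplus\mathbb{C}$. The off-diagonal blocks satisfy Sylvester equations of the form $BY_{12}=\lambda Y_{12}$ and $\lambda Y_{21}=Y_{21}A$. Since $\lambda\notin\sigma(A)\cup\sigma(B)$, these force $Y_{12}=0$ and $Y_{21}=0$ (Rosenblum). Hence $Y=Y_{11}\oplus y$, with $Y_{11}$ invertible and $A=Y_{11}^{-1}BY_{11}$. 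Since $Y_{11}$ and $Y_{11}^{-1}$ are compressions of $Y$ and $Y^{-1}$, we get $\|Y_{11}\|\|Y_{11}^{-1}\|\leqslant\|Y\|\|Y^{-1}\|$. Therefore $s(A,B)\leqslant s(\tilde A,\tilde B)$.

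Neither step is hard. The only real care point is the normalization in the first claim: without balancing the norms of $X$ and $X^{-1}$, adjoining the block $I_2$ could increase the condition number beyond $t$.
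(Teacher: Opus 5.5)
Your proposal is correct and follows essentially the same route as the paper. Both arguments adjoin a scalar $\lambda$ with $|\lambda|>\max\{\|A\|,\|B\|\}$, observe that $s(\tilde A\oplus\tilde A,\tilde B\oplus\tilde B)\leqslant t$, and use the spectral separation to force every intertwiner of $A\oplus\lambda$ and $B\oplus\lambda$ to be block diagonal; compressing that intertwiner then bounds $s(A,B)$. Your explicit balancing of $\|X\|$ and $\|X^{-1}\|$ correctly fills in the step the paper calls ``clear''.
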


\begin{proof}
    For every $A\in \mathbb{M}_n $ and scalar $\lambda\in\mathbb{C}$, we write $A_\lambda=A\oplus\lambda\in \mathbb{M}_{n+1} $.
    Suppose that $(A,B)\in \mathbb{M}_n ^2$ and $s(A\oplus A,B\oplus B)\leqslant t$.
    Let $\lambda$ be a scalar such that $\lambda>\max\{\|A\|,\|B\|\}$.
    It is clear that $s(A_\lambda\oplus A_\lambda,B_\lambda\oplus B_\lambda)\leqslant t$.
    Consequently, we have $s(A_\lambda,B_\lambda)\leqslant\theta_{n+1}(t)$.
    If $\theta_{n+1}(t)<\infty$, then there exists an invertible operator $X$ in $\mathbb{M}_{n+1} $ such that $\|X\|\|X^{-1}\|\leqslant\theta_{n+1}(t)$ and $A_ \lambda X=XB_\lambda$.
    By the definition of $\lambda$, we have $X=Y\oplus\mu$ for some invertible $Y\in \mathbb{M}_n $ and nonzero $\mu\in\mathbb{C}$.
    It follows that $AY=YB$ and $s(A,B)\leqslant\|Y\|\|Y^{-1}\|\leqslant\theta_{n+1}(t)$.
    Therefore, $\theta_n(t)\leqslant\theta_{n+1}(t)$.
\end{proof}

\begin{remark} \label{rem theta-increasing}
    By \Cref{thm reduction} and \Cref{lem theta-increasing}, if Kaplansky's second test problem is true for every type $\mathrm{I}_{n+1}$ von Neumann algebra, then it is true for  every type $\mathrm{I}_n$ von Neumann algebra.
\end{remark}

By the method applied in the proof of \Cref{thm reduction} and the result of \Cref{lem theta-increasing}, we have the following result.

\begin{corollary}\label{cor reduction}
    The following statements are equivalent:
    \begin{enumerate}  [label= $(\arabic*)$, ref= \arabic*, leftmargin=*] 
        \item[$(1)$] $\sup_{n\geqslant 1}\theta_n(t)<\infty$ for every $t\geqslant 1$;
        \item[$(2)$] Kaplansky's second problem is true for $\bigoplus_{n=1}^{\infty}\mathbb{M}_n $;
        \item[$(3)$] Kaplansky's second problem is true for every finite type $\mathrm{I}$ von Neumann algebra.
    \end{enumerate}
\end{corollary}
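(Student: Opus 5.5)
The plan is to argue exactly as in the proof of \Cref{thm reduction}, replacing $\mathbb{M}_n$ by the family of all matrix algebras and invoking \Cref{lem theta-increasing} to handle the unbounded size. I would prove $(3)\Rightarrow(2)\Rightarrow(1)\Rightarrow(3)$. The implication $(3)\Rightarrow(2)$ is immediate. Indeed, $\bigoplus_{n=1}^{\infty}\mathbb{M}_n$ (the $\ell^\infty$-direct sum) is a finite type $\mathrm{I}$ von Neumann algebra, being the direct sum of the type $\mathrm{I}_n$ algebras $\mathbb{M}_n$.

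For $(2)\Rightarrow(1)$, suppose $\sup_n\theta_n(t)=\infty$ for some $t\geqslant 1$. Then for each $j$ there are $n_j$ and $(A_j,B_j)\in\mathbb{M}_{n_j}^2$ with $\|A_j\|,\|B_j\|\leqslant 1$ (after scaling), $s(A_j\oplus A_j,B_j\oplus B_j)\leqslant t$ and $s(A_j,B_j)\geqslant j$; this includes the possibility $s(A_j,B_j)=\infty$. By \Cref{lem theta-increasing}, a pair in $\mathbb{M}_{n}$ can be padded with a large scalar $\lambda$ to a pair in $\mathbb{M}_{n'}$ for any $n'\geqslant n$ without changing either similarity number. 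Using this padding, I would arrange the $n_j$ to be distinct, for instance $n_j$ strictly increasing, so that each pair sits in its own summand of $\bigoplus_n\mathbb{M}_n$. Summands not used receive the trivial pair $(0,0)$. The rest follows the proof of \Cref{thm reduction}. The block diagonal $X=\bigoplus X_j$ with $\|X_j\|,\|X_j^{-1}\|\leqslant\sqrt t$ implements $A\oplus A\sim B\oplus B$ in $\mathbb{M}_2(\bigoplus_n\mathbb{M}_n)$. On the other hand, any similarity $Y=\bigoplus Y_n$ between $A$ and $B$ would satisfy $\|Y\|\|Y^{-1}\|\geqslant j$ for all $j$, which is a contradiction. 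One small point must be checked: similarity in the direct sum algebra is summandwise, which holds because the central projections onto the summands commute with everything.

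For $(1)\Rightarrow(3)$, write a finite type $\mathrm{I}$ von Neumann algebra as $\mathcal{M}=\bigoplus_{n}\mathcal{M}_n$ via central projections $Z_n$, with each $\mathcal{M}_n=\mathbb{M}_n(\mathcal{A}_n)$ of type $\mathrm{I}_n$. Given $A\oplus A=X^{-1}(B\oplus B)X$ in $\mathbb{M}_2(\mathcal{M})$, set $t=\|X\|\|X^{-1}\|$. Cutting by $Z_n$, the argument of $(1)\Rightarrow(3)$ in \Cref{thm reduction} gives invertible $Y_n\in\mathcal{M}_n$ with $AZ_n=Y_n^{-1}BZ_nY_n$ and $\|Y_n\|,\|Y_n^{-1}\|\leqslant\sqrt{\theta_n(t)}\leqslant\sqrt{\sup_m\theta_m(t)}$. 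This step uses the pointwise bound at each $\omega\in\Omega_n$ together with \cite[Theorem 1]{DP64}. Because these bounds are uniform in $n$, $Y=\sum_n Y_n$ is a bounded invertible element of $\mathcal{M}$ with $A=Y^{-1}BY$.

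The main obstacle I expect is keeping the norms of $Y$ and $Y^{-1}$ bounded in $(1)\Rightarrow(3)$, since infinitely many $n$ may occur. This is handled by the uniform hypothesis $\sup_n\theta_n(t)<\infty$, which is exactly why statement $(1)$ involves the supremum.
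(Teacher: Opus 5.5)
Your proposal is correct and follows the route the paper indicates; the paper itself only says to combine the method of \Cref{thm reduction} with \Cref{lem theta-increasing}. The scalar padding from \Cref{lem theta-increasing} puts the bad pairs in distinct summands of $\bigoplus_n\mathbb{M}_n$ for $(2)\Rightarrow(1)$, and the uniform bound $\sup_n\theta_n(t)$ glues the summandwise similarities from the type $\mathrm{I}_n$ argument in $(1)\Rightarrow(3)$; the one detail to state explicitly is to pad with a fixed scalar such as $\lambda=2$ after normalizing $\|A_j\|,\|B_j\|\leqslant 1$, so that the direct sums $A$ and $B$ stay bounded.
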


\subsection{Type \texorpdfstring{$\mathrm{I}_3$}{I3} von Neumann algebras}

In this subsection, we show that Kaplansky's second test problem is true for every type $\mathrm{I}_n$ von Neumann algebra for $1\leqslant n\leqslant 3$.
By \Cref{rem theta-increasing}, it suffices to consider the case $n=3$.

\begin{proposition}\label{prop type-I-3}
    For each type $\mathrm{I}_n$ von Neumann algebra with $1\leqslant n\leqslant 3$, Kaplansky's second test problem has an affirmative answer.
\end{proposition}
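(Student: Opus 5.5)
By \Cref{thm reduction} and \Cref{rem theta-increasing}, it suffices to show $\theta_3(t)<\infty$ for every $t\geqslant 1$. I will argue by contradiction. Suppose there are pairs $(A_j,B_j)\in\mathbb{M}_3^2$ with $\max\{\|A_j\|,\|B_j\|\}\leqslant 1$, $s(A_j\oplus A_j,B_j\oplus B_j)\leqslant t$ and $s(A_j,B_j)\to\infty$. Note that $A_j\sim B_j$ holds in $\mathbb{M}_3$ by the Jordan form, since Jordan types are determined by the ranks of $(A-\lambda)^k$ and these double under $A\mapsto A\oplus A$. Passing to a subsequence, I may assume $A_j\to A$, $B_j\to B$, and that the intertwiners $X_j$ (with $\|X_j\|,\|X_j^{-1}\|\leqslant\sqrt t$) converge to some $X$. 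Then $A\oplus A\sim B\oplus B$, hence $A\sim B$. The aim is to show that the similarity $A_j\sim B_j$ can be realized with uniformly bounded condition number, which contradicts $s(A_j,B_j)\to\infty$.

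Equivalently, by the proof of $(2)\Rightarrow(1)$ in \Cref{thm reduction}, I only need to treat $A=\bigoplus A_j$ and $B=\bigoplus B_j$ in $\mathbb{M}_3\otimes\ell^\infty$. By \Cref{thm J-Kaplansky} it then suffices to show that $A$ has property $(J)$ after cutting $\ell^\infty$ into finitely many central pieces on which the problem is uniform; a finite partition is harmless because similarities can be glued. Using \Cref{rem local-structure} together with a compactness/ultrafilter argument on $\Omega=\beta\mathbb{N}$, I split $\mathbb{N}$ into finitely many sets according to the clustering of the eigenvalues of $A_j$. The possibilities are three separated eigenvalue clusters, a $2+1$ split, or a single cluster of size three, with separation measured relative to a fixed $\varepsilon$.

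On pieces where the clusters stay uniformly separated, Riesz projections have uniformly bounded norm. Hence $A$ is uniformly block-diagonalizable into blocks of size at most $2$, and the cases $n\leqslant 2$ reduce to the same argument in lower dimension. The case $n=1$ is trivial, and for $n=2$ the uniform structure is similar to $aI+J_2\otimes c$ or a diagonal. On the single-cluster pieces, I subtract $\tau(A)$ and work with an upper-triangular, nearly nilpotent $3\times 3$ matrix $N_j$ whose superdiagonal entries $(x_j,y_j)$ and corner $z_j$ may degenerate at different rates. I further partition according to which of $|x_j|,|y_j|,|z_j|$ are bounded below relative to the others, and in each piece I write down an explicit bounded conjugation, as in \Cref{lem similar-J} or \Cref{lem Jordan-nilpotent} with diagonal scalings, to a normal form.

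The main obstacle is this near-nilpotent $3\times 3$ analysis. Eigenvalues that are close but distinct make Riesz projections blow up, and for such pieces property $(J)$ of $A$ may genuinely fail. There I plan to use the given $X_j$ directly instead of property $(J)$. In dimension $3$, the possible Jordan types of $A_j\oplus A_j$ are few, and the uniformly bounded $X_j$ forces the spectral data to degenerate compatibly in $A_j$ and $B_j$: the defect of $B_j$ must match that of $A_j$ at a comparable scale. From this I would build the bounded intertwiner case by case, guided by \Cref{lem matrix-units}. This is the step where I expect the real work, and where the restriction $n\leqslant 3$ enters.
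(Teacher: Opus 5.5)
Your reductions are fine as far as they go. By \Cref{thm reduction} and \Cref{lem theta-increasing} it is enough to treat $\mathbb{M}_3\otimes\ell^\infty$, and finite central partitions are harmless. Uniformly separated eigenvalue clusters give uniformly bounded Riesz projections. On pieces where each block has a single eigenvalue, so is nilpotent after subtracting the trace, property $(J)$ holds and \Cref{thm J-Kaplansky} applies.

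But the proposal stops exactly where the proof has to start. The hard regime is where eigenvalues are distinct at every point but not uniformly separated, which is precisely where property $(J)$ fails. You handle it only by the promise to ``build the bounded intertwiner case by case'' from the $X_j$. That is a restatement of $\theta_3(t)<\infty$, not an argument. The limiting step ($A_j\to A$, $A\sim B$) gives no control of $s(A_j,B_j)$. Partitioning by the relative sizes of $|x_j|,|y_j|,|z_j|$ ignores the eigenvalue gaps, which are what matters.

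Your treatment of the $2$-block is also wrong: it is not true that for $n=2$ each piece is uniformly similar to $aI+J_2\otimes c$ or to a diagonal. Take $A_j=\begin{pmatrix}1/j&1\\0&0\end{pmatrix}$.
\begin{itemize}
\item It has two distinct eigenvalues, so it is not similar to anything of the form $aI+J_2\otimes c$.
\item Its Riesz projection is $jA_j$, of norm at least $j$, so every diagonalizing $S_j$ has $\|S_j\|\|S_j^{-1}\|\geqslant j$.
\item $\bigoplus_j A_j$ does not have property $(J)$: its relative commutant is abelian, and the family of all its idempotents is unbounded.
\end{itemize}
So the degenerate problem already sits inside your $2+1$ split, and the recursion to ``lower dimension'' does not close.

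The idea you are missing is to stop trying to split $A$ spectrally and use \Cref{lem matrix-units} instead. That lemma says $A\sim B$ if and only if $F_{11}=X^{-1}E_{11}X$ is similar to $E_{11}$ \emph{inside} the relative commutant $\{A\oplus A\}'\cap\mathbb{M}_6(\mathcal{A})$. The paper works directly in $\mathbb{M}_3(\mathcal{A})$ for an arbitrary abelian $\mathcal{A}$, with no sequences and no $\theta_3$.
\begin{enumerate}
\item After subtracting a central element and cutting by central projections, $A$ is upper triangular. Its diagonal entries have pairwise differences that are either $0$ or have range projection $I_{\mathcal{A}}$, and these differences need not be invertible.
\item The nilpotent case has property $(J)$ (Case~1).
\item In Cases~2.1, 2.2 and~3, the commutant of $A_0=U^*(A\oplus A)U$ is written explicitly as block upper triangular with linear constraints such as $(a_{11}-a_{22})T_{12}=a_{12}(T_{11}-T_{22})$.
\item A rank argument with $S=PT(I-P)$ and $T=U^*F_{12}U$ pins down the traces of the diagonal blocks of $P=U^*F_{11}U$.
\item Pigeonhole on these traces gives lower bounds such as $|p_{22,11}|\geqslant\frac12 I_{\mathcal{A}}$.
\item These bounds make the explicit conjugations $V_1,V_2,\dots$ in the commutant invertible with controlled inverses. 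One conjugates until $P-U^*E_{11}U$ lies in the radical of the commutant, and \Cref{lem P-Q-radical} finishes.
\end{enumerate}
Uniformity is then automatic, and no rates of degeneration ever need to be tracked. This is the step your proposal leaves blank.
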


\begin{proof}
    Let $\mathcal{A}$ be an abelian von Neumann algebra and denote by $\mathcal{M}=\mathbb{M}_3(\mathcal{A})$ a type $\mathrm{I}_3$ von Neumann algebra.
    We shall use the notation introduced in \Cref{lem matrix-units}.
    By \Cref{lem upper triangular}, we may assume that $A=(a_{ij})$ is upper triangular, i.e., $a_{ij}=0$ for $i>j$.
    By considering $A-I_3\otimes a$ for some $a\in\mathcal{A}$ and applying central projections, the proof naturally divides into the following cases.
    
    \textbf{Case 1.}
    Suppose that $A$ is nilpotent, i.e.,
    \begin{equation*}
        A=
        \begin{pmatrix}
            0 & a_{12} & a_{13} \\
            0 & 0 & a_{23} \\
            0 & 0 & 0
        \end{pmatrix}.
    \end{equation*}
    We claim that $A$ has property $(J)$, and the conclusion holds by \Cref{thm J-Kaplansky}.
    By cutting $A$ down with central projections, we are led to three subcases.
    
    \textbf{Case 1.1.}
    $C_A=0$.
    In this case, $A=0$ has property $(J)$.
    
    \textbf{Case 1.2.}
    $C_A=I_{\mathcal{M}}$ and $C_{A^2}=0$.
    In this case, we have $a_{12}a_{23}=0$.
    Up to unitary equivalence, we may assume that 
    \begin{equation*}
        A=
        \begin{pmatrix}
            0 & a_{12} & 0 \\
            0 & 0 & 0 \\
            0 & 0 & 0
        \end{pmatrix},
    \end{equation*}
    where $R(a_{12})=I_{\mathcal{A}}$.
    Indeed, if $a_{23}=0$, then let $r=\sqrt{|a_{12}|^2+|a_{13}|^2}$ and
    \begin{equation*}
      U=
      \begin{pmatrix}
        I_{\mathcal{A}} & 0 & 0 \\
        0 & r^{-1}a_{12}^* & -r^{-1}a_{13} \\
        0 & r^{-1}a_{13}^* & r^{-1}a_{12}
    \end{pmatrix}.
    \end{equation*}
    If $a_{12}=0$, then let $r=\sqrt{|a_{13}|^2+|a_{23}|^2}$ and
    \begin{equation*}
      U=
      \begin{pmatrix}
        r^{-1}a_{13} & 0 & -r^{-1}a_{23}^* \\
        r^{-1}a_{23} & 0 & r^{-1}a_{13}^* \\
        0 & I_{\mathcal{A}} & 0
    \end{pmatrix}.
    \end{equation*}
    In both cases, a direct computation shows that $U$ is a unitary operator in $\mathcal{M}$ and
    \begin{equation*}
      U^*AU=
      \begin{pmatrix}
        0 & r & 0 \\
        0 & 0 & 0 \\
        0 & 0 & 0
      \end{pmatrix}=
      \begin{pmatrix}
        0 & r \\
        0 & 0
      \end{pmatrix}\oplus 0.
    \end{equation*}
    Thus, $A$ has property $(J)$.
    
    \textbf{Case 1.3.}
    $C_{A^2}=I_{\mathcal{M}}$.
    In this case, we have $R(a_{12})=R(a_{23})=I_{\mathcal{A}}$.
    Hence $A$ is strongly irreducible by \Cref{thm SI}.
    Thus, $A$ has property $(J)$.
    
    \textbf{Case 2.}
    Suppose that 
    \begin{equation*}
        A=
        \begin{pmatrix}
            a_{11} & a_{12} & a_{13} \\
            0 & 0 & a_{23} \\
            0 & 0 & 0
        \end{pmatrix}
    \end{equation*}
    where $R(a_{11})=I_{\mathcal{A}}$.
    By cutting $A$ by central projections, we are led to 2 subcases.
    
    \textbf{Case 2.1.}
    $a_{23}=0$.
    As in \textbf{Case 1.2}, up to unitary equivalence, we may assume that
    \begin{equation*}
        A=
        \begin{pmatrix}
            a_{11} & a_{12} & 0 \\
            0 & 0 & 0 \\
            0 & 0 & 0
        \end{pmatrix}.
    \end{equation*}
    There exists a unitary $U$ in $\mathbb{M}_6 \hookrightarrow \mathbb{M}_6(\mathcal{A})$ such that
    \begin{equation*}
      A_0:=U^*
      \begin{pmatrix}
        A & 0 \\
        0 & A
      \end{pmatrix}U
      =
      \begin{pmatrix}
        I_2\otimes a_{11} & I_2\otimes a_{12} & 0 \\
        0 & 0 & 0 \\
        0 & 0 & 0
      \end{pmatrix}.
    \end{equation*}
    It is clear that every operator in $\{A_0\}'\cap \mathbb{M}_6(\mathcal{A})$ is of the form
    \begin{equation}\label{equ 2.1-T}
      T=
      \begin{pmatrix}
        T_{11} & T_{12} & T_{13}\\
        0 & T_{22} & T_{23} \\
        0 & T_{32} & T_{33}
      \end{pmatrix},\quad
      \begin{cases}
        a_{11}T_{12}=a_{12}(T_{11}-T_{22}),\\
        a_{11}T_{13}=-a_{12}T_{23},
      \end{cases}\quad T_{ij}\in \mathbb{M}_2(\mathcal{A}).
    \end{equation}
    For convenience, we write
    \begin{equation}\label{equ 2.1-T-again}
      T=
      \begin{pmatrix}
        T_{11} & \widetilde{T}_{12} \\
        0 & \widetilde{T}_{22}
      \end{pmatrix},\quad
      \widetilde{T}_{12}=
      \begin{pmatrix}
        T_{12} & T_{13}
      \end{pmatrix},\quad
      \widetilde{T}_{22}=
      \begin{pmatrix}
        T_{22} & T_{23} \\
        T_{32} & T_{33}
      \end{pmatrix}.
    \end{equation}
    Let $P=U^*F_{11}U\in\{A_0\}'\cap \mathbb{M}_6(\mathcal{A})$.
    We will show that $P\sim U^*E_{11}U$ in $\{A_0\}'\cap \mathbb{M}_6(\mathcal{A})$, where
    \begin{equation*}
      U^*E_{11}U=
      \begin{pmatrix}
        I_{\mathcal{A}} & \\
        & 0
      \end{pmatrix}\oplus
      \begin{pmatrix}
        I_{\mathcal{A}} & \\
        & 0
      \end{pmatrix}\oplus
      \begin{pmatrix}
        I_{\mathcal{A}} & \\
        & 0
      \end{pmatrix}.
    \end{equation*}
    With the notation in \eqref{equ 2.1-T} and \eqref{equ 2.1-T-again}, both $P_{11}$ and $\widetilde{P}_{22}$ are idempotents.
    Moreover, we have
    \begin{equation*}
      \mathrm{Tr}(P_{11})+\mathrm{Tr}(\widetilde{P}_{22})
      =\mathrm{Tr}(P)=\mathrm{Tr}(F_{11})=3I_{\mathcal{A}}.
    \end{equation*}
    Cutting down by central projections, we may assume that $\mathrm{Tr}(P_{11})=rI_{\mathcal{A}}$, where $r=0,1,2$.
    If $r=0$, then $P_{11}=0$ and $\mathrm{Tr}(I_4\otimes I_{\mathcal{A}}-\widetilde{P}_{22})=I_{\mathcal{A}}$.
    If $r=2$, then $I_2\otimes I_{\mathcal{A}}-P_{11}=0$ and $\mathrm{Tr}(\widetilde{P}_{22})=I_{\mathcal{A}}$.
    In both cases, for every $T\in\{A_0\}'\cap \mathbb{M}_6(\mathcal{A})$, we can write $S=PT(I_6\otimes I_{\mathcal{A}}-P)\in\{A_0\}'\cap \mathbb{M}_6(\mathcal{A})$ as
    \begin{equation*}
      S=
      \begin{pmatrix}
        0 & \widetilde{S}_{12} \\
        0 & \widetilde{S}_{22}
      \end{pmatrix},\quad
      a_{11}\widetilde{S}_{12}=
      \begin{pmatrix}
         -a_{12} & 0
       \end{pmatrix}\widetilde{S}_{22},\quad \widetilde{S}_{22}=\widetilde{P}_{22}T_{22}(I_4\otimes I_{\mathcal{A}}-\widetilde{P}_{22}).
    \end{equation*}
    Since $R(a_{11})=I_{\mathcal{A}}$, in both cases, we have
    \begin{equation*}
      \mathrm{Tr}(R(S))=\mathrm{Tr}(R(\widetilde{S}_{22}))
      \leqslant\max\{\mathrm{Tr}(\widetilde{P}_{22}),\mathrm{Tr}(I_4\otimes I_{\mathcal{A}}-\widetilde{P}_{22})\}\leqslant I_{\mathcal{A}}.
    \end{equation*}
    We get a contradiction by taking $T=U^*F_{12}U$.
    Therefore, we have $\mathrm{Tr}(P_{11})=I_{\mathcal{A}}$ and $\mathrm{Tr}(\widetilde{P}_{22})=2I_{\mathcal{A}}$.
    Since $P_{11}$ is an idempotent in $\mathbb{M}_2(\mathcal{A})$ with $\mathrm{Tr}(P_{11})=I_{\mathcal{A}}$, there exists an invertible operator $X_0$ in $\mathbb{M}_2(\mathcal{A})$ such that
    \begin{equation*}
      X_0^{-1}P_{11}X_0=
      \begin{pmatrix}
        I_{\mathcal{A}} & 0 \\
        0 & 0
      \end{pmatrix}.
    \end{equation*}
    Let $V_1=X_0\oplus X_0\oplus X_0\in\{A_0\}'\cap \mathbb{M}_6(\mathcal{A})$.
    By considering $V_1^{-1}PV_1$, we may assume that
    \begin{equation*}
      P_{11}=
      \begin{pmatrix}
        I_{\mathcal{A}} & 0 \\
        0 & 0
      \end{pmatrix}.
    \end{equation*}
    For every $1\leqslant i,j\leqslant 3$, we write $P_{ij}=(p_{ij,kl})_{1\leqslant k,l\leqslant 2}\in \mathbb{M}_2(\mathcal{A})$.
    Then
    \begin{equation*}
      p_{22,11}+p_{22,22}+p_{33,11}+p_{33,22}
      =\mathrm{Tr}(\widetilde{P}_{22})=2I_{\mathcal{A}}.
    \end{equation*}
    Cutting down by central projections, we may assume that $|x|\geqslant\frac{1}{2}I_{\mathcal{A}}$ for at least one operator $x\in\{p_{22,11},p_{22,22},p_{33,11},p_{33,22}\}$.
    We claim that $|p_{22,11}|\geqslant\frac{1}{2}I_{\mathcal{A}}$ up to similarity.
    We assume that $|p_{22,11}|\leqslant\frac{1}{2}I_{\mathcal{A}}$ and let $x=a_{11}^{-1}a_{22}=(I_{\mathcal{A}}-p_{22,11})^{-1}p_{12,11}\in\mathcal{A}$.
    For the cases $|p_{22,22}|\geqslant\frac{1}{2}I_{\mathcal{A}}$, $|p_{33,11}|\geqslant\frac{1}{2}I_{\mathcal{A}}$, and $|p_{33,22}|\geqslant\frac{1}{2}I_{\mathcal{A}}$, let $V_2\in\{A_0\}'\cap \mathbb{M}_6(\mathcal{A})$ be given by
    \begin{equation*}
      \left(\begin{array}{cc|cc|cc}
        I_{\mathcal{A}} & 0 & x & -x & 0 & 0 \\
        0 & I_{\mathcal{A}} & -x & x & 0 & 0 \\
        \hline
        0 & 0 & 0 & I_{\mathcal{A}} & 0 & 0 \\
        0 & 0 & I_{\mathcal{A}} & 0 & 0 & 0 \\
        \hline
        0 & 0 & 0 & 0 & I_{\mathcal{A}} & 0 \\
        0 & 0 & 0 & 0 & 0 & I_{\mathcal{A}}
      \end{array}\right),\quad
      \left(\begin{array}{cc|cc|cc}
        I_{\mathcal{A}} & 0 & x & 0 & -x & 0 \\
        0 & I_{\mathcal{A}} & 0 & 0 & 0 & 0 \\
        \hline
        0 & 0 & 0 & 0 & I_{\mathcal{A}} & 0 \\
        0 & 0 & 0 & I_{\mathcal{A}} & 0 & 0 \\
        \hline
        0 & 0 & I_{\mathcal{A}} & 0 & 0 & 0 \\
        0 & 0 & 0 & 0 & 0 & I_{\mathcal{A}}
      \end{array}\right),
    \end{equation*}
    and
    \begin{equation*}
        \left(\begin{array}{cc|cc|cc}
            I_{\mathcal{A}} & 0 & x & 0 & 0 & -x \\
            0 & I_{\mathcal{A}} & 0 & 0 & 0 & 0 \\
            \hline
            0 & 0 & 0 & 0 & 0 & I_{\mathcal{A}} \\
            0 & 0 & 0 & I_{\mathcal{A}} & 0 & 0 \\
            \hline
            0 & 0 & 0 & 0 & I_{\mathcal{A}} & 0 \\
            0 & 0 & I_{\mathcal{A}} & 0 & 0 & 0
            \end{array}\right), \text{ respectively. }
    \end{equation*}
    Thus, $|p_{22,11}|\geqslant\frac{1}{2}I_{\mathcal{A}}$ up to similarity by considering $V_2^{-1}PV_2$.
    Let
    \begin{equation*}
      V_3=\left(\begin{array}{cc|cc|cc}
        I_{\mathcal{A}} & 0 & 0 & 0 & 0 & 0 \\
        0 & I_{\mathcal{A}} & p_{22,11}^{-1}p_{12,21} & 0 & 0 & 0 \\
        \hline
        0 & 0 & I_{\mathcal{A}} & 0 & 0 & 0 \\
        0 & 0 & p_{22,11}^{-1}p_{22,21} & I_{\mathcal{A}} & 0 & 0 \\
        \hline
        0 & 0 & p_{22,11}^{-1}p_{23,11} & 0 & I_{\mathcal{A}} & 0 \\
        0 & 0 & p_{22,11}^{-1}p_{23,21} & 0 & 0 & I_{\mathcal{A}}
      \end{array}\right)\in\{A_0\}'\cap \mathbb{M}_6(\mathcal{A}).
    \end{equation*}
    By considering $V_3^{-1}PV_3$, we may assume that
    \begin{equation*}
      \widetilde{P}_{22}=
      \left(\begin{array}{cc|cc}
        I_{\mathcal{A}} & p_{22,12} & p_{23,11} & p_{23,12} \\
        0 & p_{22,22} & p_{23,21} & p_{23,22} \\
        \hline
        0 & p_{32,12} & p_{33,11} & p_{33,12} \\
        0 & p_{32,22} & p_{33,21} & p_{33,22}
      \end{array}\right).
    \end{equation*}
    Then $p_{22,22}+p_{33,11}+p_{33,22}=I_{\mathcal{A}}$.
    For the cases $|p_{22,22}|\geqslant\frac{1}{3}I_{\mathcal{A}}$ and $|p_{33,11}|\geqslant\frac{1}{3}I_{\mathcal{A}}$, let $V_4\in\{A_0\}'\cap \mathbb{M}_6(\mathcal{A})$ be given by
    \begin{equation*}
      \left(\begin{array}{cc|cc|cc}
          I_{\mathcal{A}} & 0 & 0 & 0 & 0 & 0 \\
          0 & I_{\mathcal{A}} & 0 & x & 0 & -x \\
          \hline
          0 & 0 & I_{\mathcal{A}} & 0 & 0 & 0 \\
          0 & 0 & 0 & 0 & 0 & I_{\mathcal{A}}\\
          \hline
          0 & 0 & 0 & 0 & I_{\mathcal{A}} & 0\\
          0 & 0 & 0 & I_{\mathcal{A}} & 0 & 0
      \end{array}\right)\quad\text{and}\quad
      \left(\begin{array}{cc|cc|cc}
          I_{\mathcal{A}} & 0 & 0 & 0 & 0 & 0 \\
          0 & I_{\mathcal{A}} & 0 & 0 & 0 & 0 \\
          \hline
          0 & 0 & I_{\mathcal{A}} & 0 & 0 & 0 \\
          0 & 0 & 0 & I_{\mathcal{A}} & 0 & 0\\
          \hline
          0 & 0 & 0 & 0 & 0 & I_{\mathcal{A}}\\
          0 & 0 & 0 & 0 & I_{\mathcal{A}} & 0
      \end{array}\right),
    \end{equation*}
    respectively, where $x=a_{11}^{-1}a_{22}=-p_{22,22}^{-1}p_{12,22}\in\mathcal{A}$.
    Thus, $|p_{33,22}|\geqslant\frac{1}{3}I_{\mathcal{A}}$ up to similarity by considering $V_4^{-1}PV_4$.
    Next, we may assume that
    \begin{equation*}
      \widetilde{P}_{22}=
      \left(\begin{array}{cc|cc}
        I_{\mathcal{A}} & p_{22,12} & p_{23,11} & p_{23,12} \\
        0 & p_{22,22} & p_{23,21} & p_{23,22} \\
        \hline
        0 & p_{32,12} & p_{33,11} & p_{33,12} \\
        0 & 0 & 0 & I_{\mathcal{A}}
      \end{array}\right),
    \end{equation*}
    by considering $V_5^{-1}PV_5$, where
    \begin{equation*}
        V_5=
        \left(\begin{array}{cc|cc|cc}
          I_{\mathcal{A}} & 0 & 0 & 0 & 0 & 0 \\
          0 & I_{\mathcal{A}} & 0 & 0 & 0 & 0 \\
          \hline
          0 & 0 & I_{\mathcal{A}} & 0 & 0 & 0 \\
          0 & 0 & 0 & I_{\mathcal{A}} & 0 & 0\\
          \hline
          0 & 0 & 0 & 0 & 0 & I_{\mathcal{A}}\\
          0 & 0 & 0 & p_{33,22}^{-1}p_{23,22} & p_{33,22}^{-1}p_{33,21} & I_{\mathcal{A}}
      \end{array}\right)\in\{A_0\}'\cap \mathbb{M}_6(\mathcal{A}).
    \end{equation*}
    Since $\widetilde{P}_{22}$ is an idempotent with $\mathrm{Tr}(\widetilde{P}_{22})=2I_{\mathcal{A}}$, we see that
    \begin{equation*}
      \widetilde{P}_{22}=
      \left(\begin{array}{cc|cc}
        I_{\mathcal{A}} & p_{22,12} & p_{23,11} & p_{23,12} \\
        0 & 0 & 0 & p_{23,22} \\
        \hline
        0 & 0 & 0 & p_{33,12} \\
        0 & 0 & 0 & I_{\mathcal{A}}
      \end{array}\right).
    \end{equation*}
    By considering $V_6^{-1}PV_6$ for some invertible operator $V_6\in\{A_0\}'\cap \mathbb{M}_6(\mathcal{A})$, we may assume that
    \begin{equation*}
      \widetilde{P}_{22}=
      \left(\begin{array}{cc|cc}
        I_{\mathcal{A}} & 0 & 0 & 0 \\
        0 & 0 & 0 & 0 \\
        \hline
        0 & 0 & 0 & 0 \\
        0 & 0 & 0 & I_{\mathcal{A}}
      \end{array}\right).
    \end{equation*}
    Let $V_7=(I_4\otimes I_{\mathcal{A}})\oplus
    \begin{pmatrix}
      0 & I_{\mathcal{A}} \\
      I_{\mathcal{A}} & 0
    \end{pmatrix}\in\{A_0\}'\cap \mathbb{M}_6(\mathcal{A})$.
    By considering $V_7^{-1}PV_7$, we may assume that
    \begin{equation*}
      \widetilde{P}_{22}=\begin{pmatrix}
        I_{\mathcal{A}} & \\
        & 0
      \end{pmatrix}\oplus
      \begin{pmatrix}
        I_{\mathcal{A}} & \\
        & 0
      \end{pmatrix}.
    \end{equation*}
    Therefore, $P$ is similar to $U^*E_{11}U$ in $\{A_0\}'\cap \mathbb{M}_6(\mathcal{A})$.
    
    \textbf{Case 2.2.}
    $R(a_{23})=I_{\mathcal{A}}$.
    There exists a unitary $U$ in $\mathbb{M}_6 \hookrightarrow \mathbb{M}_6(\mathcal{A})$ such that
    \begin{equation*}
      A_0:=U^*
      \begin{pmatrix}
        A & 0 \\
        0 & A
      \end{pmatrix}U
      =
      \begin{pmatrix}
        I_2\otimes a_{11} & I_2\otimes a_{12} & I_2\otimes a_{13} \\
        0 & 0 & I_2\otimes a_{23} \\
        0 & 0 & 0
      \end{pmatrix}.
    \end{equation*}
    Similar to \eqref{equ 2.1-T}, every operator in $\{A_0\}'\cap \mathbb{M}_6(\mathcal{A})$ is of the form
    \begin{equation}\label{equ 2.2-T}
      T=
      \begin{pmatrix}
        T_1 & T_{12} & T_{13} \\
        0 & T_2 & T_{23} \\
        0 & 0 & T_2
      \end{pmatrix},\quad
      \begin{cases}
        aT_{12}=b(T_1-T_2),\\
        aT_{13}=c(T_1-T_2)+dT_{12}-bT_{23}.
      \end{cases}
    \end{equation}
    Let $P=U^*F_{11}U\in\{A_0\}'\cap \mathbb{M}_6(\mathcal{A})$.
    With the notation in \eqref{equ 2.2-T}, both $P_1$ and $P_2$ are idempotents in $\mathbb{M}_2(\mathcal{A})$.
    Moreover, we have
    \begin{equation*}
      \mathrm{Tr}(P_1)+2\mathrm{Tr}(P_2)=3I_{\mathcal{A}}.
    \end{equation*}
    It follows that $\mathrm{Tr}(P_1)=I_{\mathcal{A}}$.
    Consequently, there exists an invertible operator $X_0$ in $\mathbb{M}_2(\mathcal{A})$ such that
    \begin{equation*}
      X_0^{-1}P_1X_0=
      \begin{pmatrix}
        I_{\mathcal{A}} & 0 \\
        0 & 0
      \end{pmatrix}.
    \end{equation*}
    Let $V_1=X_0\oplus X_0\oplus X_0\in\{A_0\}'\cap \mathbb{M}_6(\mathcal{A})$.
    by considering $V_1^{-1}PV_1$, we may assume that $P_1=
    \begin{pmatrix}
      I_{\mathcal{A}} & 0 \\
      0 & 0
    \end{pmatrix}$.
    If $|p_{2,11}|\leqslant\frac{1}{2}I_{\mathcal{A}}$, then let
    \begin{equation*}
      V_2=
      \left(\begin{array}{cc|cc|cc}
          I_{\mathcal{A}} & 0 & x_{12} & -x_{12} & x_{13} & -x_{13} \\
          0 & I_{\mathcal{A}} & -x_{12} & x_{12} & -x_{13} & x_{13} \\
          \hline
          0 & 0 & 0 & I_{\mathcal{A}} & x_{23} & -x_{23} \\
          0 & 0 & I_{\mathcal{A}} & 0 & -x_{23} & x_{23}\\
          \hline
          0 & 0 & 0 & 0 & 0 & I_{\mathcal{A}}\\
          0 & 0 & 0 & 0 & I_{\mathcal{A}} & 0
      \end{array}\right)\in\{A_0\}'\cap \mathbb{M}_6(\mathcal{A}),
    \end{equation*}
    where $x_{ij}=p_{ij,11}(I_{\mathcal{A}}-p_{2,11})^{-1}$ for $1\leqslant i<j\leqslant 3$.
    By considering $V_2^{-1}PV_2$, we may assume that $|p_{2,11}|\geqslant\frac{1}{2}I_{\mathcal{A}}$.
    Let
    \begin{equation*}
      V_3=
      \left(\begin{array}{cc|cc|cc}
          I_{\mathcal{A}} & 0 & 0 & 0 & 0 & 0 \\
          0 & I_{\mathcal{A}} & p_{2,11}^{-1}p_{12,21} & 0 & p_{2,11}^{-1}p_{13,21} & 0 \\
          \hline
          0 & 0 & I_{\mathcal{A}} & 0 & 0 & 0 \\
          0 & 0 & p_{2,11}^{-1}p_{2,21} & I_{\mathcal{A}} & p_{2,11}^{-1}p_{23,21} & 0\\
          \hline
          0 & 0 & 0 & 0 & I_{\mathcal{A}} & 0\\
          0 & 0 & 0 & 0 & p_{2,11}^{-1}p_{2,21} & I_{\mathcal{A}}
      \end{array}\right)\in\{A_0\}'\cap \mathbb{M}_6(\mathcal{A}),
    \end{equation*}
    By considering $V_3^{-1}PV_3$, we may assume that
    $P_2=
    \begin{pmatrix}
      I_{\mathcal{A}} & p_{2,12} \\
      0 & 0
    \end{pmatrix}$.
    Let
    \begin{equation*}
      V_4=
      \left(\begin{array}{cc|cc|cc}
          I_{\mathcal{A}} & 0 & 0 & p_{12,12} & 0 & p_{13,12} \\
          0 & I_{\mathcal{A}} & 0 & 0 & 0 & 0 \\
          \hline
          0 & 0 & I_{\mathcal{A}} & p_{2,12} & 0 & p_{23,12} \\
          0 & 0 & 0 & I_{\mathcal{A}} & 0 & 0\\
          \hline
          0 & 0 & 0 & 0 & I_{\mathcal{A}} & p_{2,12}\\
          0 & 0 & 0 & 0 & 0 & I_{\mathcal{A}}
      \end{array}\right)\in\{A_0\}'\cap \mathbb{M}_6(\mathcal{A}).
    \end{equation*}
    By considering $V_4^{-1}PV_4$, we may assume that
    $P_2=
    \begin{pmatrix}
      I_{\mathcal{A}} & 0 \\
      0 & 0
    \end{pmatrix}$.
    With the notation in \eqref{equ 2.2-T}, we have
    \begin{equation*}
      P=
      \begin{pmatrix}
        P_1 & 0 & P_{13} \\
        0 & P_1 & P_{23} \\
        0 & 0 & P_1
      \end{pmatrix},\quad P_1=
      \begin{pmatrix}
        I_{\mathcal{A}} & 0 \\
        0 & 0
      \end{pmatrix},\quad a_{11}P_{13}=-a_{12}P_{23}.
    \end{equation*}
    Then $P-U^*E_{11}U$ lies in the radical of $\{A_0\}'\cap \mathbb{M}_6(\mathcal{A})$.
    Thus, $P$ and $U^*E_{11}U$ are similar in $\{A_0\}'\cap \mathbb{M}_6(\mathcal{A})$ by \Cref{lem P-Q-radical}.
    
    \textbf{Case 3.} Suppose that
    \begin{equation*}
        A=
        \begin{pmatrix}
            a_{11} & a_{12} & a_{13} \\
            0 & a_{22} & a_{23} \\
            0 & 0 & 0
        \end{pmatrix}
    \end{equation*}
    where $R(a_{11})=R(a_{22})=R(a_{11}-a_{22})=I_{\mathcal{A}}$.
    There exists a unitary $U$ in $\mathbb{M}_6 \hookrightarrow \mathbb{M}_6(\mathcal{A})$ such that
    \begin{equation*}
      A_0:=U^*
      \begin{pmatrix}
        A & 0 \\
        0 & A
      \end{pmatrix}U
      =
      \begin{pmatrix}
        I_2\otimes a_{11} & I_2\otimes a_{12} & I_2\otimes a_{13} \\
        0 & I_2\otimes a_{22} & I_2\otimes a_{23} \\
        0 & 0 & 0
      \end{pmatrix}.
    \end{equation*}
    Similar to \eqref{equ 2.1-T}, every operator in $\{A_0\}'\cap \mathbb{M}_6(\mathcal{A})$ is of the form
    \begin{equation}\label{equ 3-T}
      T=
      \begin{pmatrix}
        T_{11} & T_{12} & T_{13} \\
        0 & T_{22} & T_{23} \\
        0 & 0 & T_{33}
      \end{pmatrix},\quad
      \begin{cases}
        (a_{11}-a_{22})T_{12}=a_{12}(T_{11}-T_{22}),\\
        a_{22}T_{23}=a_{23}(T_{22}-T_{33}),\\
        a_{11}T_{13}=a_{13}(T_{11}-T_{33})+a_{23}T_{12}-a_{12}T_{23}.
      \end{cases}
    \end{equation}
    Let $P=U^*F_{11}U\in\{A_0\}'\cap \mathbb{M}_6(\mathcal{A})$.
    With the notation in \eqref{equ 3-T}, all $P_{11}$, $P_{22}$, and $P_{33}$ are idempotents in $\mathbb{M}_2(\mathcal{A})$.
    Moreover, we have
    \begin{equation*}
      \mathrm{Tr}(P_{11})+\mathrm{Tr}(P_{22})+\mathrm{Tr}(P_{33})=3I_{\mathcal{A}}.
    \end{equation*}
    We claim that $\mathrm{Tr}(P_{jj})=I_{\mathcal{A}}$ for every $1\leqslant j\leqslant 3$.
    Otherwise, as in \textbf{Case 2.1}, for every $T\in\{A_0\}'\cap \mathbb{M}_6(\mathcal{A})$ and $S=PT(I_6\otimes I_{\mathcal{A}}-P)\in\{A_0\}'\cap \mathbb{M}_6(\mathcal{A})$, we have
    \begin{equation*}
      \mathrm{Tr}(R(S))\leqslant I_{\mathcal{A}}.
    \end{equation*}
    We get a contradiction by taking $T=U^*F_{12}U$.
    Consequently, there exists an invertible operator $X_0$ in $\mathbb{M}_2(\mathcal{A})$ such that
    \begin{equation*}
      X_0^{-1}P_{11}X_0=
      \begin{pmatrix}
        I & 0 \\
        0 & 0
      \end{pmatrix}.
    \end{equation*}
    Let $V_1=X_0\oplus X_0\oplus X_0\in\{A_0\}'\cap \mathbb{M}_6(\mathcal{A})$.
    By considering $V_1^{-1}PV_1$, we may assume that
    \begin{equation*}
      P_{11}=
      \begin{pmatrix}
        I_{\mathcal{A}} & 0 \\
        0 & 0
      \end{pmatrix}.
    \end{equation*}
    If $|p_{22,11}|\leqslant\frac{1}{2}I_{\mathcal{A}}$, then let
    \begin{equation*}
      Y=
      \left(\begin{array}{cc|cc|cc}
          I_{\mathcal{A}} & 0 & x_{12} & -x_{12} & x_{13} & -x_{13} \\
          0 & I_{\mathcal{A}} & -x_{12} & x_{12} & -x_{13} & x_{13} \\
          \hline
          0 & 0 & 0 & I_{\mathcal{A}} & x_{23} & -x_{23} \\
          0 & 0 & I_{\mathcal{A}} & 0 & -x_{23} & x_{23}\\
          \hline
          0 & 0 & 0 & 0 & \frac{1}{2}(I_{\mathcal{A}}+x) & \frac{1}{2}(I_{\mathcal{A}}-x)\\
          0 & 0 & 0 & 0 & \frac{1}{2}(I_{\mathcal{A}}-x) & \frac{1}{2}(I_{\mathcal{A}}+x)
      \end{array}\right)
    \end{equation*}
    and
    \begin{equation*}
      Z=
      \left(\begin{array}{cc|cc|cc}
          I_{\mathcal{A}} & 0 & x_{12} & -x_{12} & x_{13} & -x_{13} \\
          0 & I_{\mathcal{A}} & -x_{12} & x_{12} & -x_{13} & x_{13} \\
          \hline
          0 & 0 & 0 & I_{\mathcal{A}} & x_{23} & -x_{23} \\
          0 & 0 & I_{\mathcal{A}} & 0 & -x_{23} & x_{23}\\
          \hline
          0 & 0 & 0 & 0 & \frac{1}{2}(I_{\mathcal{A}}+x) & \frac{1}{2}(I_{\mathcal{A}}-x)\\
          0 & 0 & 0 & 0 & -\frac{1}{2}(I_{\mathcal{A}}-x) & \frac{1}{2}(I_{\mathcal{A}}+x)
      \end{array}\right),
    \end{equation*}
    where $x_{ij}=(I-p_{22,11})^{-1}p_{ij,11}$ for $1\leqslant i<j\leqslant 3$ and
    \begin{equation*}
      x=I_{\mathcal{A}}-2(I_{\mathcal{A}}-p_{22,11})^{-1}(I_{\mathcal{A}}-p_{33,11}).
    \end{equation*}
    Note that
    \begin{equation*}
      \det(Y_{33})=x\quad\text{and}\quad\det(Z_{33})=\frac{1}{2}(I_{\mathcal{A}}+x^2).
    \end{equation*}
    Let $q$ be the spectral projection of $|x|$ with respect to the interval $[\frac{1}{2},\infty)$ and
    \begin{equation*}
      V_2=qY+(I_{\mathcal{A}}-q)Z\in\{A_0\}'\cap \mathbb{M}_6(\mathcal{A}).
    \end{equation*}
    By considering $V_2^{-1}PV_2$, we may assume that $|p_{22,11}|\geqslant\frac{1}{2}I_{\mathcal{A}}$.
    Let
    \begin{equation*}
      V_3=
      \left(\begin{array}{cc|cc|cc}
          I_{\mathcal{A}} & 0 & 0 & 0 & 0 & 0 \\
          0 & I_{\mathcal{A}} & p_{22,11}^{-1}p_{12,21} & 0 & p_{22,11}^{-1}p_{13,21} & 0 \\
          \hline
          0 & 0 & I_{\mathcal{A}} & 0 & 0 & 0 \\
          0 & 0 & p_{22,11}^{-1}p_{22,21} & I_{\mathcal{A}} & p_{22,11}^{-1}p_{23,21} & 0\\
          \hline
          0 & 0 & 0 & 0 & I_{\mathcal{A}} & 0\\
          0 & 0 & 0 & 0 & p_{22,11}^{-1}p_{33,21} & I_{\mathcal{A}}
      \end{array}\right)\in\{A_0\}'\cap \mathbb{M}_6(\mathcal{A}).
    \end{equation*}
    By considering $V_3^{-1}PV_3$, we may assume that $P_{22}=
    \begin{pmatrix}
      I_{\mathcal{A}} & p_{22,12} \\
      0 & 0
    \end{pmatrix}$.
    Let
    \begin{equation*}
      V_4=
      \left(\begin{array}{cc|cc|cc}
          I_{\mathcal{A}} & 0 & 0 & p_{12,12} & 0 & p_{13,12} \\
          0 & I_{\mathcal{A}} & 0 & 0 & 0 & 0 \\
          \hline
          0 & 0 & I_{\mathcal{A}} & p_{22,12} & 0 & p_{23,12} \\
          0 & 0 & 0 & I_{\mathcal{A}} & 0 & 0\\
          \hline
          0 & 0 & 0 & 0 & I_{\mathcal{A}} & p_{33,12}\\
          0 & 0 & 0 & 0 & 0 & I_{\mathcal{A}}
      \end{array}\right)\in\{A_0\}'\cap \mathbb{M}_6(\mathcal{A}).
    \end{equation*}
    By considering $V_4^{-1}PV_4$, we may assume that
    $P_2=
    \begin{pmatrix}
      I_{\mathcal{A}} & 0 \\
      0 & 0
    \end{pmatrix}$.
    With the notation in \eqref{equ 3-T}, we have
    \begin{equation*}
      P=
      \begin{pmatrix}
        P_{11} & 0 & P_{13} \\
        0 & P_{11} & P_{23} \\
        0 & 0 & P_{33}
      \end{pmatrix},\quad P_{11}=
      \begin{pmatrix}
        I_{\mathcal{A}} & 0 \\
        0 & 0
      \end{pmatrix},\quad
      \begin{cases}
        a_{22}P_{23}=a_{23}(P_{11}-P_{33}),\\
        a_{22}P_{13}=a_{13}(P_{11}-P_{33})-a_{12}P_{23}.
      \end{cases}
    \end{equation*}
    If $|p_{33,11}|\leqslant\frac{1}{2}I_{\mathcal{A}}$, let
    \begin{equation*}
      V_5=
      \left(\begin{array}{cc|cc|cc}
          I_{\mathcal{A}} & 0 & 0 & 0 & x_{13} & -x_{13} \\
          0 & I_{\mathcal{A}} & 0 & 0 & -x_{13} & x_{13} \\
          \hline
          0 & 0 & I_{\mathcal{A}} & 0 & x_{23} & -x_{23} \\
          0 & 0 & 0 & I_{\mathcal{A}} & -x_{23} & x_{23}\\
          \hline
          0 & 0 & 0 & 0 & 0 & I_{\mathcal{A}}\\
          0 & 0 & 0 & 0 & I_{\mathcal{A}} & 0
      \end{array}\right)\in\{A_0\}'\cap \mathbb{M}_6(\mathcal{A}),
    \end{equation*}
    where $x_{i3}=(I-p_{33,11})^{-1}p_{i3,11}$ for $i=1,2$.
    By considering $V_5^{-1}PV_5$, we may assume that $|p_{3,11}|\geqslant\frac{1}{2}I_{\mathcal{A}}$.
    Let
    \begin{equation*}
      V_6=
      \left(\begin{array}{cc|cc|cc}
          I_{\mathcal{A}} & 0 & 0 & 0 & 0 & 0 \\
          0 & I_{\mathcal{A}} & 0 & 0 & p_{33,11}^{-1}p_{13,21} & 0 \\
          \hline
          0 & 0 & I_{\mathcal{A}} & 0 & 0 & 0 \\
          0 & 0 & 0 & I_{\mathcal{A}} & p_{33,11}^{-1}p_{23,21} & 0\\
          \hline
          0 & 0 & 0 & 0 & I_{\mathcal{A}} & 0\\
          0 & 0 & 0 & 0 & p_{33,11}^{-1}p_{33,21} & I_{\mathcal{A}}
      \end{array}\right)\in\{A_0\}'\cap \mathbb{M}_6(\mathcal{A}).
    \end{equation*}
    By considering $V_6^{-1}PV_6$, we may assume that
    $P_3=
    \begin{pmatrix}
      I_{\mathcal{A}} & p_{33,12} \\
      0 & 0
    \end{pmatrix}$.
    Let
    \begin{equation*}
      V_7=
      \left(\begin{array}{cc|cc|cc}
          I_{\mathcal{A}} & 0 & 0 & 0 & 0 & p_{13,12} \\
          0 & I_{\mathcal{A}} & 0 & 0 & 0 & 0 \\
          \hline
          0 & 0 & I_{\mathcal{A}} & 0 & 0 & p_{23,12} \\
          0 & 0 & 0 & I_{\mathcal{A}} & 0 & 0\\
          \hline
          0 & 0 & 0 & 0 & I_{\mathcal{A}} & p_{33,12}\\
          0 & 0 & 0 & 0 & 0 & I_{\mathcal{A}}
      \end{array}\right)\in\{A_0\}'\cap \mathbb{M}_6(\mathcal{A}).
    \end{equation*}
    By considering $V_7^{-1}PV_7$, we may assume that
    $P_3=
    \begin{pmatrix}
      I_{\mathcal{A}} & 0 \\
      0 & 0
    \end{pmatrix}$, i.e.,
    \begin{equation*}
      P=
      \begin{pmatrix}
        P_1 & 0 & P_{13} \\
        0 & P_1 & P_{23} \\
        0 & 0 & P_1
      \end{pmatrix},\quad P_1=
      \begin{pmatrix}
        I & 0 \\
        0 & 0
      \end{pmatrix},\quad a_{11}P_{13}=-a_{12}P_{23}.
    \end{equation*}
    Then $P-U^*E_{11}U$ lies in the radical of $\{A_0\}'\cap \mathbb{M}_6(\mathcal{A})$.
    Thus, $P$ and $U^*E_{11}U$ are similar in $\{A_0\}'\cap \mathbb{M}_6(\mathcal{A})$ by \Cref{lem P-Q-radical}.
    
    Therefore, we complete the proof.
\end{proof}

\section{Kaplansky's second test problem in Banach algebras}\label{s5}

In this section, we study Kaplansky's second test problem for elements in Banach algebras (see \Cref{thm Q-Kaplansky}).

\subsection{Essentially finite-dimensional commutants}

We introduce the essentially relative commutants of elements in Banach algebras as follows.

\begin{definition}\label{def Q}
    Let $\mathcal{B}$ be a unital Banach algebra.
    For every element $T$ in $\mathcal{B}$, its \emph{relative commutant} and \emph{essentially relative commutant} in $\mathcal{B}$ are defined as
    \begin{equation*}
      (T,\mathcal{B})'=\{X\in\mathcal{B}\colon TX=XT\}\quad\text{and}\quad
      \mathcal{Q}(T,\mathcal{B})=(T,\mathcal{B})'/\mathrm{Rad}((T,\mathcal{B})').
    \end{equation*}
    We say that $T$ has \emph{essentially finite-dimensional commutant} in $\mathcal{B}$ provided that $\mathcal{Q}(T,\mathcal{B})$ is finite dimensional.
    Moreover, $T$ is said to be \emph{strongly irreducible} if each idempotent in $(T,\mathcal{B})'$ lies in the center $\mathcal{Z}(\mathcal{B})$ of $\mathcal{B}$.
\end{definition}

\begin{remark} \label{rem Q}
    If $\mathcal{B}$ is a von Neumann algebra and $T$ is a Hilbert space operator in $\mathcal{B}$, then $(T,\mathcal{B})'=\{T\}'\cap\mathcal{B}$ by definition.
    But the notation $\{T\}'$ has no meaning for elements in general Banach algebras.
\end{remark}

\begin{lemma} \label{lem BIR}
    Let $\mathcal{B}$ be a unital Banach algebra.
    If $T$ is an element in $\mathcal{B}$ satisfying that $\mathcal{Q}(T,\mathcal{B})\cong\mathbb{C}$, then $T$ is a strongly irreducible element in $\mathcal{B}$.
\end{lemma}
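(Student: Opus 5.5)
The plan is to show that the hypothesis $\mathcal{Q}(T,\mathcal{B})\cong\mathbb{C}$ forces every idempotent in $(T,\mathcal{B})'$ to be trivial, that is, equal to $0$ or $I_{\mathcal{B}}$. Since both of these lie in $\mathcal{Z}(\mathcal{B})$, this gives strong irreducibility in the sense of \Cref{def Q}.

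First I note that $(T,\mathcal{B})'$ is a closed unital subalgebra of $\mathcal{B}$, hence a unital Banach algebra. So $\mathrm{Rad}((T,\mathcal{B})')$ and the quotient map
\begin{equation*}
  \pi\colon(T,\mathcal{B})'\to\mathcal{Q}(T,\mathcal{B})\cong\mathbb{C}
\end{equation*}
make sense, and $\pi$ is a unital homomorphism.

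Next, let $P$ be an idempotent in $(T,\mathcal{B})'$. Then $\pi(P)$ is an idempotent in $\mathbb{C}$, so $\pi(P)\in\{0,1\}$. Equivalently, either $P\in\mathrm{Rad}((T,\mathcal{B})')$ or $I_{\mathcal{B}}-P\in\mathrm{Rad}((T,\mathcal{B})')$. Replacing $P$ by $I_{\mathcal{B}}-P$ if necessary, I reduce to the case where $P$ is an idempotent lying in the radical of $(T,\mathcal{B})'$.

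The only point needing an argument is that an idempotent in the radical must vanish. There are two routes.
\begin{enumerate}[label=$(\roman*)$, leftmargin=*]
  \item Apply \Cref{lem P-Q-radical} inside the Banach algebra $(T,\mathcal{B})'$ to the idempotents $P$ and $0$, whose difference lies in the radical. This gives $P\sim 0$, and hence $P=X^{-1}0X=0$.
  \item Argue directly with the spectrum. By the description of the radical recalled in \Cref{s2}, the spectrum of $P$ in $(T,\mathcal{B})'$ is $\{0\}$. On the other hand, if $P\neq 0$, then $(P-I_{\mathcal{B}})P=0$ shows that $P-I_{\mathcal{B}}$ is not invertible, so $1\in\sigma(P)$, a contradiction.
\end{enumerate}
Either way $P=0$, or, in the replaced case, $P=I_{\mathcal{B}}$. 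Both lie in $\mathcal{Z}(\mathcal{B})$, which completes the argument.

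I do not expect a genuine obstacle. The one thing to check is that the radical and the spectra are taken in the subalgebra $(T,\mathcal{B})'$ rather than in $\mathcal{B}$. Since \Cref{lem P-Q-radical} is stated for an arbitrary unital Banach algebra, it applies to $(T,\mathcal{B})'$ directly, so this causes no difficulty.
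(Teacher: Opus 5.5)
Your proof is correct and follows essentially the paper's argument. The paper notes that $\sigma(P)$ is a single point, since it equals the spectrum of $\pi(P)$ in $\mathcal{Q}(T,\mathcal{B})\cong\mathbb{C}$, and an idempotent with one-point spectrum is $0$ or $I_{\mathcal{B}}$; your route $(ii)$ is that same spectral argument after the harmless reduction to $\pi(P)=0$, and route $(i)$ via \Cref{lem P-Q-radical} is an equally valid variant.
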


\begin{proof}
    Let $P$ be an idempotent in $(T,\mathcal{B})'$.
    As $\mathcal{Q}(T,\mathcal{B})\cong\mathbb{C}$, there exists a scalar $\lambda$ such that $\sigma(P)=\{\lambda\}$.
    Since $P$ is an idempotent, we must have $P=0$ or $I_{\mathbb{B}}$.
    This completes the proof.
\end{proof}

Similar to \Cref{def centrally-minimal}, we introduce the following definition.

\begin{definition}\label{def FD}
    Let $\mathcal{B}$ be a unital Banach algebra.
    A finite family $\{P_j\}_{j=1}^m$ of nonzero idempotents in $\mathcal{B}$ is called a \emph{finite decomposition} of $\mathcal{B}$ if $I=\sum_{j=1}^{m}P_j$ and $P_iP_j=0$ for all $i\ne j$.
\end{definition}

The following proposition is an analog of \Cref{lem SI-decomposition}.

\begin{lemma}\label{lem Q-FD}
    Let $\mathcal{B}$ be a unital Banach algebra and $T$ an element in $\mathcal{B}$.
    If $\mathcal{Q}(T,\mathcal{B})$ is finite dimensional, then there is a finite decomposition $\{P_j\}_{j=1}^{m}$ of $(T,\mathcal{B})'$ such that
    \begin{equation*}
        \mathcal{Q}(TP_j,P_j\mathcal{B}P_j)\cong\mathbb{C}\quad\text{for each}~1\leqslant j\leqslant m.
    \end{equation*}
    In particular, $T$ is a direct sum of strongly irreducible elements.
\end{lemma}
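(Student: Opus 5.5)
Write $\mathcal{C}=(T,\mathcal{B})'$, $\mathcal{R}=\mathrm{Rad}(\mathcal{C})$ and $\pi\colon\mathcal{C}\to\mathcal{C}/\mathcal{R}=\mathcal{Q}(T,\mathcal{B})$. The plan is to lift a complete family of minimal idempotents from the finite-dimensional quotient back to $\mathcal{C}$. Since $\mathcal{C}/\mathcal{R}$ is a finite-dimensional semisimple unital algebra, Wedderburn's theorem gives $\mathcal{C}/\mathcal{R}\cong\mathbb{M}_{n_1}\oplus\cdots\oplus\mathbb{M}_{n_k}$. Inside it I take the diagonal matrix units $e_1,\dots,e_m$, with $m=\sum_i n_i$. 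These are pairwise orthogonal minimal idempotents with $\sum_j e_j=1$, and each satisfies $e_j(\mathcal{C}/\mathcal{R})e_j\cong\mathbb{C}$.

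The first step is to lift $\{e_j\}$ to orthogonal idempotents $P_1,\dots,P_m$ in $\mathcal{C}$ with $\sum P_j=I$ and $\pi(P_j)=e_j$. This is the standard lifting of idempotents modulo the Jacobson radical of a Banach algebra, and I will do it inductively. Take any preimage $A$ of $e_1$. Then $\sigma(A)=\sigma(e_1)\subseteq\{0,1\}$ by \eqref{equ sigma-pi-A} applied to $\mathcal{C}$, so the holomorphic functional calculus gives an idempotent $P_1=f(A)$, where $f$ is $1$ near $1$ and $0$ near $0$. Since $\pi$ commutes with the functional calculus, $\pi(P_1)=f(e_1)=e_1$. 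I then pass to the corner $(I-P_1)\mathcal{C}(I-P_1)$. Its radical is $(I-P_1)\mathcal{R}(I-P_1)$: one inclusion follows as in \Cref{lem iota}, and the reverse inclusion is the standard fact $\mathrm{Rad}(P\mathcal{C}P)=P\,\mathrm{Rad}(\mathcal{C})P$. Its quotient is therefore $(1-e_1)(\mathcal{C}/\mathcal{R})(1-e_1)$, and I repeat the construction there with $e_2,\dots,e_m$. The idempotents so obtained are orthogonal to $P_1$ and to one another, and the last one is taken to be $I-\sum_{j<m}P_j$, so the family sums to $I$. Each $P_j$ is nonzero because $e_j\neq0$, so $\{P_j\}$ is a finite decomposition of $(T,\mathcal{B})'$.

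The second step identifies the corners. Since $P_j$ commutes with $T$, we have $(TP_j,P_j\mathcal{B}P_j)'=P_j\mathcal{C}P_j$. By the radical-of-a-corner fact again, $\mathrm{Rad}(P_j\mathcal{C}P_j)=P_j\mathcal{R}P_j=P_j\mathcal{C}P_j\cap\mathcal{R}$. Hence
\[
\mathcal{Q}(TP_j,P_j\mathcal{B}P_j)\cong\pi(P_j\mathcal{C}P_j)=e_j(\mathcal{C}/\mathcal{R})e_j\cong\mathbb{C}.
\]
\Cref{lem BIR}, applied in the unital Banach algebra $P_j\mathcal{B}P_j$, then shows that each $TP_j$ is strongly irreducible there, which yields $T=\sum_j TP_j$ as a direct sum of strongly irreducible elements.

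The main obstacle is the identity $\mathrm{Rad}(P\mathcal{C}P)=P\mathcal{C}P\cap\mathrm{Rad}(\mathcal{C})$ for an idempotent $P\in\mathcal{C}$. \Cref{lem iota} gives injectivity of the induced map, which is the inclusion $\supseteq$. For $\subseteq$, take $R\in\mathrm{Rad}(P\mathcal{C}P)$ and $B\in\mathcal{C}$. Then $\sigma_{\mathcal{C}}(RB)\cup\{0\}=\sigma_{\mathcal{C}}(RPBP)\cup\{0\}$, because $R=RP=PR$ and $\sigma(XY)\cup\{0\}=\sigma(YX)\cup\{0\}$. Moreover $\sigma_{\mathcal{C}}(RPBP)\cup\{0\}=\sigma_{P\mathcal{C}P}(RPBP)\cup\{0\}=\{0\}$, so $RB$ is quasinilpotent for every $B$, and by the same argument so is $BR$. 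Hence $R\in\mathcal{R}$, which is exactly the argument in the first half of the proof of \Cref{lem iota}. With this identity in hand, the lifting step and the corner identification both follow as described.
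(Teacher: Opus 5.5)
Your proof is correct, but it follows a different route from the paper's at both steps.

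\textbf{Lifting the idempotents.} The paper lifts all the diagonal matrix units in one step. It picks $A\in(T,\mathcal{B})'$ whose image in $\mathcal{Q}(T,\mathcal{B})\cong\bigoplus_j\mathbb{M}_{n_j}$ has spectrum $\{1,\dots,m\}$, in effect $\pi(A)=\sum_j j e_j$. It then takes the $P_j$ to be the spectral idempotents of $A$ from the holomorphic functional calculus. This replaces your induction through successive corners. (The paper writes $A=\sum_j jP_j$; that holds only modulo the radical, but one may replace $A$ by $\sum_j jP_j$ afterwards.)

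\textbf{Identifying the corner quotients.} The paper does not compute $\mathcal{Q}(TP_j,P_j\mathcal{B}P_j)$ directly; it argues by contradiction. Suppose $\mathcal{Q}(TP_1,P_1\mathcal{B}P_1)\not\cong\mathbb{C}$. By \Cref{lem iota} it is then finite-dimensional and semisimple, so Wedderburn gives two nonzero orthogonal idempotents summing to its unit. Lifting these again to $Q_1,Q_2$ and pushing them into $\mathcal{Q}(T,\mathcal{B})$ yields $m+1$ nonzero orthogonal idempotents in $\bigoplus_j\mathbb{M}_{n_j}$, contradicting $m=\sum_j n_j$.

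You instead prove the exact identity $\mathrm{Rad}(P\mathcal{C}P)=P\mathcal{C}P\cap\mathrm{Rad}(\mathcal{C})$ and read off $\mathcal{Q}(TP_j,P_j\mathcal{B}P_j)\cong e_j\,\mathcal{Q}(T,\mathcal{B})\,e_j\cong\mathbb{C}$.

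\textbf{What each route buys.} Yours needs no second use of Wedderburn or of lifting. It also gives the stronger statement that the corner quotients are exactly the corners of $\mathcal{Q}(T,\mathcal{B})$. The paper's one-shot lifting is shorter than your induction. Both arguments rest on the same two inclusions packaged in \Cref{lem iota}. Your write-up also supplies the identity $\sigma(RB)\cup\{0\}=\sigma(RPBP)\cup\{0\}$, which the paper's proof of that lemma passes over. The cleanest version combines the paper's spectral-idempotent lifting with your corner identity.
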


\begin{proof}
    Since $\mathcal{Q}(T,\mathcal{B})$ is finite dimensional and semi-simple, by Wedderburn's Theorem, we have $\mathcal{Q}(T,\mathcal{B})\cong\bigoplus_{j=1}^k \mathbb{M}_{n_j} $.
    Let $\pi\colon(T,\mathcal{B})'\to\mathcal{Q}(T,\mathcal{B})$ be the quotient map and $m=\sum_{j=1}^{k}n_j$.
    Then there exists an element $A$ in $(T,\mathcal{B})'$ such that
    \begin{equation*}
        \sigma_{(T,\mathcal{B})'}(A)
        =\sigma_{\mathcal{Q}(T,\mathcal{B})}(\pi(A))=\{1,2,\ldots,m\}.
    \end{equation*}
    By the analytic function calculus, there is a finite decomposition $\{P_j\}_{j=1}^{m}$ of $(T,\mathcal{B})'$ such that
    \begin{equation*}
        A=\sum_{j=1}^{m}jP_j.
    \end{equation*}
    Clearly, $\{\pi(P_j)\}_{j=1}^m$ is finite decomposition of $\mathcal{Q}(T,\mathcal{B})$.
    
    For every $P$ in $(T,\mathcal{B})'$, it is routine to verify that $P(T,\mathcal{B})'P=(TP,P\mathcal{B}P)'$.
    Suppose on the contrary that $\mathcal{Q}(TP_1,P_1\mathcal{B}P_1)\ncong\mathbb{C}$.
    Then $\mathcal{Q}(TP_1,P_1\mathcal{B}P_1)$ is finite dimensional and semi-simple by \Cref{lem iota}.
    By Wedderburn's Theorem, there is a finite decomposition $\{Q_1, Q_2\}$ of $(TP_1, P_1\mathcal{B}P_1)'$.
    It follows that
    \begin{equation*}
        \{\pi(Q_1),\pi(Q_2),\pi(P_3),\ldots,\pi(P_m)\}
    \end{equation*}
    is a finite decomposition of $\mathcal{Q}(T,\mathcal{B})$ containing $m+1$ idempotents.
    That is a contradiction.
    Therefore, $\mathcal{Q}(TP_1,P_1\mathcal{B}P_1)\cong\mathbb{C}$.
    Similarly, we have $\mathcal{Q}(TP_j,P_j\mathcal{B}P_j)\cong\mathbb{C}$ for every $1\leqslant j\leqslant m$.
    This completes the proof.
\end{proof}

Let $\mathcal{B}$ be a unital Banach algebra and $P$ a nonzero idempotent in $\mathcal{B}$.
Then the \emph{reduced subalgebra} $P\mathcal{B}P$ is also a unital Banach algebra with unit $P$.
Recall that two elements $T_1$ and $T_2$ in $\mathcal{B}$ are called \emph{similar}, denoted by $T_1\sim T_2$, if there exists an invertible element $X$ in $\mathcal{B}$ such that $T_2=X^{-1}T_1X$.
We generalize the concept of similarity of elements in different reduced subalgebras of $\mathcal{B}$.

\begin{definition} \label{def similar-reduced}
    Let $\mathcal{B}$ be a unital Banach algebra.
    Suppose that $P_j$ is a nonzero idempotent in $\mathcal{B}$ and $T_j$ is an element in $P_j\mathcal{B}P_j$ for $j=1,2$.
    The pairs $(T_1,P_1\mathcal{B}P_1)$ and $(T_2,P_2\mathcal{B}P_2)$ are said to be \emph{similar}, denoted by
    \begin{equation*}
      (T_1,P_1\mathcal{B}P_1)\sim(T_2,P_2\mathcal{B}P_2),
    \end{equation*}
    if there are elements $X_{12}\in P_1\mathcal{B}P_2$ and $X_{21}\in P_2\mathcal{B}P_1$ such that
    \begin{equation*}
      X_{12}X_{21}=P_1,\quad X_{21}X_{12}=P_2,\quad T_2=X_{21}T_1X_{12}.
    \end{equation*}
\end{definition}

\begin{remark}\label{rem similar-reduced}
    It is straightforward to verify that ``$\sim$'' defines an equivalence relation on the set of all pairs $(T, P\mathcal{B}P)$, where $P$ is a nonzero idempotent in $\mathcal{B}$ and $T$ lies in $P\mathcal{B}P$.
    Moreover, $(T_1,\mathcal{B})\sim(T_2,\mathcal{B})$ if and only if $T_1\sim T_2$ for all $T_1,T_2\in\mathcal{B}$.
\end{remark}

For every integer $n\geqslant 1$ and unital Banach algebra $\mathcal{B}$, $\mathbb{M}_n(\mathcal{B})$ is also a unital Banach algebra with respect to a suitable norm.
For every $T$ in $\mathcal{B}$, we denoted by $T^{(n)}$ the diagonal matrix in $\mathbb{M}_n(\mathcal{B})$ whose diagonal elements are all $T$.
The following result is a direct consequence of \Cref{lem rad-Mn-A}.

\begin{lemma}\label{lem Q-T-n}
    Let $\mathcal{B}$ be a unital Banach algebra and $T$ an element in $\mathcal{B}$.
    Then for every integer $n\geqslant 1$, we have $(T^{(n)},\mathbb{M}_n(\mathcal{B}))'=\mathbb{M}_n((T,\mathcal{B})')$
    and
    \begin{equation*}
      \mathrm{Rad}\big((T^{(n)},\mathbb{M}_n(\mathcal{B}))'\big)
      =\mathbb{M}_n\big(\mathrm{Rad}((T,\mathcal{B})')\big).
    \end{equation*}
    In particular, $\mathcal{Q}(T^{(n)},\mathbb{M}_n(\mathcal{B}))\cong \mathbb{M}_n(\mathcal{Q}(T,\mathcal{B}))$.
\end{lemma}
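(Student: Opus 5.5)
The plan has two steps: first identify the relative commutant of $T^{(n)}$ entrywise, then transfer the radical using \Cref{lem rad-Mn-A} applied to the Banach algebra $(T,\mathcal{B})'$.

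\textbf{Step 1: the commutant.} Write $X=(X_{ij})_{1\leqslant i,j\leqslant n}\in\mathbb{M}_n(\mathcal{B})$. Since $T^{(n)}$ is diagonal with all diagonal entries equal to $T$, we have
\begin{equation*}
T^{(n)}X=(TX_{ij})\quad\text{and}\quad XT^{(n)}=(X_{ij}T).
\end{equation*}
Hence $X$ commutes with $T^{(n)}$ if and only if each $X_{ij}$ lies in $(T,\mathcal{B})'$. This gives $(T^{(n)},\mathbb{M}_n(\mathcal{B}))'=\mathbb{M}_n((T,\mathcal{B})')$ as sets. The identification also respects the algebra structure, since multiplication in both algebras is matrix multiplication.

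\textbf{Step 2: the radical.} Next observe that $(T,\mathcal{B})'$ is a closed unital subalgebra of $\mathcal{B}$, hence a unital Banach algebra. Applying \Cref{lem rad-Mn-A} with $\mathcal{B}$ replaced by $(T,\mathcal{B})'$ gives
\begin{equation*}
\mathrm{Rad}(\mathbb{M}_n((T,\mathcal{B})'))=\mathbb{M}_n(\mathrm{Rad}((T,\mathcal{B})')).
\end{equation*}
Combined with Step 1, this is the second displayed identity. The one subtlety is that the radical of $(T^{(n)},\mathbb{M}_n(\mathcal{B}))'$ is computed intrinsically in that algebra. Because Step 1 is an equality of algebras, the two radicals coincide, so no comparison with the ambient spectrum in $\mathbb{M}_n(\mathcal{B})$ is needed. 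The choice of norm on $\mathbb{M}_n$ over a Banach algebra does not affect the radical, which is purely algebraic. It also does not affect spectra, since all reasonable norms are equivalent.

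\textbf{Step 3: the quotient.} Consider the surjective homomorphism
\begin{equation*}
\mathbb{M}_n((T,\mathcal{B})')\to\mathbb{M}_n(\mathcal{Q}(T,\mathcal{B})),\qquad (X_{ij})\mapsto(\pi(X_{ij})),
\end{equation*}
obtained by applying the quotient map $\pi$ entrywise. Its kernel is exactly $\mathbb{M}_n(\mathrm{Rad}((T,\mathcal{B})'))$, which by Step 2 is the radical of the commutant. It therefore induces the isomorphism $\mathcal{Q}(T^{(n)},\mathbb{M}_n(\mathcal{B}))\cong\mathbb{M}_n(\mathcal{Q}(T,\mathcal{B}))$.

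I expect no real obstacle here. The only point requiring care is Step 2, namely making sure \Cref{lem rad-Mn-A} is invoked for the subalgebra $(T,\mathcal{B})'$ rather than for $\mathcal{B}$.
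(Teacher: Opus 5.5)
Your proposal is correct and takes the same route as the paper. The paper records this lemma as a direct consequence of \Cref{lem rad-Mn-A}, which is your Step 2 applied to the Banach algebra $(T,\mathcal{B})'$ after the entrywise identification of the commutant; your Steps 1 and 3 simply spell out routine details the paper leaves implicit.
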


With the terminology in \Cref{def FD} and \Cref{def similar-reduced}, we restate \Cref{lem Q-T-n} as follows.

\begin{corollary}\label{cor Q-T-n}
    Let $\mathcal{B}$ be a unital Banach algebra, $\{P_j\}_{j=1}^n$ a finite decomposition of $\mathcal{B}$, and $T_j\in P_j\mathcal{B}P_j$ for $1\leqslant j\leqslant n$.
    Suppose that
    \begin{equation*}
        (T_i,P_i\mathcal{B}P_i)\sim (T_j,P_j\mathcal{B}P_j)\quad\text{for all}~1\leqslant i,j\leqslant n.
    \end{equation*}
    Let $T=\sum_{j=1}^{n}T_j$.
    Then $(T,\mathcal{B})'\cong \mathbb{M}_n((T_1,P_1\mathcal{B}P_1)')$ and
    \begin{equation*}
        \mathrm{Rad}\big((T,\mathcal{B})'\big)
        \cong \mathbb{M}_n\big(\mathrm{Rad}((T_1,P_1\mathcal{B}P_1)')\big).
    \end{equation*}
    In particular, $\mathcal{Q}(T,\mathcal{B})\cong \mathbb{M}_n(\mathcal{Q}(T_1,P_1\mathcal{B}P_1))$.
\end{corollary}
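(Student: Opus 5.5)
The plan is to reduce \Cref{cor Q-T-n} to \Cref{lem Q-T-n} by conjugating $T$ to a diagonal element $\widetilde T^{(n)}$ inside an algebra isomorphic to $\mathbb{M}_n(P_1\mathcal{B}P_1)$. Since \Cref{lem Q-T-n} is stated for unital Banach algebras, it can be applied to $P_1\mathcal{B}P_1$, whose unit is $P_1$.

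For each $j$, the hypothesis $(T_1,P_1\mathcal{B}P_1)\sim(T_j,P_j\mathcal{B}P_j)$ gives elements $X_{1j}\in P_1\mathcal{B}P_j$ and $X_{j1}\in P_j\mathcal{B}P_1$ with
\begin{equation*}
    X_{1j}X_{j1}=P_1,\quad X_{j1}X_{1j}=P_j,\quad T_j=X_{j1}T_1X_{1j},
\end{equation*}
and we take $X_{11}=X_{11}'=P_1$. Set $E_{ij}=X_{i1}X_{1j}\in P_i\mathcal{B}P_j$. A direct check gives $E_{ij}E_{kl}=\delta_{jk}E_{il}$ (using $X_{1j}X_{j1}=P_1$ and $X_{1j}P_k=\delta_{jk}X_{1j}$), and $\sum_jE_{jj}=\sum_jP_j=I$. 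So $\{E_{ij}\}$ is a system of matrix units in $\mathcal{B}$ with $E_{11}=P_1$. Standard arguments then show that
\begin{equation*}
    \Phi\colon\mathcal{B}\to\mathbb{M}_n(P_1\mathcal{B}P_1),\quad B\mapsto(E_{1i}BE_{j1})_{i,j}
\end{equation*}
is a unital algebra isomorphism, with inverse $(b_{ij})\mapsto\sum_{i,j}E_{i1}b_{ij}E_{1j}$. Both maps are bounded, so $\Phi$ is a topological isomorphism of Banach algebras; the precise norm on $\mathbb{M}_n(P_1\mathcal{B}P_1)$ is irrelevant, since radicals and commutants are algebraic notions.

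Next compute $\Phi(T)$. Its $(i,j)$ entry is $E_{1i}TE_{j1}=X_{1i}P_iTP_jX_{j1}=\delta_{ij}X_{1i}T_iX_{i1}$. From $T_i=X_{i1}T_1X_{1i}$ we get $X_{1i}T_iX_{i1}=P_1T_1P_1=T_1$, so $\Phi(T)=T_1^{(n)}$ in $\mathbb{M}_n(P_1\mathcal{B}P_1)$.

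Isomorphisms preserve relative commutants and Jacobson radicals, so $\Phi$ maps $(T,\mathcal{B})'$ onto $(T_1^{(n)},\mathbb{M}_n(P_1\mathcal{B}P_1))'$ and maps $\mathrm{Rad}((T,\mathcal{B})')$ onto the radical of that commutant. Applying \Cref{lem Q-T-n} in the unital Banach algebra $P_1\mathcal{B}P_1$ identifies these with $\mathbb{M}_n((T_1,P_1\mathcal{B}P_1)')$ and $\mathbb{M}_n(\mathrm{Rad}((T_1,P_1\mathcal{B}P_1)'))$. Passing to quotients gives $\mathcal{Q}(T,\mathcal{B})\cong\mathbb{M}_n(\mathcal{Q}(T_1,P_1\mathcal{B}P_1))$.

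The main step needing care is verifying that the $E_{ij}$ form matrix units and that $\Phi$ is multiplicative. Multiplicativity follows from inserting $I=\sum_kE_{k1}E_{1k}$: one computes $E_{1i}BCE_{j1}=\sum_kE_{1i}BE_{k1}E_{1k}CE_{j1}$, and $E_{k1}E_{1k}=E_{kk}=P_k$. The rest is routine.
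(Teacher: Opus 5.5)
Your proposal is correct and is the paper's proof: since $E_{1i}=X_{1i}$ and $E_{j1}=X_{j1}$, your $\Phi(B)=(E_{1i}BE_{j1})_{i,j}$ is exactly the paper's map $B\mapsto(X_{1i}BX_{j1})_{i,j}$, which the paper shows is an isomorphism with $\Phi(T)=T_1^{(n)}$ before applying \Cref{lem Q-T-n}. Your matrix-unit and multiplicativity checks simply supply the details the paper calls straightforward.
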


\begin{proof}
    Let $X_{11}=P_1$.
    For each $2\leqslant j\leqslant n$, there exist elements $X_{1j}\in P_1\mathcal{B}P_j$ and $X_{j1}\in P_j\mathcal{B}P_1$ such that
    \begin{equation*}
      X_{1j}X_{j1}=P_1,\quad X_{j1}X_{1j}=P_j,\quad T_j=X_{j1}T_1X_{1j}.
    \end{equation*}
    We define a map
    \begin{equation*}
      \varphi\colon\mathcal{B}\to \mathbb{M}_n(P_1\mathcal{B}P_1),\quad B\mapsto(X_{1i}BX_{j1})_{1\leqslant i,j\leqslant n}.
    \end{equation*}
    It is straightforward to verify that $\varphi$ is an isomorphism such that $\varphi(T)=T_1^{(n)}$.
    We complete the proof by \Cref{lem Q-T-n}.
\end{proof}

For every $T$ in $\mathcal{B}$, we denote by $\sigma(T)=\sigma_{\mathcal{B}}(T)$ the spectrum of $T$.
If $P$ is a nonzero idempotent in $\mathcal{B}$ and $T\in P\mathcal{B}P$, then
\begin{equation*}
  \sigma(T)=
  \begin{cases}
    \sigma_{P\mathcal{B}P}(T), & \mbox{if } P=I_{\mathcal{B}}, \\
    \sigma_{P\mathcal{B}P}(T)\cup\{0\}, & \mbox{otherwise}.
  \end{cases}
\end{equation*}
It follows that $\sigma(T)\setminus\{0\}=\sigma_{P\mathcal{B}P}(T)\setminus\{0\}$.
The following technique is prepared for \Cref{prop Q-direct-sum}.

\begin{lemma}\label{lem off-diagonal}
    Let $\mathcal{B}$ be a unital Banach algebra, $P_j$ an idempotent in $\mathcal{B}$, and $T_j$ an element in $P_j\mathcal{B}P_j$ for $j=1,2$.
    Suppose that
    \begin{equation*}
      \mathcal{Q}(T_1,P_1\mathcal{B}P_1)\cong\mathbb{C}\cong
      \mathcal{Q}(T_2,P_2\mathcal{B}P_2)\quad\text{and}\quad
      (T_1,P_1\mathcal{B}P_1)\nsim(T_2,P_2\mathcal{B}P_2).
    \end{equation*}
    If the elements $X_{12}\in P_1\mathcal{B}P_2$ and $X_{21}\in P_2\mathcal{B}P_1$ satisfy that
    \begin{equation*}
      T_1X_{12}=X_{12}T_2\quad\text{and}\quad T_2X_{21}=X_{21}T_1,
    \end{equation*}
    then we have
    \begin{equation*}
      X_{12}X_{21}\in\mathrm{Rad}((T_1,P_1\mathcal{B}P_1)')\quad\text{and}\quad
      X_{21}X_{12}\in\mathrm{Rad}((T_2,P_2\mathcal{B}P_2)').
    \end{equation*}
\end{lemma}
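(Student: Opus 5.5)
Write $\mathcal{C}_j=(T_j,P_j\mathcal{B}P_j)'$ for $j=1,2$. Since $\mathcal{Q}(T_j,P_j\mathcal{B}P_j)\cong\mathbb{C}$, every element $A\in\mathcal{C}_j$ has a one-point spectrum in $\mathcal{C}_j$, namely $\sigma_{\mathcal{C}_j}(A)=\{\lambda\}$ by \eqref{equ sigma-pi-A}. Moreover, $A\in\mathrm{Rad}(\mathcal{C}_j)$ if and only if that point is $0$. We first check that $X_{12}X_{21}\in\mathcal{C}_1$ and $X_{21}X_{12}\in\mathcal{C}_2$. This follows from the intertwining relations:
\begin{equation*}
T_1X_{12}X_{21}=X_{12}T_2X_{21}=X_{12}X_{21}T_1,
\end{equation*}
and similarly $T_2X_{21}X_{12}=X_{21}X_{12}T_2$. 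Hence $\pi_1(X_{12}X_{21})=\lambda$ and $\pi_2(X_{21}X_{12})=\mu$ for some scalars $\lambda,\mu$. The task is to show $\lambda=\mu=0$.

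The next step compares $\lambda$ and $\mu$. Inside $\mathcal{B}$ we have $\sigma(X_{12}X_{21})\setminus\{0\}=\sigma(X_{21}X_{12})\setminus\{0\}$. By the remark preceding the lemma, the spectra in the corners agree with the spectra in $\mathcal{B}$ away from $0$. Combining these, if $\lambda\ne0$ then $\mu=\lambda$; likewise, if $\mu\ne0$ then $\lambda=\mu$. So it suffices to rule out the case $\lambda=\mu\ne0$.

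Assume $\lambda\ne0$. Then $X_{12}X_{21}$ is invertible in $\mathcal{C}_1$, because its spectrum there is $\{\lambda\}$. Let $W$ be its inverse in $\mathcal{C}_1$. Put $Y_{12}=X_{12}\in P_1\mathcal{B}P_2$ and $Y_{21}=X_{21}W\in P_2\mathcal{B}P_1$. Then $Y_{12}Y_{21}=X_{12}X_{21}W=P_1$. Set $E=Y_{21}Y_{12}$. It is an idempotent in $\mathcal{C}_2$, since $E^2=Y_{21}(Y_{12}Y_{21})Y_{12}=E$, and it commutes with $T_2$: using that $W$ commutes with $T_1$, we get $T_2X_{21}WX_{12}=X_{21}T_1WX_{12}=X_{21}WT_1X_{12}=X_{21}WX_{12}T_2$.

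$E$ is nonzero, because $Y_{12}EY_{21}=P_1\neq0$. By \Cref{lem BIR} applied to $T_2$ in $P_2\mathcal{B}P_2$, the only idempotents in $\mathcal{C}_2$ are $0$ and $P_2$, so $E=P_2$. (The radical quotient is $\mathbb{C}$, so $E$ has spectrum $\{0\}$ or $\{1\}$; an idempotent with such spectrum equals $0$ or $P_2$.) Finally, $Y_{21}T_1Y_{12}=X_{21}WT_1X_{12}=X_{21}T_1WX_{12}=T_2X_{21}WX_{12}=T_2E=T_2$. This shows $(T_1,P_1\mathcal{B}P_1)\sim(T_2,P_2\mathcal{B}P_2)$ in the sense of \Cref{def similar-reduced}, contradicting the hypothesis. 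Hence $\lambda=0$, and by the previous step $\mu=0$ as well, which gives both radical memberships.

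The main obstacle is the spectral comparison step, specifically relating corner spectra to spectra in $\mathcal{C}_j$. This needs spectral permanence between $\mathcal{C}_1$ and $P_1\mathcal{B}P_1$. It holds because $\mathcal{C}_1$ is a commutant, hence inverse-closed in $P_1\mathcal{B}P_1$: the inverse of an element commuting with $T_1$ also commutes with $T_1$. With this, $\sigma_{\mathcal{C}_1}=\sigma_{P_1\mathcal{B}P_1}$, and the remark on $\sigma(T)\setminus\{0\}$ finishes the comparison.
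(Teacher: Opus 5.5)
Your argument is correct, and its outline matches the paper's: both products lie in the relative commutants, they have one-point spectra there, and a nonzero value would produce a similarity of pairs. The decisive step, however, is done differently.

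\textbf{The paper's route.} It first uses $\sigma(X_{12}X_{21})\setminus\{0\}=\sigma(X_{21}X_{12})\setminus\{0\}$ to get $\lambda_1=\lambda_2\neq 0$. Hence \emph{both} $X_{12}X_{21}$ and $X_{21}X_{12}$ are invertible, with inverses $Y_1$ and $Y_2$. The purely algebraic identity $Y_2X_{21}=Y_2X_{21}X_{12}X_{21}Y_1=X_{21}Y_1$ then gives $X_{21}Y_1X_{12}=Y_2X_{21}X_{12}=P_2$ directly, with no appeal to the idempotents of $(T_2,P_2\mathcal{B}P_2)'$.

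\textbf{Your route.} You invert only $X_{12}X_{21}$, inside $\mathcal{C}_1$, which is immediate from its $\mathcal{C}_1$-spectrum. You then get $Y_{21}Y_{12}=P_2$ by recognizing $X_{21}WX_{12}$ as a nonzero idempotent in $\mathcal{C}_2$ and using the fact behind \Cref{lem BIR}: the only such idempotents are $0$ and $P_2$. So the hypothesis $\mathcal{Q}(T_2,P_2\mathcal{B}P_2)\cong\mathbb{C}$ enters through its idempotents, not through invertibility of $X_{21}X_{12}$. Choosing $W$ in $\mathcal{C}_1$ is exactly what makes $E$ commute with $T_2$.

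\textbf{What this buys you.} In the paper the spectral comparison $\lambda_1=\lambda_2$ is indispensable. For you it is optional: once $\lambda=0$, you get $\mu=0$ by running the same argument with the indices interchanged, since $\sim$ is symmetric.

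\textbf{Two small remarks.} First, the step you flag as the main obstacle is lighter than you suggest. Since $\mathcal{C}_1$ is a unital subalgebra of $P_1\mathcal{B}P_1$, $\sigma_{P_1\mathcal{B}P_1}(A)$ is a nonempty subset of $\sigma_{\mathcal{C}_1}(A)=\{\lambda\}$, so equality is automatic and inverse-closedness is not needed. Your inverse-closedness argument is nonetheless correct. Second, $P_1\neq 0$, which you use to get $E\neq 0$, is forced by $\mathcal{Q}(T_1,P_1\mathcal{B}P_1)\cong\mathbb{C}$; it is worth saying so explicitly.
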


\begin{proof}
    By the assumption $T_1X_{12}=X_{12}T_2$ and $T_2X_{21}=X_{21}T_1$, it is clear that
    \begin{equation*}
        T_1X_{12}X_{21}=X_{12}X_{21}T_1\quad\text{and}\quad T_2X_{21}X_{12}=X_{21}X_{12}T_2.
    \end{equation*}
    Since $X_{12}X_{21}\in(T_1,P_1\mathcal{B}P_1)'$ and $\mathcal{Q}(T_1,P_1\mathcal{B}P_1)\cong\mathbb{C}$, there exists a scalar $\lambda_1\in\mathbb{C}$ such that $\sigma_{P_1\mathcal{B}P_1}(X_{12}X_{21})=\{\lambda_1\}$.
    Similarly, $\sigma_{P_2\mathcal{B}P_2}(X_{21}X_{12})=\{\lambda_2\}$ for some $\lambda_2\in\mathbb{C}$.
    Note that
    \begin{equation*}
        \sigma_{P_1\mathcal{B}P_1}(X_{12}X_{21})\backslash\{0\}
        =\sigma(X_{12}X_{21})\backslash\{0\}
        =\sigma(X_{12}X_{21})\backslash\{0\}
        =\sigma_{P_2\mathcal{B}P_2}(X_{21}X_{12})\backslash\{0\}.
    \end{equation*}
    It follows that $\lambda_1=\lambda_2$.
    Suppose on the contrary that $X_{12}X_{21}\notin\mathrm{Rad}((T_1,P_1\mathcal{B}P_1)')$.
    Then $\lambda_1=\lambda_2\ne 0$.
    In particular, $X_{12}X_{21}$ and $X_{21}X_{12}$ are invertible in $P_1\mathcal{B}P_1$ and $P_2\mathcal{B}P_2$ with inverses $Y_1$ and $Y_2$, respectively.
    Then
    \begin{equation*}
        Y_2X_{21}=Y_2X_{21}X_{12}X_{21}Y_1=X_{21}Y_1.
    \end{equation*}
    Let $Y_{12}=X_{12}\in P_1\mathcal{B}P_2$ and $Y_{21}=X_{21}Y_1\in P_2\mathcal{B}P_1$.
    It is straightforward to verify that
    \begin{equation*}
        Y_{12}Y_{21}=P_1,\quad Y_{21}Y_{12}=P_2,\quad T_2=Y_{21}T_1Y_{12}.
    \end{equation*}
    That contradicts the assumption $(T_1,P_1\mathcal{B}P_1)\nsim(T_2,P_2\mathcal{B}P_2)$.
    Therefore, we obtain that $X_{12}X_{21}\in\mathrm{Rad}((T_1,P_1\mathcal{B}P_1)')$.
    Symmetrically, $X_{21}X_{12}\in\mathrm{Rad}((T_2,P_2\mathcal{B}P_2)')$.
    This completes the proof.
\end{proof}

In the following proposition, the element $T$ in $\mathcal{B}$ can be viewed as the direct sum $\bigoplus_{j=1}^{k}T_{1j}^{(n_j)}$ with $\mathcal{Q}(T_{1j})\cong\mathbb{C}$ for each $j$ and $T_{1j_1}\nsim T_{1j_2}$ for $j_1\ne j_2$.

\begin{proposition} \label{prop Q-direct-sum}
    Let $\mathcal{B}$ be a unital Banach algebra, $\{P_{ij}\}_{1\leqslant i\leqslant n_j,1\leqslant j\leqslant k}$ a finite decomposition of $\mathcal{B}$, and $T_{ij}\in P_{ij}\mathcal{B}P_{ij}$ such that
    \begin{equation*}
        \mathcal{Q}(T_{ij},P_{ij}\mathcal{B}P_{ij})\cong\mathbb{C}\quad\text{for all}~1\leqslant i\leqslant n_j,1\leqslant j\leqslant k,
    \end{equation*}
    and for all $1\leqslant i_1\leqslant n_{j_1},1\leqslant i_2\leqslant n_{j_2}, 1\leqslant j_1,j_2\leqslant k$, we have
    \begin{equation*}
        (T_{i_1j_1},P_{i_1j_1}\mathcal{B}P_{i_1j_1})
        \sim(T_{i_2j_2},P_{i_2j_2}\mathcal{B}P_{i_2j_2})
        ~\text{if and only if}~j_1=j_2.
    \end{equation*}
    Let $T=\sum_{j=1}^{k}\sum_{i=1}^{n_j}T_{ij}$.
    Then
    \begin{equation*}
        \mathcal{Q}(T,\mathcal{B})\cong\bigoplus_{j=1}^k \mathbb{M}_{n_j} .
    \end{equation*}
\end{proposition}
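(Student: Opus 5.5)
The plan is to first group the summands by similarity class. For each $1\leqslant j\leqslant k$, set $P_j=\sum_{i=1}^{n_j}P_{ij}$ and $T_j=\sum_{i=1}^{n_j}T_{ij}\in P_j\mathcal{B}P_j$. Then $\{P_j\}_{j=1}^k$ is a finite decomposition of $\mathcal{B}$ and $T=\sum_j T_j$. By \Cref{cor Q-T-n}, applied in the unital Banach algebra $P_j\mathcal{B}P_j$ with the decomposition $\{P_{ij}\}_{i=1}^{n_j}$, we get
\begin{equation*}
    \mathcal{Q}(T_j,P_j\mathcal{B}P_j)\cong \mathbb{M}_{n_j}(\mathcal{Q}(T_{1j},P_{1j}\mathcal{B}P_{1j}))\cong \mathbb{M}_{n_j}.
\end{equation*}
It therefore suffices to show that $\mathcal{Q}(T,\mathcal{B})\cong\bigoplus_{j=1}^k\mathcal{Q}(T_j,P_j\mathcal{B}P_j)$.

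To do this, write each $X\in(T,\mathcal{B})'$ as a block matrix $X=(X_{jl})$ with $X_{jl}=P_jXP_l$. Since every $P_j$ commutes with $T$, we have $T_jX_{jl}=X_{jl}T_l$. Define $D(X)=\sum_j X_{jj}$, the block-diagonal part of $X$, and let $\mathcal{N}$ be the set of off-diagonal elements $\sum_{j\ne l}X_{jl}$. The map $\Psi\colon(T,\mathcal{B})'\to\bigoplus_j\mathcal{Q}(T_j,P_j\mathcal{B}P_j)$ sends $X$ to $(\pi_j(X_{jj}))_j$, where $\pi_j$ denotes the quotient map. This map is linear and surjective. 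It is multiplicative because
\begin{equation*}
    (XY)_{jj}=X_{jj}Y_{jj}+\sum_{l\ne j}X_{jl}Y_{lj},
\end{equation*}
so it is enough to know that $X_{jl}Y_{lj}\in\mathrm{Rad}((T_j,P_j\mathcal{B}P_j)')$ for $l\ne j$. Once $\Psi$ is shown to be a homomorphism, the remaining task is to prove $\ker\Psi=\mathrm{Rad}((T,\mathcal{B})')$. Since the target algebra is semisimple, $\ker\Psi\supseteq\mathrm{Rad}$ holds automatically for a surjective homomorphism. The reverse inclusion is where the real work lies.

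\textbf{Key step (main obstacle): the cross terms $X_{jl}Y_{lj}$ for $j\ne l$.} Decompose further into entries: $X_{jl}Y_{lj}=\sum_{i,i'}\sum_{s}P_{ij}X P_{sl}YP_{i'j}$. Each summand $P_{sl}YP_{i'j}\cdot(\text{stuff})$ is a product of an element of $P_{ij}\mathcal{B}P_{sl}$ intertwining $T_{ij}$ and $T_{sl}$ with an element of $P_{sl}\mathcal{B}P_{i'j}$ intertwining $T_{sl}$ and $T_{i'j}$. Using the similarity $(T_{i'j},\cdot)\sim(T_{ij},\cdot)$, which is implemented by intertwiners $Z_{ii'}$ and $Z_{i'i}$, I would transport everything into $P_{ij}\mathcal{B}P_{ij}$. \Cref{lem off-diagonal} then shows that the resulting composite lies in $\mathrm{Rad}((T_{ij},P_{ij}\mathcal{B}P_{ij})')$. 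Via the matrix isomorphism of \Cref{cor Q-T-n}, together with \Cref{lem Q-T-n}, each such entry lies in $\mathrm{Rad}((T_j,P_j\mathcal{B}P_j)')$, because that radical equals $\mathbb{M}_{n_j}$ of the radical entrywise. Checking this bookkeeping carefully is the expected difficulty.

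\textbf{Kernel step.} Suppose $\Psi(X)=0$. Then each $X_{jj}$ lies in the radical of its corner. For any $Y\in(T,\mathcal{B})'$, we have $\Psi(XY)=0$. The plan is to show that $\sigma(XY)=\{0\}$, which gives $X\in\mathrm{Rad}$. To see this, note that $\sigma_{(T,\mathcal{B})'}(Z)\subseteq\bigcup_j\sigma(\pi_j(Z_{jj}))\cup\{\ldots\}$ whenever $\Psi(Z)$ is invertible. Concretely, if $\Psi(\lambda-Z)$ is invertible, pick $W$ with $\Psi(W)=\Psi(\lambda-Z)^{-1}$. Then $1-W(\lambda-Z)\in\ker\Psi$, and it remains to show that elements of $\ker\Psi$ are quasinilpotent. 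For this, I would show that $\ker\Psi$ is an ideal consisting of elements $K$ for which $K^2$ has block-diagonal entries in the radical and off-diagonal entries of the same form; iterating, $K^{N}$ becomes a sum of long products of cross-intertwiners. Alternatively, and more cleanly, one can use the spectral-radius formula: invertibility modulo the radical on the diagonal, combined with a Neumann-series argument for the off-diagonal part, should make $\lambda-Z$ invertible. I expect to handle this by showing that $\ker\Psi$ is a two-sided ideal all of whose elements have spectrum $\{0\}$. By the characterization of the radical recalled in \Cref{s2}, it then follows that $\ker\Psi\subseteq\mathrm{Rad}((T,\mathcal{B})')$, and hence
\begin{equation*}
    \mathcal{Q}(T,\mathcal{B})\cong\bigoplus_{j=1}^k\mathbb{M}_{n_j}.
\end{equation*}
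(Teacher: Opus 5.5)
Several parts of your set-up are sound and need no change:
\begin{itemize}
\item the reduction via \Cref{cor Q-T-n} to showing $\mathcal{Q}(T,\mathcal{B})\cong\bigoplus_j\mathcal{Q}(T_j,P_j\mathcal{B}P_j)$;
\item the multiplicativity of $\Psi$, where your cross-term step is the same argument the paper makes with \Cref{lem off-diagonal} and \Cref{lem Q-T-n};
\item the inclusion $\mathrm{Rad}((T,\mathcal{B})')\subseteq\ker\Psi$ from surjectivity onto a semisimple algebra, which is tidier than the paper's direct spectral argument.
\end{itemize}

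The gap is the reverse inclusion $\ker\Psi\subseteq\mathrm{Rad}((T,\mathcal{B})')$. You rightly call it the real work, but you never prove it. Reducing it to ``every element of the ideal $\ker\Psi$ is quasinilpotent'' is legitimate, but none of your suggested routes gets there:
\begin{itemize}
\item Since $\ker\Psi$ is an ideal, $K^N$ has exactly the same shape as $K$ (radical diagonal blocks, arbitrary intertwining off-diagonal blocks). Iterating therefore gives no decay of $\|K^N\|^{1/N}$.
\item A Neumann series in the off-diagonal part needs a smallness you do not have.
\item The inclusion $\sigma(Z)\subseteq\bigcup_j\sigma(\pi_j(Z_{jj}))\cup\{\ldots\}$ is not a usable statement.
\end{itemize}
Diagonal data alone cannot control the spectrum here: for $\begin{pmatrix}0&A\\ B&0\end{pmatrix}$ it is governed by $\sigma(AB)$. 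So the cross-term lemma has to be used a second time.

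The missing idea, which is how the paper finishes, is to treat one block at a time using $\sigma(xy)\cup\{0\}=\sigma(yx)\cup\{0\}$. Take $K\in\ker\Psi$, indices $j_1,j_2$, and $B\in(T,\mathcal{B})'$, and write $K_{j_1j_2}B=P_{j_1}(K_{j_1j_2}B)$. Then in $(T,\mathcal{B})'$
\begin{equation*}
    \sigma(K_{j_1j_2}B)\cup\{0\}=\sigma(K_{j_1j_2}BP_{j_1})\cup\{0\}=\sigma(K_{j_1j_2}B_{j_2j_1})\cup\{0\}.
\end{equation*}
The element $K_{j_1j_2}B_{j_2j_1}$ lies in $\mathrm{Rad}((T_{j_1},P_{j_1}\mathcal{B}P_{j_1})')$. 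When $j_1\neq j_2$ this is your cross-term step; when $j_1=j_2$ it holds because $K_{j_1j_1}$ already lies in that radical. Hence it is quasinilpotent in the corner, and therefore in $(T,\mathcal{B})'$. The same argument handles $BK_{j_1j_2}$. So every block $K_{j_1j_2}$ lies in $\mathrm{Rad}((T,\mathcal{B})')$, and so does $K=\sum_{j_1,j_2}K_{j_1j_2}$.

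Alternatively, an induction on $k$ via Schur complements shows that $I-K$ is invertible; the cross-term step guarantees each Schur complement is a unit minus a radical element. With either argument supplied, your proof is complete.
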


\begin{proof}
    Let $P_j=\sum_{i=1}^{n_j}P_{ij}$ and $T_j=\sum_{i=1}^{n_j}T_{ij}$ for $1\leqslant j\leqslant k$.
    Then $\{P_j\}_{1\leqslant j\leqslant k}$ is a finite decomposition of $\mathcal{B}$.
    For every $A\in\mathcal{B}$ and $1\leqslant j_1,j_2\leqslant k$, we put
    \begin{equation*}
        A_{j_1j_2}=P_{j_1}AP_{j_2}.
    \end{equation*}
    Then $A=\sum_{j_1,j_2=1}^{k}A_{j_1j_2}$.
    It is clear that $A\in(T,\mathcal{B})'$ if and only if $A_{jj}\in(T_j,P_j\mathcal{B}P_j)'$ for every $1\leqslant j\leqslant n$ and $T_{j_1}A_{j_1j_2}=A_{j_1j_2}T_{j_2}$ for all $j_1\ne j_2$.
    Let
    \begin{equation*}
        \mathcal{B}_0=\{(A_{j_1j_2})_{1\leqslant j_1\ne j_2\leqslant n}\colon
        T_{j_1}A_{j_1j_2}=A_{j_1j_2}T_{j_2}~\text{for all}~j_1\ne j_2\}.
    \end{equation*}
    Then as linear spaces, we can write
    \begin{equation*}
        (T,\mathcal{B})'\cong\mathcal{B}_0
        \oplus\bigoplus_{j=1}^{k}(T_j,P_j\mathcal{B}P_j)'.
    \end{equation*}
    Let $\mathcal{J}$ be the set of all elements $A$ in $\mathcal{B}$ such that $A_{jj}\in\mathrm{Rad}((T_j,P_j\mathcal{B}P_j)')$ for every $1\leqslant j\leqslant n$ and $T_{j_1}A_{j_1j_2}=A_{j_1j_2}T_{j_2}$ for all $j_1\ne j_2$.
    Then we can write
    \begin{equation*}
        \mathcal{J}\cong\mathcal{B}_0
        \oplus\bigoplus_{j=1}^{k}\mathrm{Rad}((T_j,P_j\mathcal{B}P_j)').
    \end{equation*}
    Combining with \Cref{cor Q-T-n}, it suffices to prove that $\mathcal{J}=\mathrm{Rad}((T,\mathcal{B})')$.
    
    It is routine to verify that $(TP,P\mathcal{B}P)'=P(T,\mathcal{B})'P$ for every idempotent $P$ in $(T,\mathcal{B})'$.
    Let $A\in\mathrm{Rad}((T,\mathcal{B})')$.
    Then for every $B\in(T,\mathcal{B})'$ and $1\leqslant j\leqslant k$, we have
    \begin{equation*}
        \sigma_{(T,\mathcal{B})'}(AB_{jj})=\{0\}=\sigma_{(T,\mathcal{B})'}(B_{jj}A).
    \end{equation*}
    It follows that $\sigma_{(TP_j,P_j\mathcal{B}P_j)'}(A_{jj}B_{jj})
    =\{0\}=\sigma_{(TP_j,P_j\mathcal{B}P_j)'}(B_{jj}A_{jj})$.
    Consequently, we have $A_{jj}\in\mathrm{Rad}((T_j,P_j\mathcal{B}P_j)')$ for every $1\leqslant j\leqslant n$, and hence $A\in\mathcal{J}$.
    Therefore, $\mathrm{Rad}((T,\mathcal{B})')\subseteq\mathcal{J}$.
    
    Let $A\in\mathcal{J}$, $B\in(T,\mathcal{B})'$, and $1\leqslant j_1,j_2\leqslant k$.
    If $j_1\ne j_2$, then by \Cref{lem Q-T-n} and \Cref{lem off-diagonal}, we have $A_{j_1j_2}B_{j_2j_1}\in\mathrm{Rad}((T_j,P_j\mathcal{B}P_j)')$.
    If $j_1=j_2$, then we have $A_{j_1j_1}\in\mathrm{Rad}((T_j,P_j\mathcal{B}P_j)')$ and hence $A_{j_1j_1}B_{j_1j_1}\in\mathrm{Rad}((T_j,P_j\mathcal{B}P_j)')$.
    In both cases, we obtain that
    \begin{equation*}
        \sigma_{(T_j,P_j\mathcal{B}P_j)'}(A_{j_1j_2}B_{j_2j_1})=\{0\}.
    \end{equation*}
    Since $A_{j_1j_2}B=\sum_{l=1}^{k}A_{j_1j_2}B_{j_2l}
    =A_{j_1j_2}B_{j_2j_1}+\sum_{l\ne j_1}A_{j_1j_2}B_{j_2l}$, we have
    \begin{equation*}
        \sigma_{(T,\mathcal{B})'}(A_{j_1j_2}B)
        =\sigma_{(T,\mathcal{B})'}(A_{j_1j_2}B_{j_2j_1})=\{0\}.
    \end{equation*}
    Similarly, we have $\sigma_{(T,\mathcal{B})'}(BA_{j_1j_2})=\{0\}$.
    It follows that $A_{j_1j_2}\in\mathrm{Rad}((T,\mathcal{B})')$.
    Thus, $A=\sum_{j_1,j_2=1}^{k}A_{j_1j_2}\in\mathrm{Rad}((T,\mathcal{B})')$.
    Therefore, $\mathcal{J}\subseteq\mathrm{Rad}((T,\mathcal{B})')$.
    This completes the proof.
\end{proof}

The following corollary is an analog of \Cref{cor J-direct-sum}.

\begin{corollary}\label{cor Q-direct-sum}
    Let $\mathcal{B}$ be a unital Banach algebra, $P_j$ an idempotent in $\mathcal{B}$, and $T_j$ an element in $P_j\mathcal{B}P_j$ for $j=1,2$.
    Let $P=P_1+P_2$ and $T=T_1+T_2$.
    Then $\mathcal{Q}(T,P\mathcal{B}P)$ is finite dimensional if and only if both $\mathcal{Q}(T_1,P_1\mathcal{B}P_1)$ and $\mathcal{Q}(T_2,P_2\mathcal{B}P_2)$ are finite dimensional.
\end{corollary}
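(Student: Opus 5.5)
Set $\mathcal{C}=(T,P\mathcal{B}P)'$ and $\mathcal{C}_j=(T_j,P_j\mathcal{B}P_j)'$ for $j=1,2$. We assume $P_1P_2=P_2P_1=0$, as in the intended direct-sum setting, so that $P_1,P_2$ are idempotents in $\mathcal{C}$ with $P_1+P_2=P$.

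\emph{Necessity.} Here we use only the reduction results already available, not the radical computation. For each $j$, $P_j$ commutes with $T$, and it is routine that $P_j\mathcal{C}P_j=\mathcal{C}_j$. By \Cref{lem iota} applied to the unital Banach algebra $\mathcal{C}$ and the idempotent $P_j$, the map $\mathcal{C}_j/\mathrm{Rad}(\mathcal{C}_j)\to\mathcal{C}/\mathrm{Rad}(\mathcal{C})$ is injective. Hence finite dimensionality of $\mathcal{Q}(T,P\mathcal{B}P)$ passes to each $\mathcal{Q}(T_j,P_j\mathcal{B}P_j)$.

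\emph{Sufficiency.} We reduce to \Cref{prop Q-direct-sum}, working inside the unital Banach algebra $P\mathcal{B}P$. Since $\mathcal{Q}(T_j,P_j\mathcal{B}P_j)$ is finite dimensional, \Cref{lem Q-FD} (applied in $P_j\mathcal{B}P_j$) gives a finite decomposition $\{P_{j,l}\}_l$ of $\mathcal{C}_j$ with $\mathcal{Q}(T_jP_{j,l},P_{j,l}\mathcal{B}P_{j,l})\cong\mathbb{C}$ for every $l$. The union $\{P_{1,l}\}\cup\{P_{2,l}\}$ is a finite decomposition of $P\mathcal{B}P$, and $T=\sum_{j,l}T_jP_{j,l}$ with $T_jP_{j,l}\in P_{j,l}\mathcal{B}P_{j,l}$. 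By \Cref{rem similar-reduced}, $\sim$ is an equivalence relation on pairs, so we can partition these finitely many pairs $(T_jP_{j,l},P_{j,l}\mathcal{B}P_{j,l})$ into similarity classes. Relabel them as $\{P_{ik}\}_{1\leqslant i\leqslant n_k,\,1\leqslant k\leqslant K}$, with two pairs similar exactly when they have the same $k$. This is precisely the hypothesis of \Cref{prop Q-direct-sum}, which yields $\mathcal{Q}(T,P\mathcal{B}P)\cong\bigoplus_{k=1}^K\mathbb{M}_{n_k}$. That algebra is finite dimensional.

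\emph{Main point to check.} The proof only needs \Cref{prop Q-direct-sum} to hold with $\mathcal{B}$ replaced by $P\mathcal{B}P$ and with the pieces $T_{ij}$ coming from two different summands. This causes no difficulty: the proposition is stated for an arbitrary unital Banach algebra and an arbitrary finite decomposition, and the similarity relation between pieces from different $P_j$'s is defined intrinsically in $P\mathcal{B}P$ through \Cref{def similar-reduced}. One also has to check that \Cref{lem Q-FD} applies within $P_j\mathcal{B}P_j$ with unit $P_j$, which is immediate.
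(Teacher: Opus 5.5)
Your proposal is correct and follows the paper's own argument. The direction from $\mathcal{Q}(T,P\mathcal{B}P)$ to the summands uses $P_j(T,P\mathcal{B}P)'P_j=(T_j,P_j\mathcal{B}P_j)'$ together with the injective homomorphism of \Cref{lem iota}. The converse applies \Cref{lem Q-FD} in each $P_j\mathcal{B}P_j$, groups the resulting pieces into similarity classes, and then invokes \Cref{prop Q-direct-sum}; this also fills in the details the paper dismisses as ``clear'', including the orthogonality $P_1P_2=P_2P_1=0$ that the statement leaves implicit.
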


\begin{proof}
    The sufficiency is clear by applying \Cref{lem Q-FD} and \Cref{prop Q-direct-sum}.
    Conversely, we have $P_j(T,\mathcal{B})'P_j=(T_j,P_j\mathcal{B}P_j)'$.
    It follows that $\mathcal{Q}(T_j,P_j\mathcal{B}P_j)$ is finite dimensional for $j=1,2$ by \Cref{lem iota}.
    We complete the proof.
\end{proof}

\subsection{Kaplansky's second test problem and essentially finite-dimensional commutants}

\begin{theorem} \label{thm Q-Kaplansky}
    Let $\mathcal{B}$ be a unital Banach algebra, $T_1,T_2\in\mathcal{B}$, and $n\in\mathbb{N}$ such that $T_1^{(n)}\sim T_2^{(n)}$ in $\mathbb{M}_n(\mathcal{B})$.
    If $T_1$ has essentially finite-dimensional commutant in $\mathcal{B}$, then $T_1\sim T_2$ in $\mathcal{B}$.
\end{theorem}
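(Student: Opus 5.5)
The plan is a Krull--Schmidt style cancellation argument, driven by the structure of $\mathcal{Q}(\cdot,\cdot)$ from \Cref{lem Q-FD} and \Cref{prop Q-direct-sum}.

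First, decompose $T_1$. By \Cref{lem Q-FD}, choose a finite decomposition $\{P_{i}\}$ of $(T_1,\mathcal{B})'$ with $\mathcal{Q}(T_1P_i,P_i\mathcal{B}P_i)\cong\mathbb{C}$. Group these by the equivalence $\sim$ of \Cref{def similar-reduced} into classes $j=1,\dots,k$ with multiplicities $m_j$. Choose representatives $S_j\in E_j\mathcal{B}E_j$. Then \Cref{prop Q-direct-sum} gives
\begin{equation*}
\mathcal{Q}(T_1,\mathcal{B})\cong\bigoplus_{j}\mathbb{M}_{m_j}.
\end{equation*}

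Second, transport to $T_2$. Let $X$ implement $T_1^{(n)}\sim T_2^{(n)}$. Then $T_2^{(n)}$ is a direct sum of $nm_j$ copies of each $S_j$, in the sense of pairs. By \Cref{cor Q-direct-sum} and \Cref{lem Q-T-n}, $\mathcal{Q}(T_2,\mathcal{B})$ is finite dimensional, since $\mathbb{M}_n(\mathcal{Q}(T_2,\mathcal{B}))\cong\mathcal{Q}(T_2^{(n)},\mathbb{M}_n(\mathcal{B}))\cong\mathcal{Q}(T_1^{(n)},\mathbb{M}_n(\mathcal{B}))$ is. Applying \Cref{lem Q-FD} to $T_2$ gives a decomposition of $T_2$ into pieces $R_l$ with $\mathcal{Q}\cong\mathbb{C}$. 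Then $T_2^{(n)}$ has two such decompositions: $n$ copies of the $R_l$'s, and the image of $n$ copies of the $P_i$'s.

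Third, prove a uniqueness statement: two decompositions of one element $T$ into pieces with $\mathcal{Q}\cong\mathbb{C}$ agree up to $\sim$ and multiplicity. The idea is that a piece $(TQ,Q\mathcal{B}Q)$ corresponds to an idempotent $Q\in(T,\mathcal{B})'$ whose image $\pi(Q)$ in $\bigoplus\mathbb{M}_{m_j}$ is a rank-one idempotent in some summand. Two such idempotents lie in the same summand if and only if the pieces are similar. Indeed, if $\pi(Q)$ and $\pi(Q')$ are equivalent in the semisimple quotient, lift the partial isometries and invert modulo the radical, as in the proof of \Cref{lem off-diagonal}. Conversely, similar pieces give equivalent idempotents. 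Counting rank in each summand then shows the multiplicities agree.

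Apply this to $T_2^{(n)}$: each $R_l$ is similar to some $S_j$, and $n\cdot\#\{l:R_l\sim S_j\}=nm_j$. Hence the multiset of classes of $T_2$ matches that of $T_1$. Assembling the pairwise similarities blockwise, using $\sum P_i=I=\sum Q_l$, gives an invertible element with $T_2=X^{-1}T_1X$. This is the step that actually produces the similarity, and it is straightforward once the multiplicities match.

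I expect the main obstacle to be the uniqueness step: showing that equivalence of the images $\pi(Q),\pi(Q')$ in $\mathcal{Q}(T,\mathcal{B})$ lifts to similarity of the pairs. This needs lifting idempotents and partial isometries through $\mathrm{Rad}$. My first attempt is \Cref{lem P-Q-radical} combined with the invertibility-modulo-radical trick from \Cref{lem off-diagonal}.
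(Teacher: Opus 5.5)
Your proposal is correct, but it reaches the conclusion by a different route than the paper.

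The paper never matches individual pieces with simple summands. It only uses that the isomorphism type of $\mathcal{Q}(\cdot,\cdot)$ is a similarity invariant, computed by \Cref{prop Q-direct-sum}. First, comparing the number of simple summands of $\mathcal{Q}(T_1^{(n)}\oplus T_2^{(n)},\mathbb{M}_{2n}(\mathcal{B}))$ and $\mathcal{Q}(T_2^{(2n)},\mathbb{M}_{2n}(\mathcal{B}))$ shows that $T_1$ and $T_2$ have the same similarity classes of pieces. Equal multisets of block sizes would not say which class carries which multiplicity, so the paper then pads: it sets $T=T_1\oplus T_2^{(r)}$ with $r=\max_j|m_j-n_j|$, uses $T^{(n)}\sim T_1^{((1+r)n)}$, and uses integrality of $n_j+(m_j-n_j)/(1+r)$ to force $m_j=n_j$.

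You instead prove a Krull--Schmidt uniqueness statement, in three parts:
\begin{itemize}
\item Pieces correspond to minimal idempotents $\pi(Q)$ of the semisimple quotient, because $\pi(Q)\mathcal{Q}(T,\mathcal{B})\pi(Q)\cong\mathcal{Q}(TQ,Q\mathcal{B}Q)$. This follows from $Q(T,\mathcal{B})'Q=(TQ,Q\mathcal{B}Q)'$ and \Cref{lem iota}.
\item Two pieces are similar iff their idempotents are equivalent. One direction lifts the implementing elements and inverts modulo the radical, as in \Cref{lem off-diagonal}. For the other, the intertwiners of a pair-similarity automatically commute with $T$, since $Q,Q'$ do.
\item Rank counting in each summand gives the multiplicities.
\end{itemize}
This identifies classes with summands explicitly, so no padding trick is needed. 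It also yields more, for example cancellation $T_1\oplus S\sim T_2\oplus S\Rightarrow T_1\sim T_2$ whenever the relevant $\mathcal{Q}$ is finite dimensional.

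The cost is checking these routine corner and lifting facts. You also need the remark that pair-similarity between corners $(Q\otimes e_{ss})\mathbb{M}_n(\mathcal{B})(Q\otimes e_{ss})$ and $(P\otimes e_{tt})\mathbb{M}_n(\mathcal{B})(P\otimes e_{tt})$ descends to $\mathcal{B}$, since the intertwiners lie in $Q\mathcal{B}P\otimes e_{st}$. Finally, \Cref{lem P-Q-radical} is not actually needed: you only lift the elements implementing an equivalence, never idempotents.
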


\begin{proof}
    By \Cref{lem Q-FD}, there is a finite decomposition $\{P_{ij}\}_{1\leqslant i\leqslant m_j,1\leqslant j\leqslant k_1}$ of $(T_1,\mathcal{B})'$ such that
    \begin{equation*}
      \mathcal{Q}(T_1P_{ij},P_{ij}\mathcal{B}P_{ij})\cong\mathbb{C}\quad\text{for all}~1\leqslant i\leqslant m_j,1\leqslant j\leqslant k_1,
    \end{equation*}
    and for all $1\leqslant i_1\leqslant m_{j_1},1\leqslant i_2\leqslant m_{j_2}, 1\leqslant j_1,j_2\leqslant k_1$, we have
    \begin{equation*}
      (T_1P_{i_1j_1},P_{i_1j_1}\mathcal{B}P_{i_1j_1})
      \sim(T_1P_{i_2j_2},P_{i_2j_2}\mathcal{B}P_{i_2j_2})
      ~\text{if and only if}~j_1=j_2.
    \end{equation*}
    Similarly, combining \Cref{lem Q-FD} and \Cref{cor Q-direct-sum}, there is a finite decomposition $\{Q_{ij}\}_{1\leqslant i\leqslant n_j,1\leqslant j\leqslant k_2}$ of $(T_2,\mathcal{B})'$ such that
    \begin{equation*}
      \mathcal{Q}(T_2Q_{ij},Q_{ij}\mathcal{B}Q_{ij})\cong\mathbb{C}
      \quad\text{for all}~1\leqslant i\leqslant n_j,1\leqslant j\leqslant k_2,
    \end{equation*}
    and for all $1\leqslant i_1\leqslant n_{j_1},1\leqslant i_2\leqslant n_{j_2}, 1\leqslant j_1,j_2\leqslant k_2$, we have
    \begin{equation*}
      (T_2Q_{i_1j_1},Q_{i_1j_1}\mathcal{B}Q_{i_1j_1})
      \sim(T_2Q_{i_2j_2},Q_{i_2j_2}\mathcal{B}Q_{i_2j_2})
      ~\text{if and only if}~j_1=j_2.
    \end{equation*}
    Let $\Lambda_1$ be the set of all indices $j_1$ satisfying that $1\leqslant j_1\leqslant k_1$ and
    \begin{equation*}
      (T_1P_{1j_1},P_{1j_1}\mathcal{B}P_{1j_1})
      \sim(T_2Q_{1j_2},Q_{1j_2}\mathcal{B}Q_{1j_2})
      ~\text{for some}~1\leqslant j_2\leqslant k_2.
    \end{equation*}
    Let $\Lambda_2=\{1,2,\ldots,k_1\}\backslash\Lambda_1$.
    Note that
    \begin{equation*}
      T_1^{(n)}\oplus T_2^{(n)}\sim T_2^{(2n)}\quad\text{in}~\mathbb{M}_{2n}(\mathcal{B}).
    \end{equation*}
    By \Cref{prop Q-direct-sum}, $\mathcal{Q}\big(T_1^{(n)}\oplus T_2^{(n)},\mathbb{M}_{2n}(\mathcal{B})\big)$ is the direct sum of $k_2+\operatorname{card}(\Lambda_2)$ full matrix algebras and $\mathcal{Q}\big(T_2^{(2n)},\mathbb{M}_{2n}(\mathcal{B})\big)$ is the direct sum of $k_2$ full matrix algebras.
    It follows that $\Lambda_2=\varnothing$.
    In other words, for every $1\leqslant j_1\leqslant k_1$, there exists $1\leqslant j_2\leqslant k_2$ such that
    \begin{equation*}
      (T_1P_{1j_1},P_{1j_1}\mathcal{B}P_{1j_1})
      \sim(T_2Q_{1j_2},Q_{1j_2}\mathcal{B}Q_{1j_2}).
    \end{equation*}
    By a similar argument, for every $1\leqslant j_2\leqslant k_2$, there exists $1\leqslant j_1\leqslant k_1$ such that
    \begin{equation*}
      (T_1P_{1j_1},P_{1j_1}\mathcal{B}P_{1j_1})
      \sim(T_2Q_{1j_2},Q_{1j_2}\mathcal{B}Q_{1j_2}).
    \end{equation*}
    It follows that $k_1=k_2$, which is denoted by $k$ for simplicity.
    Without loss of generality, we assume that
    \begin{equation*}
      (T_1P_{1j},P_{1j}\mathcal{B}P_{1j})
      \sim(T_2Q_{1j},Q_{1j}\mathcal{B}Q_{1j})\quad\text{for all}~1\leqslant j\leqslant k.
    \end{equation*}
    Let $r=\max_{1\leqslant j\leqslant k}|m_j-n_j|$ and $T=T_1\oplus T_2^{(r)}\in \mathbb{M}_{1+r}(\mathcal{B})$.
    By \Cref{prop Q-direct-sum}, we have that
    \begin{equation*}
      \mathcal{Q}\big(T^{(n)},\mathbb{M}_{(1+r)n}(\mathcal{B})\big)
      \cong\bigoplus_{j=1}^{k}\mathbb{M}_{n(m_j+rn_j)} .
    \end{equation*}
    Since $T^{(n)}=T_1^{(n)}\oplus T_2^{(rn)}\sim T_1^{((1+r)n)}$ in $\mathbb{M}_{(1+r)n}(\mathcal{B})$, we also have
    \begin{equation*}
      \mathcal{Q}\big(T^{(n)},\mathbb{M}_{(1+r)n}(\mathcal{B})\big)
      \cong\bigoplus_{j=1}^{k}\mathbb{M}_{(1+r)nm_j} .
    \end{equation*}
    Thus, for each $1\leqslant j\leqslant k$, there exists $1\leqslant j_*\leqslant k$ such that
    \begin{equation*}
      n(m_j+rn_j)=(1+r)nm_{j_*}.
    \end{equation*}
    It follows that
    \begin{equation*}
      m_{j_*}=\frac{m_j+rn_j}{1+r}=n_j+\frac{m_j-n_j}{1+r}\in\mathbb{N}.
    \end{equation*}
    Since $1+r>|m_j-n_j|$, we must have $m_j=n_j$ for every $1\leqslant j\leqslant k$.
    Therefore, we obtain that $T_1\sim T_2$.
    This completes the proof.
\end{proof}

We present an application of \Cref{thm Q-Kaplansky} as follows. To this end, we first recall the notion of weighted Hardy space $H^2_\beta$ (also viewed as a Hilbert space of formal power series) and its multiplier algebra $H^\infty_\beta$, which were systematically developed by A. L. Shields \cite{Shi74} in the context of weighted shift operators. Over the decades, they have served as a central framework for the study of subnormal operators and function-theoretic operator theory.


\begin{definition}
    Let $\{\beta_n\}_{n=0}^{\infty}$ be a sequence of positive numbers with $\beta_0=1$. The weighted Hardy space  $H^2_\beta$ is a Hilbert space defined as the set of all formal power series $f(z) = \sum_{n=0}^\infty f_n z^n$ whose coefficient sequences $f=\{f_n\}_{n=0}^{\infty}$ satisfy
    \begin{equation*}
        \|f\|_\beta^2 = \sum_{n=0}^\infty |f_n|^2 \beta_n^2 < +\infty.
    \end{equation*}
    Furthermore, $H^\infty_\beta$ denotes the algebra of formal power series $\phi(z) = \sum_{n=0}^\infty \hat{\phi}(n)z^n$ that act as bounded multiplication operators on $H^2_\beta$, that is, $\phi H^2_\beta \subset H^2_\beta$.
\end{definition}

\begin{theorem} {\rm \cite[page 62, Theorem 3]{Shi74}} \label{thm shift-commutant}
	If $A$ is an operator on $H^2_\beta$ that commutes with $M_z$, then $A = M_\phi$ for some $\phi\in H^\infty_\beta$.
\end{theorem}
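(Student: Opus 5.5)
This is Shields' theorem on the commutant of the shift $M_z$ on $H^2_\beta$: every operator commuting with $M_z$ is a multiplication by some $\phi\in H^\infty_\beta$. The plan is the standard cyclic-vector argument for the shift.

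First I record the basic facts about $M_z$. It acts on coefficient sequences by $M_z(\sum f_nz^n)=\sum f_nz^{n+1}$, i.e.\ it is unitarily equivalent to the weighted shift $e_n\mapsto(\beta_{n+1}/\beta_n)e_{n+1}$ on the orthonormal basis $e_n=z^n/\beta_n$. Since we are considering operators commuting with $M_z$, I assume $M_z$ is bounded, which means $\sup_n\beta_{n+1}/\beta_n<\infty$. The constant function $1=z^0$ lies in $H^2_\beta$ because $\beta_0=1$. Moreover $M_z^n1=z^n$, so the polynomials $\mathcal{P}$ form the linear span of $\{M_z^n1\}$, and this span is dense in $H^2_\beta$. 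Finally, for each $k$ the coefficient functional $f\mapsto f_k$ is bounded, since $|f_k|\leqslant\|f\|_\beta/\beta_k$.

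Next I define the symbol. Set $\phi=A1\in H^2_\beta$ and write $\phi=\sum\hat\phi(n)z^n$. For a polynomial $p$, commutation gives
\begin{equation*}
  Ap=Ap(M_z)1=p(M_z)A1=p(M_z)\phi=p\phi,
\end{equation*}
where $p\phi$ denotes the formal Cauchy product. This identity is legitimate because $p(M_z)$ multiplies a formal power series by the polynomial $p$ coefficientwise, so $p(M_z)\phi$ is exactly the Cauchy product $p\phi$.

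The remaining step, and the only delicate point, is extending $Af=\phi f$ from polynomials to all $f\in H^2_\beta$, where $\phi f$ is the formal Cauchy product. Take polynomials $p_j\to f$ in $H^2_\beta$; for instance the partial sums of $f$ will do. By boundedness of $A$, $Ap_j\to Af$ in norm, so each coefficient of $Ap_j$ converges to the corresponding coefficient of $Af$. On the other hand, the $k$-th coefficient of $p_j\phi$ is $\sum_{i\leqslant k}(p_j)_i\hat\phi(k-i)$. This is a finite sum involving only the first $k+1$ coefficients of $p_j$, each of which converges to the corresponding coefficient of $f$. Hence the $k$-th coefficient of $p_j\phi$ converges to the $k$-th coefficient of $f\phi$. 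Comparing the two limits gives $Af=\phi f$ coefficientwise, so $\phi H^2_\beta\subseteq H^2_\beta$, i.e.\ $\phi\in H^\infty_\beta$ with $M_\phi=A$. Its multiplier norm is $\|A\|$.

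I expect no real obstacle. The only care required is to interpret the product formally, which is why the argument compares coefficients rather than using pointwise evaluation.
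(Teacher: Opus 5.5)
Your proof is correct and is essentially Shields' original cyclic-vector argument. You set $\phi=A1$, derive $Ap=p\phi$ for polynomials from $AM_z=M_zA$, and extend to all $f\in H^2_\beta$ by coefficientwise convergence, using that each coefficient functional is bounded. The paper itself does not reprove this statement; it only cites Shields, and your argument matches the proof given there.
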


\begin{proposition} \label{sufficient condition}
    Let $T$ be an injective unilateral weighted shift represented as the multiplication operator $M_z$ acting on $H^2_\beta$. Suppose that $T$ is strictly cyclic and quasinilpotent. Then $\mathcal{Q}(T) \cong \mathbb{C}$.
\end{proposition}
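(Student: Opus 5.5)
By \Cref{thm shift-commutant}, the commutant $(T,\mathcal{B}(H^2_\beta))'$ is exactly $\{M_\phi\colon \phi\in H^\infty_\beta\}$, a commutative Banach algebra. Here $\mathcal{Q}(T)$ means $\mathcal{Q}(T,\mathcal{B}(H^2_\beta))$. Since this commutant is commutative, its radical consists of the elements whose spectrum (taken in the commutant) is $\{0\}$. So $\mathcal{Q}(T)$ is the quotient of $H^\infty_\beta$ by the intersection of the kernels of its characters. The plan is to show that $H^\infty_\beta$ has exactly one character, namely $\phi\mapsto\hat\phi(0)$. Equivalently, for every $\phi\in H^\infty_\beta$ we want $M_\phi-\hat\phi(0)I=M_{\phi-\hat\phi(0)}$ to lie in the radical. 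Then the map $\phi\mapsto\hat\phi(0)$ identifies $\mathcal{Q}(T)$ with $\mathbb{C}$.

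Strict cyclicity enters here. It means $H^2_\beta$ and $H^\infty_\beta$ coincide as sets, via $\phi\mapsto M_\phi 1=\phi$. Moreover, the multiplier norm is equivalent to $\|\cdot\|_\beta$: the map is a bounded bijection, so the open mapping theorem applies. Thus $H^\infty_\beta$ is a Banach algebra whose norm is equivalent to the $\ell^2_\beta$ norm. In particular, polynomials are dense in $H^\infty_\beta$ in multiplier norm, since truncations of the power series converge in $\|\cdot\|_\beta$.

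Now let $\chi$ be a character of $H^\infty_\beta$; it is automatically continuous. Set $\lambda=\chi(z)$. Then $\lambda\in\sigma(M_z)=\sigma(T)=\{0\}$ by quasinilpotence. Spectra in the commutant contain the spectra in $\mathcal{B}(H^2_\beta)$, and since the commutant is commutative, $\chi(z)$ lies in its spectrum there. That relative spectrum equals $\sigma(T)$, because the commutant is inverse-closed: the inverse of an operator commuting with $T$ also commutes with $T$. Hence $\chi(z)=0$, so $\chi(p)=p(0)$ for every polynomial $p$. By density of polynomials and continuity, $\chi(\phi)=\hat\phi(0)$ for all $\phi$. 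Therefore $M_{\phi-\hat\phi(0)}$ lies in the kernel of every character, hence in the radical, and $\mathcal{Q}(T)\cong\mathbb{C}$. The quotient is nonzero because $\phi\mapsto\hat\phi(0)$ is a nonzero character.

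The main obstacle is the density step: the multiplier norm must be controlled by $\|\cdot\|_\beta$, so that polynomial truncations converge in the operator norm. Strict cyclicity gives this through the open mapping theorem. If needed, I would instead cite Shields' result that for strictly cyclic shifts $\|M_\phi\|\le C\|\phi\|_\beta$.
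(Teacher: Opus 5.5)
Your proof is correct, but it does not follow the paper's route.

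\textbf{The paper's route.} The paper identifies the commutant with $H^\infty_\beta$. It then quotes Shields' result (Proposition 31 and Corollary 1 of \cite{Shi74}) that the maximal ideal space of $H^\infty_\beta$ is the disc $\{z\colon |z|\leqslant r(T)\}$. Quasinilpotence collapses this disc to $\{0\}$. So $H^\infty_\beta$ has a unique maximal ideal, the radical equals that ideal, and the quotient is $\mathbb{C}$.

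\textbf{Your route.} You prove directly the special case of Shields' theorem that is needed. Strict cyclicity and the open mapping theorem give $\|M_\phi\|\leqslant C\|\phi\|_\beta$, so polynomial truncations are norm-dense in the commutant. Every character is therefore determined by $\chi(M_z)$. That value must vanish because $\sigma_{\{T\}'}(T)=\sigma(T)=\{0\}$. You get this from inverse-closedness of the commutant; the spectral radius formula would also do, since it does not depend on the ambient algebra.

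\textbf{Comparison.} The paper's argument is shorter but delegates everything to the citation. Its text never says where strict cyclicity is used; it is hidden in the hypotheses of the cited proposition, and it is genuinely needed. Your argument is self-contained and shows the role of each hypothesis: strict cyclicity gives density of polynomials, and quasinilpotence gives $\chi(z)=0$. The cost is that you obtain only the $r(T)=0$ case, not the full description of the maximal ideal space.

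\textbf{One point to tighten.} Your identification $H^\infty_\beta=H^2_\beta$ via $\phi\mapsto M_\phi 1$ assumes that $1$ is a strictly cyclic vector. If strict cyclicity is defined through an arbitrary strictly cyclic vector, passing to the vector $1$ needs Shields' characterization of strictly cyclic weighted shifts, or a short separate argument. With that supplied, the proof is complete.
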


\begin{proof}
    By Theorem \ref{thm shift-commutant}, the commutant algebra satisfies $\mathcal{A}'(T) \cong H^\infty_\beta$. Since $T$ is an injective unilateral weighted shift, it follows from \cite[page 94, Proposition 31 and Corollary 1]{Shi74} that the maximal ideal space of $H^\infty_\beta$, denoted by $\Delta(H^\infty_\beta)$, is given by the closed disc $\{z \colon |z| \leq r(T)\}$. Given that $T$ is quasinilpotent, $\Delta(H^\infty_\beta)$ reduces identically to $\{0\}$. This implies that $H^\infty_\beta$ possesses a unique maximal ideal, denoted by $\mathcal{M}$. Since $H^\infty_\beta$ is commutative and the radical $\operatorname{Rad}(H^\infty_\beta)$ is the intersection of all maximal ideals, we have $\operatorname{Rad}(H^\infty_\beta) = \mathcal{M}$.
    Consequently, we obtain the following isometric isomorphism
    \begin{equation*}
        \mathcal{Q}(T) = \mathcal{A}'(T) / \textup{Rad}(\mathcal{A}'(T)) \cong H^\infty_\beta / \operatorname{Rad}(H^\infty_\beta) = H^\infty_\beta / \mathcal{M} \cong \mathbb{C}.
    \end{equation*}
    This completes the proof.
\end{proof}

\begin{remark}
    According to \cite[page 104, Proposition 38]{Shi74}, if $T$ is a quasinilpotent, strongly strictly cyclic unilateral weighted shift, then $T$ is unicellular. Conversely, M. P. Thomas \cite[Theorem 3.32]{Tho85} demonstrated the existence of quasinilpotent strictly cyclic unilateral weighted shift operators on $\ell^2$ that fail to be unicellular. Nevertheless, by \Cref{sufficient condition}, the commutants of these counterexamples are all essentially finite dimensional.
\end{remark}

\begin{corollary}
    Let $A \in \mathcal{B}(\mathcal{H})$ be an injective unilateral weighted shift with a weight sequence $\{w_n\}^{\infty}_{n=0}$, and $B \in \mathcal{B}(\mathcal{H})$. Suppose that the sequence $\{|w_n|\}^{\infty}_{n=0}$ is decreasing and belongs to $\ell^p$ for some $p < \infty$. If $A \oplus A \sim B \oplus B$, then $A \sim B$.
\end{corollary}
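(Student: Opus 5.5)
The plan is to reduce the corollary to \Cref{thm Q-Kaplansky} with $n=2$ in the unital Banach algebra $\mathcal{B}=\mathcal{B}(\mathcal{H})$. It then suffices to show that $A$ has essentially finite-dimensional commutant in $\mathcal{B}(\mathcal{H})$. The natural route is to check the hypotheses of \Cref{sufficient condition}: an injective weighted shift that is quasinilpotent and strictly cyclic has $\mathcal{Q}(A,\mathcal{B}(\mathcal{H}))\cong\mathbb{C}$, which is certainly finite dimensional.

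First, unitarily equivalent shifts give the same question. Indeed, an injective weighted shift with weights $w_n$ is unitarily equivalent (via a diagonal unitary) to the shift with weights $|w_n|$. Similarity and the hypothesis $A\oplus A\sim B\oplus B$ are both preserved under this change: replace $B$ by $U^*BU$. So we may assume $w_n>0$. We then realize $A$ as $M_z$ on $H^2_\beta$ with $\beta_0=1$ and $\beta_{n}=w_0w_1\cdots w_{n-1}$.

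Next comes quasinilpotence. Since the $w_n$ are decreasing and lie in $\ell^p$, we have $w_n\to 0$. Hence $\|A^k\|=\sup_j w_jw_{j+1}\cdots w_{j+k-1}=w_0\cdots w_{k-1}$, using monotonicity. Its $k$-th root tends to $0$ because $w_n\to0$. So $r(A)=0$.

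The main obstacle is strict cyclicity. For this we invoke Shields' criterion \cite{Shi74}: a weighted shift is strictly cyclic when $\sup_n\sum_{k\le n}\bigl(\beta_n/(\beta_k\beta_{n-k})\bigr)^2<\infty$. For decreasing weights, $\beta_n/(\beta_k\beta_{n-k})=\frac{w_k\cdots w_{n-1}}{w_0\cdots w_{n-k-1}}\le 1$ termwise, but we need a summable bound. Pair the factors $w_{k+i}$ against $w_i$ and use monotonicity: the ratio is at most $\min(\beta_{k}',\beta_{n-k}')$-type products. Here we write $\beta_n/(\beta_k\beta_{n-k})\le \prod_{i<\min(k,n-k)} w_{\max(k,n-k)+i}/w_i \le c\,\beta_{\min(k,n-k)}$ after dropping the extra factors, which are at most $w_0$. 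The sum is then at most $2\sum_m \beta_m^2$. This is finite since $\beta_m=w_0\cdots w_{m-1}$ decays superexponentially. The $\ell^p$ hypothesis is exactly what Shields' sufficient condition (decreasing $\ell^p$ weights imply strictly cyclic) uses. If the hand estimate fails, I would cite that result of \cite{Shi74} directly.

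With quasinilpotence and strict cyclicity in hand, \Cref{sufficient condition} gives $\mathcal{Q}(A)\cong\mathbb{C}$. Then \Cref{thm Q-Kaplansky}, applied with $T_1=A$, $T_2=B$ and $n=2$, yields $A\sim B$.
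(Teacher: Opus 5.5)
Your overall route is the same as the paper's. The paper also takes strict cyclicity from \cite[page 99, Corollary 2]{Shi74} and quasinilpotence from $w_n\downarrow 0$, then applies \Cref{sufficient condition} and \Cref{thm Q-Kaplansky} with $n=2$. Your reduction to positive weights via a diagonal unitary and your formula $\|A^k\|=w_0\cdots w_{k-1}$ are correct details that the paper leaves implicit.

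The one thing to fix is your hand verification of Shields' condition, which is wrong as written. With $m=\min(k,n-k)$ and $M=\max(k,n-k)$, the identity $\beta_n/(\beta_k\beta_{n-k})=\prod_{i<m}w_{M+i}/w_i$ is fine. The bound by $c\,\beta_m$ is false. Take $w_n=1/(n+1)$, which is decreasing and in $\ell^2$, with $n=2m$ and $k=m$. The left side is $\binom{2m}{m}^{-1}\geqslant 4^{-m}$, while $c\,\beta_m=c/m!$, which is smaller than $4^{-m}$ for large $m$.

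A warning sign was available: your estimate uses only $w_n\downarrow 0$ and never the $\ell^p$ hypothesis. That cannot work, because decreasing null weights with very long constant stretches violate Shields' sum-of-squares condition. So either cite Shields' corollary directly, as the paper does, or repair the estimate using $\ell^p$:
\begin{enumerate}
\item Monotonicity together with $\{w_n\}\in\ell^p$ gives $w_n=o(n^{-1/p})$. Fix an integer $j\geqslant p/2$.
\item When $m\geqslant j$, keep only the first $j$ factors, since every factor is $\leqslant 1$. This gives $\beta_n/(\beta_k\beta_{n-k})\leqslant (w_{\lceil n/2\rceil}/w_{j-1})^j=o(n^{-j/p})$.
\item There are at most $n+1$ such terms, so they contribute $o(n^{1-2j/p})=o(1)$ to $\sum_{k=0}^n\bigl(\beta_n/(\beta_k\beta_{n-k})\bigr)^2$.
\item The remaining at most $2j$ terms are each $\leqslant 1$, so the sum is bounded uniformly in $n$.
\end{enumerate}
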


\begin{proof}
    By \cite[page 99, Corollary 2]{Shi74}, the operator $A$ is strongly strictly cyclic. Since the sequence $\{|w_n|\}^{\infty}_{n=0}$ decreases monotonically to zero, $A$ is quasinilpotent. It follows from \Cref{sufficient condition} that $\mathcal{Q}(A) \cong \mathbb{C}$. Consequently, Theorem \ref{thm Q-Kaplansky} yields $A \sim B$.
\end{proof}

We end this paper with the following remark.

\begin{remark} \label{rem bitriangular-opt}
    Here we would like to make two additional observations. 
    \begin{enumerate}  [label= $(\arabic*)$, ref= \arabic*, leftmargin=*] 
        \item  In  \cite{DH90}, Davidson and Herrero investigated Kaplansky's three test problems in $\mathcal{B}(\mathcal{H})$ for bitriangular operators with respect to quasisimilarity. By Theorem 4.6 of \cite{DH90}, every bitriangular operator is quasisimilar to a Jordan operator, which is an (infinite) direct sum of a norm-bounded sequence of Jordan blocks. Based on the property $(J)$ introduced in \Cref{def J}, we can strengthen quasisimilarity to similarity for Jordan operators when studying Kaplansky's second test problem. Precisely, let $T$ and $S$ be two Jordan operators in the form
        \begin{equation*}
            T = \bigoplus^{\infty}_{k=1} J_{n_k}(\lambda_{k}) \quad \text{ and } \quad  S = \bigoplus^{\infty}_{k=1} J_{m_k}(\mu_{k}).
        \end{equation*}
        Then the similarity of $T\oplus T$ and $S \oplus S$ yields the similarity of $T$ and $S$. 

        \item In \cite{CD78}, M. J. Cowen and R. G. Douglas introduced a class of operators in $\mathcal{B}(\mathcal{H})$ with a rich complex-geometric structure, now referred to as Cowen-Douglas operators. They proved that the curvature of the associated Hermitian holomorphic vector bundle is a complete unitary invariant for Cowen-Douglas operators. By applying \cite[Theorem 4.6]{Jiang04}, the authors of \cite[Main theorem]{Jiang05} obtained a necessary and sufficient condition, in terms of $K$-theory, for the similarity of two Cowen-Douglas operators $A$ and $B$ in $B_n(\Omega)$. As a direct application, Kaplansky's second test problem holds for operators $A$ and $B$ in $B_n(\Omega)$ that satisfy the two conditions in \cite[Main theorem]{Jiang05}.
    \end{enumerate}
\end{remark}

\begin{remark} \label{rem quantum-physics}
    In 1925, W. Heisenberg \cite{Hei25} pioneered the use of matrix algebra to describe physical observables, thereby founding the matrix mechanics formulation of quantum mechanics.
    Quantum information theory is deeply rooted in matrix mechanics, both in its mathematical framework and in its physical concepts.
    In quantum information theory, positive operators with trace one correspond to quantum states or density matrices.
    Two positive operator $T$ and $S$ in $\mathbb{M}_m \otimes \mathbb{M}_n $ are called locally unitarily equivalent \cite{Kra10} if
    \begin{equation*}
        S=(U\otimes V)T(U\otimes V)^*
    \end{equation*} 
    for some unitary operators $U$ in $\mathbb{M}_m $ and $V$ in $\mathbb{M}_n$.
    Related to Kaplansky's second test problem, a natural question proposed by L. Zhang and B. Xie \cite[Section 8.2]{ZX26} is whether the local unitary equivalence is determined by the unitary equivalence of the triple $(T,\mathrm{Tr}_1(T)\otimes I_n, I_m\otimes\mathrm{Tr}_2(T))$, where the partial traces are defined by $\mathrm{Tr}_1(T)=(\mathrm{id}\otimes\mathrm{Tr})(T)$ and $\mathrm{Tr}_2(T)=(\mathrm{Tr}\otimes\mathrm{id})(T)$.
    The case $m=n=2$ was answered affirmatively in \cite{ZXT25}.
    We discover that local unitary equivalence and Kaplansky's second problem are closely related.
    In Theorem 2.6 of \cite{MS26}, we show that counterexamples exist for $m=n\geqslant 3$. Moreover, the answer is no for the case $\operatorname{gcd}(m,n) \geqslant 3$ (see Remark 2.7 of \cite{MS26}).
\end{remark}

\section*{Declarations}
\noindent{\bf Conflict of interest}
On behalf of all authors, the corresponding author states that there is no conflict of interest.

\section*{Acknowledgments}
The authors gratefully acknowledge the anonymous referees for their insightful comments and constructive suggestions, as well as research support from Dalian University of Technology, Hebei Normal University, and Jilin University during the preparation of this manuscript.


\begin{thebibliography}{99}


\bibitem{All11}
G. Allan.
\emph{Introduction to Banach spaces and algebras.}
Prepared for publication and with a preface by H. Garth Dales.
Oxf. Grad. Texts Math., 20
Oxford University Press, Oxford, 2011. viii+371 pp.
MR2761146


\bibitem{Azo74}
E. Azoff.
\emph{Borel measurability in linear algebra.}
Proc. Amer. Math. Soc. 42 (1974), 346--350.
MR0327799


\bibitem{Berc87}
H. Bercovici.
\emph{Three test problems for quasisimilarity.}
Canad. J. Math. 39 (1987), no. 4, 880--892.
MR0915020


\bibitem{CD78}
M. Cowen, R. Douglas.
\emph{Complex geometry and operator theory.}
Acta Math. 141 (1978), no. 3-4, 187--261.
MR0501368


\bibitem{Dav96}
K. Davidson.
\emph{$C^*$-algebras by example.}
Fields Inst. Monogr., 6
American Mathematical Society, Providence, RI, 1996. xiv+309 pp.
MR1402012


\bibitem{DH90}
K. Davidson, D. Herrero.
\emph{The Jordan form of a bitriangular operator.}
J. Funct. Anal. 94 (1990), no. 1, 27--73.
MR1077544


\bibitem{DP64}
D. Deckard, C. Pearcy.
\emph{On continuous matrix-valued functions on a Stonian space.}
Pacific J. Math. 14 (1964), 857--869.
MR0172130


\bibitem{Gil72}
F. Gilfeather.
\emph{Strong reducibility of operators.}
Indiana Univ. Math. J. 22 (1972/73), 393--397.
MR0303322

\bibitem{Hei25}
W. Heisenberg. 
\emph{\"Uber quantentheoretische Umdeutung kinematischer und mechanischer Beziehungen.}
Z. Physik 33, 879--893 (1925). 
doi.org/10.1007/BF01328377

\bibitem{Jiang04}
C. Jiang.
\emph{Similarity classification of Cowen-Douglas operators.}
Canad. J. Math. 56 (2004), no. 4, 742--775.
MR2074045


\bibitem{JC23}
C. Jiang, Y. Cao.
\emph{Advance in operator theory and Jiang's seven questions.}
Commun. Math. Res. 39 (2023), no. 2, 287--295.
MR4580037


\bibitem{Jiang05}
C. Jiang, X. Guo, K. Ji,
\emph{$K$-group and similarity classification of operators.}
J. Funct. Anal. 225 (2005), no. 1, 167--192.
MR2149922



\bibitem{JS11}
C. Jiang, R. Shi.
\emph{Direct integrals of strongly irreducible operators.}
J. Ramanujan Math. Soc. 26 (2011), no. 2, 165--180. 
MR2816786


\bibitem{KR1}
R. Kadison, J. Ringrose.
\emph{Fundamentals of the theory of operator algebras, I, Elementary theory.}
Academic Press, New York, 1983.
MR0719020


\bibitem{KR2}
R. Kadison, J. Ringrose.
\emph{Fundamentals of the theory of operator algebras, II, Advanced theory.}
Academic Press, Orlando, FL, 1986.
MR0859186


\bibitem{KS57}
R. Kadison, I. Singer.
\emph{Three test problems in operator theory.}
Pacific J. Math. 7 (1957), 1101--1106.
MR0092123


\bibitem{Kap54}
I. Kaplansky.
\emph{Infinite abelian groups.}
University of Michigan Press, Ann Arbor, MI, 1954. v+91 pp.
MR0065561

\bibitem{Kra10}
B. Kraus. 
\emph{Local unitary equivalence of multipartite pure states.}
Phys. Rev. Lett. 104 (2010), no. 2, 020504, 4 pp.
MR2585388

\bibitem{MS26}
M. Ma, R. Shi.
\emph{Bargmann invariants and local unitary equivalence.}
arXiv:2607.16878 [quant-ph]
\href{https://doi.org/10.48550/arXiv.2607.16878}{
doi.org/10.48550/arXiv.2607.16878}

\bibitem{MRTZ24}
L. Marcoux, H. Radjavi, S. Troscheit, Y. Zhang.
\emph{Stability relations for Hilbert space operators and a problem of Kaplansky.}
Math. Z. 308 (2024), no. 3, Paper No. 39, 46 pp.
MR4803020


\bibitem{Ros56}
M. Rosenblum,
\emph{On the operator equation $BX-XA=Q$.}
Duke Math. J. 23 (1956), 263--269.
{MR0079235}


\bibitem{Sher10}
D. Sherman.
\emph{Divisible operators in von Neumann algebras.}
Illinois J. Math. 54 (2010), no. 2, 567--600.
MR2846474

\bibitem{Shi74} 
A. Shields.
\emph{Weighted shift operators and analytic function theory.}
Topics in Operator Theory, Math. Surveys, vol. 13, Amer. Math. Soc., Providence, R.I., 1974, 49--128.
MR0361899

\bibitem{Tho85}
M. P. Thomas.
\emph{Quasinilpotent strictly cyclic unilateral weighted shift operators on $\ell^p$ which are not unicellular.}
Proc. London Math. Soc. (3) 51 (1985), no. 1, 127--145.
MR0788853

\bibitem{ZX26}
L. Zhang, B. Xie.
\emph{A Survey of Bargmann Invariants: Geometric Foundations and Application.}
2026, arXiv:2601.01858

\bibitem{ZXT25}
L. Zhang, B. Xie, Y. Tao.
\emph{Bargmann-invariant framework for local unitary equivalence and entanglement.}
Phys. Rev. A 112 (2025), no. 5, Paper No. 052426, 28 pp.
MR5002102

\end{thebibliography}
\end{document}